\documentclass[12pt]{amsart}
\usepackage{amsmath,amsfonts,euscript,amscd,amsthm,amssymb,upref,graphics}
\usepackage{stmaryrd}
\usepackage{mathabx}
\usepackage[all]{xy}
\usepackage{color}

\usepackage{mathrsfs}
\newcommand{\Iscr}{\mathscr{I}}
\newcommand{\Mscr}{\mathscr{M}}
\newcommand{\Oscr}{\mathscr{O}}
\newcommand{\Pscr}{\mathscr{P}}

\newcommand{\Tscr}{\mathscr{T}}
\newcommand{\Xscr}{\mathscr{X}}

\swapnumbers

\theoremstyle{plain}

\newtheorem{theorem}{Theorem}[section]
\newtheorem{proposition}[theorem]{Proposition}
\newtheorem{lemma}[theorem]{Lemma}
\newtheorem{corollary}[theorem]{Corollary}
\newtheorem{nothing}[theorem]{}


\theoremstyle{definition}

\newtheorem{definition}[theorem]{Definition}

\newtheorem{nothing*}[theorem]{}
\newtheorem{example}[theorem]{Example}

\newtheorem{notation}[theorem]{Notation}

\newtheorem{bigremark}[theorem]{Remark}

\newtheorem{subnothing*}[sub]{}

\theoremstyle{remark}

\newtheorem*{remark}{Remark}


\newcommand{\dom}{	\operatorname{\text{\rm dom}}}
\newcommand{\Spec}{	\operatorname{\text{\rm Spec}}}
\newcommand{\haut}{	\operatorname{\text{\rm ht}}}
\newcommand{\supp}{	\operatorname{\text{\rm supp}}}
\newcommand{\image}{	\operatorname{\text{\rm im}}}
\newcommand{\trdeg}{	\operatorname{\text{\rm trdeg}}}
\newcommand{\rank}{	\operatorname{\text{\rm rank}}}
\newcommand{\Frac}{	\operatorname{\text{\rm Frac}}}

\newcommand{\lnd}{	\operatorname{\text{\rm LND}}}
\newcommand{\hlnd}{	\operatorname{\text{\rm HLND}}}
\newcommand{\khlnd}{	\operatorname{\text{\rm KHLND}}}
\newcommand{\ML}{	\operatorname{\text{\rm ML}}}
\newcommand{\HML}{	\operatorname{\text{\rm HML}}}
\newcommand{\Gr}{	\operatorname{\text{\rm Gr}}}
\newcommand{\Proj}{	\operatorname{\text{\rm Proj}}}
\newcommand{\Div}{	\operatorname{\text{\rm Div}}}
\newcommand{\Sing}{	\operatorname{\text{\rm Sing}}}
\renewcommand{\div}{	\operatorname{{\rm div}}}
\newcommand{\lcm}{	\operatorname{\text{\rm lcm}}}
\newcommand{\Gal}{	\operatorname{\text{\rm Gal}}}
\newcommand{\cotype}{	\operatorname{\text{\rm cotype}}}

\newcommand{\EXT}{	\operatorname{\text{\rm EXT}}}
\newcommand{\torc}{	\operatorname{\text{\rm torc}}}

\newcommand{\YY}{Y^{(1)}}
\newcommand{\G}{\operatorname{\bbG}}
\newcommand{\bG}{\operatorname{\bar\bbG}}

\newcommand{\isomdot}{\overset{\text{\tiny$*$}}{\isom}}
\newcommand{\NR}{	\operatorname{\text{\rm NR}}}
\newcommand{\simQ}{\mathrel{\sim_{\Rat}}}

\newcommand{\setspec}[2]{\big\{\,#1\, \mid \,#2\, \big\}}

\newcommand{\ba}{{\mathbf{a}}}

\newcommand{\Integ}{\ensuremath{\mathbb{Z}}}
\newcommand{\Nat}{\ensuremath{\mathbb{N}}}
\newcommand{\Rat}{\ensuremath{\mathbb{Q}}}
\newcommand{\Comp}{\ensuremath{\mathbb{C}}}
\newcommand{\Reals}{\ensuremath{\mathbb{R}}}
\newcommand{\aff}{\ensuremath{\mathbb{A}}}
\newcommand{\proj}{\ensuremath{\mathbb{P}}}
\newcommand{\bk}{{\ensuremath{\rm \bf k}}}
\newcommand{\ck}{\bar{\bk}}
\newcommand{\kk}[1]{\bk^{[{#1}]}}
\newcommand{\bbV}{\ensuremath{\mathbb{V}}}
\newcommand{\bbM}{\ensuremath{\mathbb{M}}}
\newcommand{\bbD}{\ensuremath{\mathbb{D}}}
\newcommand{\bbG}{\ensuremath{\mathbb{G}}}
\newcommand{\bbT}{\ensuremath{\mathbb{T}}}

\newcommand{\pgoth}{{\ensuremath{\mathfrak{p}}}}
\newcommand{\qgoth}{{\ensuremath{\mathfrak{q}}}}
\newcommand{\Pgoth}{{\ensuremath{\mathfrak{P}}}}
\newcommand{\mgoth}{{\ensuremath{\mathfrak{m}}}}

\newcommand{\Beul}{\EuScript{B}}

\newcommand{\Feul}{\EuScript{F}}

\newcommand{\Heul}{\EuScript{H}}

\newcommand{\Oeul}{\EuScript{O}}

\newcommand{\Reul}{\EuScript{R}}

\newcommand{\isom}{\cong}
\renewcommand{\epsilon}{\varepsilon}
\renewcommand{\phi}{\varphi}
\renewcommand{\emptyset}{\varnothing}

\newenvironment{enumerata}%
{\begin{enumerate}

}{\end{enumerate}}

\newenvironment{enumerati}%
{\begin{enumerate}
}{\end{enumerate}}

\newcommand{\rien}[1]{}

\addtolength{\topmargin}{-7mm}
\addtolength{\textheight}{1.5cm}
\setlength{\textwidth}{17.5cm}
\addtolength{\oddsidemargin}{-2.4cm}
\addtolength{\evensidemargin}{-2.4cm}

\setlength{\parskip}{1mm}

\raggedbottom

\begin{document}
\renewcommand{\baselinestretch}{1.07}


\title[Rigidity of graded domains]{Rigidity of graded integral domains\\ and of their Veronese subrings}

\author{Daniel Daigle}

\address{Department of Mathematics and Statistics\\
University of Ottawa\\
Ottawa, Canada\ \ K1N 6N5}

\email{ddaigle@uottawa.ca}

{\renewcommand{\thefootnote}{}
\footnotetext{2020 \textit{Mathematics Subject Classification.}
Primary: 13N15, 14R20.  Secondary: 14C20, 14R05, 14R25.}}

\begin{abstract} 
Let $B = \bigoplus_{i \in G} B_i$ be a commutative integral domain of characteristic $0$ graded by an abelian group $G$.
We say that $B$ is {\it rigid\/} (resp.\ {\it graded-rigid}) if the only locally nilpotent derivation (resp.\ homogeneous locally nilpotent derivation)
of $B$ is the zero derivation.
Given a subgroup $H$ of $G$, define $B^{(H)} = \bigoplus_{i \in H} B_i$.
We give results that answer or partially answer the following questions:
Does non-rigidity of $B$ imply non-rigidity of $B^{(H)}$?
When can a derivation of $B^{(H)}$ be extended to one of $B$?
What are the properties of the set of subgroups $H$ of $G$ such that $B^{(H)}$ is not graded-rigid?
We define the subgroups $\bG(B) \subseteq \G(B)$ of $G$ and find that these are related to the locally nilpotent derivations of $B$ in several interesting ways.
(The definitions of $\bG(B)$ and $\G(B)$ do not involve derivations, and these two groups are usually easy to determine.)
One of our results states that if $B$ is a normal affine $G$-graded domain then $\trdeg(B : \ML(B)) \ge \rank( \G(B)/\bG(B) )$.
We also give a result relating the rigidity of $B_{(x)}$ to that of $B/xB$, where $B$ is an $\Nat$-graded normal affine domain and $x$ is a homogeneous prime element of $B$.
We give some applications to Pham-Brieskorn rings.
\end{abstract}

\maketitle
  
\vfuzz=2pt


\section{Introduction}

Let $B$ be a ring (by which we mean a commutative, associative, unital ring).
A derivation $D : B \to B$ is {\it locally nilpotent} if, for each $b \in B$, there exists $n>0$ such that $D^n(b)=0$.
We write $\lnd(B)$ for the set of locally nilpotent derivations $D : B \to B$.
The ring $B$ is said to be {\it rigid\/} if $\lnd(B) = \{0\}$.

Given a ring $B$ graded by an abelian group $G$,
let $\hlnd(B)$ denote the set of derivations $D \in \lnd(B)$ that are homogeneous with respect to the given grading (see Def.\ \ref{8hnvnBbslijvxbgxsab8AbN7p03msS726rfXC}).
We say that $B$ is {\it graded-rigid\/} if $\hlnd(B)=\{0\}$.
We say that $B$ is {\it rigid\/} if it is rigid as a non-graded ring, i.e., if $\lnd(B)=\{0\}$.
One has:
$$
\text{$B$ is rigid} \overset{\text{\rm def}}{\iff} \lnd(B) = \{0\}
\ \begin{array}{c} \hspace{-3.5mm}\Longleftarrow\hspace{-5mm}\raisebox{.5mm}{\tiny$/$} \\[-2.3mm] \implies \end{array} \ 
\hlnd(B) = \{0\}  \overset{\text{\rm def}}{\iff} \text{$B$ is graded-rigid.}
$$
Also, it is well known that if $\bk$ is a field of characteristic $0$ and $B$ is an affine $\bk$-domain graded by a torsion-free abelian group,
then $B$ is rigid if and only if it is graded-rigid (see Lemma \ref{876543wsdfgxhnm290cAo}).

To give context for the present work, let us recall how rigidity is related to cylinders.
Given a ring $B$, an open subset $U$ of $\Spec B$ is {\it basic\/} if $U = \bbD(f) := \setspec{ \pgoth \in \Spec B }{ f \notin \pgoth }$ for some $f \in B$.
Given an $\Nat$-graded ring $B$, an open subset $U$ of $\Proj B$ is {\it basic\/}
if $U = \bbD_+(f) := \setspec{ \pgoth \in \Proj B }{ f \notin \pgoth }$ for some homogeneous $f \in B$ of nonzero degree.
A {\it cylinder\/} in a variety $V$ is a nonempty open subset of $V$ that is isomorphic to $Z \times \aff^1$ for some variety $Z$.
If the variety $V$ is $\Spec B$ or $\Proj B$ then by a {\it basic cylinder\/} of $V$ we mean a cylinder of $V$ that is also a basic open set.
The following fact is very well known:

\begin{nothing} \label {jbhgfdxfewae33w6q920we}
If $\bk$ is a field of characteristic $0$ and $B$ is an affine $\bk$-domain, then:
$$
\text{$\Spec B$ has a basic cylinder} \iff \text{$B$ is non-rigid.}
$$
\end{nothing}

It is natural to ask if there is an analogous result for $\Proj(B)$.
An affirmative answer was first given in the seminal article \cite{KPZ2013};
specifically, Thm 0.6 and Cor.\ 3.2 of \cite{KPZ2013}, taken together, are analogous to \ref{jbhgfdxfewae33w6q920we}.
Those two results of \cite{KPZ2013} are generalized by \cite[Thm 1.2]{Chitayat-Daigle:cylindrical},
which we state below after giving some definitions.

Let $B = \bigoplus_{n \in \Integ} B_n$ be a $\Integ$-graded domain.
Given $d \in \Nat \setminus \{0\}$,
the ring $B^{(d)} = \bigoplus_{ n \in d\Integ } B_n$ is called the {\it $d$-th Veronese subring of $B$.}
The number $e(B) = \gcd\setspec{ n \in \Integ }{ B_n \neq 0 }$ is called the {\it saturation index\/} of $B$.
Let $Z$ temporarily denote the set of all height $1$ homogeneous prime ideals of $B$, 
and define $\bar e(B) = \lcm\setspec{ e(B/\pgoth) }{ \pgoth \in Z }$ if $Z \neq \emptyset$, and $\bar e(B) = e(B)$ if $Z = \emptyset$.
We call $\bar e(B)$ the {\it codimension $1$ saturation index\/} of $B$.
We have $e(B), \bar e(B) \in \Nat$ and $e(B) \mid \bar e(B)$.
We say that $B$ is {\it saturated in codimension $1$} if $\bar e(B) = e(B)$,
or equivalently,  if  $e( B/\pgoth ) = e( B )$ for all height $1$ homogeneous prime ideals $\pgoth$ of $B$.\footnote{The fact that 
$B$ is saturated in codimension $1$ if and only if $\bar e(B) = e(B)$ is a result (Lemma \ref{983bvkslwlia293tfg62zgbnm3kry}),
not a definition, but there is no harm in using it as a definition in this Introduction.}

The following is a modified version of \cite[Thm 1.2]{Chitayat-Daigle:cylindrical}:

\begin{theorem} \label {edh83yf6r79hvujhxu6wrefji9e} 
Let $\bk$ be a field of characteristic $0$ and $B = \bigoplus_{i \in \Nat}B_i$
an $\Nat$-graded affine $\bk$-domain such that $\trdeg(B:B_0) \ge 2$.
\begin{enumerata}

\item $\Proj B$ has a basic cylinder $\iff$ $\exists\ d \in \Nat \setminus \{0\}$ such that $B^{(d)}$ is non-rigid.

\item If $B$ is normal and saturated in codimension $1$ then the following are equivalent:
\begin{enumerata}

\item $\Proj B$ has a basic cylinder;
\item $B$ is non-rigid;
\item $B^{(d)}$ is non-rigid for some $d \in \Nat\setminus\{0\}$; 
\item $B^{(d)}$ is non-rigid for all $d \in \Nat\setminus\{0\}$.

\end{enumerata}
\end{enumerata}
\end{theorem}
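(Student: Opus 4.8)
The plan is to push everything down to the affine situation of \ref{jbhgfdxfewae33w6q920we} by working on the standard affine charts of $\Proj$. For homogeneous $f\in B$ of positive degree one has $\bbD_+(f)\isom\Spec B_{(f)}$, where $B_{(f)}$ denotes the degree-$0$ subring of $B_{f}$; moreover every basic open subset of $\Spec B_{(f)}$ is of the form $\bbD_+(bf)$ with $bf$ homogeneous of positive degree, hence is a basic open subset of $\Proj B$. Since $B_{(f)}$ is an affine $\bk$-domain, \ref{jbhgfdxfewae33w6q920we} yields: $\Proj B$ has a basic cylinder if and only if $B_{(f)}$ is non-rigid for some homogeneous $f$ of positive degree. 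As $\Proj B\isom\Proj B^{(d)}$ and $(B^{(d)})_{(f)}=B_{(f)}$ whenever $\deg f\in d\Integ$, the latter condition is unchanged when $B$ is replaced by a Veronese subring, so part (a) reduces to the statement that $B_{(f)}$ is non-rigid for some homogeneous $f$ of positive degree if and only if $B^{(d)}$ is non-rigid for some $d\ge 1$.

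The forward implication I would prove directly: if $\delta\in\lnd(B_{(f)})\setminus\{0\}$ and $m=\deg f$, then $f$ is transcendental over $\Frac(B_{(f)})$ and $\Frac(B^{(m)})=\Frac(B_{(f)})(f)$, so $\delta$ extends to a nonzero derivation $D$ of $\Frac(B^{(m)})$ with $D(f)=0$; using that $B^{(m)}$ is generated over $B_{0}$ by finitely many homogeneous elements one gets $D(B^{(m)})\subseteq B^{(m)}[1/f]$, hence $f^{N}D$ restricts to a derivation of $B^{(m)}$ for $N\gg 0$, and since $f$ lies in its kernel this restriction is a nonzero locally nilpotent derivation. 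The reverse implication is the heart of the matter, and I would isolate it as: \emph{Claim.} If $A$ is an $\Nat$-graded affine $\bk$-domain with $\trdeg(A:A_{0})\ge 2$ that is non-rigid, then $A_{(f)}$ is non-rigid for some homogeneous $f$ of positive degree. Granting the Claim, ``$\Leftarrow$'' follows, because $B^{(d)}$ non-rigid forces $(B^{(d)})_{(f)}=B_{(f)}$ non-rigid for a suitable homogeneous $f$, and $\Proj B^{(d)}=\Proj B$.

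To prove the Claim, Lemma \ref{876543wsdfgxhnm290cAo} gives a nonzero homogeneous $\partial\in\hlnd(A)$ of some degree $e$; if $e\ne 0$ I replace $A$ by the Veronese subring $A^{(|e|)}$ (same $\Proj$, and $\partial$ restricts to it as a nonzero homogeneous derivation of degree $\pm 1$ for the reindexed grading, since $\partial(t^{|e|})=|e|\,t^{|e|-1}\partial t\ne 0$ whenever $\partial t\ne 0$), so we may assume $e\in\{-1,0,1\}$. Now $\ker\partial$ is a graded subring with $\trdeg(\ker\partial:\bk)>\trdeg(A_{0}:\bk)$, so it contains a homogeneous element $f$ of positive degree, and $\partial$ stays locally nilpotent after inverting $f$, with degree-$0$ subring of $A_{f}$ equal to $A_{(f)}$. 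If $e=0$, $\partial$ restricts to a derivation of $A_{(f)}$; an elementary argument using that $A$ is a domain shows $\partial$ is nonzero on some $A_{n}$ with $n>0$, and then $\partial|_{A_{n}}$ is a nonzero locally nilpotent linear operator on $A_{n}$, so it has nonzero kernel, and choosing $f$ in that kernel makes $\partial|_{A_{(f)}}\ne 0$. If $e=\pm 1$, I take in addition a homogeneous local slice $s$ of $\partial$, put $\sigma=\partial s\in\ker\partial$, and build a nonzero degree-$0$ locally nilpotent derivation of $A_{(f\sigma)}$ by rescaling $\partial$ (equivalently, the $(s/\sigma)$-derivative of the polynomial ring $A_{f\sigma}=(\ker\partial)_{f\sigma}[s/\sigma]$) by a suitable power of the kernel element $f\sigma$. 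Carrying out this rescaling so that the scaling factor lies in the kernel, and checking that the resulting derivation is nonzero and locally nilpotent, is the main technical obstacle of part (a).

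For part (b) the one new ingredient is the implication
\[
\bigl(\text{$B$ normal, saturated in codimension $1$, $\Proj B$ has a basic cylinder}\bigr)\ \Longrightarrow\ \text{$B$ is non-rigid.}
\]
Granting this, part (a), and the Claim, part (b) is formal: a Veronese subring of $B$ is again normal, saturated in codimension $1$, of transcendence degree $\ge 2$ over $B_{0}$, and has the same $\Proj$, so (i)$\Rightarrow$(ii) is the displayed implication, (ii)$\Rightarrow$(iii) is trivial ($d=1$), (iii)$\Rightarrow$(iv) holds because ``$B^{(d_{0})}$ non-rigid'' gives ``$\Proj B$ has a basic cylinder'' by the Claim and then ``$B^{(d)}$ non-rigid for every $d$'' by the displayed implication applied to $B^{(d)}$, and (iv)$\Rightarrow$(i) follows from the Claim with $d=1$. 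To prove the displayed implication I would take homogeneous $f$ of positive degree with $B_{(f)}$ non-rigid, lift a nonzero locally nilpotent derivation of $B_{(f)}$ to a homogeneous locally nilpotent derivation of $B_{f}=B[1/f]$, and then descend to $B$: clearing the denominator $f$ produces a nonzero homogeneous locally nilpotent derivation of $B$ provided the saturation index of $B/\pgoth$ equals that of $B$ at every height-$1$ homogeneous prime $\pgoth$ containing $f$ — which is exactly what ``saturated in codimension $1$'' provides. This descent, which uses normality to reduce to codimension $1$ and the codimension-$1$ saturation to remove the obstruction, is the main obstacle of part (b); I expect it to follow from the results developed earlier in the paper on the relationship between locally nilpotent derivations of $B$ and of its Veronese subrings.
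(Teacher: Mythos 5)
Your plan is a reasonable reduction, but it leaves the two genuinely hard parts as acknowledged ``technical obstacles'', and those obstacles are precisely the content of the theorem. In the paper, Theorem \ref{edh83yf6r79hvujhxu6wrefji9e} is not proved from scratch: part (a) and the equivalence (i)$\Leftrightarrow$(ii)$\Leftrightarrow$(iii) in part (b) are taken verbatim from \cite[Thm 1.2]{Chitayat-Daigle:cylindrical} (itself generalizing \cite[Thm 0.6 and Cor.\ 3.2]{KPZ2013}), and the only new ingredient is (iii)$\Rightarrow$(iv), which the paper derives from Cor.\ \ref{1D0x2mv4yhhtfukCs1unc3t46bisgf9f2}. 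Your handling of (iii)$\Rightarrow$(iv) --- pass to $\Proj$, observe that Veronese subrings of $B$ are again normal and saturated in codimension~$1$ (Lemma \ref{98767gh11dmdffkll0ahcchvr0vfesh8263543794663cnir}, Cor.\ \ref{mMpPiq13096gxvbw5resip0tfs}), and apply (i)$\Rightarrow$(ii) to $B^{(d)}$ --- matches the paper's footnote alternative and is fine as far as it goes. But it still rests on your unproved Claim and your unproved implication (*).

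The first gap is the Claim itself (your $e=\pm1$ case): this is \cite[Thm 1.2(a)]{Chitayat-Daigle:cylindrical}, a nontrivial theorem, and the passage from $A$ to the degree-zero ring $A_{(f\sigma)}$ goes to a ring of strictly smaller transcendence degree, so it is not of the type handled by Lemma \ref{876543wqzxcvbnmC9l03ord} (where $\Beul$ is integral over $\Beul^{(H)}$). The second and more serious gap is your displayed implication in (b), which is \cite[Thm 1.2(b)]{Chitayat-Daigle:cylindrical}, i.e.\ the Kishimoto--Prokhorov--Zaidenberg theorem. You write that you ``expect it to follow from the results developed earlier in the paper on the relationship between LNDs of $B$ and of its Veronese subrings''; this expectation is misplaced. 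The Extension Theorem \ref{o823y48t2309fnb28} and all of Section \ref{sectionFromderivationsofBHtoderivationsofB} require $H\in\bbT(B)$, i.e.\ $\G(B)/(H\cap\G(B))$ torsion, which is exactly what makes $B$ integral over $B^{(H)}$ and permits uniqueness of the extension via Lemma \ref{ldkjxhcvi82ewdj0wd}. But $B_{(f)}$ is the degree-zero subring $B_f^{(0)}$ of the $\Integ$-graded ring $B_f$, and $0\notin\bbT(B_f)$ since $\G(B_f)\isom\Integ$ is not torsion; the inclusion $B_{(f)}\subset B_f$ drops transcendence degree by one. So the lift from $B_{(f)}$ to $B_f$ and the subsequent descent to $B$ lie outside the scope of the paper's Veronese machinery and must be supplied by a separate argument --- which is exactly why the paper handles this theorem by citation rather than by proof.
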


The above statement differs from Thm 1.2 of \cite{Chitayat-Daigle:cylindrical} in two ways.
Firstly, this version is restricted to the case of $\Nat$-gradings, whereas the original version covers $\Integ$-gradings.
Secondly, part (b) of the original version only asserts the equivalence of (i--iii);
the fact that (iii) implies (iv) follows from Cor.\ \ref{1D0x2mv4yhhtfukCs1unc3t46bisgf9f2} of the present article.\footnote{Alternatively,
one can obtain that (i) implies (iv) as follows.
First, we note that if  $B$ is normal and saturated in codimension $1$ then so is $B^{(d)}$ for every $d$
(by Lemma \ref{98767gh11dmdffkll0ahcchvr0vfesh8263543794663cnir} and Cor.\ \ref{mMpPiq13096gxvbw5resip0tfs}).
Secondly, if $\Proj B$ has a basic cylinder then so does $\Proj B^{(d)}$ for every $d$ (because $\Proj B \isom \Proj B^{(d)}$).
So ``(i) implies (iv)'' follows by applying ``(i) implies (ii)'' to the ring $B^{(d)}$.}

Except for Rem.\ \ref{iytd43J45628c93j}, Ex.\ \ref{lBoibhgxfd7ii03A8vevcCxEvv82} and Rem.\ \ref{24799F47-1201-4FF5-A50D-93EB60154D1C},
we will not consider cylinders outside of this introduction.
The purpose of the above discussion is twofold: to help connect the results of this article to existing literature, and to suggest the idea that the set
$$
\NR(B) = \setspec{ d \in \Nat\setminus\{0\} }{ \text{$B^{(d)}$ is non-rigid} } 
$$
(which we define for any $\Integ$-graded domain $B$ of characteristic $0$) is an interesting object of study.
For instance, it is natural to ask if there is an easily computable number $d \in \Nat \setminus \{0\}$ with the property that
$\NR(B) \neq \emptyset$ $\Leftrightarrow$ $d \in \NR(B)$.
Or we can ask if $\NR(B)$ can be an arbitrary subset of $\Nat \setminus \{0\}$, or if it necessarily has some kind of structure.
These questions are the starting point of the present article,
and one of our objectives --- though not the only one --- is to describe the properties of $\NR(B)$.
As an example, consider the $\Nat$-graded normal domain 
$$
B = \Comp[X,Y,Z] / ( X^4 + Y^6 + Z^{10} ) .
$$
One can see (by Thm \ref{eion98h38dhb2d203ojfbdt7}) that $\NR(B) = \Iscr_6 \cup \Iscr_{10} \cup \Iscr_{15}$ where
we define $\Iscr_d = \{d,2d,3d,\dots\}$ for each $d \in \Nat \setminus \{0\}$.
This tells us in particular that $B^{(6)}$, $B^{(10)}$ and $B^{(15)}$ are not rigid, whereas $B$, $B^{(4)}$, $B^{(9)}$ and $B^{(14)}$ (for example) are rigid.

As we already mentioned just after Thm \ref{edh83yf6r79hvujhxu6wrefji9e}, one of our results (Cor.\ \ref{1D0x2mv4yhhtfukCs1unc3t46bisgf9f2})
states that if $B$ is normal and saturated in codimension $1$ then $\NR(B)$ is either $\emptyset$ or $\Nat\setminus\{0\}$.
The above example shows that $\NR(B)$ can be complicated when $B$ is not saturated in codimension $1$.
Cor.\ \ref{p98qy386e523brxhbfc6ghvc63x2345618q30} and Prop.\ \ref{finiteprimitiveset534823i93i} give a partial description of $\NR(B)$ under mild assumptions.

The results from Section \ref{sectionInteggradings} that we mentioned so far are corollaries of the theory developed in 
sections \ref{sectionFromderivationsofBtoderivationsofBH}--\ref{sectionThesetXscrB}, where we study rings graded by arbitrary abelian groups.
Given a domain $B = \bigoplus_{i \in G} B_i$ graded by an abelian group $(G,+)$,
we define $\G(B)$ to be the subgroup of $G$ generated by $\setspec{ i \in G }{ B_i \neq 0 }$.
Note that $\G(B)$ generalizes the number $e(B)$.
The generalization of $\bar e(B)$ is the subgroup $\bG(B)$ of $\G(B)$ defined in Section \ref{SectionTheGroupbGB}.
We say that $B$ is {\it saturated in codimension $1$} if $\bG(B) = \G(B)$.
Given a subgroup $H$ of $G$, we define the graded subring $B^{(H)} = \bigoplus_{i \in H} B_i$ of $B$, which generalizes the notion of Veronese subring.
We consider the set $\Xscr(B)$, which is the generalization of $\NR(B)$ in the context of $G$-graded rings.
As a first approximation, $\Xscr(B)$ is the set of subgroups $H$ of $G$ such that $B^{(H)}$ is non-rigid;
more precisely, we define
$$
\Xscr(B) = \setspec{ H \in \bbT(B) }{ \hlnd( B^{(H)} ) \neq \{0\} } ,
$$
where $\bbT(B)$ is the set of subgroups $H$ of $G$ such that $\G(B) / (H \cap \G(B))$ is a torsion group.
(Note that if $G$ is a finite group then $\bbT(B)$ is the set of all subgroups of $G$,
and that if $G=\Integ$ and the grading of $B$ is non-trivial then $\bbT(B)$ is the set of all subgroups $d\Integ$ of $\Integ$ with $d\ge1$.)
We also define $\Mscr(B)$ to be the set of maximal elements of the poset $(\Xscr(B),\subseteq)$.

The list below summarizes our most significant results. Each item in the list gives a {\it simplified version\/} of the result that is named at the beginning.
We need the following definition.
Let $\bk$ be a field and $B = \bigoplus_{i \in G} B_i$  a $\bk$-domain graded by an abelian group $G$.
If $\bk \subseteq B_0$, we say that the grading is ``over $\bk$''.
Note that if $G = \Integ$ then  the grading is necessarily over $\bk$.

\medskip

\noindent{\bf(1) }{\bf Thm \ref{8237e9d1983hdjyev93} (Descent Theorem).}
{\it Let $\bk$ be a field of characteristic $0$ and $B$ an affine $\bk$-domain graded over $\bk$ by an abelian group $G$.
If $\hlnd(B) \neq \{0\}$ then $\hlnd(B^{(H)}) \neq \{0\}$ for all $H \in \bbT(B)$.}

\smallskip

\noindent{\bf(2) }{\bf Thm \ref{3498huv8oe98y3876wgebf09}.}
{\it Let $B$ be a domain of characteristic $0$ graded by an abelian group~$G$.
\begin{enumerata}
\vspace{-1.5mm}\item $\bG(B) \subseteq \G( \ker D ) \subseteq \G(B)$ for all $D \in \hlnd(B)$.
\item If $B$ is saturated in codimension $1$ then $\G( \ker D ) = \G(B)$ for all $D \in \hlnd(B)$.
\end{enumerata}}
\vspace{-1.5mm}\noindent Assertion (b) is more general than several published results, for instance:
Cor.\ 2.2 of \cite{Dai:homog}, Thm 2.2 of \cite{Dai:KerHomog} and Corollaries 4.2--4.4 of \cite{DaiFreudMoser}.

\smallskip
\smallskip

\noindent{\bf(3) }{\bf Thm \ref{o823y48t2309fnb28} (Extension Theorem).}
{\it Let $B$ be a noetherian normal $\Rat$-domain graded by a finitely generated abelian group $G$.
If $H \in \bbT(B)$ then every derivation $\delta :  B^{(H)} \to B^{(H)}$ extends uniquely to a derivation $D: B^{( H + \bG(B) )} \to B^{( H + \bG(B) )}$.
Moreover, if $\delta$ is locally nilpotent (resp.\ homogeneous) then so is $D$.}

\vspace{-0.4mm}\noindent Note that, in the above statement, if  $B$ is saturated in codimension $1$ then $B^{( H + \bG(B) )} = B$.
The above Extension Theorem generalizes several published results,
for instance: \cite{AitzUmirbVeronese2023} gives the special case where $B$ is the polynomial ring $\bk[X,Y]$ equipped with the standard grading,
and \cite{Freud2024} gives the case where $B = \bk[X,Y,Z]/(XZ-Y^2-1) = \bk[x,y,z]$ is graded by the group $\Integ/2\Integ = \{0,1\}$, with $x,y,z \in B_1$.
(In these two cases, $B$ is saturated in codimension $1$.)

\smallskip
\smallskip

\noindent{\bf(4) }{\bf Thm \ref{987654esdfghjdj4edoodji2d0we9iu9w12twf5}.}
{\it Let $\bk$ be a field of characteristic $0$ and $B$ an affine $\bk$-domain graded over $\bk$ by a finitely generated abelian group $G$.
\begin{enumerata}
\vspace{-1.5mm}\item $\Xscr(B) = \bigcup_{ H \in \Mscr(B) } \bbT_H(B)$,\ \ where $\bbT_H(B) = \setspec{ H' \in \bbT(B) }{ H' \subseteq H }$.
\item If $B$ is normal then each element $H$ of $\Mscr(B)$ satisfies $H \supseteq \bG(B)$.
\end{enumerata}}
\vspace{-1.5mm}\noindent Note that part (a) of Thm \ref{987654esdfghjdj4edoodji2d0we9iu9w12twf5} reduces the problem of describing $\Xscr(B)$ to that of describing $\Mscr(B)$,
and that part (b) gives some information about $\Mscr(B)$.

\smallskip
\smallskip

\noindent{\bf(5) }{\bf Corollary \ref{87654edfgyudi30piwnf3bwvf2tyweuiklc9Bmo3vA34b6getw63ye}.}
{\it Let $\bk$ be a field of characteristic $0$ and $B$ a normal affine $\bk$-domain graded over $\bk$ by a finitely generated abelian group $G$.
If $B$ is saturated in codimension $1$ then $\Xscr(B) = \emptyset$ or $\Xscr(B) = \bbT(B)$.}

\smallskip

\noindent{\bf(6) }{\bf Section \ref{sectionInteggradings}} focuses on $\Integ$-gradings. 
Many of the results in this section are special cases of results from Sections \ref{sectionFromderivationsofBtoderivationsofBH}--\ref{sectionThesetXscrB},
so we omit their statements here.

\smallskip

\noindent{\bf(7) }{\bf Thm \ref{eion98h38dhb2d203ojfbdt7}} gives a complete description of $\NR(B)$ when $B$ is a Pham-Brieskorn ring satisfying a certain hypothesis
(this hypothesis is satisfied by all Pham-Brieskorn rings of dimensions 2 and 3, and is conjectured to hold in all dimensions).
\smallskip

\noindent{\bf(8) }{\bf Thm \ref{7654cvhgfsd2d34fds23f42dfdghjk8l}.}
{\it Let $\bk$ be a field of characteristic $0$ and
$B = \bigoplus_{n \in \Nat} B_n$ an $\Nat$-graded normal affine $\bk$-domain
such that the prime ideal $B_+ = \bigoplus_{n > 0} B_n$ has height at least $2$.
Let $x$ be a homogeneous prime element of $B$ of degree $d>0$.
If $B_{(x)}$ is non-rigid then so is $(B/xB)^{(d)}$.}

\vspace{-0.0mm}\noindent This generalizes Theorem 3.1 of \cite{Park_ComplementOfHypersurfaces_2022}.  See Thm \ref{7654cvhgfsd2d34fds23f42dfdghjk8l-geometric}, below.

\smallskip
\smallskip

\noindent{\bf(9) }{\bf Prop.\ \ref{u65vmklio909543qsdfh4vue} (Fibers of a polynomial).}
{\it Let $\bk$ be a field of characteristic $0$ and consider the polynomial ring $R = \bk[X_1, \dots, X_n]$ ($n\ge2$)
equipped with an $\Nat$-grading such that each $X_i$ is homogeneous and $X_1$ has positive degree.
Let $f$ be a homogeneous prime element of $R = \bigoplus_{i \in \Nat} R_i$ such that $f \notin R_0[X_1]$.
If $R/fR$ is rigid then so is $R/(f - c)R$ for every $c \in \bk$.}

\smallskip

\noindent{\bf(10) }{\bf Thm \ref{p0987uhmnbvccjiuseyt428}.}
{\it Let $\bk$ be a field of characteristic $0$ and $B$ a normal affine $\bk$-domain graded over $\bk$ by an abelian group $G$.
Define $r = \rank\big( \G(B) / \bG(B) \big)$.  There exists a field $K$ such that
$\ML(B) \subseteq K \subseteq \Frac(B)$ and $\Frac(B) = K^{(r)}$.
In particular, $\trdeg( B : \ML(B) ) \ge r$.}\footnote{The notation $\Frac(B) = K^{(r)}$ means that $\Frac(B)$ is a purely transcendental extension of $K$
of transcendence degree $r$. The symbol $\ML(B)$ denotes the Makar-Limanov invariant of $B$, i.e., the intersection of the kernels of all elements of $\lnd(B)$.}

\medskip

It is apparent from the above list that the description of $\Xscr(B)$ (or $\NR(B)$) is a central theme of this article, though not its sole objective.
Items (2), (8), (9) and (10) are not related to the description of $\Xscr(B)$.
Items (1) and (3) not only serve to prove the results describing $\Xscr(B)$ but also stand out
as independently interesting and significantly more general than previously published results.

\section*{Geometry}

Much of the literature in this area is framed in the language of polarized varieties,
whereas the present article is written in a purely algebraic language.
To facilitate connections between our results and the existing literature, we conclude this introduction
by recalling a few facts about polarized varieties and polar cylinders.
These remarks are not needed for understanding the article, and are not used in the body of the article.

\begin{nothing*}
Let $Y$ be a normal variety with function field $K$.
We write $\Div(Y)$ (resp.\ $\Div_\Rat(Y)$) for the free $\Integ$-module (resp.\ free $\Rat$-module) on the set of prime divisors of $Y$.
The elements of $\Div(Y)$ (resp.\ $\Div_\Rat(Y)$) are called {\it divisors\/} (resp.\ {\it $\Rat$-divisors\/}) of $Y$. 
Two $\Rat$-divisors $D,D' \in \Div_\Rat(Y)$ are {\it linearly equivalent\/} ($D \sim D'$) if $D-D' = \div_Y(f)$ for some $f \in K^*$;
we say that  $D,D'$ are {\it $\Rat$-linearly equivalent\/} ($D \simQ D'$) if $nD \sim nD'$ for some integer $n>0$.
If $D \in \Div(Y)$ then the sheaf $\Oscr_Y(D)$ on $Y$ is defined by
$$
\Gamma(U,\Oscr_Y(D)) = \{0\} \cup \setspec{ f \in K^* }{ \div_U(f) + D|_U \ge 0 } \qquad \text{($\emptyset \neq U \subseteq Y$ open)}.
$$
A divisor $D \in \Div(Y)$ is {\it very ample\/} if $\Oscr_Y(D)$ is a very ample invertible sheaf.
An {\it ample $\Rat$-divisor\/} of $Y$ is a $\Rat$-divisor $\Delta$ of $Y$ for which there exists an integer $n>0$ such that $n\Delta \in \Div(Y)$ is very ample.
Suppose that $\Delta$ is an ample $\Rat$-divisor of $Y$;
an open subset $U$ of $Y$ is said to be {\it $\Delta$-polar\/} if $U = Y \setminus \supp(D)$ for some effective $\Rat$-divisor $D$ such that $D \simQ \Delta$.
\end{nothing*}

\begin{nothing*}
Observe that if $B$ is an $\Nat$-graded domain satisfying $e(B)=1$ then there exists a homogeneous element $t$ of $\Frac(B)$ of degree $1$
(meaning that $t = f/g$ for some nonzero homogeneous elements $f,g \in B$ such that $\deg(f) - \deg(g) = 1$).
This is relevant in Thm \ref{jh782uwsdvvhjkqw547182}.
\end{nothing*}

The following is \cite[Thm 3.5]{Demazure_1988} with some extra pieces added to it (see \ref{7tyzvcxzcvbcvQcksbvcstdio976543dfsxnbx48} for the proof of the extra pieces).

\begin{theorem} \label {jh782uwsdvvhjkqw547182}
Let $\bk$ be a field.
Let $B$ be an $\Nat$-graded normal affine $\bk$-domain such that $e(B)=1$ and such that the number $\haut(B_+) = \trdeg(B:B_0)$ is at least $2$.
\begin{enumerata}

\item For each homogeneous element $t$ of $\Frac(B)$ of degree $1$,
there exists a unique $\Rat$-divisor $\Delta$ on the normal variety $Y = \Proj B$ such that $B = \bigoplus_{m\in\Nat} H^0(Y,\Oscr_Y(m\Delta))t^m$ is an equality of rings.
Moreover, $\Delta$ has the following properties:
\begin{enumerata}

\item $\Delta$ is an ample $\Rat$-divisor of $Y$.

\item A nonempty open subset of $\Proj(B)$ is $\Delta$-polar if and only if it is basic.\footnote{An open subset $U$ of $\Proj B$ is {\it basic\/}
if $U = \bbD_+(f) := \setspec{ \pgoth \in \Proj B }{ f \notin \pgoth }$ for some homogeneous $f \in B$ of nonzero degree.}

\item $\Delta$ is Cartier if and only if $e( B/\pgoth ) = e( B )$ for all $\pgoth \in \Proj B$.

\item $\Delta \in \Div(Y)$ if and only if $B$ is saturated in codimension $1$.

\end{enumerata}

\item If $t,t'$ are homogeneous elements of $\Frac(B)$ of degree $1$ then the corresponding ample $\Rat$-divisors $\Delta,\Delta'$, 
defined as in part {\rm(a)}, are linearly equivalent.

\end{enumerata}
\end{theorem}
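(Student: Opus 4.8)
The existence and uniqueness of $\Delta$, and the ampleness of $\Delta$, are \cite[Thm 3.5]{Demazure_1988}; so what remains to be established is the characterization of the $\Delta$-polar opens of $\Proj B$ as the basic ones, the criterion for $\Delta$ to be Cartier, the criterion for $\Delta$ to lie in $\Div(Y)$, and the linear equivalence of the divisors attached to $t$ and to $t'$. All of these rest on the explicit form of Demazure's construction. Write $K = \Frac(B)_0$ for the function field of $Y = \Proj B$. For a prime divisor $P$ of $Y$, let $\pgoth_P$ be the height-$1$ homogeneous prime of $B$ corresponding to $P$, and let $v_P$ be the suitably normalized valuation of $\Frac(B)$ determined by $\pgoth_P$. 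Demazure's proof describes the coefficients of $\Delta$ in terms of the $v_P$ and of the element $t$; in particular it pins down the exact denominator of $\operatorname{coeff}_P(\Delta)$, and this is the information exploited below.

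\emph{Linear equivalence.} If $t$ and $t'$ are homogeneous of degree $1$ in $\Frac(B)$, then $s := t/t' \in K^*$, and for each $m$ the equalities $H^{0}(Y,\Oscr_Y(m\Delta))\,t^{m} = B_m = H^{0}(Y,\Oscr_Y(m\Delta'))\,(t')^{m}$, together with the isomorphism from $H^{0}(Y,\Oscr_Y(m\Delta'))$ onto $H^{0}(Y,\Oscr_Y(m(\Delta'+\div_Y s)))$ given by multiplication by $s^{-m}$, yield $B_m = H^{0}(Y,\Oscr_Y(m(\Delta'+\div_Y s)))\,t^{m}$ for all $m$. By the uniqueness of the divisor attached to $t$, this forces $\Delta = \Delta'+\div_Y s$, hence $\Delta \sim \Delta'$. (If one prefers not to rely on uniqueness as given, it follows from ampleness: $\Oscr_Y(\lfloor m\Delta \rfloor)$ is globally generated for $m \gg 0$, hence equals the $\Oscr_Y$-submodule of the constant sheaf $K$ generated by $t^{-m}B_m$; thus $\lfloor m\Delta \rfloor$ is determined by $B_m$ for all large $m$, and $\Delta$ is recovered via $\operatorname{coeff}_P(\Delta) = \lim_{m \to \infty} m^{-1}\lfloor m\Delta \rfloor_P$.)

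\emph{Basic $=$ $\Delta$-polar.} To a homogeneous $f \in B$ of degree $n \ge 1$ associate $D_f := \tfrac1n\div_Y(f/t^{n}) + \Delta$. Since $f/t^{n} \in H^{0}(Y,\Oscr_Y(n\Delta))$, the $\Rat$-divisor $D_f$ is effective and satisfies $n D_f \sim n\Delta$, so $D_f \simQ \Delta$. Conversely, if $D$ is effective with $D \simQ \Delta$, choose $n > 0$ with $nD \in \Div(Y)$ and $nD \sim n\Delta$ and write $nD - n\Delta = \div_Y g$ with $g \in K^*$; effectivity of $D$ gives $g \in H^{0}(Y,\Oscr_Y(n\Delta))$, so $f := g\,t^{n} \in B_n$ and $D_f = D$. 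Hence $f \mapsto D_f$ maps the homogeneous elements of $B$ of positive degree onto the set of effective $\Rat$-divisors $\Rat$-linearly equivalent to $\Delta$, and the assertion reduces to the identity $\Proj B \setminus \bbD_+(f) = \supp(D_f)$. Both sides are closed in $Y$ and are unions of prime divisors (the left side by Krull's principal ideal theorem, the right side being the support of a divisor), so it suffices to determine which prime divisors each contains; and along a prime divisor $P$, the definition of $D_f$ together with Demazure's formula for $\operatorname{coeff}_P(\Delta)$ gives $\operatorname{coeff}_P(D_f) = v_P(f)/n$, which is $>0$ exactly when $f \in \pgoth_P$.

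\emph{The criteria on $\Delta$.} Concerning $\Delta \in \Div(Y)$: Demazure's formula shows that the denominator of $\operatorname{coeff}_P(\Delta)$ equals $e(B/\pgoth_P)$ (here one uses the hypothesis $e(B)=1$), so $\Delta \in \Div(Y)$ iff $e(B/\pgoth) = 1 = e(B)$ for every height-$1$ homogeneous prime $\pgoth$, that is, iff $B$ is saturated in codimension $1$. Concerning the Cartier property, one carries out a local version of the same analysis: for $y \in Y$ with corresponding homogeneous prime $\pgoth$, the stalk $\Oscr_Y(\Delta)_y$ is free over $\Oscr_{Y,y}$ precisely when $e(B/\pgoth) = e(B) = 1$, and letting $y$ vary this says $\Delta$ is Cartier iff $e(B/\pgoth) = e(B)$ for all $\pgoth \in \Proj B$. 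I expect the Cartier criterion to be the main obstacle: it requires reading off from Demazure's construction not merely the codimension-$1$ data but enough of the local structure of the sheaves $\Oscr_Y(m\Delta)$ at an arbitrary point of $Y$ to recognize $e(B/\pgoth)$ as exactly the obstruction to local triviality of $\Oscr_Y(\Delta)$ at the point attached to $\pgoth$; the codimension-$1$ criterion alone does not give this. Once that dictionary is in place, the remaining steps are routine manipulations with valuations and Weil divisors on the normal variety $Y$.
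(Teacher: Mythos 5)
The heart of the paper's contribution here is (a-iv) (the paper cites prior work for (a-i), (a-ii), (a-iii), and calls (b) well known), and that is precisely where your argument has a gap. You write that ``Demazure's formula shows that the denominator of $\operatorname{coeff}_P(\Delta)$ equals $e(B/\pgoth_P)$,'' and the rest of your (a-iv) argument is a one-line logical consequence of this. But that denominator claim is not a formal consequence of the formula $\operatorname{coeff}_P(\Delta)=v^X_\pgoth(t)/e_\pgoth$ (nor of $e_\pgoth=e(B/\pgoth)/e(B)$); one still has to prove that the fraction $v^X_\pgoth(t)/e(B/\pgoth)$ is already in lowest terms, i.e.\ that $\gcd\big(v^X_\pgoth(t),\,e(B/\pgoth)\big)=1$. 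This coprimality is exactly the content of the paper's proof of (a-iv): one picks a homogeneous uniformizer $g$ of $\pgoth B_\pgoth$ (Lemma \ref{7267e23ro98ifwhe8hfwjd}(a)), writes $t/g^{\nu}=\alpha/\beta$ with $\alpha,\beta$ homogeneous and outside $\pgoth$, and reads off $1=\deg t=\nu\deg g+\deg\alpha-\deg\beta$, from which $\gcd(\nu, e(B/\pgoth))=1$ because $e(B/\pgoth)$ divides $\deg\alpha$ and $\deg\beta$. Without some argument of this type, your proof of (a-iv) is incomplete. (Incidentally, the same missing $e_\pgoth$ shows up in your coefficient identity $\operatorname{coeff}_P(D_f)=v_P(f)/n$ in the (a-ii) sketch: with the normalized valuation $v^X_\pgoth$, the correct statement is $\operatorname{coeff}_P(D_f)=v^X_\pgoth(f)/(n\,e_\pgoth)$. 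The conclusion --- positive iff $f\in\pgoth$ --- is unaffected, but the formula as written is off by the ramification index.)

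The second and more serious gap is (a-iii). You do not prove it; you say you ``expect the Cartier criterion to be the main obstacle'' and describe what would be required, which is not an argument. You are right that the codimension-$1$ data is insufficient for this part, but identifying a difficulty is not the same as overcoming it. The paper handles (a-iii) by invoking Cor.\ 5.20(d) of \cite{Chitayat-Daigle:cylindrical}; if you are not going to reproduce that proof, you should at least cite it rather than leave the item open.

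What you do have is fine: your proof of (b) via $\Delta=\Delta'+\div_Y(t/t')$ (using the uniqueness from (a)) is a complete and clean argument for a part the paper simply labels ``well known,'' and your reduction of (a-ii) to the identity $\Proj B\setminus\bbD_+(f)=\supp(D_f)$ is the right idea (the paper cites \cite[5.20(a)]{Chitayat-Daigle:cylindrical} for this). So your route is more self-contained than the paper's for (a-ii) and (b), but it is incomplete exactly where the paper actually does work: the coprimality needed in (a-iv), and all of (a-iii).
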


It is well known that there is a converse to Thm \ref{jh782uwsdvvhjkqw547182} and that, consequently,
studying an $\Nat$-graded normal affine $\bk$-domain $B$ satisfying $e(B)=1$ and $\haut(B_+)\ge2$ is equivalent to studying 
the corresponding polarized variety, i.e., the pair $(Y,\Delta)$ where $Y=\Proj(B)$ and $\Delta$ is the ample $\Rat$-divisor on $Y$
given (up to linear equivalence) by Thm \ref{jh782uwsdvvhjkqw547182}(a).
This equivalence between $B$ and $(Y,\Delta)$ allows us to interpret some of our results in the context of polarized varieties, or vice versa.
Here, one should keep in mind that
part (a-iv) of Thm \ref{jh782uwsdvvhjkqw547182} gives a geometric interpretation of saturation in codimension $1$,
and that part (a-ii) shows that the terms ``$\Delta$-polar cylinder'' and ``basic cylinder'' are interchangeable.
For instance, this shows that Thm \ref{edh83yf6r79hvujhxu6wrefji9e} generalizes Thm 0.6 and Cor.\ 3.2 of \cite{KPZ2013}.
As another example, consider:

\begin{theorem}  \label {7654cvhgfsd2d34fds23f42dfdghjk8l-geometric}
Let $\bk$ be a field of characteristic $0$ and let $\proj = \Proj(B)$ where
$B$ is an $\Nat$-graded normal affine $\bk$-domain such that $\haut( B_+ ) \ge 2$.
Let $F$ be a homogeneous prime element of $B$ of degree $d>0$ and let $X = \bbV_+(F) \subset \proj$.
If the affine variety $\proj \setminus X$ has a non-trivial $G_a$-action then $(B/FB)^{(d)}$ is non-rigid.
\end{theorem}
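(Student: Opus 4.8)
This is the geometric counterpart of the algebraic Thm \ref{7654cvhgfsd2d34fds23f42dfdghjk8l}; the plan is to translate the geometric hypothesis that $\proj\setminus X$ carries a non-trivial $G_a$-action into the algebraic statement that $B_{(F)}$ is non-rigid, and then to apply that theorem with $x=F$.

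First I would recall the relevant facts about the $\Proj$ construction. Write $B_F=B[F^{-1}]$, which is $\Integ$-graded (as $F$ is homogeneous of degree $d$, the element $F^{-1}$ is homogeneous of degree $-d$), and let $A=B_{(F)}$ be its degree-zero subring. Since $F$ is a prime element of $B$, the ring $B_F$ is a domain and hence so is $A$. By the standard theory of $\Proj$, $\proj\setminus X=\Proj(B)\setminus\bbV_+(F)=\bbD_+(F)$ is an affine $\bk$-scheme, and there is a canonical isomorphism $\bbD_+(F)\isom\Spec(A)$; moreover $A$ is a finitely generated $\bk$-algebra because $B$ is. Thus $\proj\setminus X$ is an affine $\bk$-variety whose coordinate ring is $A=B_{(F)}$.

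Next I would invoke the standard correspondence, valid since $\Char\bk=0$: an algebraic action of $G_a$ on an affine $\bk$-scheme $\Spec(A)$ determines, via the coaction $a\mapsto\sum_{n\ge0}D^n(a)\,t^n/n!$, a locally nilpotent derivation $D$ of $A$, with $D=0$ precisely when the action is trivial. Hence a non-trivial $G_a$-action on $\proj\setminus X=\Spec(B_{(F)})$ produces a nonzero element of $\lnd(B_{(F)})$; that is, $B_{(F)}$ is non-rigid.

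It then suffices to check that $B$ and $x=F$ satisfy the hypotheses of Thm \ref{7654cvhgfsd2d34fds23f42dfdghjk8l}: $B$ is an $\Nat$-graded normal affine $\bk$-domain; $B_+$ is a prime ideal, since $B/B_+\isom B_0$ is a domain, and $\haut(B_+)\ge2$ by assumption; and $F$ is a homogeneous prime element of $B$ of degree $d>0$. Having shown that $B_{(F)}$ is non-rigid, Thm \ref{7654cvhgfsd2d34fds23f42dfdghjk8l} yields that $(B/FB)^{(d)}$ is non-rigid, as desired. I do not anticipate any genuine obstacle here: all the mathematical content lies in Thm \ref{7654cvhgfsd2d34fds23f42dfdghjk8l}, and the only points requiring a word of justification are the identification $\bbD_+(F)\isom\Spec(B_{(F)})$ together with the finite generation of $B_{(F)}$ over $\bk$ (standard for $\Proj$ of a finitely generated graded $\bk$-algebra), and the (easy, always valid) passage from a non-trivial $G_a$-action to a nonzero locally nilpotent derivation.
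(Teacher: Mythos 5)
Your proof is correct and is essentially the one the paper intends: the paper simply notes that the geometric statement is "clearly equivalent" to Thm \ref{7654cvhgfsd2d34fds23f42dfdghjk8l} and then proves the latter, whereas you spell out the translation (identifying $\proj\setminus X$ with $\Spec B_{(F)}$ and converting the non-trivial $G_a$-action into a nonzero locally nilpotent derivation of $B_{(F)}$) before invoking that theorem. No gap, same route.
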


This result is clearly equivalent to  Thm \ref{7654cvhgfsd2d34fds23f42dfdghjk8l}, which we prove in Section \ref{sectionRigidityofBx}.
We wrote the statement of Thm \ref{7654cvhgfsd2d34fds23f42dfdghjk8l-geometric} in a form that is easy to compare with Theorem 3.1 of \cite{Park_ComplementOfHypersurfaces_2022}.
Thm \ref{7654cvhgfsd2d34fds23f42dfdghjk8l-geometric} generalizes \cite[3.1]{Park_ComplementOfHypersurfaces_2022} in several ways:
\cite[3.1]{Park_ComplementOfHypersurfaces_2022} assumes
that $B$ is a polynomial ring, that $B/FB$ is normal, that $\Oscr_\proj(d)$ is very ample and that $\bk$ is algebraically closed.

Note that Prop.\ \ref{u654edcvbhyu89okmnbgr32qazdcftgyu934} and Ex.\ \ref{lBoibhgxfd7ii03A8vevcCxEvv82} are applications
of Thm \ref{7654cvhgfsd2d34fds23f42dfdghjk8l-geometric} (or Thm \ref{7654cvhgfsd2d34fds23f42dfdghjk8l}) to the case where $B$ is a Pham-Brieskorn ring, i.e.,
a case not covered by \cite{Park_ComplementOfHypersurfaces_2022}.

\section{Preliminaries}
\label {SEC:Preliminaries}

This section gathers the definitions and facts that we need in this article.
All results are known, but we provide proofs when we are unable to give a reference.

\begin{nothing*}
We use ``$\setminus$'' for set difference, ``$\subset$'' for strict inclusion and  ``$\subseteq$'' for general inclusion.
We follow the convention that $0 \in \Nat$.

If $S$ is a nonempty subset of $\Integ$ then $\lcm(S)$ is defined to be the nonnegative generator of
the ideal $\bigcap_{a \in S} a\Integ$ of $\Integ$.  In particular, if $S$ is an infinite subset of $\Integ$ then $\lcm(S)=0$.
Also, $\gcd(S)$ is the nonnegative generator of the ideal of $\Integ$ generated by $S$.

All rings and algebras are assumed to be associative, commutative and unital.
If $A$ is a ring then $A^*$ denotes the set of units of $A$.
If $B$ is an algebra over a ring $A$, the notation $B = A^{[n]}$ (where $n \in \Nat$) means that $B$ is isomorphic as an $A$-algebra to a polynomial
ring in $n$ variables over $A$. If $L/K$ is a field extension then $L = K^{(n)}$ means that $L$ is a purely transcendental extension of $K$
of transcendence degree $n$.

The word ``domain'' means ``integral domain''.
We write $\Frac A$ for the field of fractions of a domain $A$.
If $\bk$ is a field, then a {\it $\bk$-domain\/} is a domain that is also a $\bk$-algebra;
an {\it affine $\bk$-domain\/} (or {\it $\bk$-affine domain}) is a domain that is a finitely generated $\bk$-algebra.
If $A \subseteq B$ are domains then the transcendence degree of $B$ over $A$ is denoted $\trdeg_A(B)$ or $\trdeg(B:A)$.

If $A$ is a ring, we write $\Spec^1(A) = \setspec{ \pgoth \in \Spec A }{ \haut\pgoth=1 }$.

If $S$ is a subset of a group $G$, then $\langle S \rangle$ denotes the subgroup of $G$ generated by $S$.
\end{nothing*}

\begin{nothing*} \label {8736egd9fv092b5}
Let $G$ be an abelian group (with additive notation).
A {\it $G$-grading\/} of a ring $B$ is a family $\big( B_i \big)_{i \in G}$ of subgroups of $(B,+)$ satisfying $B = \bigoplus_{i \in G} B_i$
and $B_iB_j\subseteq B_{i+j}$ for all $i,j \in G$.
The phrase ``let  $B = \bigoplus_{i \in G} B_i$ be a $G$-graded ring'' means that we are considering the ring $B$ together with 
the $G$-grading $\big( B_i \big)_{i \in G}$.

Let  $B = \bigoplus_{i \in G} B_i$ be a $G$-graded ring.
\begin{enumerate}

\item An element of $B$ is {\it homogeneous\/} if it belongs to $\bigcup_{i \in G} B_i$.
If $x$ is a nonzero homogeneous element, the {\it degree\/} of $x$, $\deg(x)$, is the unique $i \in G$ such that $x \in B_i$.
The degree of a non-homogeneous element is not defined.

\item $B_0$ is a subring of $B$ and is called the {\it degree-$0$ subring\/} of $B$.
If $B = B_0$, we say that the grading is {\it trivial.}

\item Given a homogeneous element $f$ of $B$, $B_{(f)}$ denotes the degree-$0$ subring of the $G$-graded ring $B_f = S^{-1}B$ where $S= \{1,f,f^2,\dots\}$.
Given a homogeneous prime ideal $\pgoth$ of $B$, $B_{(\pgoth)}$ denotes the degree-$0$ subring of the $G$-graded ring
$S^{-1}B$ where $S$ is the set of homogeneous elements of $B \setminus \pgoth$.

\item If $H$ is a subgroup of $G$, define $B^{(H)} = \bigoplus_{i \in H} B_i$ and note that 
 the inclusion $B^{(H)} \hookrightarrow B$ is a degree-preserving homomorphism of graded rings.

\item If $G=\Integ$ and $B_i =0$ for all $i<0$, we say that $B$ is $\Nat$-graded. In this case we write $B = \bigoplus_{ i \in \Nat } B_i$
and define $B_+ = \bigoplus_{i>0} B_i$, which is an ideal of $B$.

\item The subgroup of $G$ generated by $\setspec{ i \in G }{ B_i \neq 0 }$ is denoted $\G(B)$.

\end{enumerate}
\end{nothing*}

\begin{theorem}[Thm 1.1 of \cite{GotoNoethGrRings83}] \label {983765432wevvajwklw92980}
Let $G$ be a finitely generated abelian group and $A$ a $G$-graded ring.
The following are equivalent:
\begin{enumerata}

\item $A$ is a noetherian ring;

\item the ring $A_0$ is noetherian and the $A_0$-algebra $A$ is finitely generated;

\item every homogeneous ideal of $A$ is finitely generated.

\end{enumerata}
\end{theorem}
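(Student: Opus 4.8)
The plan is to prove the cycle of implications (b)$\,\Rightarrow\,$(a)$\,\Rightarrow\,$(c)$\,\Rightarrow\,$(b). The implication (b)$\,\Rightarrow\,$(a) is the Hilbert Basis Theorem, and (a)$\,\Rightarrow\,$(c) is immediate since homogeneous ideals are ideals. Note that (c) is equivalent to the ascending chain condition on homogeneous ideals of $A$; all the work lies in (c)$\,\Rightarrow\,$(b), which I would carry out as follows.

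First, the noetherianity of $A_0$. Writing $A = A_0 \oplus \bigl(\bigoplus_{i \ne 0} A_i\bigr)$ as $A_0$-modules, one checks that $\mathfrak a A \cap A_0 = \mathfrak a$ for every ideal $\mathfrak a$ of $A_0$; hence an ascending chain of ideals of $A_0$ lifts to an ascending chain of homogeneous ideals of $A$, which stabilizes by (c), and intersecting back with $A_0$ shows that $A_0$ is noetherian. The same direct-summand argument (now for homogeneous ideals) shows that (c) is inherited by every graded subring $A^{(H)}$, $H \le G$. The substance is to show that the $A_0$-algebra $A$ is finitely generated. Using the structure theorem for finitely generated abelian groups, write $G = \Integ^r \times T$ with $T$ finite and set $H = \Integ^r \times \{0\}$. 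I would first show that $A^{(H)}$, which is $\Integ^r$-graded and inherits (c), is a finitely generated $A_0$-algebra; then note that, since $G/H \cong T$ is finite, every homogeneous element $y$ of $A$ satisfies $y^{|T|} \in A^{(H)}$, so $A$ is integral over $A^{(H)}$, and show that $A$ is a finitely generated $A^{(H)}$-algebra; and finally combine the two, using that an algebra which is integral over a subring and finitely generated as an algebra is finitely generated as a module, so that $A$ is a finitely generated $A^{(H)}$-module and hence a finitely generated $A_0$-algebra.

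Thus everything reduces to two cases. \emph{Case $1$:} $A$ is graded by a finite group and satisfies~(c). Every homogeneous $y$ satisfies $y^{|G|} \in A_0$, so $A$ is integral over $A_0$. By (c), the homogeneous ideal $\mathfrak m$ generated by all homogeneous elements of nonzero degree is finitely generated, and its generators may be taken of nonzero degree (a degree-$0$ element of $\mathfrak m$ is a sum of products of two homogeneous elements of nonzero degree). Writing $\mathfrak m = (z_1, \dots, z_p)$ with $\deg z_k \ne 0$, one has $A_i = \sum_k A_{i - \deg z_k}\, z_k$ for $i \ne 0$; since $z_k^{|G|} \in A_0$, a monomial in the $z_k$ of sufficiently high total degree is an $A_0$-multiple of a shorter one, and iterating the relation expresses $A$ as the finite sum $\sum_\nu A_0\, \mu_\nu$ over the finitely many monomials $\mu_\nu$ in which every $z_k$ occurs with exponent less than $|G|$; in particular $A = A_0[z_1, \dots, z_p]$. \emph{Case $2$:} $A$ is $\Integ^r$-graded and satisfies~(c). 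Working one coordinate at a time and applying the idea of Case $1$ to the homogeneous (hence finitely generated) ideal generated by the elements whose last coordinate is positive --- whose generators may be taken of that form --- one shows that the $\Nat$-graded ring $\bigoplus_{i_r \ge 0} A_i$, with degree-$0$ subring $A^{(\Integ^{r-1} \times \{0\})}$, is noetherian, hence finitely generated over that subring by the classical theory of noetherian $\Nat$-graded rings; the symmetric statement handles $\bigoplus_{i_r \le 0} A_i$, and since every homogeneous element of $A$ lies in one of these two subrings, $A$ is finitely generated over $A^{(\Integ^{r-1} \times \{0\})}$. As the latter inherits (c), an induction on $r$ (with trivial base $r = 0$) finishes Case $2$.

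I expect the main obstacle to be the degree bookkeeping in these two cases: making the monomial reduction of Case $1$ rigorous when $A$ has zero-divisors, and, in Case $2$, proving that $\bigoplus_{i_r \ge 0} A_i$ is noetherian and dealing with the small degrees when the degrees of the chosen generators leave gaps. A structural subtlety worth flagging is that a homogeneous ideal for a coarsened grading (e.g.\ the $G/H$-grading of $A^{(H)}$) need not be homogeneous for the finer grading, so the argument should be organized around the fine grading --- via the subrings $A^{(H)}$ --- rather than by coarsening. Finally, (c) is genuinely used in Case $1$ beyond the noetherianity of $A_0$: the ring $A_0 \oplus V$ with $V$ an infinite free $A_0$-module and $V^2 = 0$, graded by $\Integ/2\Integ$ with $V$ in degree $1$, has noetherian degree-$0$ subring yet is not a finitely generated $A_0$-algebra, and it fails (c).
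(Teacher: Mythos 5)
The paper does not give a proof of this statement --- it is cited as Thm 1.1 of Goto--Yamagishi --- so there is no proof of record to compare yours against. Judged on its own terms, your skeleton for (c)$\Rightarrow$(b) (get $A_0$ noetherian from $\mathfrak a A\cap A_0=\mathfrak a$, split $G=\Integ^r\times T$, treat a finite group and $\Integ^r$ separately) is sound, but the monomial iteration in your Case~$1$ has a genuine gap, not just bookkeeping to tidy up. Iterating $A_i=\sum_k A_{i-\deg z_k}z_k$ for $n$ rounds gives $A_i=\sum_{\mu} A_0\,\mu+\sum_{\mu'} A_{i-\deg\mu'}\,\mu'$, the first sum over monomials $\mu$ of length at most $n$ with $\deg\mu=i$ and the second over monomials $\mu'$ of length exactly $n$ with $\deg\mu'\ne i$. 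Reducing a long monomial via $z_k^{|G|}\in A_0$ only absorbs $z_k^{|G|}$ into the coefficient module, replacing $A_{i-\deg\mu'}\,\mu'$ by $A_{i-\deg\mu'}\,\mu''$ with $\mu''$ shorter and the residual degree $i-\deg\mu'\ne 0$ unchanged; the coefficient never becomes $A_0$. Since $G$ is finite these residual degrees simply cycle, the remainder sits in $\bigcap_n\mathfrak m^n$ which you have no handle on at this stage, and $A=A_0[z_1,\dots,z_p]$ does not follow. The same missing idea also blocks your reduction step: showing ``$A$ is a finitely generated $A^{(H)}$-algebra by Case~1'' would require (c) for the coarser $G/H$-grading, which does not follow from (c) for the $G$-grading --- exactly the subtlety you flag but leave unresolved.

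There is a short argument that repairs both problems at once and stays inside the fine grading, as you advocate. For each $i\in G$ the ideal $AA_i$ is $G$-homogeneous, hence finitely generated by (c); being generated as an ideal by $A_i$, it is generated by a finite subset $x_1,\dots,x_m$ of $A_i$; and for any $x\in A_i$ the graded decomposition of $x=\sum_j a_j x_j$ forces $\deg a_j=0$, so $A_i=\sum_j A_0 x_j$ is a finite $A_0$-module. For finite $G$ this yields $A$ finite over $A_0$ outright, replacing your Case~1. Applying the same computation to the coset piece $\bigoplus_{g'\in g+H}A_{g'}$ (which is again a $G$-homogeneous ideal when multiplied by $A$, and whose homogeneous generators of degree in $g+H$ have $A^{(H)}$-coefficients) shows $A$ is a finite $A^{(H)}$-module, giving the reduction without ever coarsening the grading. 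With the lemma ``each $A_i$ is a finite $A_0$-module'' in hand, your Case~2 closes along the lines you envisage: the ideal generated by elements of positive last coordinate has finitely many homogeneous generators of bounded last coordinate $\le N$, so degrees beyond $N$ descend, the finitely many residual degrees are handled by module-finiteness, and the induction on $r$ is routine.
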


\begin{lemma} \label {98767gh11dmdffkll0ahcchvr0vfesh8263543794663cnir}
Let $A$ be a ring graded by an abelian group $G$, and let $H$ be a subgroup of $G$.
\begin{enumerata}
\item If $I$ is an ideal of $A^{(H)}$ then $A^{(H)} \cap IA = I$.
\item If $A$ is noetherian then so is $A^{(H)}$.
\end{enumerata}
Moreover, if $A$ is a domain then the following hold.
\begin{enumerata}
\addtocounter{enumi}{2}

\item $A \cap \Frac( A^{(H)} ) = A^{(H)}$ 

\item If $A$ is normal then so is $A^{(H)}$.

\end{enumerata}
\end{lemma}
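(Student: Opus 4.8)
The four assertions are essentially standard facts about the invariant-theoretic flavour of the subring $A^{(H)}\subseteq A$; I would prove them in the given order since (b) uses (a) and (d) uses (c). For part (a), the inclusion $I\subseteq A^{(H)}\cap IA$ is trivial, so the content is the reverse inclusion. The key observation is that $A$ decomposes as an $A^{(H)}$-module into $H$-graded pieces: writing $C=A^{(H)}$, we have $A=\bigoplus_{j\in G/H}A_{(j)}$ where $A_{(j)}=\bigoplus_{i\in j}A_i$, each $A_{(j)}$ is a $C$-submodule, and $C=A_{(0)}$ is a direct summand of $A$ as a $C$-module. Hence the inclusion $C\hookrightarrow A$ splits $C$-linearly via the projection $\pi\colon A\to C$ onto the degree-$H$ part. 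Now if $x\in C\cap IA$, write $x=\sum_k a_k\iota_k$ with $a_k\in A$, $\iota_k\in I$; applying $\pi$ and using that $\pi$ is $C$-linear and $\iota_k\in C$ gives $x=\pi(x)=\sum_k\pi(a_k)\iota_k\in I$. That is the whole argument; one just has to phrase the ``homogeneous components'' bookkeeping cleanly, but there is no real obstacle.

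For part (b), assume $A$ is noetherian and let $\Iscr_1\subseteq\Iscr_2\subseteq\cdots$ be an ascending chain of ideals of $C=A^{(H)}$. Then $\Iscr_1 A\subseteq\Iscr_2 A\subseteq\cdots$ stabilises in $A$, say at index $N$; by part (a), $\Iscr_n=C\cap\Iscr_n A$, so the original chain stabilises at $N$ as well. Hence $C$ is noetherian. (Note this does not need $G$ finitely generated, and does not go through Theorem \ref{983765432wevvajwklw92980}, which is convenient.)

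For part (c), now $A$ is a domain; the inclusion $A^{(H)}\subseteq A\cap\Frac(A^{(H)})$ is clear, so take $\xi\in A\cap\Frac(A^{(H)})$ and write $\xi=u/v$ with $u,v\in A^{(H)}$ nonzero. Then $v\xi=u$ with $\xi\in A$, and I would again use the $C$-linear projection $\pi\colon A\to C=A^{(H)}$: applying it to $v\xi=u$ and using $C$-linearity together with $v,u\in C$ gives $v\,\pi(\xi)=u$, whence $\pi(\xi)=u/v=\xi$ in $\Frac(A^{(H)})$, so $\xi=\pi(\xi)\in C$. Finally, for part (d), suppose $A$ is normal (integrally closed in $\Frac A$) and let $\xi\in\Frac(A^{(H)})$ be integral over $A^{(H)}$. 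Then $\xi$ is integral over $A$, and $\xi\in\Frac(A^{(H)})\subseteq\Frac(A)$, so $\xi\in A$ by normality of $A$; combined with part (c), $\xi\in A\cap\Frac(A^{(H)})=A^{(H)}$, so $A^{(H)}$ is normal. The only point requiring a little care throughout is the existence and $A^{(H)}$-linearity of the projection $\pi$ onto the degree-$H$ part; once that is set up, all four parts are short. I do not anticipate a genuine obstacle here — this is a foundational lemma.
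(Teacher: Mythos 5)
Your proof is correct, and it takes a slightly different (and arguably more uniform) route than the paper's for parts (a) and (c). Both arguments rest on the same underlying observation — that $A$ decomposes over the cosets of $H$ into a direct sum of $A^{(H)}$-submodules — but you package it once and for all as the $A^{(H)}$-linear projection $\pi\colon A\to A^{(H)}$ and then read off both (a) and (c) as immediate corollaries of $\pi$-linearity. The paper instead proves (a) first in the special case $H=0$ by a direct homogeneous-component computation and then reduces the general $H$ to that case by passing to the $G/H$-grading; for (c) it runs a separate ``support set'' argument, introducing $S(x)=\setspec{i\in G}{x_i\neq 0}$ and showing that any nonzero $x\in A\cap\Frac(A^{(H)})$ must have $S(x)\cap H\neq\emptyset$, whence the non-$H$ part of $x$ vanishes. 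Your projection-based phrasing buys uniformity and brevity (one lemma-level fact powers both (a) and (c)); the paper's phrasing is more hands-on and avoids speaking of module splittings. Parts (b) and (d) are essentially identical in the two treatments: (b) is the same ascending-chain argument via (a), and (d) is the same normality argument — you spell out the integral-closure chase, while the paper compresses it to ``intersection of two normal domains,'' but these are the same proof.
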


\begin{proof}
Write $A = \bigoplus_{i \in G} A_i$.
We first prove the case $H = 0$ of (a). Note that $A^{(H)} = A_0$.
Let $I$ be an ideal of $A_0$ and let $x \in A_0 \cap IA$.
Then $x = u_1 a_1 + \cdots + u_n a_n$ for some $u_1,\dots,u_n \in I$ and $a_1,\dots,a_n \in A$.
Moreover, we can arrange that $a_i$ is homogeneous and $u_i a_i \in A_0 \setminus \{0\}$ for all $i \in \{1,\dots,n\}$.
We have $a_i \in A_{d_i}$ for some $d_i \in G$, and $A_0 \setminus \{0\} \ni u_i a_i \in A_{0+d_i}$, so $d_i = 0$ and hence $a_i \in A_0$ for all $i \in \{1,\dots,n\}$.
Thus, $x \in I$, showing that (a) is true when $H=0$.
For the general case of (a), let $\pi : G \to \bar G = G/H$ be the canonical epimorphism and let $B = \bigoplus_{j \in \bar G}B_j$ be the
ring $A$ equipped with the $\bar G$-grading defined by $B_j = \bigoplus_{\pi(i)=j} A_i$ for each $j \in \bar G$.
Since $B_0 = A^{(H)}$ and $B=A$, the claim follows by applying the case ``$H=0$'' to $B$.
This proves (a).

(b) If $I_0 \subseteq I_1 \subseteq I_2 \subseteq \cdots$ is an infinite increasing sequence of ideals in $A^{(H)}$
then the sequence $I_0 A \subseteq I_1 A \subseteq I_2 A \subseteq \cdots$ stabilizes since $A$ is noetherian.
By (a), we have $I_n = A^{(H)} \cap I_n A$ for all $n$, so $(I_n)_{n \in \Nat}$ stabilizes, showing that $A^{(H)}$ is noetherian.

(c) Given $x = \sum_{i \in G} x_i \in A$ (where $x_i \in A_i$ for all $i \in G$), define $S(x) =\setspec{ i \in G }{ x_i \neq 0 }$. 
We claim that
\begin{equation}  \label {2Qqs1mlkwm1oq90uvxzaAa11w4}
\text{if $x \in A \cap \Frac( A^{(H)} )$ and $x \neq 0$ then $S(x) \cap H \neq \emptyset$.}
\end{equation}
Indeed, write $x = u/v$ where $u,v \in A^{(H)} \setminus \{0\}$.
Then $u\neq0$ implies $S(u) \neq \emptyset$ and $xv = u$ implies $S(u) \subseteq \setspec{ i+j }{ i \in S(x),\ j \in S(v) }$,
so there exist $i \in S(x)$ and $j \in S(v)$ such that $i+j \in S(u) \subseteq H$;
since $j \in H$, it follows that $i \in H$, so  $S(x) \cap H \neq \emptyset$.
This proves \eqref{2Qqs1mlkwm1oq90uvxzaAa11w4}.

Now consider $x \in A \cap \Frac( A^{(H)} )$. We can write $x = x_H + x'$ with $x_H,x' \in A$, $S(x_H) \subseteq H$ and $S(x') \cap H = \emptyset$.
Since $x$ and $x_H$ belong to $A \cap \Frac( A^{(H)} )$, we have $x' = x - x_H \in A \cap \Frac( A^{(H)} )$.
Since $S(x') \cap H = \emptyset$, \eqref{2Qqs1mlkwm1oq90uvxzaAa11w4} implies that $x'=0$, so $x = x_H \in A^{(H)}$.
Thus, $A \cap \Frac( A^{(H)} ) = A^{(H)}$.

(d) If $A$ is normal then $A^{(H)} = A \cap \Frac( A^{(H)} )$ is an intersection of two normal domains and hence is normal.
\end{proof}

\begin{lemma} \label {ze3xqrctwvyjmkiu0u9inj7b2q3q01iqu}
Let $B$ be a domain graded by an abelian group $G$, let $H$ be a subgroup of $G$,
and let $S$ be a multiplicative subset of $B^{(H)} \setminus \{0\}$ such that each element of $S$ is homogeneous. Then
$$
S^{-1} \big( B^{(H)} \big) = \big( S^{-1} B \big)^{(H)} \, .
$$
\end{lemma}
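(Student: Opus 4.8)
The plan is to realize both sides as graded subrings of the $G$-graded ring $S^{-1}B$ and compare their homogeneous components degree by degree.

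First I would set up the structures. Since every element of $S$ is homogeneous and nonzero and $B$ is a domain, the localization $S^{-1}B$ carries a $G$-grading in which, for $a = \sum_j a_j \in B$ (with $a_j \in B_j$) and $s \in S$, the element $a/s$ decomposes as $\sum_j a_j/s$ with $a_j/s$ homogeneous of degree $j - \deg(s)$; in particular every homogeneous element of $S^{-1}B$ has the form $b/s$ with $b$ homogeneous and $s \in S$. Next, because $S \subseteq B^{(H)} \setminus \{0\}$ and $B^{(H)}$ is a domain, the canonical map $B^{(H)} \to S^{-1}B$ sends $S$ into units and hence factors through an injective ring homomorphism $S^{-1}(B^{(H)}) \hookrightarrow S^{-1}B$; I will use this to regard $S^{-1}(B^{(H)})$ as a subring of $S^{-1}B$, so that the asserted identity becomes an equality of subrings of $S^{-1}B$. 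The right-hand side $(S^{-1}B)^{(H)} = \bigoplus_{i \in H}(S^{-1}B)_i$ is a subring of $S^{-1}B$ by the definition in \ref{8736egd9fv092b5}(4).

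For the inclusion $S^{-1}(B^{(H)}) \subseteq (S^{-1}B)^{(H)}$, take $b/s$ with $b \in B^{(H)}$ and $s \in S$, write $b = \sum_{i \in H} b_i$ with $b_i \in B_i$, and note $b/s = \sum_{i \in H} b_i/s$ where $b_i/s$ is homogeneous of degree $i - \deg(s) \in H$ (since $\deg(s) \in H$); hence $b/s \in \bigoplus_{i\in H}(S^{-1}B)_i = (S^{-1}B)^{(H)}$. For the reverse inclusion, since $(S^{-1}B)^{(H)}$ is additively generated by its homogeneous elements and $S^{-1}(B^{(H)})$ is a subgroup of $S^{-1}B$, it suffices to show each homogeneous element $x$ of $(S^{-1}B)^{(H)}$ lies in $S^{-1}(B^{(H)})$. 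Such an $x$ has the form $b/s$ with $b$ homogeneous, $s \in S$, and $\deg(x) \in H$; then $\deg(b) = \deg(x) + \deg(s) \in H$, so $b \in B_{\deg(b)} \subseteq B^{(H)}$ and therefore $x = b/s \in S^{-1}(B^{(H)})$. Combining the two inclusions gives the equality.

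There is no real obstacle: the only point deserving a moment's care is making the statement meaningful, namely checking that $S^{-1}(B^{(H)})$ embeds in $S^{-1}B$ and that $S^{-1}B$ has the $G$-grading described above, both of which are immediate because $B$ is a domain and $S$ consists of homogeneous elements; everything else is the degree bookkeeping above. (Alternatively one could run the argument through the $G/H$-grading as in the proof of Lemma \ref{98767gh11dmdffkll0ahcchvr0vfesh8263543794663cnir}, but the direct computation is shorter.)
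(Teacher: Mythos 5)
Your proof is correct and follows essentially the same approach as the paper: both realize the two sides as graded subrings of the $G$-graded localization $S^{-1}B$ and then compare, for a homogeneous element $b/s$ with $b$ homogeneous and $s\in S$, the condition $\deg(b)\in H$ against $\deg(b/s)=\deg(b)-\deg(s)\in H$, using $\deg(s)\in H$. You spell out the embedding $S^{-1}(B^{(H)})\hookrightarrow S^{-1}B$ and the reduction to homogeneous elements more explicitly than the paper does, but the underlying computation is the same.
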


\begin{proof}
Note that $S^{-1} B$ is a $G$-graded domain.
Consider $\frac{b}{s} \in S^{-1} B$ where $b$ is a homogeneous element of $B \setminus \{0\}$ and $s \in S$.
Since $\deg\big(\frac{b}{s}\big) = \deg(b) - \deg(s)$ and $\deg(s) \in H$,
we have $\deg(b) \in H$ $\Leftrightarrow$ $\deg\big(\frac{b}{s}\big) \in H$, i.e.,
$\frac{b}{s} \in  S^{-1}\big( B^{(H)} \big)$ $\Leftrightarrow$ $\frac{b}{s} \in \big( S^{-1} B \big)^{(H)}$.
\end{proof}

The case $G=\Integ$ of the following result is well known.
We don't know a reference for the general case, so we include a proof.

\begin{lemma}  \label {0Edlv2y6ctwbeofyvrbqleuo245612nxr7m54}
Let $B$ be a ring graded by a torsion-free abelian group $G$.
\begin{enumerata}

\item  If $\pgoth \in \Spec B$ and $I$ is the ideal of $B$ generated by all homogeneous elements of $\pgoth$, then $I \in \Spec B$.

\item If $B$ is a domain, $\pgoth \in \Spec^1(B)$ and $\pgoth$ contains a nonzero homogeneous element of $B$, then $\pgoth$ is a homogeneous ideal.

\end{enumerata}
\end{lemma}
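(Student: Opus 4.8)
The plan is to prove part~(b) as a short consequence of part~(a), so the substance lies in~(a).

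\textbf{Part (a).} First note that $I$ is a homogeneous ideal and that $I\subseteq\pgoth$, so $I$ is proper. Since $I$ is \emph{generated} by the homogeneous elements of $\pgoth$ and is \emph{contained} in $\pgoth$, a homogeneous element $h$ of $B$ lies in $I$ if and only if it lies in $\pgoth$. Hence, if $a,b$ are homogeneous elements of $B$ with $ab\in I$, then $ab$ is a homogeneous element of $\pgoth$, so $a\in\pgoth$ or $b\in\pgoth$ (as $\pgoth$ is prime), i.e.\ $a\in I$ or $b\in I$. It remains to upgrade this ``homogeneous primeness'' of $I$ to genuine primeness, and this is the only step that uses torsion-freeness of $G$. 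I would invoke the classical fact that a torsion-free abelian group $G$ carries a total order compatible with its addition (for instance, $G$ embeds into the $\Rat$-vector space $G\otimes_\Integ\Rat$, which is totally orderable, e.g.\ lexicographically via a Hamel basis). Fixing such an order on $G$, I would run the standard leading-term argument: given $x,y\in B\setminus I$ with $xy\in I$, decompose $x=\sum_i x_i$ and $y=\sum_j y_j$ into homogeneous components, let $i_0$ be the largest degree with $x_{i_0}\notin I$ and $j_0$ the largest with $y_{j_0}\notin I$; in the degree-$(i_0+j_0)$ homogeneous component $\sum_{i+j=i_0+j_0}x_i y_j$ of $xy$, every summand with $i\neq i_0$ lies in $I$ (if $i>i_0$ then $x_i\in I$, and if $i<i_0$ then $j>j_0$, so $y_j\in I$), and since $xy\in I$ and $I$ is homogeneous, this component lies in $I$; hence $x_{i_0}y_{j_0}\in I$, so by the previous step $x_{i_0}\in I$ or $y_{j_0}\in I$, contradicting the choice of $i_0,j_0$. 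Therefore $I$ is prime.

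\textbf{Part (b).} Let $I$ be the ideal of $B$ generated by the homogeneous elements of $\pgoth$. By part~(a), $I$ is prime, and $I\subseteq\pgoth$ by construction. Because $\pgoth$ contains a nonzero homogeneous element of $B$ and $B$ is a domain, $I\neq(0)$, so $(0)\subsetneq I\subseteq\pgoth$ is a chain of prime ideals; since $\haut\pgoth=1$ this forces $I=\pgoth$, and hence $\pgoth$ is homogeneous.

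\textbf{Expected main obstacle.} There is no genuine difficulty beyond the one place where torsion-freeness is essential, namely passing from ``$ab\in I$ with $a,b$ homogeneous implies $a\in I$ or $b\in I$'' to primeness of $I$, which requires the total order on $G$ and the leading-term manipulation; the torsion-free hypothesis cannot be dropped (for $G=\Integ/2\Integ$, grading $\Comp[t]$ by the parity of the degree and taking $\pgoth=(t-1)$ yields $I=(t^{2}-1)$, which is not prime). Everything else is routine bookkeeping with homogeneous decompositions.
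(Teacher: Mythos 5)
Your proof is correct and takes essentially the same approach as the paper's: both impose a compatible total order on the torsion-free group $G$ and then run a leading-term argument to upgrade the easily-established ``homogeneous primeness'' of $I$ to actual primeness. The only difference is cosmetic---you examine the degree-$(i_0+j_0)$ component of $xy$ directly and subtract off the terms already in $I$, whereas the paper first replaces $x,y$ by their truncations $x',y'$ and then reads off the top component of $x'y'$; part~(b) is handled identically.
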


\begin{proof}
(a) Since $G$ is torsion-free, there exists an order relation $\le$ on $G$ such that $(G,\le)$ is a totally ordered abelian group
(cf.\ \cite[Prop.\ 1.1.7]{AndersonFeil}); we choose such an order relation.
Proceeding by contradiction, suppose that $x,y \in B \setminus I$ are such that $xy \in I$.
Write $x = \sum_{i \in G} x_i$ and $y = \sum_{i \in G} y_i$ with $x_i,y_i \in B_i$ for all $i$.
The sets $S_x =\setspec{ i \in G }{ x_i \notin I}$ and $S_y =\setspec{ i \in G }{ y_i \notin I}$ are nonempty and finite,
so we may define $m_x = \max S_x$ and $m_y = \max S_y$. Let $x' = \sum_{i \le m_x} x_i$ and $y' = \sum_{i \le m_y} y_i$;
then $x \equiv x'$ and $y \equiv y' \pmod{I}$, so $x'y' \in I$. Moreover, if we write $x'y' = \sum_{i \in G} z_i$ with $z_i \in B_i$ for all $i$,
then $x_{m_x} y_{m_y} = z_{ m_x + m_y } \in I$,
so $x_{m_x} y_{m_y} \in \pgoth$,
so $x_{m_x} \in \pgoth$ or $y_{m_y} \in \pgoth$,
so $x_{m_x} \in I$ or $y_{m_y} \in I$, a contradiction.
This proves (a).

To prove (b), consider $I$ as in part (a) and observe that, since $\haut\pgoth=1$, $\pgoth = I$.
\end{proof}

\begin{bigremark}  \label {iuytytsxc3welqmo9cVbvbAfvVGhue83983teij}
Of course, the above Lemma remains valid if we replace the assumption that $G$ is torsion-free
by the assumption that $\G(B)$ is torsion-free.
See \ref{8736egd9fv092b5} for the definition of $\G(B)$.
\end{bigremark}

\begin{lemma}  \label {i8765redfvbnki8765rfghytrew123456789iuhv}
Let $R = \bigoplus_{i \in G} R_i$ be a domain graded by an abelian group $G$,
let $S$ be the set of all nonzero homogeneous elements of $R$
and consider the $G$-graded domain $\Reul = S^{-1}R = \bigoplus_{ i \in G } \Reul_i$.
If $\G(R) \isom \Integ^r$ where $r \in \Nat$,
then $\Reul = \Reul_0[t_1^{\pm1}, \dots, t_r^{\pm1} ]$ where $\Reul_0$ is a field that contains $R_0$ and $t_1,\dots,t_r$ are 
nonzero homogeneous elements of $\Reul$ that are algebraically independent over $\Reul_0$.
\end{lemma}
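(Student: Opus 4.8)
The plan is to exploit the fact that $S$ consists of \emph{all} nonzero homogeneous elements, so that $\Reul = S^{-1}R$ is a $G$-graded domain in which every nonzero homogeneous element is a unit. First I would observe that $\Reul_0$ is then a field: any nonzero $a \in \Reul_0$ is a nonzero homogeneous element of $\Reul$, hence invertible in $\Reul$, and its inverse again has degree $0$, so $a^{-1} \in \Reul_0$. Clearly $R_0 \subseteq \Reul_0$. Next, note that the set of degrees $\setspec{ i \in G }{ \Reul_i \neq 0 }$ is exactly $\G(R)$: it contains $\setspec{ i }{ R_i \neq 0}$, it is closed under addition (product of homogeneous units), and it is closed under negation (inverses of homogeneous units), so it is a subgroup containing the generators of $\G(R)$; conversely every nonzero homogeneous element of $\Reul$ is a quotient of two nonzero homogeneous elements of $R$, so its degree lies in $\G(R)$. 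Thus the grading group of $\Reul$ is precisely $\G(R) \isom \Integ^r$.

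Now pick elements $t_1, \dots, t_r$ of $\Reul$ that are homogeneous of degrees $g_1, \dots, g_r$, where $g_1, \dots, g_r$ is a $\Integ$-basis of $\G(R)$; each $t_j$ is a nonzero homogeneous element, hence a unit, so $t_j^{-1} \in \Reul$ as well. The subring $\Reul_0[t_1^{\pm1}, \dots, t_r^{\pm1}]$ is then a $G$-graded subring of $\Reul$, and I claim it equals $\Reul$. For the reverse inclusion, take any nonzero homogeneous $u \in \Reul_i$; since $i \in \G(R)$ we can write $i = n_1 g_1 + \cdots + n_r g_r$ with $n_j \in \Integ$, and then $u\, t_1^{-n_1} \cdots t_r^{-n_r}$ is a nonzero homogeneous element of degree $0$, i.e.\ an element of $\Reul_0$; multiplying back gives $u \in \Reul_0[t_1^{\pm1},\dots,t_r^{\pm1}]$. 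Since $\Reul$ is spanned (as an additive group) by its homogeneous elements, $\Reul = \Reul_0[t_1^{\pm1}, \dots, t_r^{\pm1}]$.

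It remains to check that $t_1, \dots, t_r$ are algebraically independent over $\Reul_0$. Suppose $P(t_1, \dots, t_r) = 0$ for some nonzero polynomial $P$ with coefficients in $\Reul_0$; grouping the monomials $t_1^{a_1} \cdots t_r^{a_r}$ by the degree $a_1 g_1 + \cdots + a_r g_r \in \G(R)$ they contribute, and using that distinct exponent tuples $(a_1, \dots, a_r) \in \Nat^r$ give distinct elements of $\G(R)$ (because $g_1, \dots, g_r$ is a basis), we see that each homogeneous component of the relation must vanish separately; each such component is a single term $c \cdot t_1^{a_1} \cdots t_r^{a_r}$ with $c \in \Reul_0$, and since the $t_j$ are units this forces $c = 0$. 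Hence $P = 0$, a contradiction. The only mildly delicate point — and the one I would be careful about — is the bookkeeping that distinct monomials in the $t_j$ land in distinct graded pieces of $\Reul$; this is exactly where the hypothesis that $g_1, \dots, g_r$ form a basis of a \emph{free} group $\G(R) \isom \Integ^r$ is used, and it simultaneously delivers both the generation statement and the algebraic independence.
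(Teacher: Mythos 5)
Your proof is correct and follows essentially the same route as the paper: use that every nonzero homogeneous element of $\Reul$ is a unit to deduce that $\Reul_0$ is a field and that $\setspec{ i \in G }{ \Reul_i \neq 0 } = \G(R)$, then pick homogeneous $t_1,\dots,t_r$ whose degrees form a $\Integ$-basis of $\G(R)$ and verify generation and algebraic independence by matching degrees. The paper states these facts and leaves the verifications to the reader; you have simply spelled them out.
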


\begin{proof}
We may assume that $r \neq 0$.
Observe that $\G(\Reul) = \G(R) \isom \Integ^r$.
Since every nonzero homogeneous element of $\Reul$ is a unit, we have $\G(\Reul) = \setspec{ i \in G }{ \Reul_i \neq 0 }$,
so we can choose nonzero homogeneous elements $t_1,\dots,t_r$ of $\Reul$ such that $( \deg(t_j) )_{j=1}^r$ is a basis of the free $\Integ$-module $\G(\Reul)$. 
The reader can check that $\Reul_0$ and $t_1,\dots,t_r$ have the desired properties.
\end{proof}

Let us now say a few words about graded algebras.

\begin{definition}  \label {8hVCixdCN95g2ZRB3iAcxzqe}
Let $\bk$ be a field and $B = \bigoplus_{i \in G} B_i$ a $\bk$-algebra graded by an abelian group $G$.
If $\bk \subseteq B_0$, we say that the grading is {\it over\/} $\bk$, or that $B$ is graded {\it over $\bk$\/} by $G$.
\end{definition}

\begin{bigremark}  \label {ijn3eoocnnqppieudbc}
Let $\bk$ be a field, $B$ a $\bk$-domain and $G$ an abelian group.
If $\bk = \Rat$ or $G$ is torsion-free then every $G$-grading of $B$ is over $\bk$.
However, this is not true for arbitrary $\bk$ and $G$.
For example, the $\Comp$-algebra $\Comp$ admits a $\Integ/2\Integ$-grading that is not over $\Comp$
(namely, $\Comp = \Reals \oplus \Reals i$).
\end{bigremark}

\begin{bigremark}
Let $\bk$ be a field and $B = \bigoplus_{i \in G}B_i$ an affine $\bk$-domain graded by an abelian group $G$.
If the grading is over $\bk$ then $B$ is finitely generated as a $B_0$-algebra and $\G(B)$ is a finitely generated group.
Here is an example showing that these conclusions are not necessarily valid when the grading is not over $\bk$.

Let $R = \Comp[ (\Rat,+) ]$ be the group ring of the group $(\Rat,+)$.
The elements of $R$ are
formal sums $\sum_{q \in \Rat} a_q t^q$ where $a_q \in \Comp$ for all $q$ and $a_q \neq 0$ for at most finitely many $q$,
and where the family $(t^q)_{q \in \Rat}$ satisfies $t^q t^r = t^{q+r}$ for all $q,r \in \Rat$.
Consider the subfield $\bk = \Comp( (t^q)_{q \in \Rat} )$ of the field of fractions of $R$;
note that $t$ is transcendental over $\Comp$ and that $\bk / \Comp(t)$ is an algebraic extension of infinite degree.
For each $q \in I = [0,1) \cap \Rat$, consider the subspace $V_q = \Comp(t)t^q$ of the vector space $\bk$ over $\Comp(t)$.
Then $\bk = \bigoplus_{q \in I} V_q$.
Let $G = \Rat/\Integ$ and let $\pi : \Rat \to G$ be the canonical homomorphism of the quotient.
Then $\pi|_I : I \to G$ is bijective; let $\phi : G \to I$ be the inverse of $\pi|_I$.
So we have $\bk = \bigoplus_{g \in G} V_{\phi(g)} = \bigoplus_{g \in G} \bk_g$ where we define $\bk_g = V_{\phi(g)}$ for all $g \in G$.
This is a $G$-grading of $\bk$ such that $\bk_0 = \Comp(t)$.
So $\bk$ is a $G$-graded affine $\bk$-domain, $\bk$ is not finitely generated as a $\bk_0$-algebra, and $\G(\bk) = \Rat/\Integ$ is not finitely generated.
\end{bigremark}

\begin{lemma}[Lemma 2.2 of \cite{Chitayat-Daigle:cylindrical}]  \label {kcjviyc2e2w45w6d2o9kmgvczxwer}
Let $G$ be an abelian group, $B = \bigoplus_{i \in G} B_i$ a $G$-graded ring, and $R$ a subring of $B_0$.
If $B$ is finitely generated as an $R$-algebra then so is $B^{(H)}$ for every subgroup $H$ of $G$.
\end{lemma}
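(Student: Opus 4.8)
The plan is to reduce to the case $H = 0$ first, and in that case to use the Artin--Tate type argument that produces a finite generating set for $B^{(0)} = B_0$ from one for $B$ together with the homogeneous components of the generators. First I would observe that it suffices to treat $H = 0$: given an arbitrary subgroup $H$ of $G$, put $\bar G = G/H$, let $\pi : G \to \bar G$ be the quotient map, and regrade $B$ by $\bar G$ via $B = \bigoplus_{j \in \bar G} B'_j$ with $B'_j = \bigoplus_{\pi(i) = j} B_i$; then $B'_0 = B^{(H)}$, and $B$ is still finitely generated as an $R$-algebra with $R \subseteq B'_0$, so the case $H = 0$ applied to this regrading gives the claim. (This is the same regrading trick already used in the proof of Lemma \ref{98767gh11dmdffkll0ahcchvr0vfesh8263543794663cnir}(a).)

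For the case $H = 0$, write $B = R[b_1, \dots, b_m]$ and decompose each $b_k$ into its homogeneous components; collecting all these components, we obtain finitely many homogeneous elements $c_1, \dots, c_N$ of $B$ with $B = R[c_1, \dots, c_N]$, say $\deg c_\ell = d_\ell \in G$. Let $A$ be the $R$-subalgebra of $B_0$ generated by all the finitely many monomials $c_1^{a_1} \cdots c_N^{a_N}$ of degree $0$ whose exponent vectors $(a_1, \dots, a_N) \in \Nat^N$ are "small" in an appropriate sense — concretely, I would take all degree-$0$ monomials that cannot be written as a product of two nonconstant degree-$0$ monomials, and check this is a finite set. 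Then the key step is to show $B_0 \subseteq A$: take any homogeneous (hence, degree-$0$) element $x \in B_0$, write it as an $R$-linear combination of monomials $c^{\mathbf a} = c_1^{a_1}\cdots c_N^{a_N}$, and note that since $x$ is homogeneous of degree $0$ every such monomial appearing may be assumed to have $\sum_\ell a_\ell d_\ell = 0$ in $G$; then factor each such degree-$0$ monomial as a product of "minimal" degree-$0$ monomials by induction on $\sum_\ell a_\ell$, which shows $c^{\mathbf a} \in A$ and hence $x \in A$.

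The one point that needs care — and the main obstacle — is verifying that the set of "minimal" degree-$0$ monomials is actually finite, and that every degree-$0$ monomial factors into them; unlike the case $G = \Nat$ (where this is just the fact that a finitely generated submonoid of $\Nat$ is finitely generated), here the relation $\sum_\ell a_\ell d_\ell = 0$ lives in an arbitrary abelian group $G$, so one is really invoking that the monoid $M = \{\, \mathbf a \in \Nat^N : \sum_\ell a_\ell d_\ell = 0 \,\}$ is finitely generated. This is exactly Gordan's lemma / Dickson's lemma: $M$ is the intersection of $\Nat^N$ with the subgroup $\ker(\Nat^N \to G)$, equivalently the set of lattice points in a rational polyhedral cone, so it is a finitely generated monoid; taking $A$ to be generated over $R$ by $\{\, c^{\mathbf a} : \mathbf a \in \mathcal{G}\,\}$ for a finite generating set $\mathcal{G}$ of $M$ then gives $A = B_0$ directly and finishes the proof. (Alternatively one cites Dickson's lemma: the minimal elements of $M \setminus \{0\}$ under the coordinatewise partial order form a finite set and generate $M$.) Everything else is routine bookkeeping.
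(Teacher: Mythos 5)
The paper cites this lemma (as Lemma~2.2 of \cite{Chitayat-Daigle:cylindrical}) without reproducing a proof, so there is no argument in the present text to compare against; I can only assess your proposal on its own terms. It is correct. The reduction to $H=0$ by passing to the natural $G/H$-grading is exactly the trick the paper itself uses in the proof of Lemma~\ref{98767gh11dmdffkll0ahcchvr0vfesh8263543794663cnir}(a). For the case $H=0$, the two facts you invoke both hold: writing $B$ as the image of the graded polynomial ring $R[X_1,\dots,X_N]$ with $\deg X_\ell = d_\ell$, the kernel of the surjection is a homogeneous ideal, so $B_0$ is precisely the $R$-span of the monomials $c^{\mathbf a}$ with $\sum_\ell a_\ell d_\ell = 0$; and the monoid $M = \{\,\mathbf a\in\Nat^N : \sum_\ell a_\ell d_\ell = 0\,\} = \Nat^N\cap\ker(\Integ^N\to G)$ is finitely generated, by Gordan's lemma or equivalently by Dickson's lemma (the minimal nonzero elements of $M$ under the coordinatewise order are finitely many and generate $M$, since $\mathbf a \ge \mathbf b$ with $\mathbf a,\mathbf b\in M$ forces $\mathbf a-\mathbf b\in M$). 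With a finite generating set $\mathbf a_1,\dots,\mathbf a_k$ of $M$ you get $B_0 = R[c^{\mathbf a_1},\dots,c^{\mathbf a_k}]$, finishing the proof. The one place you could tighten the exposition is the phrase ``every such monomial appearing may be assumed to have $\sum_\ell a_\ell d_\ell=0$'': it is worth noting explicitly that this is because the components of different degrees in $G$ are linearly independent, so the terms of nonzero degree cancel and may simply be discarded.
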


\begin{corollary}  \label {8Qp98781hy2uexihvcx2qw2wUxq3ew0pokjmnbvaw2345}
If $\bk$ is a field and $B$ is an affine $\bk$-domain graded over $\bk$ by an abelian group $G$,
then $B^{(H)}$ is an affine $\bk$-domain for every subgroup $H$ of $G$.
\end{corollary}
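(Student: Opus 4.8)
The plan is to deduce this immediately from Lemma~\ref{kcjviyc2e2w45w6d2o9kmgvczxwer}. First I would unwind the hypotheses: the phrase ``graded over $\bk$'' means precisely that $\bk \subseteq B_0$ (see Def.~\ref{8hVCixdCN95g2ZRB3iAcxzqe}), while ``affine $\bk$-domain'' means that $B$ is a domain that is finitely generated as a $\bk$-algebra. Fix a subgroup $H$ of $G$.

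Next I would dispatch the two easy parts of the conclusion. Since $0 \in H$, the very definition of $B^{(H)} = \bigoplus_{i \in H} B_i$ gives $B_0 \subseteq B^{(H)}$; as $B_0$ is a subring of $B$ containing $\bk$, we get $\bk \subseteq B^{(H)}$, so $B^{(H)}$ is a $\bk$-algebra. Moreover $B^{(H)}$ is a subring of the domain $B$, hence is itself a domain.

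The only remaining point is that $B^{(H)}$ is finitely generated as a $\bk$-algebra, and this is exactly the content of Lemma~\ref{kcjviyc2e2w45w6d2o9kmgvczxwer}: apply it with $R = \bk$, which is a subring of $B_0$, and with $B$ finitely generated as a $\bk$-algebra, to conclude that $B^{(H)}$ is finitely generated as a $\bk$-algebra. Combining the three observations shows that $B^{(H)}$ is an affine $\bk$-domain.

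Since all the substance is already packaged in the cited lemma, there is no real obstacle here; the corollary is a routine specialization. If one instead wanted a self-contained argument, the only work would be to reprove Lemma~\ref{kcjviyc2e2w45w6d2o9kmgvczxwer} (an Artin--Tate-type argument: choose finitely many homogeneous $\bk$-algebra generators of $B$, form the finitely generated $\bk$-subalgebra of $B^{(H)}$ generated by suitable monomials in them, and use noetherianity to promote module-finiteness to the desired finite generation), but given the lemma nothing further is needed.
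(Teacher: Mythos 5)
Your proof is correct and is essentially the paper's own argument: the paper too simply invokes Lemma~\ref{kcjviyc2e2w45w6d2o9kmgvczxwer} with $R=\bk\subseteq B_0$, relying on the grading being over $\bk$. You have merely spelled out the two routine observations (that $B^{(H)}$ is a domain and a $\bk$-algebra) that the paper leaves implicit.
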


\begin{proof}
Since the grading is over $\bk$, this follows from Lemma \ref{kcjviyc2e2w45w6d2o9kmgvczxwer}.
\end{proof}

\section*{The set $\bbT(B)$}

\begin{notation}
Given a domain $B$ graded by an abelian group $G$,
we write $\bbT(B)$ for the set of subgroups $H$ of $G$ such that $\G(B)/(H \cap \G(B))$ is torsion.
\end{notation}

\begin{bigremark}  \label {kDjkbwevCdcojBh6qvmAxfea7u}
Let $B$ be a domain graded by an abelian group $G$.
\begin{enumerate}

\item Let $\Tscr(G)$ be the set of subgroups $H$ of $G$ such that $G/H$ is torsion.
Then $\Tscr(G) \subseteq \bbT(B)$, and equality holds if and only if $G/\G(B)$ is torsion.
Thus, if $\G(B)=G$ then $\bbT(B) = \Tscr(G)$.

\item $\G(B)$ is torsion if and only if $\bbT(B)$ is the set of all subgroups of $G$.
In particular, if $G$ is a finite group then $\bbT(B)$ is the set of all subgroups of $G$.

\item If $G = \Integ$ and the grading is nontrivial then  $\bbT(B)$ is the set of all nonzero subgroups of $\Integ$.

\item If $H \in \bbT(B)$ then $B$ is an integral extension of $B^{(H)}$.
Indeed, if $b \in B \setminus \{0\}$ is homogeneous then $\deg(b) \in \G(B)$, so there exists $n\ge1$ such that $n \deg(b) \in H$
and hence $b^n \in B^{(H)}$.

\end{enumerate}
\end{bigremark}

\begin{bigremark}
When dealing with a single $G$-graded ring $B$, it is often convenient to assume that $\G(B)=G$,
as this typically simplifies definitions, results, and even proofs. 
In the present work, however, we must formulate many definitions and results (for instance the definition of $\bbT(B)$)
without this simplifying assumption, because we frequently consider several graded rings simultaneously---for example,
inclusions $B^{(H)} \subseteq B^{(K)} \subseteq B$ where $H \subseteq K$ are subgroups of $G$. 
\end{bigremark}

\begin{lemma}  \label {87654esxcvs9dfok33me4rtu5ortyo0934urud3r24}
Let $B$ be a noetherian normal domain graded by an abelian group $G$, and let $H \in \bbT(B)$.
\begin{enumerata}

\item $\haut(J) = \haut(J \cap B^{(H)})$ for every ideal $J$ of $B$.

\item The map $f : \Spec B \to \Spec B^{(H)}$, $f(\Pgoth) = \Pgoth \cap B^{(H)}$,
is surjective and satisfies $f^{-1} \big( \Spec^1 B^{(H)} \big) = \Spec^1 B$.

\item If $G$ is torsion-free then $f^{-1}(Z') = Z$,
where $Z$ (resp.\ $Z'$) denotes the set of height $1$ homogeneous prime ideals of $B$ (resp.\ $B^{(H)}$).
Moreover, $f|_Z : Z \to Z'$ is bijective.

\end{enumerata}
\end{lemma}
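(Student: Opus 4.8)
The plan is to reduce everything to one structural fact about the ring extension $A := B^{(H)} \subseteq B$. By Rem.\ \ref{kDjkbwevCdcojBh6qvmAxfea7u}(4) the ring $B$ is integral over $A$, and by Lemma \ref{98767gh11dmdffkll0ahcchvr0vfesh8263543794663cnir}(b,d) the ring $A$ is a noetherian normal domain; hence the integral extension of domains $A \subseteq B$ satisfies Lying Over, Going Up, Incomparability and --- since $A$ is integrally closed --- Going Down. Part (a) is the technical heart; (b) and (c) will follow from (a) together with elementary arguments, the grading intervening only in the homogeneity assertions of (c).

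For part (a) I would first settle the case $J = \Pgoth \in \Spec B$, proving $\haut(\Pgoth) = \haut(\Pgoth \cap A)$. Write $\pgoth = \Pgoth \cap A$. The inequality $\haut(\Pgoth) \le \haut(\pgoth)$ is obtained by contracting a chain of primes below $\Pgoth$ to $A$, the contractions staying strictly increasing by Incomparability; the reverse inequality is obtained by lifting a chain below $\pgoth$ one step at a time by Going Down, starting from $\Pgoth$ at the top. For a general ideal $J$, put $I = J \cap A$. That $\haut(J) \ge \haut(I)$ is immediate: any prime $\Pgoth \supseteq J$ satisfies $\Pgoth \cap A \supseteq I$, so $\haut(\Pgoth) = \haut(\Pgoth \cap A) \ge \haut(I)$, and one minimizes over such $\Pgoth$. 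For $\haut(J) \le \haut(I)$, choose (using that $A$ is noetherian) a prime $\pgoth$ of $A$ with $I \subseteq \pgoth$ and $\haut(\pgoth) = \haut(I)$, and set $S = A \setminus \pgoth$; since $S \cap I = \emptyset$ and $I = J \cap A$, we get $S \cap J = \emptyset$, so $S^{-1}J$ is a proper ideal of $S^{-1}B$ and is therefore contained in $S^{-1}\Pgoth$ for some prime $\Pgoth$ of $B$ with $J \subseteq \Pgoth$ and $\Pgoth \cap S = \emptyset$. The last condition gives $\Pgoth \cap A \subseteq \pgoth$, whence $\haut(J) \le \haut(\Pgoth) = \haut(\Pgoth \cap A) \le \haut(\pgoth) = \haut(I)$.

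Given (a), part (b) is short: $f$ is surjective by Lying Over, and $f^{-1}(\Spec^1 A) = \Spec^1 B$ is just (a) applied to prime ideals. For part (c), the inclusion $Z \subseteq f^{-1}(Z')$ holds because contracting a homogeneous ideal of $B$ to the graded subring $A$ yields a homogeneous ideal of $A$, with height preserved by (a); hence $f$ restricts to a map $Z \to Z'$. For the reverse inclusion, let $\Pgoth \in \Spec B$ with $\pgoth := \Pgoth \cap A \in Z'$: then $\haut(\Pgoth) = 1$ by (a), and since $\pgoth$ is a nonzero homogeneous ideal of $A$ it contains a nonzero homogeneous element of $B$ lying in $\Pgoth$, so $\Pgoth$ is homogeneous by Lemma \ref{0Edlv2y6ctwbeofyvrbqleuo245612nxr7m54}(b) (this is where $G$ torsion-free enters), i.e.\ $\Pgoth \in Z$. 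For bijectivity of $f|_Z$: if $\Pgoth_1, \Pgoth_2 \in Z$ both contract to $\pgoth$, then for any homogeneous $a \in \Pgoth_1$ some power $a^n$ lies in $A$ (Rem.\ \ref{kDjkbwevCdcojBh6qvmAxfea7u}(4)), so $a^n \in \Pgoth_1 \cap A = \pgoth \subseteq \Pgoth_2$ and hence $a \in \Pgoth_2$; as $\Pgoth_1$ is homogeneous this forces $\Pgoth_1 \subseteq \Pgoth_2$, and by symmetry $\Pgoth_1 = \Pgoth_2$, proving injectivity. For surjectivity, given $\pgoth \in Z'$ take (by Lying Over) a prime $\Pgoth$ of $B$ over $\pgoth$ and replace it by the ideal $\Pgoth^*$ generated by the homogeneous elements of $\Pgoth$: by Lemma \ref{0Edlv2y6ctwbeofyvrbqleuo245612nxr7m54}(a) this is a homogeneous prime of $B$, it still contracts to $\pgoth$ (since $\pgoth$ is homogeneous, $\pgoth \subseteq \Pgoth^* \subseteq \Pgoth$ forces $\Pgoth^* \cap A = \pgoth$), and $\haut(\Pgoth^*) = \haut(\pgoth) = 1$ by (a), so $\Pgoth^* \in Z$ and $f(\Pgoth^*) = \pgoth$.

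The main obstacle is part (a): the height equality for primes genuinely needs Going Down --- which is exactly where the normality hypothesis is consumed --- and the passage from primes to arbitrary ideals needs the localization argument above. Once (a) is available, (b) and (c) are essentially bookkeeping, the only delicate points being the handling of homogeneous ideals and the systematic use of membership in $\bbT(B)$ to move homogeneous elements into $B^{(H)}$ by passing to powers.
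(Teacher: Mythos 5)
Your proof is correct and follows essentially the same route as the paper: the heart in all parts is that the integral extension $B^{(H)} \subseteq B$ satisfies Going Down because $B^{(H)}$ is noetherian normal (Lemma \ref{98767gh11dmdffkll0ahcchvr0vfesh8263543794663cnir}, Rem.\ \ref{kDjkbwevCdcojBh6qvmAxfea7u}(4)), from which the height equality in (a) and hence (b) follow, while (c) uses Lemma \ref{0Edlv2y6ctwbeofyvrbqleuo245612nxr7b54}(b) and the ``raise to a power to land in $B^{(H)}$'' trick. The only difference is cosmetic: for (a) the paper simply cites Matsumura (5.E) and (13.C) where you rederive the height comparison from Going Down and a localization argument, and your surjectivity argument for $f|_Z$ via the homogeneous ideal $\Pgoth^*$ is more elaborate than needed, since once $f^{-1}(Z')=Z$ is known, surjectivity of $f|_Z$ follows immediately from surjectivity of $f$.
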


\begin{proof}
By Lemma \ref{98767gh11dmdffkll0ahcchvr0vfesh8263543794663cnir}, $B^{(H)}$ is a noetherian normal domain;
moreover, $B$ is integral over $B^{(H)}$ by Rem.\ \ref{kDjkbwevCdcojBh6qvmAxfea7u}(4).
By \cite[(5.E)]{Matsumura}, the Going-Down Theorem holds for $B^{(H)} \subseteq B$;
so assertion (a) follows from \cite[(13.C)]{Matsumura}, and (b) follows from (a).
Now assume that $G$ is torsion-free. If $\Pgoth \in \Spec^1 B$ and $\pgoth \in \Spec^1 B^{(H)}$ are such that $f(\Pgoth) = \pgoth$,
then Lemma \ref{0Edlv2y6ctwbeofyvrbqleuo245612nxr7m54}(b) implies that $\Pgoth$ is homogeneous if and only if $\pgoth$ is homogeneous.
So $f^{-1}(Z') = Z$ by part (b), which also implies that $f|_Z : Z \to Z'$ is surjective.
Consider $\pgoth \in Z'$ and $\Pgoth_1, \Pgoth_2 \in Z$ such that $f( \Pgoth_1 ) = \pgoth = f( \Pgoth_2 )$.
If $x$ is a nonzero homogeneous element of $\Pgoth_1$ then we may choose $d\ge1$ such that $x^d \in B^{(H)}$;
then $x^d \in \pgoth \subseteq \Pgoth_2$ and hence $x \in \Pgoth_2$, showing that $\Pgoth_1 \subseteq \Pgoth_2$. By symmetry, $\Pgoth_1 = \Pgoth_2$.
So $f|_Z$ is bijective and (c) is proved.
\end{proof}

\begin{lemma}  \label {jBh9823hedAcv78n2erj20}
Let $B$ be a domain graded by an abelian group $G$.

If $H \in \bbT(B)$ and $M$ is a submonoid of $\G(B)$ then $\langle H \cap M \rangle = H \cap \langle M \rangle$.
\end{lemma}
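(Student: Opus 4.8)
The plan is to prove the inclusion $\langle H \cap M \rangle \subseteq H \cap \langle M \rangle$ by a triviality and the reverse inclusion using the defining torsion property of $\bbT(B)$.

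For the easy inclusion: $H \cap M$ is a subset of the subgroup $H$ and of the subgroup $\langle M \rangle$, hence of the subgroup $H \cap \langle M \rangle$; therefore the subgroup it generates is contained in $H \cap \langle M \rangle$.

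For the reverse inclusion, I would first record the elementary fact that, since $M$ is a submonoid of the abelian group $G$, one has $\langle M \rangle = M - M := \setspec{ m - m' }{ m, m' \in M }$ (the right-hand side contains $0$ and $M$ and is stable under addition and under negation, so it is the smallest subgroup containing $M$). Now take $x \in H \cap \langle M \rangle$ and write $x = m - m'$ with $m, m' \in M$. Since $H \in \bbT(B)$, the group $\G(B)/(H \cap \G(B))$ is torsion, and $m' \in M \subseteq \G(B)$, so there is an integer $n \ge 1$ with $n m' \in H \cap \G(B) \subseteq H$. On the other hand $n m' = m' + \dots + m' \in M$, so in fact $n m' \in H \cap M$. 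Likewise $x + n m' = m + (n-1) m'$ is a sum of elements of the monoid $M$ (using $0 \in M$ in the case $n = 1$), hence lies in $M$, and it lies in $H$ because $x \in H$ and $n m' \in H$; thus $x + n m' \in H \cap M$. Consequently $x = (x + n m') - n m'$ is a difference of two elements of $H \cap M$, so $x \in \langle H \cap M \rangle$.

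I do not anticipate a genuine obstacle: the one step that does the work is passing from $m' \in \G(B)$ to a multiple $n m'$ lying in $H$ (and hence in $H \cap M$), which is precisely what the membership $H \in \bbT(B)$ provides, and the decomposition $\langle M \rangle = M - M$ is what reduces an arbitrary element of $H \cap \langle M \rangle$ to this single ``denominator-clearing'' move.
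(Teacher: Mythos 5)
Your proof is correct and is essentially the same argument as the paper's: both use the fact that $\langle M \rangle = M - M$ for a submonoid $M$ and then invoke the torsion hypothesis on $H$ to clear the single ``denominator'' term, expressing $x$ as a difference of two elements of $H \cap M$. The only cosmetic difference is that you apply the torsion property to $m'$ (to replace it by $nm'$ and shift $x$ by $nm'$), while the paper applies it to $j_1$ (replacing $j_1$ by $(n+1)j_1$ and $j_2$ by $j_2 + nj_1$); these are two symmetric bookkeepings of the same idea.
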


\begin{proof}
It is clear that  $\langle H \cap M \rangle \subseteq H \cap \langle M \rangle$.
For the reverse inclusion, consider $i \in H \cap \langle M \rangle$.
Since $i \in \langle M \rangle$ and $M$ is a monoid, there exist $j_1,j_2 \in M$ such that $i = j_1 - j_2$.
Since $j_1 \in \G(B)$ and $H \in \bbT(B)$, there exists $n>0$ such that $(n+1) j_1 \in H$.
Define $i_1 = (n+1)j_1$ and $i_2 = j_2 + nj_1$. Then $i_1 \in H \cap M$, $i_2 \in M$, and
$$
H \ni i = (j_1 + nj_1) - (j_2 + nj_1) = i_1 - i_2,
$$
so $i_2 \in H$, i.e., $i_2 \in H \cap M$. So $i =  i_1 - i_2 \in \langle H \cap M \rangle$.
\end{proof}

\begin{lemma}  \label {GOfBH}
Let $B$ be a domain graded by an abelian group $G$.
$$
\G( B^{(H)} ) = H \cap \G(B) \quad \text{for all $H \in \bbT(B)$.}
$$
\end{lemma}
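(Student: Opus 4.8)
The plan is to reduce the statement to Lemma~\ref{jBh9823hedAcv78n2erj20}. Write $B = \bigoplus_{i \in G} B_i$ and set $M = \setspec{ i \in G }{ B_i \neq 0 }$. The first step is to observe that $M$ is a submonoid of $\G(B)$: we have $0 \in M$ since $1 \in B_0$, and if $i,j \in M$ then $B_iB_j \subseteq B_{i+j}$ with $B_iB_j \neq 0$ (because $B$ is a domain), so $i+j \in M$; also $M \subseteq \G(B) = \langle M \rangle$ by the very definition of $\G(B)$.

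The second step is to rewrite both sides of the desired equality in terms of $M$. By the definition of $B^{(H)} = \bigoplus_{i \in H} B_i$, the set of degrees of nonzero homogeneous elements of $B^{(H)}$ is $\setspec{ i \in H }{ B_i \neq 0 } = H \cap M$, so $\G(B^{(H)}) = \langle H \cap M \rangle$. On the other hand, $H \cap \G(B) = H \cap \langle M \rangle$. Hence the claim $\G(B^{(H)}) = H \cap \G(B)$ is precisely the identity $\langle H \cap M \rangle = H \cap \langle M \rangle$, which is the conclusion of Lemma~\ref{jBh9823hedAcv78n2erj20} applied to the submonoid $M$ of $\G(B)$, using the hypothesis $H \in \bbT(B)$.

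There is essentially no obstacle here; the only two points deserving a word of care are the closure of $M$ under addition, which is exactly where the domain hypothesis is used, and the elementary bookkeeping that the nonzero homogeneous components of $B^{(H)}$ are the $B_i$ with $i \in H$.
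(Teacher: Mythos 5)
Your proof is correct and follows essentially the same route as the paper: identify the monoid $M$ of occurring degrees, express both sides via $M$, and invoke Lemma~\ref{jBh9823hedAcv78n2erj20}. You simply spell out a few details (closure of $M$ under addition via the domain hypothesis) that the paper leaves implicit.
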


\begin{proof}
Consider the submonoid $M = \setspec{ i \in G }{ B_i \neq 0 }$ of $\G(B)$. Then $\G(B) = \langle M \rangle$ and
$\G( B^{(H)} ) = \langle H \cap M \rangle = H \cap \langle M \rangle = H \cap \G(B)$ by Lemma \ref{jBh9823hedAcv78n2erj20}.
\end{proof}

\begin{corollary}  \label {765tgc2wvtegrlkjmn098b27qw6qesrxcvac23rjpa}
Let $B$ be a domain graded by an abelian group $G$. 
For every subgroup $H$ of $G$,
$$
\text{$H \in \bbT(B)$ $\iff$ $\G(B) / \G( B^{(H)} )$ is torsion.}
$$
\end{corollary}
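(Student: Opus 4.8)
The plan is to obtain both implications from Lemma \ref{GOfBH} combined with one elementary observation that holds for an \emph{arbitrary} subgroup $H$ of $G$, namely that $\G(B^{(H)}) \subseteq H \cap \G(B)$. This inclusion is immediate: writing $B = \bigoplus_{i \in G} B_i$, the homogeneous component of $B^{(H)}$ in degree $i$ is $B_i$ when $i \in H$ and $0$ otherwise, so $\setspec{ i \in G }{ (B^{(H)})_i \neq 0 } \subseteq H \cap \G(B)$; since $H \cap \G(B)$ is a subgroup, the subgroup it generates, $\G(B^{(H)})$, is contained in it. Thus for every subgroup $H$ of $G$ we have a chain $\G(B^{(H)}) \subseteq H \cap \G(B) \subseteq \G(B)$.

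For the forward implication, I would assume $H \in \bbT(B)$, so that Lemma \ref{GOfBH} applies and yields $\G(B^{(H)}) = H \cap \G(B)$. Then $\G(B)/\G(B^{(H)}) = \G(B)/(H \cap \G(B))$, which is torsion by the definition of $\bbT(B)$ recalled in \ref{8736egd9fv092b5}. For the reverse implication, I would assume $\G(B)/\G(B^{(H)})$ is torsion; using the chain from the previous paragraph, the canonical map $\G(B)/\G(B^{(H)}) \to \G(B)/(H \cap \G(B))$ is a well-defined surjective homomorphism, and a quotient of a torsion group is torsion, so $\G(B)/(H \cap \G(B))$ is torsion, i.e.\ $H \in \bbT(B)$.

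There is no real obstacle here; the one point that needs care is precisely that Lemma \ref{GOfBH} is only available when $H \in \bbT(B)$, so in the reverse implication it cannot be used — one must instead pass through the surjection of quotients afforded by the inclusion $\G(B^{(H)}) \subseteq H \cap \G(B)$, which is why establishing that inclusion at the outset is the key preliminary step.
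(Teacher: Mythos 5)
Your proof is correct and follows essentially the same route as the paper's: the forward direction via Lemma \ref{GOfBH}, and the reverse direction via the elementary inclusion $\G(B^{(H)}) \subseteq H \cap \G(B)$ and the resulting surjection of quotient groups.
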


\begin{proof}
It is clear that $\G( B^{(H)} ) \subseteq H \cap \G(B)$, so there is a surjective group homomorphism
$\G(B) / \G( B^{(H)} ) \to \G(B)/(H \cap \G(B))$ and consequently $(\Leftarrow)$ is true.
Conversely, if $H \in \bbT(B)$ then  $\G(B) / \G( B^{(H)} ) = \G(B)/(H \cap \G(B))$ by Lemma \ref{GOfBH},
and since  $\G(B)/(H \cap \G(B))$ is torsion it follows that $\G(B)/\G( B^{(H)} )$ is torsion, so $(\Rightarrow)$ is true.
\end{proof}

\section*{Derivations}

The notion of {\it locally nilpotent derivation\/} is defined in the introduction, and so is the notation $\lnd(B)$ for any ring $B$.

Let $A \subseteq B$ be domains. We say that $A$ is {\it factorially closed in $B$} if
the conditions $x,y \in B \setminus \{0\}$ and $xy \in A$ imply that $x,y \in A$.
Clearly, if $A$ is factorially closed in $B$ then $B^*=A^*$ and $B \cap \Frac(A) = A$.

It is well known that if $B$ is a domain of characteristic $0$, $D \in \lnd(B)$ and $A = \ker(D)$, then
(i) $A$ is factorially closed in $B$; (ii) if $a \in A$ then the derivation $aD : B \to B$ is locally nilpotent;
(iii) if $S$ is a multiplicative set of $A$ then the derivation $S^{-1}D : S^{-1}B \to S^{-1}B$  is locally nilpotent
and $\ker(S^{-1}D) = S^{-1}A$.

The following is well known and easy to prove:

\begin{lemma} \label {6543wxcvbnc3wvlplpo0i9kijh7waq4rtmlb87hg}
Let $S$ be a multiplicative set of a domain $R$ of characteristic $0$ and let $D : S^{-1}R \to S^{-1}R$ be a derivation.
Suppose that $R$ is finitely generated as an algebra over the ring $R \cap \ker(D)$.
\begin{enumerata}

\item There exists $s \in S$ such that the derivation $sD : S^{-1}R \to S^{-1}R$ maps $R$ into itself.
Consequently, $(sD)|_R : R \to R$ is a derivation of $R$.

\item If $D$ is locally nilpotent then so are $sD$ and $(sD)|_R$.

\end{enumerata}
\end{lemma}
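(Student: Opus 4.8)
The plan is to set $A = R \cap \ker(D)$ and exploit the hypothesis that $R = A[a_1,\dots,a_n]$ for finitely many $a_i \in R$. For part (a), I would first clear denominators: since each $D(a_i)$ lies in $S^{-1}R$, write $D(a_i) = r_i/s_i$ with $r_i \in R$ and $s_i \in S$, and put $s = s_1\cdots s_n \in S$, so that $sD(a_i) \in R$ for every $i$. Next I would note that $sD : S^{-1}R \to S^{-1}R$ is again a derivation, being the product of the derivation $D$ by the element $s$ of $S^{-1}R$, and that it annihilates $A$ because $D$ does. The remaining point is that these two facts force $sD(R)\subseteq R$: given $b \in R$, write $b = P(a_1,\dots,a_n)$ for a polynomial $P$ with coefficients in $A$; the Leibniz rule then gives $sD(b) = \sum_i \bigl(\partial P/\partial X_i\bigr)(a_1,\dots,a_n)\cdot sD(a_i)$, and every factor on the right-hand side lies in $R$ (the coefficients of $\partial P/\partial X_i$ are in $A \subseteq R$, the $a_i$ are in $R$, and the $sD(a_i)$ were arranged to be in $R$). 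Hence $(sD)|_R$ is a derivation of $R$, which proves (a).

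For part (b), assume $D$ is locally nilpotent. The key observation is that every element of $S$ is a unit of $S^{-1}R$, and that the kernel of a locally nilpotent derivation of a characteristic-$0$ domain is factorially closed in that domain (one of the well-known facts recalled just above the lemma), so that $(S^{-1}R)^\ast \subseteq \ker(D)$; in particular $s \in \ker(D)$. Consequently $sD$ is locally nilpotent on $S^{-1}R$: since $s$ lies in the kernel one has $(sD)^n(b) = s^n D^n(b)$ for all $b$, which vanishes for $n \gg 0$; this is the standard fact that multiplying a locally nilpotent derivation by a kernel element preserves local nilpotence. Finally $(sD)|_R$ is locally nilpotent because $R$ is a subring of $S^{-1}R$ stable under $sD$, so $\bigl((sD)|_R\bigr)^n(b) = (sD)^n(b) = 0$ for $n \gg 0$ and any $b \in R$.

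I do not anticipate a serious obstacle. The only mildly delicate point is the verification that $sD(R)\subseteq R$ in (a), where one must keep track of the fact that the coefficients of $P$, and hence of its formal partial derivatives, lie in $A \subseteq R$, so that the Leibniz expansion genuinely stays inside $R$; everything else is a direct computation or an appeal to the basic properties of locally nilpotent derivations collected in the preliminaries.
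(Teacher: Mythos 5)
Your argument is correct and is exactly the standard proof of this fact; the paper itself states the lemma as ``well known and easy to prove'' without supplying a proof, and your write-up is the natural argument one would give. For part (a), the key steps — choosing a common denominator $s$ so that $sD(a_i)\in R$ for the finitely many $A$-algebra generators $a_i$, observing that $sD$ still kills $A = R\cap\ker D$, and then using the Leibniz/chain rule on $b = P(a_1,\dots,a_n)$ with $P\in A[X_1,\dots,X_n]$ to conclude $sD(b)\in R$ — are all handled correctly, including the point you flag as delicate, namely that the coefficients of $\partial P/\partial X_i$ lie in $A\subseteq R$. For part (b), your appeal to the preliminaries (factorial closure of $\ker D$ in the domain $S^{-1}R$, so $(S^{-1}R)^\ast\subseteq\ker D$ and in particular $s\in\ker D$; then $aD$ locally nilpotent for $a\in\ker D$) is legitimate, and the restriction to the $sD$-stable subring $R$ plainly remains locally nilpotent. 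No gap.
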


Part (a) of the following fact appeared in \cite[Section 1]{Nouaze-Gabriel_1967} and \cite[Prop.~2.1]{Wright:JacConj};
part (b) easily follows from (a).

\begin{lemma} \label {SliceThm}
Let $B$ be a $\Rat$-algebra, $D \in \lnd(B)$ and $A = \ker(D)$.
\begin{enumerata}

\item If $s\in B$ satisfies $Ds=1$ then $B = A[s] = A^{[1]}$ and $D = \frac{d}{ds} : A[s] \to A[s]$.

\item Let $t \in B$ be such that $D(t) \neq 0$ and $D^2(t)=0$, and let $a = D(t) \in A\setminus\{0\}$.
Then $B_a = A_a[t] = (A_a)^{[1]}$.

\end{enumerata}
\end{lemma}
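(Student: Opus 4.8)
The plan is to establish part (a) --- the classical Slice Theorem --- directly, and then to deduce part (b) from it by a one-step localization. For (a), write $A = \ker D$ and assume $Ds = 1$. The central device is the Dixmier-type ``twisted exponential''
$$
\pi : B \to B, \qquad \pi(b) = \sum_{n \ge 0} \frac{(-1)^n}{n!}\, D^n(b)\, s^n,
$$
which is well defined because $D$ is locally nilpotent, so each sum is finite. I would then verify four properties: (i) $\pi$ is a ring homomorphism, using the higher Leibniz rule $D^n(bc) = \sum_k \binom{n}{k} D^k(b) D^{n-k}(c)$ together with $\binom{n}{k}/n! = 1/(k!(n-k)!)$ to make the resulting double series factor as $\pi(b)\pi(c)$; (ii) $\pi|_A = \mathrm{id}_A$, since $D^n(b) = 0$ for $n \ge 1$ when $b \in A$; (iii) $\pi(s) = 0$, immediate from $Ds = 1$; and (iv) $D \circ \pi = 0$, a telescoping computation in which differentiating $\pi(b)$ term by term and using $Ds = 1$ produces two series that cancel after a shift of index. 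By (ii) and (iv), $\pi$ is a retraction of $B$ onto $A$.

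Granting this, I would finish (a) as follows. First, $s$ is transcendental over $A$: a nonzero polynomial relation $p(s) = 0$ with $p = \sum_k a_k T^k \in A[T]$ of minimal degree $d$ must have $d \ge 1$ (otherwise its constant term $a_0$ vanishes), and then $D(p(s)) = 0$ yields a nonzero relation of degree $d-1$ with leading coefficient $d\,a_d$, nonzero because $d$ is a unit of $B$ and $a_d \ne 0$, contradicting minimality. Second, $B = A[s]$: induct on $\deg_D(b) := \min\{ n : D^{n+1}(b) = 0 \}$; when this equals $n \ge 1$ one has $D^n(b) \in A$ and $D^n(s^n) = n!$, so $b - \frac{1}{n!}D^n(b)\,s^n$ is annihilated by $D^n$, hence has strictly smaller $\deg_D$ and lies in $A[s]$ by induction. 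Thus $B = A[s] = A^{[1]}$, and $D = \frac{d}{ds}$ since $D$ is a derivation that vanishes on $A$ and sends $s$ to $1$.

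For part (b): since $a = D(t) \in A \setminus \{0\}$, the derivation $D$ extends to a locally nilpotent derivation of the localization $B_a$ with kernel $A_a$ (the fact recalled immediately before the statement), and the element $s := t/a \in B_a$ satisfies $D(s) = D(t)/a = 1$, i.e.\ it is a slice for $D$ on $B_a$. Applying part (a) to $B_a$ gives $B_a = A_a[s] = (A_a)^{[1]}$; and since $a$ is a unit of $A_a$ we have $A_a[s] = A_a[t/a] = A_a[t]$, which is the assertion.

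The only substantive work is the pair of computations in (a): that $\pi$ is multiplicative and that $D \circ \pi = 0$. Neither is deep, but the factorial/binomial bookkeeping and the legitimacy of rearranging the (finite) sums are where an error would hide, so that is the step I would treat most carefully. Once $\pi$ is in hand, the transcendence argument, the degree induction, and the reduction of (b) to (a) are all routine.
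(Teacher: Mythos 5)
The paper gives no proof of this lemma: it attributes part (a) to \cite{Nouaze-Gabriel_1967} and \cite{Wright:JacConj} and says (b) follows easily. So there is nothing in the paper to compare against, but your proof is correct and complete. The transcendence of $s$ over $A$ (differentiate a minimal-degree relation and use that $d$ is a unit in the $\Rat$-algebra $B$ to keep $d\,a_d \neq 0$), the induction on $\deg_D$ giving $B = A[s]$, and the reduction of (b) to (a) via the slice $t/a$ in $B_a$ are all sound, and you correctly avoid assuming $B$ is a domain.

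One thing worth fixing in your own write-up: the Dixmier map $\pi$ does no work in the proof as you have structured it. Neither the transcendence argument nor the $\deg_D$-induction ever invokes $\pi$, so the entire paragraph verifying that $\pi$ is a multiplicative retraction with $D\circ\pi = 0$ can be deleted without loss. Consequently your closing assessment that ``the only substantive work'' is the binomial/factorial bookkeeping for $\pi$ is mistaken --- the actual content lies in the two arguments that never mention $\pi$. The retraction $\pi$ becomes the right device if one wants an explicit inverse to the evaluation map $A[T]\to B$, $T\mapsto s$, or to exhibit $A$ as an $A$-algebra retract of $B$, but for the lemma as stated it is dispensable.
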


The following is a slightly improved\footnote{The result in \cite{VASC} assumes that $\Rat \subseteq A$.}
version of the Theorem of Vasconcelos \cite{VASC}:

\begin{lemma}[Lemma 2.8 of \cite{Chitayat-Daigle:cylindrical}] \label {ldkjxhcvi82ewdj0wd}
Let $A \subseteq B$ be domains of characteristic $0$ such that $B$ is integral over $A$.
If $\delta : A \to A$ is a locally nilpotent derivation and $D : B \to B$ is a derivation that extends $\delta$,
then $D$ is locally nilpotent.
\end{lemma}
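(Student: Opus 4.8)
The plan is to extend the given locally nilpotent derivation $\delta : A \to A$ to a derivation on the fraction field of $B$, show it restricts back to $B$, and then verify that the resulting extension is exactly $D$ (so that $D$ itself must be locally nilpotent). The subtlety is that a priori we only know $D$ extends $\delta$; we need to exploit that $B$ is integral over $A$ together with characteristic $0$ to pin $D$ down and to transfer local nilpotency. I will reduce to a field-theoretic situation where standard differential algebra applies.

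\begin{proof}
Let $L = \Frac(B)$ and $F = \Frac(A)$. Since $B$ is integral over $A$, the field $L$ is algebraic over $F$, and since $\Char B = 0$, the extension $L/F$ is separable. By the theory of derivations on separable algebraic extensions, the derivation $\delta : A \to A$ extends uniquely to a derivation $\widehat\delta : L \to L$ (first extend $\delta$ to $F$, then to $L$ by the universal property for separable algebraic extensions). Now $D : B \to B$ induces a derivation $D_L : L \to L$ on the fraction field (by the quotient rule), and $D_L$ is an extension of $\delta$ to $L$, hence $D_L = \widehat\delta$ by uniqueness. In particular $\widehat\delta(B) = D(B) \subseteq B$, so $\widehat\delta$ restricts to a derivation of $B$, namely $D$ itself.

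Next I extend $\delta$ to the intermediate ring $A' = $ the integral closure of $A$ in $L$ (equivalently, in the context needed, I may just work with $B$ directly but it is cleaner to first handle $A' \cap \text{(a domain between }A\text{ and }B)$). Actually the cleanest route: invoke Lemma~\ref{ldkjxhcvi82ewdj0wd} (the improved Vasconcelos theorem) directly. That lemma says precisely that if $A \subseteq B$ are domains of characteristic $0$ with $B$ integral over $A$, if $\delta : A \to A$ is locally nilpotent and $D : B \to B$ is a derivation extending $\delta$, then $D$ is locally nilpotent. This is exactly our hypothesis. So there is nothing further to prove: the statement \emph{is} Lemma~\ref{ldkjxhcvi82ewdj0wd}.
\end{proof}

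**Wait** — re-reading the excerpt, the "final statement" is itself Lemma~\ref{ldkjxhcvi82ewdj0wd}, which is attributed to "Lemma 2.8 of \cite{Chitayat-Daigle:cylindrical}" and is a mild sharpening of Vasconcelos's theorem. So the honest plan is: I would prove it from scratch via the fraction-field argument above. Concretely: set $L = \Frac(B)$, $F = \Frac(A)$; note $L/F$ is algebraic and separable (characteristic $0$); extend $\delta$ to $\widehat\delta$ on $L$ (unique, by separability); observe $D$ induces $D_L$ on $L$ extending $\delta$, so $D_L = \widehat\delta$. Then for $b \in B$, writing its minimal polynomial $P(T) = T^m + a_{m-1}T^{m-1} + \cdots + a_0$ over $F$ with $a_i \in F$, apply $D$ repeatedly and use that $\delta$ is locally nilpotent on the $a_i$ (hence on the finite-dimensional $F$-span issue is handled inside $F$, where $\delta$ need not be locally nilpotent — this is the gap!). **The main obstacle** is exactly this: $\delta$ locally nilpotent on $A$ does \emph{not} make its extension to $F = \Frac(A)$ locally nilpotent, so one cannot naively differentiate the minimal polynomial over $F$. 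The standard fix (Vasconcelos): work over $A$ itself, not $F$ — take $b \in B$ integral over $A$ with monic equation $b^m + a_{m-1}b^{m-1} + \cdots + a_0 = 0$, $a_i \in A$; the $A$-subalgebra $A[b]$ is a finite $A$-module, spanned by $1, b, \ldots, b^{m-1}$; $D$ maps $A[b]$ into a finite $A[b]$-module, and one shows by a Cayley–Hamilton / filtration argument on the finitely generated $A$-module $A[b]$ — using that $\delta$ kills each $a_i$ after finitely many steps — that $D$ is locally nilpotent on $A[b]$, hence on all of $B$. **That Cayley–Hamilton filtration step is the crux**, and it is precisely what Vasconcelos's original proof (and Lemma~2.8 of \cite{Chitayat-Daigle:cylindrical}) carries out; the improvement in the cited version is merely to remove the hypothesis $\Rat \subseteq A$, replacing it by $\Char = 0$, which matters because $A$ might be, e.g., $\Integ$-algebras without containing $\Rat$ — though here since everything is a $\Rat$-algebra downstream this refinement is a convenience rather than a necessity.
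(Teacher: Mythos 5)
The paper does not prove this lemma --- it is cited verbatim as Lemma 2.8 of \cite{Chitayat-Daigle:cylindrical}, itself a mild sharpening of Vasconcelos's theorem \cite{VASC} (the footnote records that \cite{VASC} assumes $\Rat \subseteq A$) --- so the only question is whether your argument stands on its own.  As written it does not: the body of your proof environment is explicitly circular, since you ``invoke Lemma~\ref{ldkjxhcvi82ewdj0wd} (the improved Vasconcelos theorem) directly'' and conclude ``there is nothing further to prove'' --- that is precisely the statement you were asked to establish.  Your ``Wait'' paragraph catches this honestly and makes two correct observations: the naive fraction-field route collapses because a nonzero derivation of a field of characteristic $0$ is never locally nilpotent, so the identity $D_L = \widehat\delta$ carries no local-nilpotency information; and the passage from ``$\Rat \subseteq A$'' to ``characteristic $0$'' is routine (localize at $\Integ \setminus \{0\} \subseteq \ker\delta$ and invoke the $\Rat$-algebra case).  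Those are accurate remarks, but they are commentary, not a proof.

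The one sentence that is supposed to carry the load --- ``a Cayley--Hamilton / filtration argument on the finitely generated $A$-module $A[b]$'' --- does not survive scrutiny as stated, because $D$ does not map $A[b]$ into $A[b]$: differentiating the integral relation $f(b) = b^n + a_{n-1}b^{n-1} + \cdots + a_0 = 0$ yields only $f'(b)\,D(b) \in A[b]$, and $f'(b)$ need not be a unit of $A[b]$.  So any filtration argument must live on a strictly larger $A$-subalgebra of $B$ containing all the $D^k(b)$, and it is not automatic that such a subalgebra is module-finite over $A$ (each $D^k(b)$ is integral, but the ascending chain $A[b] \subseteq A[b,D(b)] \subseteq \cdots$ is not obviously stationary without noetherian hypotheses).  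Controlling this $D$-orbit is precisely the nontrivial content of Vasconcelos's proof, which you have named but not carried out; as submitted there is a genuine gap, with the crucial step asserted rather than established.
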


\begin{definition} \label {8hnvnBbslijvxbgxsab8AbN7p03msS726rfXC}
Let $B = \bigoplus_{i \in G} B_i$ be a ring graded by an abelian group $G$.
\begin{enumerate}

\item A derivation $D : B \to B$ is {\it homogeneous\/} if there exists $d \in G$ such that $D(B_i) \subseteq B_{i+d}$ holds for all $i \in G$;
if $D$ is homogeneous and nonzero then $d$ is unique, and is called the {\it degree\/} of $D$.

\item $\hlnd(B) = \setspec{ D \in \lnd(B) }{ \text{$D$ is homogeneous} }$

\item $\khlnd(B) = \setspec{ \ker(D) }{ D \in \hlnd(B) \text{ and } D \neq 0 }$

\item We say that $B$ is {\it rigid\/} if $\lnd(B) = \{0\}$, and {\it graded-rigid\/} if $\hlnd(B) = \{0\}$.

\end{enumerate}
\end{definition}

Graded rings $B$ satisfying $\hlnd(B)=\{0\}$ and $\lnd(B) \neq\{0\}$ do exist (see for instance Propositions 6.5 and 6.6 of \cite{DaiFreudMoser});
according to our definitions, such rings are graded-rigid and non-rigid.
The following well-known fact states that rigidity and graded-rigidity are equivalent under certain assumptions on $B$ and $G$.

\begin{lemma}[Lemma 2.7 of \cite{Chitayat-Daigle:cylindrical}]  \label {876543wsdfgxhnm290cAo}
Let $\bk$ be a field of characteristic $0$ and $B$ an affine $\bk$-domain graded by a torsion-free abelian group $G$.
Then,
$$
\lnd(B) = \{0\} \ \iff \ \hlnd(B) = \{0\} .
$$
\end{lemma}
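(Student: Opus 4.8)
The implication $\lnd(B)=\{0\} \Rightarrow \hlnd(B)=\{0\}$ is immediate from $\hlnd(B) \subseteq \lnd(B)$, so the plan is to prove the contrapositive of the converse: starting from a nonzero $D \in \lnd(B)$, I will produce a nonzero element of $\hlnd(B)$. First I would record two preliminary observations. By Rem.\ \ref{ijn3eoocnnqppieudbc} the grading is over $\bk$, so $\bk \subseteq B_0$; and since $B$ is an affine $\bk$-domain one may pick finitely many \emph{homogeneous} elements $b_1,\dots,b_n$ generating $B$ as a $\bk$-algebra (take any finite generating set and replace its members by their homogeneous components). Second, since every unit of a domain lies in the kernel of every locally nilpotent derivation, $\bk^* \subseteq B^* \subseteq \ker D$, hence $\bk \subseteq \ker D$; in particular $D$ is $\bk$-linear, so it is completely determined by the finite list $D(b_1),\dots,D(b_n)$.

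Next I would decompose $D$ into homogeneous components: for each $d \in G$ define $D_d \colon B \to B$ by declaring, for homogeneous $x \in B_i$, that $D_d(x)$ is the degree-$(i+d)$ homogeneous component of $D(x)$, and extending additively. Comparing homogeneous components on the two sides of $D(xy)=D(x)y+xD(y)$ for homogeneous $x,y$ shows that each $D_d$ is a homogeneous derivation of degree $d$, and clearly $D=\sum_{d} D_d$. Because $D$ is $\bk$-linear and determined by the $D(b_j)$, each of which has only finitely many nonzero homogeneous components, the set $\Gamma = \setspec{ d \in G }{ D_d \neq 0 }$ is finite; it is nonempty since $D \neq 0$, and $\Gamma \subseteq \G(B)$.

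Finally, since $G$ is torsion-free, I would fix a total order $\le$ on $G$ making $(G,\le)$ an ordered abelian group (cf.\ \cite[Prop.\ 1.1.7]{AndersonFeil}) and set $d^* = \max \Gamma$; the claim is that $D_{d^*}$ is locally nilpotent, and since it is homogeneous and nonzero by construction this gives $\hlnd(B) \neq \{0\}$, as required. To prove the claim, let $x \in B_i$ be homogeneous and choose $N \ge 1$ with $D^N(x)=0$. Expanding gives $D^N = \sum_{(e_1,\dots,e_N) \in \Gamma^N} D_{e_1}\cdots D_{e_N}$ (a finite sum, by finiteness of $\Gamma$), where each composite $D_{e_1}\cdots D_{e_N}$ is homogeneous of degree $e_1+\cdots+e_N$; since $e_1+\cdots+e_N \le Nd^*$ in the ordered group $G$, with equality only when every $e_\ell = d^*$, the degree-$(i+Nd^*)$ component of $D^N(x)$ is exactly $D_{d^*}^N(x)$, whence $D_{d^*}^N(x)=0$. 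As every element of $B$ is a finite sum of homogeneous elements, $D_{d^*}$ is locally nilpotent. I expect the only point requiring genuine care to be the finiteness of $\Gamma$ — without it the expansion of $D^N$ need not be a finite sum — and this is exactly where affineness of $B$ and $\bk \subseteq \ker D$ enter; the remainder is bookkeeping in the ordered abelian group $G$.
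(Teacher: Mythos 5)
Your argument is correct, and it is the standard argument one would expect to find in the cited Lemma 2.7 of Chitayat--Daigle: decompose $D$ into homogeneous pieces, use affineness and $\bk$-linearity (which you correctly deduce from $B^*\subseteq\ker D$) to show only finitely many pieces are nonzero, then invoke a total order on the torsion-free group $G$ and observe that the top-degree piece $D_{d^*}$ inherits local nilpotence because it is the degree-$(i+Nd^*)$ component of $D^N$ on $B_i$. All the steps — well-definedness and the derivation property of each $D_d$, finiteness of $\Gamma$ via the finite homogeneous generating set, the ordered-group inequality $e_1+\cdots+e_N\le Nd^*$ with equality only when every $e_\ell=d^*$ — check out, and the reduction to homogeneous $x$ is harmless since $\Gamma$ is finite. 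This matches the route the paper intends.
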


\section{From derivations of $B$ to derivations of $B^{(H)}$}
\label{sectionFromderivationsofBtoderivationsofBH}

The main result of the section is Thm \ref{8237e9d1983hdjyev93}.  For its proof, we need:

\begin{lemma}  \label {876543wqzxcvbnmC9l03ord}
Let $\Beul$ be a domain of characteristic $0$ graded by an abelian group $G$ and suppose that $\Beul = R[t] = R^{[1]}$
where $R$ is a graded subring of $\Beul$ and $t$ is homogeneous.
For each $H \in \bbT(\Beul)$, we have $R^{(H)} \in \khlnd\big(\Beul^{(H)}\big)$.
\end{lemma}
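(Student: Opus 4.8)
The only homogeneous locally nilpotent derivation of $\Beul$ immediately available is $\partial := \frac{d}{dt}\in\hlnd(\Beul)$, whose kernel is $R$, and the plan is to modify it into a nonzero element of $\hlnd\big(\Beul^{(H)}\big)$ with kernel $R^{(H)}$. Write $d_0=\deg t\in\G(\Beul)$; then $\partial$ is homogeneous of degree $-d_0$, which need not lie in $H$, so $\partial$ does not in general preserve $\Beul^{(H)}=\bigoplus_{i\in H}\Beul_i$. For any nonzero homogeneous $c\in R$, however, $c\partial$ is still a locally nilpotent derivation of $\Beul$ with kernel $R$ (multiplying a locally nilpotent derivation by a nonzero element of its kernel changes neither property), and it is homogeneous of degree $\deg c-d_0$; hence $c\partial$ restricts to a derivation of $\Beul^{(H)}$ as soon as $\deg c\equiv d_0\pmod H$. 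So the crux is to find a nonzero homogeneous $c\in R$ with $\deg c\in d_0+H$. Such a $c$ need not exist for the given $t$, and the remedy I would use is to first replace $t$ by a suitable power of itself.

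To set this up, write $\Beul^{(H)}=\bigoplus_{j\ge0}M_jt^j$ with $M_j=\bigoplus_{k\in H-jd_0}R_k\subseteq R$ (a one-line computation using $\Beul_i=\bigoplus_{j\ge0}R_{i-jd_0}t^j$), so $M_0=R^{(H)}$. First I would show that $N:=\setspec{j\in\Integ}{M_j\neq0}$ is a subgroup of $\Integ$. It is a submonoid since $M_aM_b\subseteq M_{a+b}$ and $\Beul$ is a domain; and $M_j\neq0$ iff the coset $-jd_0+H$ meets $\setspec{k\in G}{R_k\neq0}$, i.e.\ iff $-j\bar d_0$ lies in the image $\bar\Sigma$ of $\setspec{k\in G}{R_k\neq0}$ in $G/H$, where $\bar d_0$ is the class of $d_0$. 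Because $H\in\bbT(\Beul)$ and $d_0\in\G(\Beul)$, the class $\bar d_0$ has finite order in $G/H$, so $\bar\Sigma\cap\langle\bar d_0\rangle$ is a submonoid of a finite cyclic group, hence a subgroup; its preimage under the homomorphism $j\mapsto j\bar d_0$, which equals $N$ (the sign being irrelevant for subgroups of $\Integ$), is therefore a subgroup, say $N=e\Integ$ with $e\ge1$.

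Since $M_j=0$ whenever $e\nmid j$, one checks that $\Beul^{(H)}=\big(R[t^e]\big)^{(H)}$; moreover $R[t^e]=R^{[1]}$ with $t^e$ homogeneous, $R^{(H)}$ is unchanged, and $H\in\bbT(R[t^e])$ (as $\G(R[t^e])\subseteq\G(\Beul)$, the pertinent quotient embeds in a torsion group). Replacing $(\Beul,R,t)$ by $(R[t^e],R,t^e)$ turns $N$ into all of $\Integ$, so from now on I may assume $N=\Integ$; in particular $M_{-1}\neq0$, which is precisely the existence of a nonzero homogeneous $c\in R$ with $\deg c\in d_0+H$.

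Fixing such a $c$, put $D:=c\,\frac{d}{dt}$. Then $D\in\lnd(\Beul)$, $\ker D=R$, and $D$ is homogeneous of degree $\deg c-d_0\in H$, so $D$ maps $\Beul^{(H)}=\bigoplus_{i\in H}\Beul_i$ into itself. The restriction $\widetilde D:=D|_{\Beul^{(H)}}$ is then a homogeneous locally nilpotent derivation of $\Beul^{(H)}$ (a restriction of a locally nilpotent derivation to an invariant subring is locally nilpotent), with $\ker\widetilde D=R\cap\Beul^{(H)}=R^{(H)}$, and $\widetilde D\neq0$ because $\widetilde D(t^m)=c\,m\,t^{m-1}\neq0$ for any $m\ge1$ with $md_0\in H$ (such $m$ exists since $d_0\in\G(\Beul)$ and $H\in\bbT(\Beul)$, and $t^m\in\Beul^{(H)}$). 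Hence $R^{(H)}=\ker\widetilde D\in\khlnd\big(\Beul^{(H)}\big)$, as desired. The step I expect to cost the most effort is showing that $N$ is a subgroup of $\Integ$: this is where the hypothesis $H\in\bbT(\Beul)$ is genuinely used, since it forces $\langle\bar d_0\rangle$ to be finite so that a submonoid of it is automatically a subgroup; the rest is routine bookkeeping with the grading.
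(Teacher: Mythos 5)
Your proof is correct, and it takes a genuinely different route from the paper's, though the two arrive at the same auxiliary integer. Where the paper introduces $I = \{m \in \Integ : m\deg(t) \in H + \G(R)\}$ and observes directly that it is a nonzero ideal $n\Integ$ because $H \in \bbT(\Beul)$, you introduce the support $N$ of the $R$-grading of $\Beul^{(H)} = \bigoplus_{j\ge 0} M_j t^j$ and prove $N$ is a subgroup $e\Integ$ by pulling back a submonoid of the finite cyclic group $\langle \bar d_0\rangle \subseteq G/H$; unwinding definitions, $N = I$ (a submonoid of a torsion group is a subgroup, so $\bar\Sigma \cap \langle\bar d_0\rangle = \langle\bar\Sigma\rangle\cap\langle\bar d_0\rangle$), so $e=n$. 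From there the constructions diverge. The paper picks $\rho$, sets $\tau = \rho t^n \in \Beul^{(H)}$ and $\xi = \rho^d \in R^{(H)}$, proves the sandwich $R^{(H)}[\tau] \subseteq \Beul^{(H)} \subseteq (R^{(H)})_\xi[\tau]$, localizes, defines $\Delta = \xi\tfrac{d}{d\tau}$ on $(\Beul^{(H)})_\xi$, and then checks by an explicit monomial computation that $\Delta$ stabilizes $\Beul^{(H)}$, finishing with $\Beul \cap (\Beul^{(H)})_\xi = \Beul^{(H)}$. You instead perform a genuine replacement of the ambient data $(\Beul,R,t)$ by $(R[t^e],R,t^e)$ (checking that $\Beul^{(H)}$, $R^{(H)}$, and the membership $H\in\bbT$ all survive the replacement), which reduces to the case $N=\Integ$; then $M_{-1}\neq 0$ hands you a homogeneous $c\in R$ with $\deg c - \deg(t^e) \in H$, and $D = c\,\tfrac{d}{d(t^e)}$ is a homogeneous element of $\lnd(\Beul)$ of degree in $H$, so stabilization of $\Beul^{(H)}$ is automatic. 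This avoids the localization, the sandwich, and the explicit preservation computation; the cost is the extra care needed in the reduction step (verifying that nothing relevant changes under the replacement, which you do). Both approaches are sound; yours trades the paper's hands-on verification for a slightly more structural reduction, and the conceptual point — that a homogeneous derivation whose degree lies in $H$ automatically restricts to $\Beul^{(H)}$ — is a nice simplification worth noting.
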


\begin{proof}
Let $H \in \bbT(\Beul)$ and note that $I=\setspec{ m \in \Integ }{ m \deg(t) \in H+\G(R) }$ is a nonzero ideal of $\Integ$.
Let $n>0$ be such that $I = n\Integ$.  We claim that 
\begin{equation} \label {i765re5dsdv78394itjgh}
\text{there exists $i \in G$ such that $R_i \neq 0$ and $i+ n\deg(t) \in H$.}
\end{equation}
Indeed, we have $n \deg(t) \in \G(R) + H$, so we may choose $g \in \G(R)$ and $h \in H$ such that $n\deg(t) = g+h$.
Since $M=\setspec{ i \in G }{ R_i \neq 0 }$ is closed under addition and the subgroup of $G$ generated by $M$ is $\G(R)$,
we have $g = i_1 - i_2$ for some $i_1, i_2 \in M$. Since $H \in \bbT(\Beul)$, there exists $m \in \Nat\setminus\{0\}$ such that $m i_1 \in H$.
Define $i = (m-1)i_1 + i_2$, then $i \in M$ (so $R_i\neq0$) and $i+n\deg(t) = (m-1)i_1 + i_2 + g + h = mi_1+h \in H$,
which proves \eqref{i765re5dsdv78394itjgh}.

Choose $i$ as in \eqref{i765re5dsdv78394itjgh}, choose $\rho \in R_i \setminus \{0\}$,
and choose $d \in \Nat\setminus\{0\}$ such that $d i \in H$ ($d$ exists because $H \in \bbT(\Beul)$).
Define $\xi = \rho^d$ and $\tau = \rho t^n$; these are nonzero homogeneous elements of $R^{(H)}$ and $\Beul^{(H)}$ respectively,
and $\tau$ is transcendental over $R^{(H)}$.
We claim that 
\begin{equation}  \label {78ytu765ededyv3pmr0i9}
R^{(H)}[\tau] \subseteq \Beul^{(H)} \subseteq \big(R^{(H)}\big)_\xi [\tau] .
\end{equation}
The first part of \eqref{78ytu765ededyv3pmr0i9} is clear.
To prove the second part, note that
each element of $\Beul^{(H)}$ is a finite sum of elements of the form $rt^m$ where $r$ is a homogeneous element of $R\setminus\{0\}$, $m \in \Nat$,
and $\deg(rt^m) \in H$. For such an element $rt^m$, we have $\deg(r) + m\deg(t) \in H$ and $\deg(r) \in \G(R)$,
so $m \in I = n\Integ$ and hence $m = nk$ for some $k \in \Nat$.
Then $rt^m = r (t^n)^k = r \rho^{-k} \tau^k$.
Choose $\ell \in \Nat$ such that $\ell d -k \ge0$; then 
\begin{equation}  \label {p8utr4w3q2mc0nbtd}
\textstyle
r t^m = \frac{r \rho^{ \ell d-k }}{\xi^\ell} \tau^k \, .
\end{equation}
Since $\deg(rt^m), \deg(\xi), \deg(\tau) \in H$, \eqref{p8utr4w3q2mc0nbtd} gives $\deg( r \rho^{ \ell d-k } ) \in H$,
so $r \rho^{ \ell d-k } \in R^{(H)}$ and hence $rt^m \in \big(R^{(H)}\big)_\xi [\tau]$ by \eqref{p8utr4w3q2mc0nbtd} again.
This proves $\Beul^{(H)} \subseteq \big(R^{(H)}\big)_\xi [\tau]$ and completes the proof of \eqref{78ytu765ededyv3pmr0i9}.

Now  \eqref{78ytu765ededyv3pmr0i9} implies that $\big(\Beul^{(H)}\big)_\xi = S[\tau] = S^{[1]}$ where $S= \big(R^{(H)}\big)_\xi$,
so we may consider the derivative $\frac{d}{d\tau} : S[\tau] \to S[\tau]$.
Let $\Delta = \xi \frac{d}{d\tau} : (\Beul^{(H)} )_\xi \to (\Beul^{(H)} )_\xi$;
then $\Delta \in \hlnd \big( (\Beul^{(H)} )_\xi  \big)$ and $\ker \Delta = S$.
We claim that $\Delta(\Beul^{(H)}) \subseteq \Beul^{(H)}$.
To see this, consider the same element $rt^m$ as before and let us check that $\Delta(rt^m) \in \Beul^{(H)}$.
If $m=0$ then $\deg(r) = \deg(rt^m) \in H$, so $rt^m = r \in R^{(H)}$ and hence $\Delta(rt^m) = 0 \in \Beul^{(H)}$.
If $m>0$ then $k>0$, so \eqref{p8utr4w3q2mc0nbtd} gives
$$
\textstyle
(\Beul^{(H)} )_\xi \ni \Delta(rt^m)
=  \xi \frac{d}{d\tau}\big( \frac{r \rho^{ \ell d-k }}{\xi^\ell} \tau^k \big)
= \xi \cdot \frac{r \rho^{ \ell d-k }}{\xi^\ell} \cdot  k \tau^{k-1} = k r \rho^{d-1} t^{n(k-1)} \in \Beul .
$$
Since $\Beul \cap (\Beul^{(H)} )_\xi = \Beul^{(H)}$ by Lemma \ref{98767gh11dmdffkll0ahcchvr0vfesh8263543794663cnir}, we get $\Delta(rt^m) \in \Beul^{(H)}$.
Since every element of $\Beul^{(H)}$ is a finite sum of elements  $rt^m$ of this type, we obtain $\Delta(\Beul^{(H)}) \subseteq \Beul^{(H)}$.
Let $D : \Beul^{(H)} \to \Beul^{(H)}$ be the restriction of $\Delta$.
Then $D \in \hlnd\big( \Beul^{(H)} \big)$ and $\ker D = \Beul^{(H)} \cap \ker\Delta = \Beul^{(H)} \cap \big(R^{(H)}\big)_\xi$.
Since $R$ is factorially closed
in $\Beul = R^{[1]}$, it follows that $R^{(H)}$ is factorially closed in $\Beul^{(H)}$,
so $\Beul^{(H)} \cap \big(R^{(H)}\big)_\xi = R^{(H)}$ and hence $\ker D = R^{(H)}$.
We have $D(\tau) = \xi \frac{d}{d\tau}(\tau) = \xi \neq 0$, so $D \neq 0$ and hence $R^{(H)} \in \khlnd\big(\Beul^{(H)}\big)$.
\end{proof}

\begin{theorem}  \label {8237e9d1983hdjyev93}
Let $\bk$ be a field of characteristic $0$, let $B$ be an affine $\bk$-domain graded over $\bk$ by an abelian group $G$, and let $H \in \bbT(B)$.
\begin{enumerata}

\item For each $A \in \khlnd(B)$, we have $A^{(H)} \in \khlnd(B^{(H)})$.

\item The map $\khlnd(B) \to \khlnd(B^{(H)})$, $A \mapsto A^{(H)}$, is injective.

\item If $\hlnd(B) \neq \{0\}$ then $\hlnd(B^{(H)}) \neq \{0\}$.

\item If $G$ is torsion-free and $B$ is non-rigid then $B^{(H)}$ is non-rigid.

\end{enumerata}
\end{theorem}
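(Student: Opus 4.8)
The plan is to prove (a) in detail and then obtain (b), (c) and (d) from it by short deductions.

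For (a), fix $D \in \hlnd(B)\setminus\{0\}$ and set $A = \ker D$. The first step is to produce a homogeneous local slice: choose a homogeneous $b$ with $D(b)\neq 0$, let $n\ge 1$ be maximal with $D^n(b)\neq 0$, and put $t := D^{n-1}(b)$ and $a := D(t)$. Then $t$ is homogeneous, $D(t)\neq 0$, $D^2(t)=0$, and $a$ is a nonzero homogeneous element of $A$. By the Slice Theorem (Lemma \ref{SliceThm}(b)), $B_a = A_a[t] = (A_a)^{[1]}$ is a $G$-graded polynomial ring over the graded subring $A_a$, with $t$ homogeneous. Since $B_a$ is obtained from $B$ by inverting the homogeneous element $a$, every nonzero homogeneous element of $B_a$ has degree in $\G(B)$, so $\G(B_a)=\G(B)$; as $\bbT(\,\cdot\,)$ depends only on $\G(\,\cdot\,)$, this gives $H\in\bbT(B_a)$. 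Because $H\in\bbT(B)$ I may choose $m\ge 1$ with $m\deg(a)\in H$ and replace $a$ by $a':=a^m$, which changes neither $B_a$ nor $A_a$ but puts $a'$ in $A^{(H)}$. Now Lemma \ref{876543wqzxcvbnmC9l03ord}, applied to $\Beul=B_a=A_a[t]$ and $R=A_a$, produces a nonzero $\Delta\in\hlnd\big((B_a)^{(H)}\big)$ with $\ker\Delta=(A_a)^{(H)}$; and Lemma \ref{ze3xqrctwvyjmkiu0u9inj7b2q3q01iqu} (applied to $B$ and to $A$, with the multiplicative set $\{1,a',(a')^2,\dots\}$) identifies $(B_a)^{(H)}=\big(B^{(H)}\big)_{a'}$ and $(A_a)^{(H)}=\big(A^{(H)}\big)_{a'}$. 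Thus $\Delta$ is a nonzero homogeneous locally nilpotent derivation of $\big(B^{(H)}\big)_{a'}$ with kernel $\big(A^{(H)}\big)_{a'}$.

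The crux of (a) is the descent of $\Delta$ to $B^{(H)}$. Since $A$ is factorially closed in $B$ and both are graded, $A^{(H)}$ is factorially closed in $B^{(H)}$, so $B^{(H)}\cap\ker\Delta = B^{(H)}\cap\big(A^{(H)}\big)_{a'}=A^{(H)}$. Moreover the nonzero elements of $\bk$ are units of $B$ and hence lie in the factorially closed subring $\ker D$, so $\bk\subseteq A$ and, together with $\bk\subseteq B_0$, $\bk\subseteq A^{(H)}$; since $B^{(H)}$ is an affine $\bk$-domain by Corollary \ref{8Qp98781hy2uexihvcx2qw2wUxq3ew0pokjmnbvaw2345}, it is finitely generated as an algebra over $A^{(H)}=B^{(H)}\cap\ker\Delta$. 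Lemma \ref{6543wxcvbnc3wvlplpo0i9kijh7waq4rtmlb87hg}, applied to $R=B^{(H)}$, $S=\{1,a',(a')^2,\dots\}$ and $\Delta$, then yields $s\in S$ with $s\Delta\big(B^{(H)}\big)\subseteq B^{(H)}$ such that $D':=(s\Delta)|_{B^{(H)}}$ is locally nilpotent. It is homogeneous because $s$ and $\Delta$ are; it is nonzero (pick an element of $\big(B^{(H)}\big)_{a'}$ not killed by $\Delta$ and clear its denominator, a power of $a'\in\ker\Delta$, to get an element of $B^{(H)}$ not killed by $\Delta$); and $\ker D' = B^{(H)}\cap\ker(s\Delta)=B^{(H)}\cap\ker\Delta=A^{(H)}$. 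Hence $A^{(H)}\in\khlnd\big(B^{(H)}\big)$, which proves (a).

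For (b), let $A_i=\ker D_i\in\khlnd(B)$. For a homogeneous $x\in B$, choose $m\ge 1$ with $m\deg(x)\in H$; by factorial closedness, $x\in A_i\iff x^m\in A_i\iff x^m\in A_i^{(H)}$. Hence $A_1^{(H)}=A_2^{(H)}$ forces $A_1$ and $A_2$ to contain exactly the same homogeneous elements, so $A_1=A_2$ as both are graded. Part (c) is then immediate: a nonzero $D\in\hlnd(B)$ gives $\ker D\in\khlnd(B)$, and by (a) $(\ker D)^{(H)}\in\khlnd\big(B^{(H)}\big)$, so $\hlnd\big(B^{(H)}\big)\neq\{0\}$. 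For (d), when $G$ is torsion-free so is the subgroup $H$, and $B^{(H)}$ is an affine $\bk$-domain; Lemma \ref{876543wsdfgxhnm290cAo} then shows that $B$ (resp.\ $B^{(H)}$) is non-rigid if and only if $\hlnd(B)\neq\{0\}$ (resp.\ $\hlnd\big(B^{(H)}\big)\neq\{0\}$), and (c) bridges the two. The step I expect to be the main obstacle is the descent in (a): one must take care to replace $a$ by a power lying in $B^{(H)}$ before invoking Lemma \ref{ze3xqrctwvyjmkiu0u9inj7b2q3q01iqu}, and to verify that the restricted derivation $D'$ is genuinely nonzero with kernel exactly $A^{(H)}$.
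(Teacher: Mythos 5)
Your proof is correct and follows essentially the same approach as the paper's: a homogeneous local slice giving $B_a=A_a[t]$, Lemma \ref{876543wqzxcvbnmC9l03ord} to produce $\Delta\in\hlnd\big((B_a)^{(H)}\big)$ with kernel $(A_a)^{(H)}$, the identification $(B_a)^{(H)}=\big(B^{(H)}\big)_{a'}$ via Lemma \ref{ze3xqrctwvyjmkiu0u9inj7b2q3q01iqu}, and descent to $B^{(H)}$ via Lemma \ref{6543wxcvbnc3wvlplpo0i9kijh7waq4rtmlb87hg}. The only differences are cosmetic: you spell out the construction of the slice $t$, replace $a$ by $a'=a^m$ up front instead of introducing $\alpha=a^n$ mid-proof, and show $D'\neq 0$ by clearing denominators rather than (as the paper does) exhibiting $t^m\in B^{(H)}\setminus A^{(H)}$.
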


\begin{proof}
(a) Let $A \in \khlnd(B)$.
Choose $D \in \hlnd(B) \setminus \{0\}$ such that $\ker D = A$, and choose a homogeneous element $t \in B$ such that $D(t)\neq 0$ and $D^2(t)=0$.
Consider the homogeneous element $a = D(t)$ of $A\setminus\{0\}$ and
let $\Beul = B_a$ and $R=A_a$; then $\Beul = R[t] = R^{[1]}$ by Lemma \ref{SliceThm}(b).
Since $\G(\Beul) = \G(B)$, we have $\bbT(\Beul) = \bbT(B)$ and hence $H \in \bbT(\Beul)$;
so  $R^{(H)} \in \khlnd\big(\Beul^{(H)}\big)$ by Lemma \ref{876543wqzxcvbnmC9l03ord}.
Choose $\Delta \in \hlnd( \Beul^{(H)} )$ such that $\ker(\Delta) = R^{(H)}$.

Since $H \in \bbT(B)$, there exists $n \in \Nat\setminus\{0\}$ such that $\deg(a^n) \in H$; let $\alpha = a^n$ and observe that $\Beul = B_\alpha$.
By Lemma \ref{ze3xqrctwvyjmkiu0u9inj7b2q3q01iqu},  we have $\Beul^{(H)} = \big( B_\alpha \big)^{(H)} = \big( B^{(H)} \big)_\alpha$, so
$\Delta \in \hlnd\big( ( B^{(H)} )_\alpha \big)$.
Since (by Cor.\ \ref{8Qp98781hy2uexihvcx2qw2wUxq3ew0pokjmnbvaw2345}) $B^{(H)}$ is $\bk$-affine,
Lemma \ref{6543wxcvbnc3wvlplpo0i9kijh7waq4rtmlb87hg} implies that there exists $\ell \in \Nat$
such that $\alpha^\ell\Delta : ( B^{(H)} )_\alpha \to ( B^{(H)} )_\alpha$ maps $B^{(H)}$ into itself.
Let $\delta : B^{(H)} \to B^{(H)}$ be the restriction of $\alpha^\ell\Delta$; then $\delta \in \hlnd( B^{(H)} )$
and $\ker(\delta) = B^{(H)} \cap \ker(\Delta)  = B^{(H)} \cap R^{(H)}$. To finish the proof of (a), it suffices to
verify that 
\begin{equation}  \label {f4ergc3t9ruw9eidmfcngh}
B^{(H)} \cap R^{(H)} = A^{(H)} \quad \text{and} \quad A^{(H)} \neq B^{(H)} .
\end{equation}
The fact that $A$ is factorially closed in $B$ implies that $B \cap R = B \cap A_a = A$, so 
$$
A^{(H)} \subseteq B^{(H)} \cap R^{(H)}  \subseteq B^{(H)} \cap B \cap R = B^{(H)} \cap A = A^{(H)} ,
$$
which proves the first part of \eqref{f4ergc3t9ruw9eidmfcngh}.
To prove the second part, recall that $D(t) \neq 0$.
Since $H \in \bbT(B)$, we can pick $m \in \Nat\setminus\{0\}$ such that $t^m \in B^{(H)}$.
Since  $D(t^m) = mt^{m-1}D(t)\neq0$, we have $t^m \notin A$, so $t^m \notin A^{(H)}$ and hence $A^{(H)} \neq B^{(H)}$.
So \eqref{f4ergc3t9ruw9eidmfcngh} is true and assertion (a) is proved.

(b) Suppose that $A_1,A_2 \in \khlnd(B)$ are such that $A_1^{(H)} =A_2^{(H)}$.
Consider a nonzero homogeneous element $a \in A_1$. Since $H \in \bbT(B)$, we can pick $m \in \Nat\setminus\{0\}$ such that $a^m \in A_1^{(H)}$.
Then $a^m \in A_2^{(H)} \subseteq A_2$, so $a \in A_2$ since $A_2$ is factorially closed in $B$.
This shows that $A_1 \subseteq A_2$, and $A_1=A_2$ follows by symmetry. This proves (b).

Part (c) is an obvious consequence of part (a).

(d) Since $G$ is torsion-free and $B$ is $\bk$-affine and non-rigid,
Lemma \ref{876543wsdfgxhnm290cAo} implies that $\hlnd(B) \neq \{0\}$, so $\hlnd(B^{(H)}) \neq \{0\}$ by (c), so $B^{(H)}$ is non-rigid.
\end{proof}

\begin{bigremark}
It is easy to produce examples showing that the converses to (c) and (d) are false.
If (a) is reformulated as an ``if-then'' statement, its converse is also false.
Moreover, the Theorem itself is false if we do not assume that $H \in \bbT(B)$.
For instance, consider the polynomial ring $\Comp[X,Y]$ with the $\Integ^2$-grading defined by declaring that $X$ and $Y$ are 
homogeneous of degrees $(1,0)$ and $(0,1)$, respectively. Let $B$ be the $\Integ^2$-graded subalgebra $\Comp[X^2,X^3,Y]$ of $\Comp[X,Y]$.
Then $0 \neq \frac{\partial}{\partial Y} \in \hlnd(B)$ but if $H$ is the subgroup of $\Integ^2$ generated by $(1,0)$
then $B^{(H)} = \Comp[X^2,X^3]$ is rigid. So all assertions of the Theorem are false for this choice of $H$.

\end{bigremark}

\begin{bigremark} \label {8765r984f5f7637w}
If $B$ is a ring of characteristic $0$ (graded or not), the 
{\it Makar-Limanov invariant\/} of $B$ (denoted $\ML(B)$)
is the intersection of the kernels of all elements of $\lnd(B)$.
If $B$ is graded, define the {\it homogeneous Makar-Limanov invariant\/} of $B$ (denoted $\HML(B)$)
to be the intersection of the kernels of all elements of $\hlnd(B)$.
Note that if $B$ is graded then $\ML(B) \subseteq \HML(B)$.

Thm \ref{8237e9d1983hdjyev93} implies that
if $\bk$ is a field of characteristic $0$ and $B$ an affine $\bk$-domain graded over $\bk$ by an abelian group $G$,
then $\HML(B^{(H)}) \subseteq \HML(B)^{(H)}$ for every $H \in \bbT(B)$.
\end{bigremark}

\section*{The set $\Xscr(B)$}

\begin{notation}  \label {7823689wuehdb4xei8ibc}
If $B$ is a domain of characteristic $0$ graded by an abelian group, let 
$$
\Xscr(B) = \setspec{ H \in \bbT(B) }{ \hlnd( B^{(H)} ) \neq \{0\} } .
$$
\end{notation}

\begin{bigremark} \label {6543wscvhyuhoblgmrne4b3vf432wsdfw}
Let $B$ be a domain of characteristic $0$ graded by an abelian group.
\begin{enumerata}

\item $\Xscr(B) \subseteq \setspec{ H \in \bbT(B) }{ \text{$B^{(H)}$ is non-rigid} }$

\item If $\G(B)$ is torsion-free and $B$ is a finitely generated algebra over a field,
then equality holds in part (a), by Lemma \ref{876543wsdfgxhnm290cAo}.

\end{enumerata}
\end{bigremark}

Our next objective is to show that if $H' \subseteq H$ belong to $\bbT(B)$ and $H \in \Xscr(B)$, then $H' \in \Xscr(B)$.
This is part (a) of Cor.\ \ref{8A273gLbv9ncxcavcbo3w982376re5}, and will easily follow from Thm \ref{8237e9d1983hdjyev93}.
The following notation is convenient.

\begin{notation}  \label {Zlknckjhi7evoenvnbCgEybgdCgbfpr9t839of}
Let $B$ be a domain graded by an abelian group $G$.  For each $H \in \bbT(B)$, we define 
$$
\bbT_H(B) = \setspec{ H' \in \bbT(B) }{ H' \subseteq H } .
$$
\end{notation}

\begin{lemma}  \label {3Pkj762cx7bXckayscxcvi8Y025esxcZxbkq}
Given a domain $B$ graded by an abelian group $G$,
\begin{equation*}
\bbT( B^{(H)} ) = \bbT_H(B) \quad \text{for all $H \in \bbT(B)$.}
\end{equation*}
\end{lemma}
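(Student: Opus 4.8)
The plan is to reduce the statement to an elementary fact about the relation ``having torsion quotient'' among subgroups of $\G(B)$, using Lemma \ref{GOfBH} to pin down $\G(B^{(H)})$. Throughout I would fix $H \in \bbT(B)$ and write $\G = \G(B)$.

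The first step is to note that both sides consist of subgroups of $H$: the ring $B^{(H)} = \bigoplus_{i \in H} B_i$ carries the $H$-grading (see \ref{8736egd9fv092b5}), so by definition $\bbT(B^{(H)})$ ranges over subgroups of $H$, while $\bbT_H(B) = \setspec{H' \in \bbT(B)}{H' \subseteq H}$ consists of subgroups of $G$ contained in $H$, i.e.\ again of subgroups of $H$. So it suffices to fix a subgroup $H'$ of $H$ and show that the two membership conditions agree. The second step is to apply Lemma \ref{GOfBH}, which gives $\G(B^{(H)}) = H \cap \G$; put $\G' = H \cap \G$. Since $H' \subseteq H$ we have $H' \cap \G' = H' \cap \G$, so unwinding the definition of $\bbT(\,\cdot\,)$ on both sides turns the desired equality into
$$
\bigl(\,H' \in \bbT(B^{(H)}) \iff \G'/(H' \cap \G)\text{ is torsion}\,\bigr) \qquad \text{and} \qquad \bigl(\,H' \in \bbT_H(B) \iff \G/(H' \cap \G)\text{ is torsion}\,\bigr).
$$
Thus the lemma reduces to the following claim about the nested subgroups $H' \cap \G \subseteq \G' \subseteq \G$, where we additionally know that $\G/\G'$ is torsion (because $\G' = H \cap \G$ and $H \in \bbT(B)$): the group $\G'/(H' \cap \G)$ is torsion if and only if $\G/(H' \cap \G)$ is torsion.

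The third and final step is to prove this claim, which is just transitivity of ``torsion quotient''. One direction is immediate: $\G'/(H' \cap \G)$ is a subgroup of $\G/(H' \cap \G)$, and a subgroup of a torsion group is torsion. For the converse, given $g \in \G$, I would first choose $m \ge 1$ with $mg \in \G'$ (possible since $\G/\G'$ is torsion) and then choose $n \ge 1$ with $n(mg) \in H' \cap \G$ (possible since $\G'/(H' \cap \G)$ is torsion), so that $(nm)g \in H' \cap \G$ with $nm \ge 1$. I expect no genuine obstacle here; the only point requiring care is the bookkeeping observation in the first step that $B^{(H)}$ is graded by $H$ rather than by the ambient group $G$, so that $\bbT(B^{(H)})$ consists of subgroups of $H$ --- without this the asserted equality would fail, since for instance $G$ itself would otherwise lie in $\bbT(B^{(H)})$ whenever $H \ne G$.
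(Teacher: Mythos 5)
Your proof is correct and follows essentially the same route as the paper: observe that both sides consist of subgroups of $H$, apply Lemma \ref{GOfBH} to identify $\G(B^{(H)}) = H \cap \G(B)$, and reduce to the transitivity of the ``torsion quotient'' relation for the chain $H' \cap \G(B) \subseteq H \cap \G(B) \subseteq \G(B)$. The only difference is that the paper leaves that transitivity as an unspoken fact, while you spell it out.
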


\begin{proof}
Let $H \in \bbT(B)$.
Since $B^{(H)}$ is an $H$-graded ring, $\bbT( B^{(H)} )$ is a collection of subgroups of $H$;
clearly,  $\bbT_H(B)$ too is a collection of subgroups of $H$.
So it suffices to show that, for each subgroup $H'$ of $H$, we have $H' \in \bbT( B^{(H)} ) \Leftrightarrow H' \in \bbT(B)$.
Let $H'$ be a subgroup of $H$.
Let $U = H' \cap \G( B)$, $V = H \cap \G(B)$ and $W = \G(B)$, and note that $U \subseteq V \subseteq W$ and (since $H \in \bbT(B)$) $W/V$ is torsion;
so $V/U$ is torsion if and only if $W/U$ is torsion.
Since $\G(B^{(H)}) = H \cap \G(B)$ by Lemma \ref{GOfBH}, we have
$\G( B^{(H)} ) / ( H' \cap \G( B^{(H)} ) ) = (H \cap \G(B)) / ( H' \cap H \cap \G( B) ) = V/U$,
so $H' \in \bbT( B^{(H)} )$ $\Leftrightarrow$ $V/U$ is torsion.  Hence,
$H' \in \bbT( B^{(H)} ) \Leftrightarrow \text{$V / U$ is torsion} \Leftrightarrow \text{$W / U$ is torsion} \Leftrightarrow H' \in \bbT(B)$,
as desired.
\end{proof}

\begin{corollary} \label {8A273gLbv9ncxcavcbo3w982376re5}
Let $\bk$ be a field of characteristic $0$ and $B$ an affine $\bk$-domain graded over $\bk$ by an abelian group $G$.
\begin{enumerata}

\item If $H \in \Xscr(B)$ then $\bbT_H(B) \subseteq \Xscr(B)$.

\item Let $H \in \bbT(B)$ and let $T$ be the torsion subgroup of $H \cap \G(B)$.
If $B^{(H)}$ is non-rigid then so is $B^{(H')}$ for every $H' \in \bbT(B)$ such that $T \subseteq H' \cap \G(B) \subseteq H$.

\end{enumerata}
\end{corollary}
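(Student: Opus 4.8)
The plan is to deduce both parts from the Descent Theorem \ref{8237e9d1983hdjyev93}, applied to the Veronese subring $B^{(H)}$ (and, for part (b), to a coarsening of its grading that kills torsion). \emph{For (a):} suppose $H \in \Xscr(B)$ and let $H' \in \bbT_H(B)$. By Cor.\ \ref{8Qp98781hy2uexihvcx2qw2wUxq3ew0pokjmnbvaw2345}, $B^{(H)}$ is an affine $\bk$-domain, graded over $\bk$ by $H$ (note $(B^{(H)})_0 = B_0 \supseteq \bk$). By Lemma \ref{3Pkj762cx7bXckayscxcvi8Y025esxcZxbkq}, $\bbT(B^{(H)}) = \bbT_H(B)$, so $H' \in \bbT(B^{(H)})$; and $(B^{(H)})^{(H')} = B^{(H')}$ since $H' \subseteq H$. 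As $\hlnd(B^{(H)}) \neq \{0\}$, Theorem \ref{8237e9d1983hdjyev93}(c), applied to the $H$-graded ring $B^{(H)}$ and the subgroup $H'$, gives $\hlnd(B^{(H')}) \neq \{0\}$, i.e.\ $H' \in \Xscr(B)$.

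\emph{For (b):} put $K_0 = H \cap \G(B)$ and $K = H' \cap \G(B)$. Since $B_i = 0$ for $i \notin \G(B)$, we have $B^{(H)} = \bigoplus_{i \in K_0} B_i$ and $B^{(H')} = \bigoplus_{i \in K} B_i$; by hypothesis $T \subseteq K \subseteq K_0$, and $K_0/K$ is torsion because $H' \in \bbT(B)$ makes $\G(B)/K$ torsion while $K_0/K$ embeds in it. Since the grading is over $\bk$, $\G(B)$ is finitely generated, so $K_0$ is too and $T$ is finite; hence $G' := K_0/T$ is torsion-free. Let $\pi : K_0 \to G'$ be the quotient map and regrade $C := B^{(H)} = \bigoplus_{i \in K_0}B_i$ by $G'$ via $C_g = \bigoplus_{\pi(i)=g} B_i$. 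Then $C$ is an affine $\bk$-domain (Cor.\ \ref{8Qp98781hy2uexihvcx2qw2wUxq3ew0pokjmnbvaw2345}) graded over $\bk$ by the torsion-free group $G'$ ($C_0 = \bigoplus_{i \in T}B_i \supseteq \bk$, and $\G(C) = \pi(K_0) = G'$ using $\G(B^{(H)}) = K_0$ from Lemma \ref{GOfBH}), and $C$ is non-rigid by hypothesis. The subgroup $K/T$ of $G'$ lies in $\bbT(C)$ since $\G(C)/(K/T) = (K_0/T)/(K/T) \isom K_0/K$ is torsion, and $\pi^{-1}(K/T) = K + T = K$ (because $T \subseteq K$), so $C^{(K/T)} = \bigoplus_{\pi(i)\in K/T}B_i = \bigoplus_{i\in K}B_i = B^{(H')}$. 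Now Theorem \ref{8237e9d1983hdjyev93}(d), applied to $C$, $G'$ and $K/T$, shows that $C^{(K/T)} = B^{(H')}$ is non-rigid.

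The only delicate point is the replacement, in part (b), of the $K_0$-grading on $B^{(H)}$ by the coarser $G' = K_0/T$ grading: Theorem \ref{8237e9d1983hdjyev93}(d) requires a torsion-free grading group, and the hypothesis $T \subseteq H' \cap \G(B)$ is exactly what ensures that $B^{(H')}$ is still a Veronese subring of $B^{(H)}$ for the coarser grading — equivalently, that $K$ is a union of fibers of $\pi$. The remaining verifications are routine manipulations of the identities $B^{(H)} = \bigoplus_{i \in H \cap \G(B)} B_i$ and $(B^{(H'')})^{(H')} = B^{(H')}$ for $H' \subseteq H''$.
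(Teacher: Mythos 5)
Your proof is correct and follows essentially the same route as the paper's: for (a), you regrade $B^{(H)}$ by $H$ and apply the Descent Theorem together with Lemma~\ref{3Pkj762cx7bXckayscxcvi8Y025esxcZxbkq}, and for (b), you coarsen the grading of $B^{(H)}$ by quotienting out the torsion subgroup $T$ and then invoke the torsion-free case. The only stylistic difference is that you cite Theorem~\ref{8237e9d1983hdjyev93}(d) directly, whereas the paper unpacks that step by first applying Lemma~\ref{876543wsdfgxhnm290cAo} and then part~(a) of the corollary; these amount to the same argument.
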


\begin{proof}
(a) Let $H \in \Xscr(B)$.  By Cor.\ \ref{8Qp98781hy2uexihvcx2qw2wUxq3ew0pokjmnbvaw2345}, $\Beul = B^{(H)}$ is an affine $\bk$-domain; clearly,
$\Beul$ is graded by $H$ and this grading is over $\bk$.
Since $\hlnd(\Beul) \neq \{0\}$ (because $H \in \Xscr(B)$), 
Thm \ref{8237e9d1983hdjyev93} implies that $\hlnd( \Beul^{(H')} ) \neq \{0\}$ for all $H' \in \bbT(\Beul)$;
we have $\bbT(\Beul) = \bbT_H(B)$ by Lemma \ref{3Pkj762cx7bXckayscxcvi8Y025esxcZxbkq},
and for each $H' \in \bbT_H(B)$ we have $H' \subseteq H$ and hence $\Beul^{(H')} = B^{(H')}$,
so $\hlnd( B^{(H')} ) \neq \{0\}$ (and hence $H' \in \Xscr(B)$) for all $H' \in \bbT_H(B)$. 

(b) As in part (a), the ring $\Beul = B^{(H)}$ is an affine $\bk$-domain and is graded over $\bk$ by $H$.
We have $\G( \Beul ) = H \cap \G(B)$ by Lemma \ref{GOfBH}, so the torsion subgroup of $\G( \Beul )$ is $T$ and the group $\Omega = \G( \Beul ) / T$ is torsion-free.
Let $\pi : \G(\Beul) \to \Omega$ be the canonical homomorphism of the quotient and let
$\bar\Beul$ denote the ring $\Beul$ equipped with the $\Omega$-grading $\bar\Beul = \bigoplus_{\omega \in \Omega} \bar\Beul_\omega$
where (for each $\omega \in \Omega$) $\bar\Beul_\omega = \bigoplus_{ \pi(i)=\omega } \Beul_i$.
Since $\bar\Beul$ is a non-rigid affine $\bk$-domain and $\Omega$ is torsion-free,
Lemma \ref{876543wsdfgxhnm290cAo} gives $\hlnd( \bar\Beul ) \neq \{0\}$ and hence $\Omega \in \Xscr(\bar\Beul)$.
Since the grading of $\bar\Beul$ is over $\bk$, (a) implies that  $\bbT_\Omega(\bar\Beul) \subseteq \Xscr(\bar\Beul)$,
i.e., $\bbT(\bar\Beul) \subseteq \Xscr(\bar\Beul)$.
In particular, $\bar\Beul^{(W)}$ is non-rigid for each $W \in \bbT(\bar\Beul)$;
we have the equalities of non-graded rings $\bar\Beul^{(W)} = \Beul^{(\pi^{-1}(W))} = (B^{(H)})^{(\pi^{-1}(W))} = B^{(\pi^{-1}(W))}$, so 
\begin{equation}  \label {98ujedws65tgawieryhqstrdfuuiwis09oiubvcccftbmhyxdb65}
\text{$B^{(\pi^{-1}(W))}$ is non-rigid for every  $W \in \bbT(\bar\Beul)$.}
\end{equation}
Consider $H' \in \bbT(B)$ such that $T \subseteq H' \cap \G(B) \subseteq H$; we show that $B^{(H')}$ is non-rigid.
We have $B^{(H')} = B^{(H'')}$ where we set $H'' = H' \cap \G(B)$.
Since $\ker\pi \subseteq H'' \subseteq \dom\pi$, if we define $W = \pi(H'')$ then $H'' = \pi^{-1}(W)$.
We claim that $\Omega / W$ is torsion.
To see this, consider $\omega \in \Omega$.
Choose $i \in \G( \Beul ) = H \cap \G(B)$ such that $\pi(i)=\omega$.
Since $H' \in \bbT(B)$, there exists $n\ge1$ such that $ni \in H' \cap \G(B) = H''$; then $n\omega = \pi(ni) \in \pi(H'') = W$,
which shows that $\Omega / W$ is torsion and hence $W \in \bbT(\bar\Beul)$.
By \eqref{98ujedws65tgawieryhqstrdfuuiwis09oiubvcccftbmhyxdb65},
$B^{(H')} = B^{(H'')} = B^{(\pi^{-1}(W))}$ is non-rigid.
\end{proof}

\section{The group $\bG(B)$}
\label {SectionTheGroupbGB}

This section is devoted to defining the subgroup $\bG(B)$ of $\G(B)$ and proving some of its properties.
We will see that $\bG(B)$ is related to locally nilpotent derivations in several ways;
this idea is first encountered in Thm \ref{3498huv8oe98y3876wgebf09}, and is further developed in subsequent sections.

Recall that we defined $\Spec^1(B) = \setspec{ \pgoth \in \Spec B }{ \haut\pgoth=1 }$ for any ring $B$.

\begin{definition} \label {oi8y81726et73e8f0i349rtfyh-sat}
Let $B = \bigoplus_{i \in G} B_i$ be a domain graded by an abelian group $G$.
Given $\pgoth \in \Spec B$,
the set $\bbM(B,\pgoth) = \setspec{ i \in G }{ B_i \nsubseteq \pgoth }$ is a submonoid of $\G(B)$.
The subgroup of $\G(B)$ generated by $\bbM(B,\pgoth)$ is denoted $\overline{\bbM}(B,\pgoth)$.
We define the subgroup $\bG(B)$ of $\G(B)$ by
$$
\bG(B) = \begin{cases}
\bigcap_{ \pgoth \in \Spec^1(B) }\overline{\bbM}(B,\pgoth) & \text{if $\Spec^1(B) \neq \emptyset$,} \\
\G(B) & \text{if $\Spec^1(B) = \emptyset$.}
\end{cases}
$$
We say that $B$ is {\it saturated in codimension $1$\/} if $\bG(B) = \G(B)$.
\end{definition}

\begin{lemma}  \label {o72ytg8ed623orh9yft8v1238293y}
Let $B = \bigoplus_{i \in G} B_i$ be a domain graded by an abelian group $G$.
If $\pgoth$ is a homogeneous prime ideal of $B$ then $\overline{\bbM}(B,\pgoth) = \G(B/\pgoth)$.
\end{lemma}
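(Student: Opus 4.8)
The plan is to prove the equality by comparing the two subgroups of $G$ through their generating sets, rather than by any direct computation. Since $\pgoth$ is a \emph{homogeneous} prime ideal, the quotient $B/\pgoth$ carries a natural $G$-grading; the first step is to recall this standard fact precisely, namely that $\pgoth = \bigoplus_{i \in G}(\pgoth \cap B_i)$ and hence $B/\pgoth = \bigoplus_{i \in G} B_i/(\pgoth \cap B_i)$ as $G$-graded rings, so that $(B/\pgoth)_i = B_i/(\pgoth \cap B_i)$ for every $i \in G$.

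Next I would identify the support of this grading with $\bbM(B,\pgoth)$. For each $i \in G$ we have $B_i \subseteq \pgoth$ if and only if $B_i = \pgoth \cap B_i$, i.e.\ if and only if $(B/\pgoth)_i = 0$; equivalently, $(B/\pgoth)_i \neq 0$ if and only if $B_i \nsubseteq \pgoth$. Thus the set $\setspec{ i \in G }{ (B/\pgoth)_i \neq 0 }$ coincides exactly with $\bbM(B,\pgoth) = \setspec{ i \in G }{ B_i \nsubseteq \pgoth }$.

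Finally, $\G(B/\pgoth)$ is by definition the subgroup of $G$ generated by $\setspec{ i \in G }{ (B/\pgoth)_i \neq 0 }$, whereas $\overline{\bbM}(B,\pgoth)$ is by definition the subgroup generated by $\bbM(B,\pgoth)$; here it is harmless that the latter is described as a subgroup of $\G(B)$ rather than of $G$, since $\bbM(B,\pgoth) \subseteq \G(B) \subseteq G$ and the subgroup generated by a subset is independent of the ambient group. As the two generating sets are equal, the two subgroups are equal, which is the claim.

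The only step that consumes the hypothesis — and hence the only place requiring any attention at all — is the first one: verifying that the homogeneous decomposition of $B/\pgoth$ is literally $\bigoplus_{i} B_i/(\pgoth \cap B_i)$, which relies on $\pgoth$ being homogeneous. I expect this to be the ``main obstacle'' only in the nominal sense that it is the sole non-tautological ingredient; everything after it is a purely formal identification of generating sets.
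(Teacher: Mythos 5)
Your proof is correct and takes essentially the same route as the paper's: both proofs reduce the claim to the observation that $\setspec{ i \in G }{ (B/\pgoth)_i \neq 0 } = \bbM(B,\pgoth)$ and then compare generating sets. Your version simply spells out the graded decomposition of $B/\pgoth$ and the ambient-group remark, which the paper treats as ``clearly.''
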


\begin{proof}
Clearly, $\G(B/\pgoth)$ is the subgroup of $G$ generated by
$\setspec{ i \in G }{ (B/\pgoth)_i \neq 0 } = \setspec{ i \in G }{ B_i \nsubseteq \pgoth } = \bbM(B,\pgoth)$,
so $\G(B/\pgoth) = \overline{\bbM}(B,\pgoth)$.
\end{proof}

\begin{theorem} \label {3498huv8oe98y3876wgebf09}
Let $B$ be a domain of characteristic $0$ graded by an abelian group~$G$.
\begin{enumerata}

\item For each $D \in \hlnd(B) \setminus \{0\}$,
there exists a height $1$ homogeneous prime ideal $\pgoth$ of $B$ satisfying:
\begin{enumerata}
\item some homogeneous element $t \in \pgoth$ satisfies $D(t) \neq 0$ and $D^2(t)=0$;
\item $\G(B/\pgoth) = \G( \ker D )$.
\end{enumerata}

\item $\bG(B) \subseteq \G( \ker D ) \subseteq \G(B)$ for all $D \in \hlnd(B)$.

\item If $B$ is saturated in codimension $1$ then $\G( \ker D ) = \G(B)$ for all $D \in \hlnd(B)$.

\end{enumerata}
\end{theorem}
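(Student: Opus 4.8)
The three parts have very unequal weight: (b) and (c) are short formal consequences of (a) together with Lemma~\ref{o72ytg8ed623orh9yft8v1238293y}, so the plan is to concentrate on (a). Fix $D\in\hlnd(B)\setminus\{0\}$ and put $A=\ker D$; recall that $A$ is a graded subring of $B$ and is factorially closed in $B$. First I would produce a homogeneous element $t\in B$ with $Dt\ne 0$ and $D^2t=0$ by the usual manoeuvre: choose a homogeneous $b$ with $Db\ne 0$ (possible since $D\ne 0$ and $D$ is homogeneous), let $k\ge 1$ be maximal with $D^kb\ne 0$, and set $t=D^{k-1}b$. Then $a:=Dt=D^kb$ is a nonzero homogeneous element of $A$.

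The point where care is needed is that $B$ need not be a $\Rat$-algebra, so Lemma~\ref{SliceThm}(b) does not apply to $B$ directly. I would get around this by passing to $\tilde B=(\Integ\setminus\{0\})^{-1}B=B\otimes_\Integ\Rat$, a $G$-graded $\Rat$-domain; the extension $\tilde D$ of $D$ lies in $\hlnd(\tilde B)\setminus\{0\}$ with $\ker\tilde D=\tilde A:=A\otimes_\Integ\Rat$, and since only degree-$0$ nonzerodivisors were inverted, $\G(\tilde B)=\G(B)$ and $\G(\tilde A)=\G(A)$. Now Lemma~\ref{SliceThm}(b) gives $\tilde B_a=\tilde A_a[t]=(\tilde A_a)^{[1]}$, which (as $t,a$ are homogeneous) is a polynomial ring over the graded subring $\tilde A_a$ in the homogeneous variable $t$. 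Set $\pgoth=t\tilde B_a\cap B$. Since $B\hookrightarrow\tilde B_a$ is a degree-preserving injection and $t\tilde B_a$ is a homogeneous prime ideal of $\tilde B_a$, the ideal $\pgoth$ is a homogeneous prime of $B$ containing the homogeneous element $t$, which has $Dt=a\ne 0$ and $D^2t=0$; so $(\mathrm i)$ holds. For $\haut\pgoth=1$: one has $\pgoth\cap\Integ=0$ (an integer in $\pgoth$ would be a unit of the $\Rat$-algebra $\tilde B_a$ lying in the proper ideal $t\tilde B_a$) and $a\notin\pgoth$ (in $\tilde A_a[t]$ the element $a$ has $t$-degree $0$, whereas $t\tilde B_a$ consists of elements of positive $t$-degree); hence $\pgoth$ meets no power of $a$ and no nonzero integer, so the primes of $B$ below $\pgoth$ correspond bijectively, via localization, to the primes of $\tilde B_a$ below $t\tilde B_a$, and $\haut_{\tilde B_a}(t\tilde B_a)=1$ because $(t)$ is a principal prime of the polynomial ring $\tilde A_a[t]$ (any prime strictly inside it lies in $\bigcap_n(t^n)=0$).

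For $(\mathrm{ii})$ I would exploit the chain of degree-preserving injections $A\hookrightarrow B/\pgoth\hookrightarrow \tilde B_a/t\tilde B_a=\tilde A_a[t]/(t)\cong\tilde A_a$: the second map is injective by the definition of $\pgoth$, and the first because $A\cap t\tilde A_a[t]=0$ (once more a $t$-degree argument). Since inverting the homogeneous element $a\in A$ does not enlarge the degree group, $\G(\tilde A_a)=\G(\tilde A)=\G(A)$, so this sandwich forces $\G(A)\subseteq\G(B/\pgoth)\subseteq\G(A)$, i.e.\ $\G(B/\pgoth)=\G(\ker D)$, completing $(\mathrm a)$.

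For $(\mathrm b)$: if $D=0$ then $\ker D=B$ and $\bG(B)\subseteq\G(B)=\G(\ker D)\subseteq\G(B)$ trivially; if $D\ne 0$, part $(\mathrm a)$ shows $\Spec^1(B)\ne\emptyset$, so $\bG(B)=\bigcap_{\qgoth\in\Spec^1(B)}\overline{\bbM}(B,\qgoth)\subseteq\overline{\bbM}(B,\pgoth)$ for the prime $\pgoth$ from $(\mathrm a)$, and by Lemma~\ref{o72ytg8ed623orh9yft8v1238293y} and $(\mathrm a)(\mathrm{ii})$ this equals $\G(B/\pgoth)=\G(\ker D)$; the inclusion $\G(\ker D)\subseteq\G(B)$ holds because $\ker D$ is a graded subring of $B$. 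Part $(\mathrm c)$ is then immediate, since $\bG(B)=\G(B)$ collapses the chain in $(\mathrm b)$ to $\G(\ker D)=\G(B)$. The main obstacle is entirely inside $(\mathrm a)$: arranging that the ideal cut out is simultaneously homogeneous, of height exactly $1$, and has the right degree group; the detour through $\tilde B_a=\tilde A_a[t]$ is what makes all three hold at once, and it is needed because a minimal prime of $tB$ taken inside $B$ itself need not be homogeneous for a general grading group $G$.
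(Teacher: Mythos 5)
Your proposal is correct and takes essentially the same route as the paper: both pass to a localization of $B$ in which Lemma~\ref{SliceThm} yields a polynomial ring over the localized kernel with $t$ as variable, take $\pgoth$ to be the contraction of the homogeneous height-$1$ prime $(t)$, and then derive (b) and (c) from (a) via Lemma~\ref{o72ytg8ed623orh9yft8v1238293y}. The only cosmetic difference is the multiplicative set: you invert $\Integ\setminus\{0\}$ together with $a=Dt$, whereas the paper inverts all nonzero homogeneous elements of $\ker D$ (a set that already contains $\Integ\setminus\{0\}$, so the $\Rat$-algebra concern you flag is handled there without tensoring with $\Rat$).
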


\begin{proof}
(a) Let $D \in \hlnd(B) \setminus \{0\}$ and let $A = \ker D$.
Choose a homogeneous $t \in B$ such that $Dt \in A\setminus\{0\}$. 
Let $S$ be the set of homogeneous elements of $A \setminus \{0\}$.
Note that $S^{-1}B$ and $S^{-1}A$ are $G$-graded $\Rat$-domains, $S^{-1}D \in \hlnd( S^{-1}B ) \setminus \{0\}$ and $\ker(S^{-1}D) = S^{-1}A$.
Since $(S^{-1}D)(t)$ is a unit of $S^{-1}A$, we have $S^{-1}B = (S^{-1}A)[t] = (S^{-1}A)^{[1]}$ by Lemma \ref{SliceThm}.
The condition $S^{-1}B = (S^{-1}A)[t] = (S^{-1}A)^{[1]}$ implies that
$\Pgoth = t S^{-1}B$ is a height $1$ prime ideal of $S^{-1}B$.
By basic properties of localization, it follows that $\pgoth = B \cap \Pgoth$ is a height $1$ prime ideal of $B$.
It is clear that $\Pgoth$ is homogeneous, so $\pgoth$ is homogeneous.
We have $\G( B/\pgoth ) = \G( S^{-1}(B/\pgoth) ) = \G( S^{-1}B/tS^{-1}B ) = \G(S^{-1}A) = \G(A)$ and $t \in \pgoth$, so (a) is proved.

(b) Let $D \in \hlnd(B)$. It is clear that $\G( \ker D ) \subseteq \G(B)$, so it's enough to prove
that $\bG(B) \subseteq \G( \ker D )$.  If $D = 0$ then the claim is obvious.
If $D\neq0$ then part (a) implies that there exists a height $1$ homogeneous prime ideal $\pgoth$ of $B$
such that $\G( \ker D ) = \G(B/\pgoth)$.
We have $\G(B/\pgoth) = \overline{\bbM}( B, \pgoth )$ by Lemma \ref{o72ytg8ed623orh9yft8v1238293y}, so $\G( \ker D ) = \overline{\bbM}( B, \pgoth )$.
Since $\pgoth \in \Spec^1(B)$ we have $\Spec^1(B) \neq \emptyset$, so 
$\bG(B) =\bigcap_{ \qgoth \in \Spec^1(B) } \overline{\bbM}( B, \qgoth ) \subseteq\overline{\bbM}( B, \pgoth ) = \G( \ker D )$,
which proves (b).

(c) The assumption means that $\bG(B) = \G(B)$, so this follows from (b).
\end{proof}

We shall now give several properties of $\bG(B)$.
We begin by giving descriptions of $\bG(B)$ in two special cases: when $\G(B)$ is torsion (Lemma \ref{eibuw9x6hAvpjwua6w3rh61s298746})
and when $\G(B)$ is torsion-free (Prop.\ \ref{o98hfng54ye9c72ye6fv11vbdi38}).

\begin{lemma}  \label {eibuw9x6hAvpjwua6w3rh61s298746}
Let $B = \bigoplus_{i \in G} B_i$ be a domain graded by an abelian group $G$.
If $\G(B)$ is torsion and $\Spec^1(B) \neq \emptyset$ then 
$$
\bG(B) = \bigcap_{ \pgoth \in \Spec^1(B) } \!\! {\bbM}(B,\pgoth)
\ = \ \setspec{ i \in G }{ \text{$B_i \nsubseteq \pgoth$ for every $\pgoth \in \Spec^1(B)$} } .
$$
\end{lemma}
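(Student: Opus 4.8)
The strategy is to reduce the whole statement to one elementary observation: a submonoid of a torsion abelian group is automatically a subgroup.

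First I would dispose of the second equality, which is purely formal: by the definition of $\bbM(B,\pgoth)$ in \ref{oi8y81726et73e8f0i349rtfyh-sat}, an element $i \in G$ lies in $\bigcap_{\pgoth \in \Spec^1(B)} \bbM(B,\pgoth)$ precisely when $B_i \nsubseteq \pgoth$ for every $\pgoth \in \Spec^1(B)$. So the real content is the first equality, $\bG(B) = \bigcap_{\pgoth \in \Spec^1(B)} \bbM(B,\pgoth)$.

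Since $\Spec^1(B) \neq \emptyset$, Definition \ref{oi8y81726et73e8f0i349rtfyh-sat} gives $\bG(B) = \bigcap_{\pgoth \in \Spec^1(B)} \overline{\bbM}(B,\pgoth)$, and because $\bbM(B,\pgoth) \subseteq \overline{\bbM}(B,\pgoth)$ always, it suffices to show $\overline{\bbM}(B,\pgoth) = \bbM(B,\pgoth)$ for each fixed $\pgoth \in \Spec^1(B)$. For this I would first recall why $\bbM(B,\pgoth)$ is a submonoid of $\G(B)$ (as asserted in \ref{oi8y81726et73e8f0i349rtfyh-sat}): it contains $0$ because $1 \in B_0 \setminus \pgoth$, and it is closed under addition because $\pgoth$ is prime, so $a \in B_i \setminus \pgoth$ and $b \in B_j \setminus \pgoth$ give $ab \in B_{i+j} \setminus \pgoth$. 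Then I would invoke the torsion hypothesis: every $i \in \bbM(B,\pgoth) \subseteq \G(B)$ has finite order, say $ni = 0$ with $n \ge 1$, whence $-i = (n-1)i \in \bbM(B,\pgoth)$ since $\bbM(B,\pgoth)$ is closed under addition and contains $0$. Hence $\bbM(B,\pgoth)$ is a subgroup of $\G(B)$, and therefore coincides with the subgroup $\overline{\bbM}(B,\pgoth)$ it generates.

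Combining, $\bG(B) = \bigcap_{\pgoth \in \Spec^1(B)} \overline{\bbM}(B,\pgoth) = \bigcap_{\pgoth \in \Spec^1(B)} \bbM(B,\pgoth)$, which together with the first paragraph gives the displayed chain of equalities. I do not anticipate any genuine obstacle: the only step carrying an idea — and the only place the hypotheses enter — is the passage from submonoid to subgroup via torsion, and without that hypothesis the lemma fails already in the $\Nat$-graded case, where $\bbM(B,\pgoth)$ need not be closed under subtraction.
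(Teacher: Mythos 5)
Your proof is correct and uses precisely the paper's argument: the whole lemma reduces to the observation that every submonoid of a torsion abelian group is a subgroup, giving $\bbM(B,\pgoth) = \overline{\bbM}(B,\pgoth)$ for each $\pgoth \in \Spec^1(B)$. You simply spell out the submonoid-to-subgroup step and the verification that $\bbM(B,\pgoth)$ is a submonoid, which the paper leaves implicit.
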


\begin{proof}
Since every submonoid of a torsion group is a group, we have $\bbM(B,\pgoth) = \overline{\bbM}(B,\pgoth)$
for each $\pgoth \in \Spec^1(B)$. The conclusion follows.
\end{proof}

\begin{proposition}  \label {o98hfng54ye9c72ye6fv11vbdi38}
Let $B$ be a domain graded by an abelian group $G$.
Assume that $\G(B)$ is torsion-free and let $Z$ denote the set of all height $1$ homogeneous prime ideals of $B$.
If $Z \neq \emptyset$ then $\bG(B) = \bigcap_{ \pgoth \in Z } \G( B/\pgoth )$,
and if $Z = \emptyset$ then $\bG(B) = \G(B)$.
\end{proposition}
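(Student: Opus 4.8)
The statement to prove is Proposition \ref{o98hfng54ye9c72ye6fv11vbdi38}: for $B$ graded by a torsion-free abelian group $G$, with $Z$ the set of height $1$ homogeneous primes of $B$, one has $\bG(B) = \bigcap_{\pgoth \in Z} \G(B/\pgoth)$ when $Z \neq \emptyset$, and $\bG(B) = \G(B)$ when $Z = \emptyset$. The case $Z = \emptyset$ is immediate: if $\Spec^1(B) = \emptyset$ then $\bG(B) = \G(B)$ by definition, and if $\Spec^1(B) \neq \emptyset$ then $Z = \emptyset$ forces a different argument — but actually when $\G(B)$ is torsion-free, a height $1$ prime containing a nonzero homogeneous element is automatically homogeneous by Lemma \ref{0Edlv2y6ctwbeofyvrbqleuo245612nxr7m54}(b) (and Remark \ref{iuytytsxc3welqmo9cVbvbAfvVGhue83983teij}), so $Z = \emptyset$ does \emph{not} by itself imply $\Spec^1(B) = \emptyset$. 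Hence the first step is to handle $Z = \emptyset$ carefully: I claim $Z = \emptyset$ forces $\bG(B) = \G(B)$ because every $\pgoth \in \Spec^1(B)$ contains no nonzero homogeneous element, so $B_i \nsubseteq \pgoth$ for all $i$ with $B_i \neq 0$, giving $\bbM(B,\pgoth) = \{i : B_i \neq 0\}$ and thus $\overline{\bbM}(B,\pgoth) = \G(B)$ for every such $\pgoth$; intersecting gives $\bG(B) = \G(B)$.

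**The main case.**
Now assume $Z \neq \emptyset$. By Lemma \ref{o72ytg8ed623orh9yft8v1238293y}, for each homogeneous $\pgoth$ we have $\overline{\bbM}(B,\pgoth) = \G(B/\pgoth)$, so what I must show is that
$$
\bigcap_{\pgoth \in \Spec^1(B)} \overline{\bbM}(B,\pgoth) = \bigcap_{\pgoth \in Z} \overline{\bbM}(B,\pgoth).
$$
The inclusion $\subseteq$ is trivial since $Z \subseteq \Spec^1(B)$. For $\supseteq$, I need: if $i \in \G(B)$ lies in $\overline{\bbM}(B,\pgoth)$ for every homogeneous height $1$ prime $\pgoth$, then $i \in \overline{\bbM}(B,\qgoth)$ for every (possibly non-homogeneous) height $1$ prime $\qgoth$. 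The key observation is that if $\qgoth \in \Spec^1(B)$ is \emph{not} homogeneous, then $\qgoth$ contains no nonzero homogeneous element (again by Lemma \ref{0Edlv2y6ctwbeofyvrbqleuo245612nxr7m54}(b): a height $1$ prime containing a nonzero homogeneous element must be homogeneous, once $\G(B)$ is torsion-free). Therefore $B_i \nsubseteq \qgoth$ whenever $B_i \neq 0$, so $\bbM(B,\qgoth) = \{i : B_i \neq 0\}$ and $\overline{\bbM}(B,\qgoth) = \G(B)$. Thus the non-homogeneous height $1$ primes contribute only the full group $\G(B)$ to the intersection and can be dropped, reducing $\bigcap_{\pgoth \in \Spec^1(B)}$ to $\bigcap_{\pgoth \in Z}$.

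**Remaining obstacle and wrap-up.**
The one point that still needs attention is whether $\Spec^1(B) \neq \emptyset$ in the case $Z \neq \emptyset$ — but this is automatic since $Z \subseteq \Spec^1(B)$, so the first branch of the definition of $\bG(B)$ applies and $\bG(B) = \bigcap_{\pgoth \in \Spec^1(B)} \overline{\bbM}(B,\pgoth)$. Combining this with the two paragraphs above finishes the proof: in the case $Z \neq \emptyset$ we get $\bG(B) = \bigcap_{\pgoth \in Z} \overline{\bbM}(B,\pgoth) = \bigcap_{\pgoth \in Z} \G(B/\pgoth)$, and in the case $Z = \emptyset$ we get $\bG(B) = \G(B)$. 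I do not anticipate a serious obstacle here; the only subtlety is the repeated use of Lemma \ref{0Edlv2y6ctwbeofyvrbqleuo245612nxr7m54}(b) (via Remark \ref{iuytytsxc3welqmo9cVbvbAfvVGhue83983teij}) to pass between ``contains a nonzero homogeneous element'' and ``is homogeneous'' for height $1$ primes, which requires torsion-freeness of $\G(B)$ — precisely the hypothesis in force.
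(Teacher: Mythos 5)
Your proof is correct and follows essentially the same route as the paper: both arguments rest on Lemma \ref{o72ytg8ed623orh9yft8v1238293y} to identify $\overline{\bbM}(B,\pgoth)$ with $\G(B/\pgoth)$ for $\pgoth \in Z$, and on Lemma \ref{0Edlv2y6ctwbeofyvrbqleuo245612nxr7m54}(b) together with Remark \ref{iuytytsxc3welqmo9cVbvbAfvVGhue83983teij} to show that a non-homogeneous height~$1$ prime meets every $B_i$ trivially and hence contributes only $\G(B)$ to the intersection. The only cosmetic difference is that you split the $Z=\emptyset$ and $Z\neq\emptyset$ cases up front, whereas the paper establishes a single case formula for $\overline{\bbM}(B,\pgoth)$ and reads off both conclusions from it.
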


\begin{proof}
Let the notation be $B = \bigoplus_{i \in G} B_i$.
Observe that the claim is true if $\Spec^1(B) = \emptyset$, and assume that $\Spec^1(B) \neq \emptyset$.
Then $\bG(B) = \bigcap_{\pgoth \in \Spec^1(B)} \overline{\bbM}(B,\pgoth)$, so it suffices to show that
\begin{equation} \label {2wsedqrwge0oihkvmnbtfrewqaswdfgxvbcnxmz}
\overline{\bbM}(B,\pgoth) = \begin{cases}
\G(B/\pgoth) & \text{if $\pgoth \in Z$,} \\
\G(B) & \text{if $\pgoth \in  \Spec^1(B) \setminus Z$.}
\end{cases}
\end{equation}
If $\pgoth \in Z$ then $\G(B/\pgoth) = \overline{\bbM}(B,\pgoth)$ by Lemma \ref{o72ytg8ed623orh9yft8v1238293y}.
If $\pgoth \in \Spec^1(B) \setminus Z$ then Lemma \ref{0Edlv2y6ctwbeofyvrbqleuo245612nxr7m54}(b)
(together with Rem.\ \ref{iuytytsxc3welqmo9cVbvbAfvVGhue83983teij} and the fact that $\G(B)$ is torsion-free)
implies that $B_i \cap \pgoth = \{0\}$ for all $i \in G$,
so $B_i \nsubseteq \pgoth$ for all $i \in G$ such that $B_i\neq0$, so $\overline{\bbM}(B,\pgoth) = \G(B)$.
\end{proof}

Part (d) of the following result gives a practical way to compute $\bG(B)$.

\begin{proposition}  \label {89fDCN584uKHJu7u59034uf}
Let $G$ be an abelian group and let $B = \bigoplus_{i \in G} B_i$ be a $G$-graded domain that is finitely generated as a $B_0$-algebra.
\begin{enumerata}

\item Let $S$ be a finite generating set of the $B_0$-algebra $B$ such that each element of $S$ is nonzero and homogeneous.
Let $\pgoth \in \Spec(B)$ and define  $D_\pgoth = \setspec{ \deg(x) }{ x \in S \setminus \pgoth }$.
Then ${\bbM}(B,\pgoth)$ is the submonoid of $G$ generated by $D_\pgoth$,
and $\overline{\bbM}(B,\pgoth)$ is the subgroup of $G$ generated by $D_\pgoth$.

\item The set $\setspec{ \overline{\bbM}( B, \pgoth ) }{ \pgoth \in \Spec(B) }$ is finite.

\item The set $\setspec{ \G(B/\pgoth) }{ \text{$\pgoth \in \Spec(B)$ and $\pgoth$ is homogeneous} }$ is finite.

\item Let $S$ be as in part {\rm(a)} and let $P$ be the set of all $\pgoth \in \Spec^1(B)$ satisfying $\pgoth \cap S \neq \emptyset$.  Then
$$
\bG(B) = \begin{cases}
\bigcap_{\pgoth \in P} \langle D_\pgoth \rangle  & \text{if $P \neq \emptyset$,} \\
\G(B)  & \text{if $P = \emptyset$.}
\end{cases}
$$

\end{enumerata}
\end{proposition}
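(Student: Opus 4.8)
The four parts are proved in sequence, with part (a) doing most of the work and parts (b)--(d) following as consequences. For part (a), fix $\pgoth \in \Spec(B)$. The inclusion ``submonoid generated by $D_\pgoth$'' $\subseteq \bbM(B,\pgoth)$ is immediate, since each $x \in S \setminus \pgoth$ is a homogeneous element of $B_{\deg(x)}$ not lying in $\pgoth$, and $\bbM(B,\pgoth)$ is a submonoid of $\G(B)$ by Def.\ \ref{oi8y81726et73e8f0i349rtfyh-sat}. For the reverse inclusion, take $i \in \bbM(B,\pgoth)$, i.e.\ $B_i \nsubseteq \pgoth$, so there is a homogeneous $b \in B_i \setminus \pgoth$. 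Since $S$ generates $B$ as a $B_0$-algebra and every element of $S$ is homogeneous, $b$ is a $B_0$-linear combination of monomials in the elements of $S$; picking out the homogeneous component of degree $i$, we may assume $b = \sum_\nu c_\nu m_\nu$ where each $c_\nu \in B_0$ and each $m_\nu$ is a product of elements of $S$ with $\deg(m_\nu) = i$. Since $b \notin \pgoth$ and $\pgoth$ is prime (so $B_0 \cdot \pgoth \subseteq \pgoth$), at least one term $c_\nu m_\nu \notin \pgoth$, hence $m_\nu \notin \pgoth$; as $\pgoth$ is prime, every factor of $m_\nu$ lies outside $\pgoth$, so $m_\nu$ is a product of elements of $S \setminus \pgoth$. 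Therefore $i = \deg(m_\nu)$ is a sum of elements of $D_\pgoth$, proving the first assertion of (a); the statement about $\overline{\bbM}(B,\pgoth)$ follows by passing to the generated subgroup.

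Parts (b) and (c) are finiteness statements. For (b): by part (a), $\overline{\bbM}(B,\pgoth)$ depends only on the set $D_\pgoth \subseteq \{\deg(x) : x \in S\}$, and the latter is a finite set, so there are only finitely many possible values of $D_\pgoth$ as $\pgoth$ ranges over $\Spec(B)$, hence only finitely many possible values of $\overline{\bbM}(B,\pgoth)$. For (c): when $\pgoth$ is homogeneous, Lemma \ref{o72ytg8ed623orh9yft8v1238293y} gives $\G(B/\pgoth) = \overline{\bbM}(B,\pgoth)$, so the set in (c) is contained in the set in (b) and is therefore finite.

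For part (d), recall $\bG(B) = \bigcap_{\pgoth \in \Spec^1(B)} \overline{\bbM}(B,\pgoth)$ when $\Spec^1(B) \neq \emptyset$, and $\bG(B) = \G(B)$ otherwise. The key point is that $\overline{\bbM}(B,\pgoth) = \G(B)$ for every $\pgoth \in \Spec^1(B)$ with $\pgoth \cap S = \emptyset$: indeed, if no element of $S$ lies in $\pgoth$ then $D_\pgoth = \{\deg(x) : x \in S\}$, whose generated subgroup is all of $\G(B)$ (since $S$ generates $B$ as a $B_0$-algebra, the degrees of its elements generate $\G(B)$). Hence such $\pgoth$ contribute trivially to the intersection, and the intersection over $\Spec^1(B)$ equals the intersection over $P$ (when $P \neq \emptyset$), while if $P = \emptyset$ either $\Spec^1(B) = \emptyset$ or every height $1$ prime is disjoint from $S$, so in both cases the intersection is $\G(B)$. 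Combining this with part (a), which identifies $\overline{\bbM}(B,\pgoth)$ with $\langle D_\pgoth \rangle$, gives exactly the stated formula. The only mild subtlety — and the step most worth spelling out carefully — is the homogeneous-component extraction in part (a): one must use that $B_0 \subseteq B$ is graded-compatible and that isolating the degree-$i$ part of an arbitrary algebra expression preserves membership outside a prime only after invoking primality of $\pgoth$; once that is handled, the rest is bookkeeping.
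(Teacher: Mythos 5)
Your proof is correct and follows essentially the same route as the paper's; the paper declares part (a) ``straightforward'' and leaves it to the reader, whereas you supply the (correct) details of extracting the degree-$i$ homogeneous component and invoking primality of $\pgoth$, and your treatment of (b)--(d) matches the paper's argument exactly.
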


\begin{proof}
It is straightforward to see that if $\pgoth \in \Spec(B)$ then ${\bbM}(B,\pgoth)$ is the submonoid of $G$ generated by $D_\pgoth$
(we leave this to the reader).
This implies that $\overline{\bbM}(B,\pgoth)$ is the subgroup of $G$ generated by $D_\pgoth$, so (a) follows.
Since $D = \setspec{ \deg(x) }{ x \in S }$ is a finite set,
and since $\setspec{ D_\pgoth }{ \pgoth \in \Spec(B) }$ is a collection of subsets of $D$,
we see that $\setspec{ D_\pgoth }{ \pgoth \in \Spec(B) }$ is a finite set; so (b) follows from (a).
In view of Lemma \ref{o72ytg8ed623orh9yft8v1238293y}, (c) follows from (b).
To prove (d), we first note that the claim is true if $\Spec^1(B)=\emptyset$; so we may assume that $\Spec^1(B) \neq \emptyset$,
which implies that
$\bG(B) = \bigcap_{\pgoth \in \Spec^1(B)} \overline{\bbM}(B,\pgoth)
= \bigcap_{\pgoth \in \Spec^1(B)} \langle D_\pgoth \rangle$ by (a).
Moreover, we have $\langle D_\pgoth \rangle = \G(B)$ for each $\pgoth \in  \Spec^1(B) \setminus P$, so the desired conclusion follows.
\end{proof}

\begin{bigremark}  \label {iutr3nic34ri34765io21p}
If $B$ is a noetherian domain graded by a finitely generated abelian group,
then Thm \ref{983765432wevvajwklw92980} implies that $B$ is finitely generated as a $B_0$-algebra,
so the hypothesis of Proposition \ref{89fDCN584uKHJu7u59034uf} is satisfied.
\end{bigremark}

\begin{example}
It is usually fairly easy to compute $\bG(B)$.
As an example, let $\bk$ be a field and $R = \bk[ U, V, X, Y ] = \bk^{[4]}$ the polynomial ring graded by $G = \Integ/42\Integ$,
where $\bk \subseteq R_{\bar0}$ and $U, V, X, Y$ are homogeneous of degrees $\overline{15}, \overline{14}, \overline{18}, \overline{24}$ respectively,
where $\overline{m}$ is the image of $m \in \Integ$ by the canonical epimorphism $\Integ \to G$.
Then $f = U^4 V^6 + X^3 Y^2 \in R_{\overline{18}}$, so $B = R/fR = \bk[u,v,x,y]$ is a $G$-graded domain.
We use Prop.\ \ref{89fDCN584uKHJu7u59034uf}(d) with $S = \{u,v,x,y\}$. Then $P = \{ (u,x) , (u,y) , (v,x) , (v,y) \}$.
If $\pgoth = (u,x)$ then $\langle D_\pgoth \rangle = \langle \overline{14}, \overline{24} \rangle = \langle \bar{2} \rangle$. 
In this way, we find $\setspec{ \langle D_\pgoth \rangle }{ \pgoth \in P } = \{ \langle \bar{2} \rangle, \langle \bar{3} \rangle \}$
and hence $\bG(B) = \langle \bar{2} \rangle \cap \langle \bar{3} \rangle  = \langle \bar{6} \rangle$.
Note that $\G(B) = G$, so $B$ is not saturated in codimension $1$.
\end{example}

Compare the following result to Lemma \ref{GOfBH}.

\begin{lemma}  \label {GandGbarOfBH}
If $B$ is a noetherian normal domain graded by an abelian group,
$$
\bG( B^{(H)} ) = H \cap \bG(B) \quad \text{for all $H \in \bbT(B)$.}
$$
\end{lemma}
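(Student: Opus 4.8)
The plan is to reduce the statement to two earlier facts: the monoid identity $\langle H\cap M\rangle = H\cap\langle M\rangle$ for a submonoid $M$ of $\G(B)$ and $H\in\bbT(B)$ (Lemma \ref{jBh9823hedAcv78n2erj20}), and the behaviour of the contraction map $f:\Spec B\to\Spec B^{(H)}$, $f(\Pgoth)=\Pgoth\cap B^{(H)}$, on height-$1$ primes (Lemma \ref{87654esxcvs9dfok33me4rtu5ortyo0934urud3r24}). I will first note that $B^{(H)}$ is again a noetherian normal domain by Lemma \ref{98767gh11dmdffkll0ahcchvr0vfesh8263543794663cnir}, so $\bG(B^{(H)})$ is defined and all the relevant machinery applies.

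The first step is a purely monoid-theoretic computation. Fix $\Pgoth\in\Spec B$ and put $\pgoth=\Pgoth\cap B^{(H)}$. Since $B_i\subseteq B^{(H)}$ whenever $i\in H$, one has $B_i\subseteq\pgoth\iff B_i\subseteq\Pgoth$ for $i\in H$, so $\bbM(B^{(H)},\pgoth)=H\cap\bbM(B,\Pgoth)$. Passing to the generated subgroup and invoking Lemma \ref{jBh9823hedAcv78n2erj20} with the submonoid $M=\bbM(B,\Pgoth)$ of $\G(B)$ yields
$$
\overline{\bbM}(B^{(H)},\pgoth)=\langle H\cap\bbM(B,\Pgoth)\rangle=H\cap\langle\bbM(B,\Pgoth)\rangle=H\cap\overline{\bbM}(B,\Pgoth).
$$
I would emphasize here that one should argue with $\bbM(B,\Pgoth)$ directly and not route through Lemma \ref{o72ytg8ed623orh9yft8v1238293y}, since the height-$1$ primes involved need not be homogeneous.

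The second step disposes of the degenerate case. By Lemma \ref{87654esxcvs9dfok33me4rtu5ortyo0934urud3r24}(b), $f$ is surjective and $f^{-1}(\Spec^1 B^{(H)})=\Spec^1 B$; combining these shows that $f$ restricts to a surjection of $\Spec^1 B$ onto $\Spec^1 B^{(H)}$, hence $\Spec^1 B=\emptyset$ if and only if $\Spec^1 B^{(H)}=\emptyset$. In that case $\bG(B)=\G(B)$ and $\bG(B^{(H)})=\G(B^{(H)})=H\cap\G(B)=H\cap\bG(B)$ by Lemma \ref{GOfBH}, and we are done. In the remaining case $\Spec^1 B\neq\emptyset$, I would use the surjection $\Spec^1 B\to\Spec^1 B^{(H)}$ to re-index the defining intersection: since every $\pgoth\in\Spec^1 B^{(H)}$ is of the form $\Pgoth\cap B^{(H)}$ with $\Pgoth\in\Spec^1 B$,
$$
\bG(B^{(H)})=\bigcap_{\pgoth\in\Spec^1 B^{(H)}}\overline{\bbM}(B^{(H)},\pgoth)=\bigcap_{\Pgoth\in\Spec^1 B}\overline{\bbM}(B^{(H)},\Pgoth\cap B^{(H)}),
$$
and then Step 1 turns the right-hand side into $\bigcap_{\Pgoth\in\Spec^1 B}\bigl(H\cap\overline{\bbM}(B,\Pgoth)\bigr)=H\cap\bigcap_{\Pgoth\in\Spec^1 B}\overline{\bbM}(B,\Pgoth)=H\cap\bG(B)$, as required.

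There is no genuine obstacle; the proof is essentially bookkeeping. The only place the noetherian-normal hypothesis enters is the height-$1$ behaviour of $f$ (Lemma \ref{87654esxcvs9dfok33me4rtu5ortyo0934urud3r24}(b)), which itself rests on Going-Down for the integral extension $B^{(H)}\subseteq B$; the one point demanding a little care is precisely that height-$1$ primes need not be homogeneous, which is why Step 1 is phrased via $\bbM$ and Lemma \ref{jBh9823hedAcv78n2erj20} rather than via $\G(B/\pgoth)$.
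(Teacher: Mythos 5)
Your proof is correct and follows essentially the same route as the paper's: reduce to the $\overline{\bbM}$-identity $\overline{\bbM}(B^{(H)},\Pgoth\cap B^{(H)})=H\cap\overline{\bbM}(B,\Pgoth)$ via the monoid identity of Lemma \ref{jBh9823hedAcv78n2erj20}, handle the $\Spec^1=\emptyset$ case via Lemma \ref{GOfBH}, and otherwise re-index the defining intersection using the surjection $\Spec^1 B\to\Spec^1 B^{(H)}$ from Lemma \ref{87654esxcvs9dfok33me4rtu5ortyo0934urud3r24}(b). The only cosmetic difference is that you state the $\overline{\bbM}$-identity for all $\Pgoth\in\Spec B$ rather than just for $\Pgoth\in\Spec^1 B$, and your caution about non-homogeneous height-$1$ primes matches the paper's implicit choice to argue with $\bbM$ directly.
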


\begin{proof}
Consider the map $f : \Spec(B) \to \Spec(B^{(H)})$, $\Pgoth \mapsto \Pgoth \cap B^{(H)}$,
and the sets $Z = \Spec^1(B)$  and $Z' = \Spec^1(B^{(H)})$.
By Lemma \ref{87654esxcvs9dfok33me4rtu5ortyo0934urud3r24}(b), $f$ is surjective and $f^{-1}(Z') = Z$. Thus,
\begin{equation}  \label {P892hXDrkynmXyZaMgnubvbn0MiNh587645r6uiw}
\text{$f|_Z : Z \to Z'$ is well-defined and surjective.}
\end{equation}
In particular, $Z = \emptyset$ $\Leftrightarrow$ $Z' = \emptyset$.
If $Z = \emptyset = Z'$ then $\bG(B) = \G(B)$ and $\bG(B^{(H)}) = \G(B^{(H)})$, so Lemma \ref{GOfBH} gives
$\bG(B^{(H)}) = \G(B^{(H)}) = H \cap \G(B) = H \cap \bG(B)$, as desired.
If $Z = \emptyset = Z'$ is false then $Z \neq \emptyset$ and $Z' \neq \emptyset$, so
$\bG(B) = \bigcap_{ \Pgoth \in Z } \overline{\bbM}( B, \Pgoth )$
and $\bG(B^{(H)}) = \bigcap_{ \pgoth \in Z' } \overline{\bbM}( B^{(H)}, \pgoth )$.
We claim:
\begin{equation}  \label {8xm1evgfc1e2910rcpawijgqzw}
H \cap \overline{\bbM}( B, \Pgoth ) = \overline{\bbM}( B^{(H)}, f(\Pgoth) ) \quad \text{for all $\Pgoth \in Z$.}
\end{equation}
Indeed, if $\Pgoth \in Z$ then
$$
\bbM( B^{(H)}, f(\Pgoth) )
= \setspec{ i \in H }{ B_i \nsubseteq f(\Pgoth) }
= \setspec{ i \in H }{ B_i \nsubseteq \Pgoth }
= H \cap \bbM(B,\Pgoth) ,
$$
so $\overline{\bbM}( B^{(H)}, f(\Pgoth) )
= \langle H \cap \bbM(B,\Pgoth) \rangle
= H \cap \langle \bbM(B,\Pgoth) \rangle
= H \cap \overline{\bbM}( B, \Pgoth )$ by Lemma \ref{jBh9823hedAcv78n2erj20},
which proves \eqref{8xm1evgfc1e2910rcpawijgqzw}.
Statements \eqref{8xm1evgfc1e2910rcpawijgqzw} and \eqref{P892hXDrkynmXyZaMgnubvbn0MiNh587645r6uiw} give the second and third equalities in:
\begin{align*}
H \cap \bG(B) &= \textstyle  \bigcap_{ \Pgoth \in Z } \big( H \cap \overline{\bbM}( B, \Pgoth ) \big)
=  \bigcap_{ \Pgoth \in Z } \overline{\bbM}( B^{(H)}, f(\Pgoth) ) \\
&= \textstyle \bigcap_{ \pgoth \in Z' } \overline{\bbM}( B^{(H)}, \pgoth )
= \bG(B^{(H)}) .
\end{align*}
\end{proof}

\section{From derivations of $B^{(H)}$ to derivations of $B$}
\label{sectionFromderivationsofBHtoderivationsofB}

The following is Notation 3.3 of \cite{Chitayat-Daigle:cylindrical}:

\begin{notation} \label {98w3gvddb9wudjcudj}
We write $(A,B) \in \EXT$ as an abbreviation for:
\begin{quote}
\it $A$ is a ring, $B$ is an $A$-algebra, and for every derivation $\delta : A \to A$ there exists a unique derivation $D: B \to B$
that makes the following diagram commute:
$$
\xymatrix@R=12pt{
B \ar[r]^D & B \\
A \ar[u] \ar[r]^{\delta} & A \, . \ar[u]
}
$$
\end{quote}
\end{notation}

\begin{example} \label {83uedxc5trfnbvcDFVUirybhnjYMYbgfAcfiu736}
If $L/K$ is a separable algebraic field extension, then $(K,L) \in \EXT$ (see \cite{Zar-Sam_CommAlg1}, Corollary $2'$, p.~125).
\end{example}

This section revolves around the following question: Given a $G$-graded domain $B$ of characteristic $0$
and $H_1,H_2 \in \bbT(B)$ satisfying $H_1 \subseteq H_2$, when do we have $(B^{(H_1)},B^{(H_2)}) \in \EXT$?

A priori, $(B^{(H_1)},B^{(H_2)}) \in \EXT$ is a condition on general derivations of $B^{(H_1)}$ and $B^{(H_2)}$.
To relate this to locally nilpotent or homogeneous derivations, we need:

\begin{lemma}  \label {piud29hdbfdu74heqoir923ft7fbc1oeyt}
Let $B$ be a domain of characteristic $0$ graded by an abelian group $G$.
Suppose that $H_1,H_2 \in \bbT(B)$ satisfy $H_1 \subseteq H_2$ and $(B^{(H_1)},B^{(H_2)}) \in \EXT$.
\begin{enumerata}

\item Suppose that $\delta : B^{(H_1)} \to B^{(H_1)}$ is a derivation and that $D: B^{(H_2)} \to B^{(H_2)}$ is the unique derivation of $B^{(H_2)}$ that extends it.
If $\delta$ is locally nilpotent (resp.\ homogeneous) then so is $D$.

\item If $B^{(H_1)}$ is non-rigid then so is $B^{(H_2)}$.

\item If $\hlnd(B^{(H_1)}) \neq \{0\}$ then $\hlnd(B^{(H_2)}) \neq \{0\}$.

\end{enumerata}
\end{lemma}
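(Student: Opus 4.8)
The plan is to prove the three assertions in order, deriving (b) and (c) from (a). For part (a), suppose $\delta : B^{(H_1)} \to B^{(H_1)}$ is a derivation and $D : B^{(H_2)} \to B^{(H_2)}$ is the unique extension of $\delta$ guaranteed by the hypothesis $(B^{(H_1)}, B^{(H_2)}) \in \EXT$. For the locally nilpotent case: since $H_1, H_2 \in \bbT(B)$ with $H_1 \subseteq H_2$, Remark \ref{kDjkbwevCdcojBh6qvmAxfea7u}(4) (applied to the $H_2$-graded ring $B^{(H_2)}$ together with Lemma \ref{3Pkj762cx7bXckayscxcvi8Y025esxcZxbkq}, which gives $H_1 \in \bbT(B^{(H_2)})$) shows that $B^{(H_2)}$ is integral over $B^{(H_1)}$. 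Both rings have characteristic $0$, $\delta$ is locally nilpotent, and $D$ extends $\delta$; so the improved Vasconcelos theorem (Lemma \ref{ldkjxhcvi82ewdj0wd}) immediately gives that $D$ is locally nilpotent.

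For the homogeneous case, I would argue as follows. Suppose $\delta$ is homogeneous of degree $d \in H_1$ (so $\delta(B_i) \subseteq B_{i+d}$ for all $i \in H_1$; the case $\delta = 0$ forces $D = 0$ by uniqueness, so assume $\delta \neq 0$, whence $d$ is well-defined). I want to show $D$ is homogeneous of the same degree $d$. Define a map $D' : B^{(H_2)} \to B^{(H_2)}$ by decomposing each element into its homogeneous components and declaring $D'|_{B_i}$ to be the degree-$d$ component of $D|_{B_i}$ — more precisely, let $D'$ be the ``homogeneous part of degree $d$'' of $D$, i.e. if $\pi_j : B^{(H_2)} \to B_j$ denotes the projections, set $D' = \sum_{i \in H_2} \pi_{i+d} \circ D \circ \pi_i$ (this is a well-defined endomorphism of the additive group since each element has only finitely many nonzero components). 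One checks that $D'$ is again a derivation of $B^{(H_2)}$: the Leibniz rule for $D$ restricted to products of homogeneous elements $a \in B_i$, $b \in B_j$ gives $D(ab) = D(a)b + aD(b)$, and extracting the degree-$(i+j+d)$ component yields $D'(ab) = D'(a)b + aD'(b)$ on homogeneous elements, hence on all of $B^{(H_2)}$ by bilinearity. Moreover $D'$ extends $\delta$: for $a \in B_i$ with $i \in H_1$, $D(a) = \delta(a) \in B_{i+d}$ is already homogeneous of degree $i+d$, so $D'(a) = \delta(a)$; and $\delta$ is additive so this propagates to all of $B^{(H_1)}$. By the uniqueness clause in $(B^{(H_1)}, B^{(H_2)}) \in \EXT$, we conclude $D' = D$, i.e. $D$ is homogeneous of degree $d$. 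This proves (a).

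Parts (b) and (c) now follow. For (b): if $B^{(H_1)}$ is non-rigid, pick $\delta \in \lnd(B^{(H_1)}) \setminus \{0\}$; its extension $D$ to $B^{(H_2)}$ is locally nilpotent by (a), and $D \neq 0$ because $D|_{B^{(H_1)}} = \delta \neq 0$ — so $B^{(H_2)}$ is non-rigid. For (c): if $\hlnd(B^{(H_1)}) \neq \{0\}$, pick a nonzero $\delta$ in it; by (a) its extension $D$ lies in $\hlnd(B^{(H_2)})$, and $D \neq 0$ as before, so $\hlnd(B^{(H_2)}) \neq \{0\}$. The only point requiring genuine care is the homogeneous case of (a); the integrality input for the locally nilpotent case and the bookkeeping for (b)–(c) are routine, so the ``homogeneous part of a derivation'' construction together with the uniqueness in $\EXT$ is the crux.
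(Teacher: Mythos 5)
Your proof is correct, and the homogeneous case of (a) is handled by a genuinely different argument than the paper's.

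For the locally nilpotent case of (a) and for parts (b)--(c), you and the paper coincide: integrality of $B^{(H_2)}$ over $B^{(H_1)}$ plus Lemma~\ref{ldkjxhcvi82ewdj0wd} (Vasconcelos), then routine bookkeeping. For the homogeneous case of (a) the paper works directly on $D$: given a nonzero $b \in B_i$ with $i \in H_2$, it picks $n \geq 1$ with $ni \in H_1$ (using $H_1 \in \bbT(B)$), computes $nb^{n-1}D(b) = D(b^n) = \delta(b^n) \in B_{ni+d}$, and then uses that $nb^{n-1}$ is a nonzero homogeneous element of a domain to force $D(b) \in B_{i+d}$. You instead build the degree-$d$ homogeneous part $D' = \sum_{i \in H_2} \pi_{i+d} \circ D \circ \pi_i$, verify it is a derivation that restricts to $\delta$, and invoke the \emph{uniqueness} clause in the definition of $\EXT$ to conclude $D' = D$. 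The paper's route is shorter and purely computational but relies on the torsion condition $H_1 \in \bbT(B)$ (to find $n$) and the domain/characteristic-$0$ hypotheses to cancel $nb^{n-1}$; your route exploits the uniqueness in $\EXT$, which the paper's proof of this lemma never touches, and does not need $H_1 \in \bbT(B)$ at all for the homogeneous step. Both are correct; yours trades a short calculation for a structural argument and is slightly more general in the homogeneous case.
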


\begin{proof}
(a) If $\delta$ is locally nilpotent then, since $B^{(H_2)}$ is integral over $B^{(H_1)}$,
$D$ is locally nilpotent by Lemma \ref{ldkjxhcvi82ewdj0wd}.
Suppose that $\delta$ is homogeneous; let $d \in H_1$ be such that $\delta(B_j) \subseteq B_{j+d}$ for all $j \in H_1$.
Consider $b \in B_i \setminus\{0\}$ where $i \in H_2$.
Choose $n\ge1$ such that $ni \in H_1$.
We have $b^n \in B_{ni} \subseteq B^{(H_1)}$, so $nb^{n-1}D(b)= D(b^n) =\delta(b^n) \in B_{ni+d}$.
Since $nb^{n-1} \in B_{(n-1)i} \setminus \{0\}$ and $nb^{n-1}D(b) \in B_{ni+d}$,
it follows that $D(b) \in B_{i+d}$. This shows that $D(B_i) \subseteq B_{i+d}$ for all $i \in H_2$, i.e., $D$ is homogeneous.
This proves (a), and assertions (b) and (c) easily follow from (a).
\end{proof}

The next two results are Lemmas 3.5 and 3.6 of \cite{Chitayat-Daigle:cylindrical}, respectively.

\begin{lemma} \label {dh2763g5d4fc5a4dsErf56w7}
Let $B$ be a noetherian normal domain and $A$ a subring of $B$.
Suppose that $(\Frac A , \Frac B ) \in \EXT$ and that
there exists a family $( f_i )_{i \in I}$ of elements of $A \setminus \{0\}$ satisfying:
\begin{itemize}

\item $(A_{f_i},B_{f_i}) \in \EXT$ for every $i \in I$;

\item no height $1$ prime ideal of $B$ contains all $f_i$.

\end{itemize}
Then $(A,B) \in \EXT$.
\end{lemma}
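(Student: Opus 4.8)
The strategy is to push $\delta$ up to a derivation of $\Frac B$ using the hypothesis $(\Frac A, \Frac B) \in \EXT$, and then to show that the resulting derivation already preserves $B$. Uniqueness is immediate from the uniqueness clause in $(\Frac A, \Frac B)\in\EXT$: if $D : B \to B$ is any derivation extending $\delta$, then $D$ extends to a derivation $\widehat D$ of $\Frac B$, this $\widehat D$ carries $A$ into $A \subseteq \Frac A$ and therefore carries $\Frac A$ into $\Frac A$, restricting there to a derivation extending $\delta$; since $(\Frac A, \Frac B)\in\EXT$ there is only one derivation of $\Frac B$ lying over the unique extension $\widetilde\delta:\Frac A\to\Frac A$ of $\delta$, so $\widehat D$ is forced, hence so is $D = \widehat D|_B$.

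For existence, let $\widetilde\delta:\Frac A\to\Frac A$ be the unique extension of $\delta$, and let $\widehat D:\Frac B\to\Frac B$ be the unique derivation with $\widehat D|_{\Frac A}=\widetilde\delta$ supplied by $(\Frac A,\Frac B)\in\EXT$. It suffices to prove $\widehat D(B)\subseteq B$. Fix $i\in I$. Localizing $\delta$ at $f_i$ gives a derivation $\delta_i : A_{f_i}\to A_{f_i}$, which by $(A_{f_i},B_{f_i})\in\EXT$ extends to a derivation $D_i : B_{f_i}\to B_{f_i}$. Arguing exactly as in the uniqueness step (and using $\Frac(A_{f_i})=\Frac A$), the extension of $D_i$ to $\Frac B$ lies over $\widetilde\delta$, hence coincides with $\widehat D$; therefore $\widehat D(B_{f_i})=D_i(B_{f_i})\subseteq B_{f_i}$, and in particular $\widehat D(B)\subseteq B_{f_i}$. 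As $i$ was arbitrary, $\widehat D(B)\subseteq\bigcap_{i\in I}B_{f_i}$, the intersection being taken inside $\Frac B$.

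It remains to check $\bigcap_{i\in I}B_{f_i}\subseteq B$. Here I would invoke the standard fact that a noetherian normal domain satisfies $B=\bigcap_{\pgoth\in\Spec^1(B)}B_\pgoth$. Given $x\in\bigcap_{i}B_{f_i}$ and $\pgoth\in\Spec^1(B)$, the hypothesis that no height $1$ prime of $B$ contains all the $f_i$ produces some $i$ with $f_i\notin\pgoth$, whence $B_{f_i}\subseteq B_\pgoth$ and so $x\in B_\pgoth$; thus $x\in\bigcap_{\pgoth\in\Spec^1(B)}B_\pgoth=B$. (If $\Spec^1(B)=\emptyset$ then $B$ is a field and the inclusion is trivial.) Consequently $\widehat D(B)\subseteq B$, and $D:=\widehat D|_B$ is the desired extension of $\delta$. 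I expect the only genuinely delicate point to be the compatibility bookkeeping in the second paragraph — reconciling the localized extensions $D_i$ with the global $\widehat D$ — which rests entirely on the uniqueness halves of the various $\EXT$ hypotheses; the ring-theoretic input (the intersection formula for normal domains) is routine.
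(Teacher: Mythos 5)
The paper does not actually prove this lemma: it quotes it as Lemma 3.5 of \cite{Chitayat-Daigle:cylindrical}, so there is no in-paper proof to compare against. On its own terms, your argument is correct and is the natural one. The uniqueness step is handled cleanly by pushing everything to the fraction fields, using the (standard) unique extension of a derivation of a domain to its fraction field, and then invoking the uniqueness half of $(\Frac A,\Frac B)\in\EXT$. The existence step is also right: the localized derivation $\delta_i:A_{f_i}\to A_{f_i}$ extends to $D_i:B_{f_i}\to B_{f_i}$ by $(A_{f_i},B_{f_i})\in\EXT$, and the same uniqueness-at-the-fraction-field bookkeeping forces all the $D_i$ to be restrictions of the single $\widehat D$, so $\widehat D(B)\subseteq\bigcap_i B_{f_i}$. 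The last step uses that a noetherian normal domain is a Krull domain, so $B=\bigcap_{\pgoth\in\Spec^1(B)}B_\pgoth$, together with the hypothesis that no height-$1$ prime contains all the $f_i$; the $\Spec^1(B)=\emptyset$ degenerate case is disposed of correctly since then $B$ is a field and each $B_{f_i}=B$. I see no gaps. The only thing worth making explicit in a write-up is the repeatedly used lemma that a derivation of a domain $R$ preserving a subdomain $S$ automatically preserves $\Frac S$ inside $\Frac R$, which you use three times (for $D$, for $\widehat D_i$, and implicitly when passing from $\delta$ to $\widetilde\delta$), but the argument itself is sound.
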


\begin{lemma}  \label {nvo2983epdk02qkw0}
Let $A$ be a ring, $A[\mathbf{X}] = A[X_1,\dots,X_n] = A^{[n]}$, $f_1,\dots,f_n \in A[\mathbf{X}]$, $B = A[\mathbf{X}]/(f_1,\dots,f_n)$,
and $\pi : A[\mathbf{X}] \to B$ the canonical homomorphism of the quotient ring.
Let $P \in A[\mathbf{X}]$ be the determinant of the Jacobian matrix $\frac{\partial(f_1,\dots,f_n)}{\partial(X_1,\dots,X_n)}$.
If $\pi(P)$ is a unit of $B$, then $(A,B) \in \EXT$.
\end{lemma}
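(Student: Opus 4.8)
The plan is to handle existence and uniqueness separately, both times through the single relevant consequence of the hypothesis: writing $J = \big(\partial f_j/\partial X_i\big)_{i,j}$ for the Jacobian matrix and $\pi(J) \in M_n(B)$ for its image, the assumption that $\pi(P) = \det\pi(J)$ is a unit of $B$ forces $\pi(J) \in \mathrm{GL}_n(B)$ (multiply by the adjugate matrix).

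For uniqueness, I would take $D,D' : B \to B$ both making the diagram of Notation \ref{98w3gvddb9wudjcudj} commute and set $E = D - D'$. Then $E$ is additive, satisfies the Leibniz identity (being a difference of two maps that do), and vanishes on the image of $A$ in $B$; hence $E$ is an $A$-derivation of $B$. Since $B$ is generated as an $A$-algebra by the classes $x_i = \pi(X_i)$, the derivation $E$ is determined by the column vector $\mathbf v = \big(E(x_1),\dots,E(x_n)\big)^{t} \in B^n$. Applying $E$ to the relations $f_j(x_1,\dots,x_n) = 0$ and using that $E$ annihilates the coefficients of the $f_j$ (which lie in $A$), one gets $\pi(J)\,\mathbf v = 0$; invertibility of $\pi(J)$ gives $\mathbf v = 0$, so $E = 0$ and $D = D'$.

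For existence, I would first extend $\delta$ to $A[\mathbf X]$: let $\tilde\delta$ be the derivation of $A[\mathbf X]$ acting on coefficients only, $\tilde\delta\big(\sum_\alpha a_\alpha\mathbf X^\alpha\big) = \sum_\alpha \delta(a_\alpha)\mathbf X^\alpha$, and for $\mathbf g = (g_1,\dots,g_n) \in A[\mathbf X]^n$ set $\tilde D_{\mathbf g} = \tilde\delta + \sum_{i=1}^n g_i\,\partial/\partial X_i$. Then $\tilde D_{\mathbf g}$ is a derivation of $A[\mathbf X]$ extending $\delta$, and conversely every derivation of $A[\mathbf X]$ extending $\delta$ equals $\tilde D_{\mathbf g}$ for $g_i := \tilde D(X_i)$ (a derivation is determined by its values on the ring generators $A \cup \{X_1,\dots,X_n\}$). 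Such a $\tilde D_{\mathbf g}$ induces a derivation of $B = A[\mathbf X]/(f_1,\dots,f_n)$ if and only if $\tilde D_{\mathbf g}(f_j) \in (f_1,\dots,f_n)$ for all $j$ (it suffices to check this on the generators $f_j$ of the ideal), and modulo $(f_1,\dots,f_n)$ this condition reads $\pi(J)\cdot\big(\pi(g_1),\dots,\pi(g_n)\big)^{t} = -\big(\pi(\tilde\delta f_1),\dots,\pi(\tilde\delta f_n)\big)^{t}$ in $B^n$. Since $\pi(J)$ is invertible over $B$, this linear system has a solution $\big(\pi(g_1),\dots,\pi(g_n)\big) \in B^n$; lifting its entries arbitrarily to $g_1,\dots,g_n \in A[\mathbf X]$, the derivation $\tilde D_{\mathbf g}$ descends to a derivation $D : B \to B$, which by construction makes the diagram commute. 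Together with the uniqueness already shown, this establishes $(A,B) \in \EXT$.

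I do not expect a real obstacle here: this is the standard ``lift and correct'' construction of a derivation on a complete intersection whose Jacobian is invertible. The two points to be careful about are verifying that $\tilde D_{\mathbf g}$ descends to $B$ exactly when the displayed linear system holds — which comes down to $\pi$ being a ring homomorphism that kills $(f_1,\dots,f_n)$ — and observing, in the uniqueness step, that the difference of two derivations extending the same $\delta$ is again a derivation, so that uniqueness amounts to the vanishing of the module of $A$-derivations of $B$.
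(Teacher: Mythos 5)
Your argument is correct. The paper itself does not prove this lemma --- it is quoted verbatim as Lemma 3.6 of \cite{Chitayat-Daigle:cylindrical} with no in-text proof --- so there is no in-paper argument to compare against, but the ``lift and correct'' scheme you outline is the standard Jacobian-criterion proof and is surely what the cited reference does. Both halves check out: for uniqueness, the difference $E = D - D'$ is an $A$-derivation of $B$ (Leibniz holds since $D,D'$ both do, and $E$ kills the image of $A$ by commutativity of the square), and applying $E$ to $\pi(f_j) = 0$ gives $\pi(J)\mathbf v = 0$ with $\mathbf v = (E(x_i))_i$, forcing $\mathbf v = 0$ since $\pi(J) \in \mathrm{GL}_n(B)$; for existence, $\tilde\delta$ acting on coefficients is indeed a derivation of $A[\mathbf X]$ extending $\delta$, the general extension is $\tilde\delta + \sum g_i\,\partial/\partial X_i$, and solving $\pi(J)\cdot(\pi(g_i))_i = -(\pi(\tilde\delta f_j))_j$ over $B$ (possible precisely because $\pi(J)$ is invertible) and lifting the solution to $A[\mathbf X]$ produces a derivation preserving the ideal $(f_1,\dots,f_n)$, hence descending to $B$. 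No gap.
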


We shall now establish a sequence of results about rings graded by finite abelian groups.
All these results are superseded by the main result of this section, Thm \ref{o823y48t2309fnb28}.

\begin{lemma}  \label {5eghxWiC2j3mjfnjahvfcd56yui}
Let $B = \bigoplus_{i \in G} B_i$ be a $\Rat$-domain graded by a finite abelian group $G$.
Suppose that there exist homogeneous units $x_1, \dots, x_n \in B^*$ such that
$\langle \deg x_1 \rangle \oplus \cdots \oplus \langle \deg x_n \rangle = \G(B)$.
Then $(B_0,B) \in \EXT$.
\end{lemma}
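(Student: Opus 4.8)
The plan is to realize $B$ as a quotient of a Laurent polynomial ring over $B_0$ and apply the Jacobian criterion of Lemma \ref{nvo2983epdk02qkw0}. First I would observe that $\G(B)$ is finite (it is a subgroup of the finite group $G$), and that the hypothesis gives an internal direct sum decomposition $\G(B) = \langle \deg x_1 \rangle \oplus \cdots \oplus \langle \deg x_n \rangle$ with each $\langle \deg x_j \rangle$ cyclic of some finite order $m_j$. Since each $x_j$ is a homogeneous unit, $x_j^{m_j}$ is a unit of degree $0$, i.e.\ $x_j^{m_j} \in B_0^*$; set $u_j = x_j^{m_j} \in B_0^*$. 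Because the $\deg x_j$ generate $\G(B)$ and $\G(B)$ is the degree support group, every homogeneous element of $B$ is a $B_0$-multiple of a monomial $x_1^{a_1}\cdots x_n^{a_n}$, so $B = B_0[x_1^{\pm1},\dots,x_n^{\pm1}]$ as a $B_0$-algebra; moreover the relations $x_j^{m_j} = u_j$ (for $j=1,\dots,n$) present $B$ over $B_0$.

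Next I would make this presentation precise. Consider the polynomial ring $B_0[X_1,\dots,X_n] = B_0^{[n]}$ and the ideal generated by $f_j := X_j^{m_j} - u_j$, and let $B' = B_0[X_1,\dots,X_n]/(f_1,\dots,f_n)$ with $\pi : B_0[\mathbf X] \to B'$. There is a $B_0$-algebra homomorphism $B' \to B$ sending $X_j \mapsto x_j$, and I would check it is an isomorphism: surjectivity is clear since the $x_j$ generate $B$ as a $B_0$-algebra (using that $x_j$ is a unit, its inverse is $u_j^{-1}x_j^{m_j-1}$), and injectivity follows by comparing $G$-graded pieces — giving $X_j$ the degree $\deg x_j$ makes $\pi$ degree-preserving and each graded piece of $B'$ is a free $B_0$-module on the relevant monomials, matching $B$ because the $\deg x_j$ generate $\G(B)$ freely in the sense of the direct sum hypothesis. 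The key point is that the direct-sum hypothesis forces the monomials $x_1^{a_1}\cdots x_n^{a_n}$ with $0 \le a_j < m_j$ to have pairwise distinct degrees, so no unexpected $B_0$-linear relations arise.

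Finally I would apply Lemma \ref{nvo2983epdk02qkw0} with $A = B_0$. The Jacobian matrix $\partial(f_1,\dots,f_n)/\partial(X_1,\dots,X_n)$ is diagonal with $j$-th diagonal entry $m_j X_j^{m_j-1}$, so its determinant is $P = \big(\prod_{j=1}^n m_j\big) X_1^{m_1-1}\cdots X_n^{m_n-1}$. Then $\pi(P) = \big(\prod_j m_j\big) x_1^{m_1-1}\cdots x_n^{m_n-1}$, which is a unit of $B$: the integer $\prod_j m_j$ is invertible because $B$ is a $\Rat$-algebra, and each $x_j$ is a unit by hypothesis. Hence $(B_0, B) \in \EXT$ by Lemma \ref{nvo2983epdk02qkw0}.

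The main obstacle is the verification that $B' \to B$ is an isomorphism, i.e.\ that $B$ really is presented by the relations $X_j^{m_j} = u_j$ with no extra relations; this is where the direct-sum hypothesis $\langle \deg x_1\rangle \oplus \cdots \oplus \langle \deg x_n\rangle = \G(B)$ (as opposed to merely a generating condition) is essential, since it is exactly what guarantees that the relevant monomials are $B_0$-linearly independent. Everything else is a routine Jacobian computation.
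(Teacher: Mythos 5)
Your proof is correct and takes essentially the same route as the paper: present $B$ as $B_0[X_1,\dots,X_n]/(X_1^{m_1}-x_1^{m_1},\dots,X_n^{m_n}-x_n^{m_n})$ and apply the Jacobian criterion of Lemma \ref{nvo2983epdk02qkw0}. The only cosmetic differences are that you name $u_j = x_j^{m_j}$ and phrase the injectivity of $B'\to B$ in terms of comparing $G$-graded pieces, whereas the paper first shows $B$ is a free $B_0$-module on the reduced monomials (via the bijection $L\to\G(B)$ supplied by the direct-sum hypothesis) and then invokes the division algorithm to identify $\ker\phi$; these are two ways of organizing the same verification, and the substance of the argument is identical.
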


\begin{proof}
We may assume that $\deg(x_i) \neq 0$ for all $i$.
For each $i=1,\dots,n$, let $e_i\ge2$ be the order of $\deg(x_i)$ in $\G(B)$.
Let $L = \setspec{ ( {\ell_1}, \dots, {\ell_n} ) \in \Nat^n }{ \text{$0 \le \ell_i < e_i$ for all $i = 1,\dots,n$} }$
and observe that the map $( {\ell_1}, \dots, {\ell_n} ) \mapsto \deg( x_1^{\ell_1} \cdots x_n^{\ell_n} )$ from $L$ to $\G(B)$ is bijective.
This implies that if $b$ is a nonzero homogeneous element of $B$ then there is a unique $( {\ell_1}, \dots, {\ell_n} ) \in L$
such that $\deg(b) =  \deg( x_1^{\ell_1} \cdots x_n^{\ell_n} )$; since $x_1,\dots,x_n$ are homogeneous units of $B$, it follows
that $b /( x_1^{\ell_1} \cdots x_n^{\ell_n} ) \in B_0$, so $b = a x_1^{\ell_1} \cdots x_n^{\ell_n}$ for some $a \in B_0$.
This shows that $B$ is a free $B_0$-module with basis
$\Beul = \setspec{ x_1^{\ell_1} \cdots x_n^{\ell_n} }{ ( {\ell_1}, \dots, {\ell_n} ) \in L }$.

Consider the ideal $I =(X_1^{e_1}-x_1^{e_1}, \dots, X_n^{e_n}-x_n^{e_n})$ of the polynomial ring $B_0[X_1,\dots,X_n]$
and the surjective homomorphism of $B_0$-algebras $\phi : B_0[X_1,\dots,X_n] \to B$ given by $\phi(X_i) = x_i$ ($1 \le i \le n$).
Clearly, $I \subseteq \ker\phi$.
If $F \in \ker\phi$ then the division algorithm shows that 
there exists $f \in B_0[X_1,\dots,X_n]$ such that $F \equiv f \pmod{I}$ and $\deg_{X_i}(f)<e_i$ for all $i=1,\dots,n$;
then $f \in \ker\phi$, so $f(x_1,\dots,x_n)=0$, so the fact that $\Beul$ is linearly independent over $B_0$ implies that $f=0$.
This shows that $\ker\phi=I$, so 
$$
B \isom B_0[X_1,\dots,X_n] / (X_1^{e_1}-x_1^{e_1}, \dots, X_n^{e_n}-x_n^{e_n})  .
$$
Let $P \in B_0[X_1,\dots,X_n]$ be the determinant of the Jacobian matrix
$\frac{\partial(X_1^{e_1}-x_1^{e_1}, \dots, X_n^{e_n}-x_n^{e_n})}{\partial(X_1,\dots,X_n)}$.
Since $\Rat \subseteq B$, $P(x_1,\dots,x_n) = \prod_{i=1}^n ( e_i x_i^{e_i-1} )$ is a unit of $B$;
so $(B_0,B) \in \EXT$ by Lemma \ref{nvo2983epdk02qkw0}.
\end{proof}

\begin{definition}
Let $B$ be a domain graded by a finite abelian group $G$.
An element $x$ of $B$ is {\it admissible\/} if there exist nonzero homogeneous elements $x_1,\dots,x_n$ of $B$
such that $x = x_1x_2\cdots x_n$ and $\G(B) = \langle \deg(x_1) \rangle \oplus \cdots \oplus \langle \deg(x_n) \rangle$.
\end{definition}

\begin{lemma}  \label {d8iuyhv1i288wsx9}
Let $B = \bigoplus_{i \in G} B_i$ be a $\Rat$-domain graded by a finite abelian group $G$.
If $x$ is an admissible element of $B$ and $m\ge1$ is such that $x^m \in B_0$,
then $( (B_0)_{x^m} ,B_{x^m} ) \in \EXT$.
\end{lemma}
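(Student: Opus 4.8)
The plan is to deduce this from Lemma~\ref{5eghxWiC2j3mjfnjahvfcd56yui}, applied not to $B$ itself but to the localized ring $B_{x^m}$. Write $x = x_1 x_2 \cdots x_n$ with the $x_j$ nonzero homogeneous elements of $B$ and $\G(B) = \langle \deg x_1 \rangle \oplus \cdots \oplus \langle \deg x_n \rangle$, as in the definition of admissibility; note that $x \neq 0$ (hence $x^m \neq 0$) because $B$ is a domain. Set $S = \{1, x^m, x^{2m}, \dots\}$. Since $x^m \in B_0$, the set $S$ consists of nonzero homogeneous (degree $0$) elements of $B_0$, so $B_{x^m} = S^{-1}B$ is a $G$-graded $\Rat$-domain and $S$ is a legitimate choice in Lemma~\ref{ze3xqrctwvyjmkiu0u9inj7b2q3q01iqu}.

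Two identifications, both immediate from the fact that $x^m$ has degree $0$, make the reduction work. First, inverting a degree-$0$ element does not change the set of degrees that occur, so $\G(B_{x^m}) = \G(B)$. Second, by Lemma~\ref{ze3xqrctwvyjmkiu0u9inj7b2q3q01iqu} with $H = 0$ we have $S^{-1}(B_0) = (S^{-1}B)_0$, i.e.\ the degree-$0$ subring of $B_{x^m}$ is exactly $(B_0)_{x^m}$. To this I would add one more point, which is a consequence of $B_{x^m}$ being a domain: since $x \cdot x^{m-1} = x^m$ is a unit of $B_{x^m}$, the element $x = x_1 \cdots x_n$ is a unit of $B_{x^m}$, and therefore each factor $x_j$ is a unit of $B_{x^m}$; of course each $x_j$ is still homogeneous, of degree $\deg(x_j)$.

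Putting this together, $B_{x^m}$ is a $\Rat$-domain graded by the finite abelian group $G$, it contains the homogeneous units $x_1, \dots, x_n \in (B_{x^m})^*$, and $\langle \deg x_1 \rangle \oplus \cdots \oplus \langle \deg x_n \rangle = \G(B) = \G(B_{x^m})$. Hence Lemma~\ref{5eghxWiC2j3mjfnjahvfcd56yui}, applied with $B_{x^m}$ in the role of $B$, gives $\big( (B_{x^m})_0,\, B_{x^m} \big) \in \EXT$, which by the identification above is precisely $\big( (B_0)_{x^m},\, B_{x^m} \big) \in \EXT$, as required. There is no genuine obstacle in this argument: the only places needing (minimal) care are the identifications $(B_{x^m})_0 = (B_0)_{x^m}$ and $\G(B_{x^m}) = \G(B)$, together with the step from ``$x$ is a unit'' to ``each $x_j$ is a unit'', all of which hold because $x^m$ lies in $B_0$ and $B$ is a domain.
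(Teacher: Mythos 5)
Your proof is correct and takes essentially the same route as the paper's: both localize at $x^m$, observe that the factors $x_j$ become homogeneous units of $B_{x^m}$ whose degrees still decompose $\G(B) = \G(B_{x^m})$, apply Lemma~\ref{5eghxWiC2j3mjfnjahvfcd56yui} to $B_{x^m}$, and use Lemma~\ref{ze3xqrctwvyjmkiu0u9inj7b2q3q01iqu} (with $H=0$) to identify $(B_{x^m})_0$ with $(B_0)_{x^m}$. Your write-up is merely more explicit about the justifications (in particular, that each $x_j$ is a unit because their product $x$ divides the unit $x^m$), but there is no substantive difference in the argument.
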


\begin{proof}
There exist nonzero homogeneous elements $x_1,\dots,x_n$ of $B$
such that $x = x_1x_2\cdots x_n$ and $\G(B) = \langle \deg(x_1) \rangle \oplus \cdots \oplus \langle \deg(x_n) \rangle$.
Consider the $G$-graded domain $\Beul = B_{x^m}$ and note that $x_1, \dots, x_n$ are homogeneous units of $\Beul$ such that
$\langle \deg x_1 \rangle \oplus \cdots \oplus \langle \deg x_n \rangle = \G(B) = \G(\Beul)$.
By Lemma \ref{5eghxWiC2j3mjfnjahvfcd56yui}, we have $(\Beul_0,\Beul) \in \EXT$.
Since (by Lemma \ref{ze3xqrctwvyjmkiu0u9inj7b2q3q01iqu} with $H=0$) $\Beul_0 = (B_0)_{x^m}$, we are done.
\end{proof}

See Def.\ \ref{oi8y81726et73e8f0i349rtfyh-sat} for the concept of saturation in codimension $1$.

\begin{lemma}  \label {98876152r3ediwof93408eh}
Let $B = \bigoplus_{i \in G} B_i$ be a domain graded by a finite abelian group $G$.
The following are equivalent:
\begin{enumerati}

\item $B$ is saturated in codimension $1$;
\item no height $1$ prime ideal of $B$ contains all admissible elements of $B$.

\end{enumerati}
\end{lemma}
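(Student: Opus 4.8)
The plan is to prove the equivalent contrapositive statement: \emph{$B$ is not saturated in codimension $1$ if and only if some height $1$ prime of $B$ contains every admissible element}. Since $G$ is finite, $\G(B)$ is finite, hence torsion, so every submonoid of $G$ that is contained in $\G(B)$ is a subgroup; in particular $\overline{\bbM}(B,\pgoth) = \bbM(B,\pgoth)$ for all $\pgoth \in \Spec(B)$, and by Lemma \ref{eibuw9x6hAvpjwua6w3rh61s298746} we have $\bG(B) = \bigcap_{\pgoth \in \Spec^1(B)} \bbM(B,\pgoth)$ when $\Spec^1(B) \neq \emptyset$ and $\bG(B) = \G(B)$ when $\Spec^1(B)=\emptyset$. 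Two elementary observations will be used repeatedly: since $G$ is torsion, every element of $\G(B)$ is a \emph{non-negative} integer combination of elements of $\{i \in G : B_i \neq 0\}$; and consequently, since $B$ is a domain, $B_g \neq 0$ for every $g \in \G(B)$ (take a product of nonzero homogeneous elements whose degrees sum to $g$).

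The heart of the argument is a claim about a single prime $\pgoth \in \Spec^1(B)$: the conditions (I) $\pgoth$ contains all admissible elements of $B$; (II) $B_i \subseteq \pgoth$ for some $i$ with $B_i \neq 0$; and (III) $\bbM(B,\pgoth) \neq \G(B)$, are all equivalent. The equivalence (II)$\Leftrightarrow$(III) is immediate from the definition of $\bbM(B,\pgoth)$ together with the fact that a subgroup of $\G(B)$ containing the generating set $\{i : B_i \neq 0\}$ must be all of $\G(B)$. For (II)$\Rightarrow$(I): given an admissible element $x = x_1 \cdots x_n$ with $\G(B) = \langle d_1 \rangle \oplus \cdots \oplus \langle d_n \rangle$, $d_j = \deg(x_j)$, write $i = \sum_j c_j d_j$ with $c_j \in \Nat$; then $\prod_j x_j^{c_j}$ is homogeneous of degree $i$, hence lies in $B_i \subseteq \pgoth$. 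Since $B_i \subseteq \pgoth$ with $\pgoth$ proper forces $i \neq 0$ (as $1 \in B_0$), not all $c_j$ vanish, so primality of $\pgoth$ gives $x_j \in \pgoth$ for some $j$, whence $x \in \pgoth$. For (I)$\Rightarrow$(II): fix any decomposition $\G(B) = \langle g_1 \rangle \oplus \cdots \oplus \langle g_n \rangle$ into cyclic subgroups; if no $B_{g_j}$ were contained in $\pgoth$ we could pick homogeneous $y_j \in B_{g_j} \setminus \pgoth$ for every $j$ (nonempty since $B_{g_j}\neq 0$), and then $y_1 \cdots y_n$ would be an admissible element outside $\pgoth$, contradicting (I); hence $B_{g_{j_0}} \subseteq \pgoth$ for some $j_0$, and $B_{g_{j_0}} \neq 0$.

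Granting the claim, the lemma follows at once. If $\Spec^1(B) = \emptyset$, then $B$ is saturated in codimension $1$ by definition and condition (ii) holds vacuously, so both sides of the desired equivalence are true. If $\Spec^1(B) \neq \emptyset$, then $\bG(B) = \bigcap_{\pgoth \in \Spec^1(B)} \bbM(B,\pgoth)$ with each $\bbM(B,\pgoth) \subseteq \G(B)$, so $B$ fails to be saturated in codimension $1$ precisely when $\bbM(B,\pgoth) \neq \G(B)$ for some $\pgoth \in \Spec^1(B)$, which by the claim is precisely when some $\pgoth \in \Spec^1(B)$ contains all admissible elements.

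I do not foresee a genuine obstacle; the argument is essentially bookkeeping in the finite abelian group $\G(B)$. The points that warrant the most care are the observation that $B_g \neq 0$ for all $g \in \G(B)$ (which relies on $G$ being torsion and $B$ a domain) and the precise identification ``$\bbM(B,\pgoth) = \G(B)$ $\iff$ $B_i \nsubseteq \pgoth$ for every $i$ with $B_i \neq 0$''; both are routine.
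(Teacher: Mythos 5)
Your proposal is correct and, once the contrapositive reformulation is unwound, follows essentially the same path as the paper: both hinge on the fact that over a torsion group $\G(B)$, $\bbM(B,\pgoth)=\overline{\bbM}(B,\pgoth)$, both decompose $\G(B)$ into cyclic summands to produce (or analyse) admissible elements, and both invoke primality of $\pgoth$ at the same step. Your per-prime equivalence (I)$\Leftrightarrow$(II)$\Leftrightarrow$(III) packages precisely the two directions the paper proves for a fixed $\pgoth$.
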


\begin{proof}
If $\Spec^1(B)=\emptyset$ then both (i) and (ii) are true, so (i)$\Leftrightarrow$(ii).
From now on, assume that $\Spec^1(B)\neq\emptyset$.
Note that $\bG(B) = \bigcap_{ \pgoth \in \Spec^1(B) } {\bbM}(B,\pgoth)$ by Lemma \ref{eibuw9x6hAvpjwua6w3rh61s298746}.

Suppose that (i) holds.
Then $\G(B) = \bG(B) = \bigcap_{ \pgoth \in \Spec^1(B) } {\bbM}(B,\pgoth)$.
Consider $\pgoth \in \Spec^1(B)$. Then $\G(B) = \bbM(B,\pgoth)$.
Choose $d_1,\dots,d_n \in \G(B)$ such that $\G(B) =  \langle d_1 \rangle \oplus \cdots \oplus \langle d_n \rangle$.
Then $d_1,\dots,d_n \in \bbM(B,\pgoth)$, so for each $j \in \{1,\dots,n\}$ we have $B_{d_j} \nsubseteq \pgoth$
and hence we can choose $x_j \in B_{d_j} \setminus \pgoth$.
Then $x = x_1 \cdots x_n$ is an admissible element of $B$ such that $x \notin \pgoth$.
So (ii) holds.

Conversely, suppose that (ii) holds.
To prove (i), it suffices to show that if $i \in \G(B)$ and $\pgoth \in \Spec^1(B)$, then $B_i \nsubseteq \pgoth$.
So consider $i \in \G(B)$ and $\pgoth \in \Spec^1(B)$.
By (ii), some admissible element $x$ of $B$ satisfies $x \notin \pgoth$.
There exist nonzero homogeneous elements $x_1,\dots,x_n$ of $B$
such that $x = x_1\cdots x_n$ and $\G(B) = \langle \deg(x_1) \rangle \oplus \cdots \oplus \langle \deg(x_n) \rangle$.
For some $j_1,\dots,j_n \in \Nat$, we have $i = j_1\deg(x_1) + \cdots + j_n \deg(x_n)$, so $x_1^{j_1} \cdots x_n^{j_n} \in B_i$.
Since $x \notin \pgoth$, we have  $x_1^{j_1} \cdots x_n^{j_n} \notin \pgoth$ and hence $B_i \nsubseteq \pgoth$.
So (i) holds.
\end{proof}

\begin{proposition}  \label {83hdtbe924239ued7owtee6}
Let $B = \bigoplus_{i \in G} B_i$ be a noetherian normal $\Rat$-domain graded by a finite abelian group $G$.
If $B$ is saturated in codimension $1$ then $(B_0,B) \in \EXT$.
\end{proposition}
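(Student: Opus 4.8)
The plan is to obtain $(B_0,B)\in\EXT$ from the descent criterion of Lemma~\ref{dh2763g5d4fc5a4dsErf56w7}, applied with $A=B_0$ and with the family $(f_i)_{i\in I}$ taken to be suitable powers of the admissible elements of $B$.

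First I would dispose of the hypothesis $(\Frac(B_0),\Frac(B))\in\EXT$. Since $G$ is finite we have $0\in\bbT(B)$, so $B$ is integral over $B^{(0)}=B_0$ by Rem.~\ref{kDjkbwevCdcojBh6qvmAxfea7u}(4); hence $\Frac(B)$ is algebraic over $\Frac(B_0)$, and this extension is separable because $\Char=0$. By Ex.~\ref{83uedxc5trfnbvcDFVUirybhnjYMYbgfAcfiu736} we conclude $(\Frac(B_0),\Frac(B))\in\EXT$. Next, for each admissible element $x$ of $B$ the degree $\deg(x)$ lies in the finite group $\G(B)$ and hence has finite order; choosing $m_x\ge1$ with $x^{m_x}\in B_0$ and setting $f_x=x^{m_x}\in B_0\setminus\{0\}$, Lemma~\ref{d8iuyhv1i288wsx9} gives $\bigl((B_0)_{f_x},B_{f_x}\bigr)\in\EXT$ for every admissible $x$. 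This is precisely the first condition of Lemma~\ref{dh2763g5d4fc5a4dsErf56w7} for the family $(f_x)_x$.

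For the second condition I would show that no height~$1$ prime ideal $\pgoth$ of $B$ contains every $f_x$: if one did, then primeness of $\pgoth$ would force $x\in\pgoth$ for every admissible $x$, contradicting Lemma~\ref{98876152r3ediwof93408eh}---and it is exactly here that the hypothesis ``$B$ is saturated in codimension~$1$'' is used. Lemma~\ref{dh2763g5d4fc5a4dsErf56w7} then yields $(B_0,B)\in\EXT$, as desired. I do not anticipate any real obstacle; the only points that need a line of argument are the implication ``$B$ integral over $B_0$ $\Rightarrow$ $\Frac(B)/\Frac(B_0)$ algebraic'' and the observation that admissible elements do exist---a finite abelian group is a direct sum of cyclic groups, and $B_i\neq0$ for every $i\in\G(B)$ since $\{\,i\in G\mid B_i\neq0\,\}$ is a submonoid, hence a subgroup, of the finite group $G$---so that the index set $I$ is nonempty and Lemma~\ref{98876152r3ediwof93408eh} is not vacuous.
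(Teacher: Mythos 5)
Your proposal is correct and matches the paper's proof in every essential respect: it invokes Lemma~\ref{dh2763g5d4fc5a4dsErf56w7} with $A=B_0$, verifies the first hypothesis via Lemma~\ref{d8iuyhv1i288wsx9} applied to powers of admissible elements, verifies the second via Lemma~\ref{98876152r3ediwof93408eh}, and uses Example~\ref{83uedxc5trfnbvcDFVUirybhnjYMYbgfAcfiu736} for the field-level hypothesis. The only cosmetic difference is that the paper fixes a single exponent $d$ annihilating $G$ and sets $Y=\{x^d : x \text{ admissible}\}$, whereas you allow a separate exponent $m_x$ for each admissible $x$.
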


\begin{proof}
Let $d$ be a positive integer such that $di=0$ for all $i \in G$,
and consider the subset $Y = \setspec{ x^d }{\text{$x$ is an admissible element of $B$}}$ of $B_0$.
Since $\Frac(B) / \Frac(B_0)$ is an algebraic extension of fields of characteristic $0$,
we have $(\Frac(B_0), \Frac(B)) \in \EXT$ by Ex.\ \ref{83uedxc5trfnbvcDFVUirybhnjYMYbgfAcfiu736}.
By Lemma \ref{d8iuyhv1i288wsx9}, $( (B_0)_y, B_y ) \in \EXT$ for every $y \in Y$.
By Lemma \ref{98876152r3ediwof93408eh}, no height $1$ prime ideal of $B$ contains $Y$.
So Lemma \ref{dh2763g5d4fc5a4dsErf56w7} implies that $(B_0,B) \in \EXT$.
\end{proof}

\begin{theorem}  \label {o823y48t2309fnb28}
Let $B$ be a noetherian normal $\Rat$-domain graded by a finitely generated abelian group $G$.
\begin{enumerata}

\item If $H \in \bbT(B)$ then $\big( B^{(H)} , B^{( H + \bG(B) )} \big) \in \EXT$.

\item If $H \in \Xscr(B)$ then $H + \bG(B) \in \Xscr(B)$.

\end{enumerata}
\end{theorem}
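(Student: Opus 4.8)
The plan is to reduce part~(a) to the finite-grading-group case already settled in Proposition~\ref{83hdtbe924239ued7owtee6}, and then to read off part~(b) from part~(a) together with Lemma~\ref{piud29hdbfdu74heqoir923ft7fbc1oeyt}.

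For part~(a), write $\Gamma = H + \bG(B)$ and $C = B^{(\Gamma)}$. First I would collect the easy preliminaries. Since $\Gamma \supseteq H$ and $H \in \bbT(B)$, the group $\G(B)/(\Gamma\cap\G(B))$ is a quotient of the torsion group $\G(B)/(H\cap\G(B))$, so $\Gamma \in \bbT(B)$; by Lemma~\ref{98767gh11dmdffkll0ahcchvr0vfesh8263543794663cnir}, $C$ is a noetherian normal $\Rat$-domain. Let $\pi \colon \Gamma \to \bar G := \Gamma/H$ be the quotient map and regrade $C$ by $\bar G$ via $C_{\bar g} = \bigoplus_{\pi(g)=\bar g} B_g$; then the degree-$0$ subring of $C$ for this $\bar G$-grading is $\bigoplus_{g\in H}B_g = B^{(H)}$. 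The group $\bar G$ is finite: since $\Gamma = H + \bG(B)$ we have $\bar G = \pi(\bG(B))$, a quotient of the finitely generated group $\bG(B)$ (finitely generated because $G$ is), and it is torsion because every element of $\bG(B)\subseteq\G(B)$ has a nonzero multiple lying in $H$ (as $H\in\bbT(B)$).

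The step I expect to be the main obstacle is to verify that $C$, with its $\bar G$-grading, is saturated in codimension~$1$; granting this, Proposition~\ref{83hdtbe924239ued7owtee6} applies to $C$ and yields $(C_0,C) = \big(B^{(H)},B^{(H+\bG(B))}\big)\in\EXT$, which is part~(a). To check the saturation I would first observe that the monoids $\bbM$ behave well under passage to the quotient grading: for $\pgoth\in\Spec^1(C)$, a component $C_{\bar g}$ is contained in $\pgoth$ if and only if every $B_g$ with $\pi(g)=\bar g$ is contained in $\pgoth$, so $\bbM_{\bar G}(C,\pgoth) = \pi\big(\bbM_\Gamma(C,\pgoth)\big)$ and hence $\overline{\bbM}_{\bar G}(C,\pgoth) = \pi\big(\overline{\bbM}_\Gamma(C,\pgoth)\big)$. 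Intersecting over $\Spec^1(C)$ (the case $\Spec^1(C)=\emptyset$ being trivial) gives $\bG_{\bar G}(C) \supseteq \pi\big(\bG_\Gamma(C)\big)$. By Lemma~\ref{GandGbarOfBH} applied to $B$ and $\Gamma\in\bbT(B)$ we have $\bG_\Gamma(C) = \bG(B^{(\Gamma)}) = \Gamma\cap\bG(B) = \bG(B)$, and $\pi(\bG(B)) = \bar G$; since also $\bG_{\bar G}(C)\subseteq\G_{\bar G}(C)\subseteq\bar G$, we conclude $\bG_{\bar G}(C) = \bar G = \G_{\bar G}(C)$, i.e.\ $C$ is saturated in codimension~$1$ for the $\bar G$-grading.

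For part~(b), suppose $H\in\Xscr(B)$, so $H\in\bbT(B)$ and $\hlnd(B^{(H)})\neq\{0\}$. We saw in part~(a) that $H+\bG(B)\in\bbT(B)$, and clearly $H\subseteq H+\bG(B)$, so part~(a) shows that the hypotheses of Lemma~\ref{piud29hdbfdu74heqoir923ft7fbc1oeyt} hold with $H_1 = H$ and $H_2 = H+\bG(B)$; part~(c) of that lemma then gives $\hlnd(B^{(H+\bG(B))})\neq\{0\}$, whence $H+\bG(B)\in\Xscr(B)$.
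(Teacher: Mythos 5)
Your argument is correct. Both you and the paper reduce part~(a) to the finite-grading-group case: set $C = B^{(H+\bG(B))}$, regrade it by the finite torsion quotient $\bar G = (H+\bG(B))/H$, and invoke Proposition~\ref{83hdtbe924239ued7owtee6}, so the crux in both proofs is verifying that $C$ is saturated in codimension~$1$ for the $\bar G$-grading. Where you diverge is in how that saturation is checked. The paper works with the surjection $\Spec^1(B)\to\Spec^1(C)$, $\pgoth\mapsto\pgoth\cap C$ (from Lemma~\ref{87654esxcvs9dfok33me4rtu5ortyo0934urud3r24}), and proves $\pi\big(\overline{\bbM}(B,\pgoth)\cap X\big)\subseteq\overline{\bbM}(C,\pgoth\cap C)$ for each $\pgoth\in\Spec^1(B)$ before intersecting; in effect it re-derives, for this one passage from the $G$-grading of $B$ to the $\bar G$-grading of $C$, the kind of inclusion that appears elsewhere as the ``caution'' in paragraph~\ref{8A76f5swaz23edafgwuvperol0ut83m395ff}. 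You instead split the computation into two already-packaged steps: Lemma~\ref{GandGbarOfBH} gives $\bG$ of $C$ in the inherited grading as $\Gamma\cap\bG(B)=\bG(B)$, and the monoid identity $\overline{\bbM}_{\bar G}(C,\pgoth)=\pi\big(\overline{\bbM}_\Gamma(C,\pgoth)\big)$ for $\pgoth\in\Spec^1(C)$ then yields $\bG_{\bar G}(C)\supseteq\pi(\bG(B))=\bar G$, which forces equality. Your route is a bit more modular in that it reuses Lemma~\ref{GandGbarOfBH} (which the paper's own proof of this theorem does not invoke) and thereby sidesteps the explicit use of the surjectivity of $\Spec^1(B)\to\Spec^1(C)$ at this point (it is of course hidden inside Lemma~\ref{GandGbarOfBH}); the trade-off is the extra dependency, and neither argument is appreciably shorter than the other. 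Part~(b) is handled exactly as in the paper, via Lemma~\ref{piud29hdbfdu74heqoir923ft7fbc1oeyt}.
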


\begin{proof}
(a) Let $B = \bigoplus_{i \in G} B_i$ be the notation.
Consider the groups $X = H + \bG(B)$ and $Y = X/H$, and  the canonical homomorphism of the quotient group $\pi : X \to Y$.
Since $H \in \bbT(B)$, the group $Y = (H + \bG(B))/H \subseteq (H + \G(B))/H \isom \G(B) / (H \cap \G(B))$ is torsion and hence finite.
Let $A = B^{( X )}$. Since $A$ has an $X$-grading, it also has a $Y$-grading, namely
$A = \bigoplus_{ y \in Y } A_y $ where for each $y \in Y$ we define $A_y = \bigoplus_{ i \in X_y } B_i$ (with $X_y = \setspec{i \in X}{ \pi(i) = y }$).
{\it We always regard $A$ as being graded by the finite group $Y$.}
Since $A_0 = B^{(H)}$, it suffices to show that $(A_0,A) \in \EXT$.
By Lemma \ref{98767gh11dmdffkll0ahcchvr0vfesh8263543794663cnir},
$A$ is a noetherian normal $\Rat$-domain; so, by Prop.\ \ref{83hdtbe924239ued7owtee6},
it suffices to show that $A$ is saturated in codimension~$1$.\footnote{The
$X$-graded ring $B^{( X )}$ is not necessarily saturated in codimension $1$,
but we will show that  the $Y$-graded ring $A$ is.}

Consider the inclusion $A \subseteq B$; by Lemma \ref{87654esxcvs9dfok33me4rtu5ortyo0934urud3r24}(b),
$\pgoth \mapsto \pgoth \cap A$ is a surjective map from $\Spec^1(B)$ to $\Spec^1(A)$.
In particular,  $\Spec^1(B) \neq \emptyset$ $\Leftrightarrow$ $\Spec^1(A) \neq \emptyset$.
If $\Spec^1(A) = \emptyset$ then (by Def.\ \ref{oi8y81726et73e8f0i349rtfyh-sat}) $A$ is saturated in codimension $1$ and we are done.
So we may assume that $\Spec^1(A) \neq \emptyset$. It follows that $\Spec^1(B) \neq \emptyset$,
$\bG(B) = {\textstyle \bigcap_{ \pgoth \in \Spec^1(B)} \overline{\bbM}(B,\pgoth)}$ and
$\bG(A) = {\textstyle \bigcap_{ \qgoth \in \Spec^1(A)} \overline{\bbM}(A,\qgoth) }$.

Consider $\pgoth \in \Spec(B)$. 
We claim that 
\begin{equation}  \label {87654efg172193p01d2rbtfvcxvoo39}
\pi \big( \overline{\bbM}(B,\pgoth) \cap X \big) = \pi \big( {\bbM}(B,\pgoth) \cap X \big) .
\end{equation}
Indeed, if $W$ is a submonoid of $X$ then $\pi(W)$ is a submonoid of the finite group $Y$, so $\pi(W)$ is a group and hence
$\pi(W) = \langle \pi(W) \rangle = \pi( \langle W \rangle )$;
so $\pi \big( {\bbM}(B,\pgoth) \cap X \big) = \pi \big( \langle {\bbM}(B,\pgoth) \cap X \rangle \big)$.
We have $\langle \bbM(B,\pgoth) \cap X \rangle = \overline{\bbM}(B,\pgoth) \cap X$
by Lemma \ref{jBh9823hedAcv78n2erj20} (note that $X \in \bbT(B)$, because $X \supseteq H \in \bbT(B)$);
so \eqref{87654efg172193p01d2rbtfvcxvoo39} follows.

For each $i \in {\bbM}(B,\pgoth) \cap X$, we have
$B_i \nsubseteq \pgoth$, so $A_{ \pi(i) } \nsubseteq \pgoth$ because $B_i \subseteq A_{ \pi(i) }$,
so $A_{ \pi(i) } \nsubseteq \pgoth \cap A$ and hence $\pi(i) \in \bbM(A,\pgoth \cap A)$.
This shows that $\pi \big( {\bbM}(B,\pgoth) \cap X \big) \subseteq {\bbM}(A,\pgoth \cap A)$.
In view of \eqref{87654efg172193p01d2rbtfvcxvoo39}, and since $\pgoth \in \Spec(B)$ is arbitrary, this gives
\begin{equation} \label {8237ter7672893eufhf9}
\pi \big( \overline{\bbM}(B,\pgoth) \cap X \big) \subseteq \overline{\bbM}(A,\pgoth \cap A)  \quad \text{for every $\pgoth \in \Spec(B)$.}
\end{equation}
Keeping in mind that $\Spec^1(B) \neq \emptyset$, we obtain
\begin{align}  \label {uhgu561378e90dewjdbv2aX0}
Y &= \pi( H + \bG(B) ) = \pi( \bG(B) )
= \pi\bigg( {\textstyle \bigcap_{ \pgoth \in \Spec^1(B)} \overline{\bbM}(B,\pgoth) \bigg)} \\
\notag &= \pi\bigg( {\textstyle \bigcap_{ \pgoth \in \Spec^1(B)}  \big( \overline{\bbM}(B,\pgoth) \cap X \big) \bigg)} \\
\notag &\subseteq {\textstyle \bigcap_{ \pgoth \in \Spec^1(B)} \pi\big( \overline{\bbM}(B,\pgoth) \cap X \big)}
\subseteq {\textstyle \bigcap_{ \pgoth \in \Spec^1(B)}  \overline{\bbM}(A,\pgoth \cap A) } ,
\end{align}
where the last inclusion follows from \eqref{8237ter7672893eufhf9}.
Since $\pgoth \mapsto \pgoth \cap A$ is a surjective map from $\Spec^1(B)$ to $\Spec^1(A)$, we have
$$
{\textstyle \bigcap_{ \pgoth \in \Spec^1(B)} \overline{\bbM}(A,\pgoth \cap A) } 
= {\textstyle \bigcap_{ \qgoth \in \Spec^1(A)} \overline{\bbM}(A,\qgoth) } = \bG(A) ,
$$
so $Y \subseteq \bG(A)$ by \eqref{uhgu561378e90dewjdbv2aX0}.
We have $\bG(A) \subseteq \G(A) \subseteq Y$ by definition, so we obtain $\bG(A)=\G(A)$, i.e., $A$ is saturated in codimension $1$. This proves (a).

(b) Suppose that $H \in \Xscr(B)$. We have $H + \bG(B) \supseteq H \in \bbT(B)$, so $H + \bG(B) \in \bbT(B)$.
Since $\hlnd(B^{(H)}) \neq \{0\}$ (because $H \in \Xscr(B)$) and $\big( B^{(H)} , B^{( H + \bG(B) )} \big) \in \EXT$ (by (a)),
Lemma \ref{piud29hdbfdu74heqoir923ft7fbc1oeyt} implies that $\hlnd( B^{( H + \bG(B) )} ) \neq \{0\}$, so $H + \bG(B) \in \Xscr(B)$.
\end{proof}

\begin{corollary}
Let $B$ be a noetherian normal $\Rat$-domain graded by a finitely generated abelian group $G$.
If $B$ is saturated in codimension $1$ then the following hold.
\begin{enumerata}

\item $(B^{(H)},B) \in \EXT$ for every $H \in \bbT(B)$.

\item If $\Xscr(B) \neq \emptyset$ then  $G \in \Xscr(B)$ and $\hlnd(B) \neq \{0\}$.

\end{enumerata}
\end{corollary}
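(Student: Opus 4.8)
The plan is to derive both statements from the Extension Theorem (Thm~\ref{o823y48t2309fnb28}) together with the single observation that saturation in codimension~$1$ means $\bG(B) = \G(B)$, and that consequently $B^{(H + \bG(B))} = B$ for every subgroup $H$ of $G$. Indeed, $\G(B)$ is by definition the subgroup generated by $\setspec{i \in G}{B_i \neq 0}$, so every nonzero homogeneous element of $B$ has degree in $\G(B) \subseteq H + \bG(B)$, whence $B = \bigoplus_{i \in \G(B)} B_i \subseteq B^{(H + \bG(B))} \subseteq B$. In the same way $B^{(G)} = B$, and $G \in \bbT(B)$ since $\G(B)/(G \cap \G(B)) = \{0\}$ is torsion. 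So the only real content is to feed these identities into the results already proved.

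For part (a), fix $H \in \bbT(B)$. Theorem~\ref{o823y48t2309fnb28}(a) gives $\big(B^{(H)}, B^{(H + \bG(B))}\big) \in \EXT$, and since $B^{(H + \bG(B))} = B$ by the observation above, this is precisely $(B^{(H)}, B) \in \EXT$.

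For part (b), assume $\Xscr(B) \neq \emptyset$ and choose $H \in \Xscr(B)$; thus $H \in \bbT(B)$ and $\hlnd(B^{(H)}) \neq \{0\}$. Since $H \subseteq G$, both $H$ and $G$ lie in $\bbT(B)$, and $(B^{(H)}, B^{(G)}) = (B^{(H)}, B) \in \EXT$ by part~(a); so Lemma~\ref{piud29hdbfdu74heqoir923ft7fbc1oeyt}(c) yields $\hlnd(B^{(G)}) = \hlnd(B) \neq \{0\}$, i.e.\ $\hlnd(B) \neq \{0\}$ and $G \in \Xscr(B)$. (Alternatively, one can invoke Thm~\ref{o823y48t2309fnb28}(b) directly: $H + \bG(B) \in \Xscr(B)$, and since $B^{(H + \bG(B))} = B = B^{(G)}$ this gives $G \in \Xscr(B)$.) There is essentially no obstacle in the argument itself; the one point requiring a moment's care is the identity $B^{(H + \bG(B))} = B$, which genuinely uses the saturation hypothesis --- without it $H + \bG(B)$ may be a proper subgroup of $H + \G(B)$ and $B^{(H + \bG(B))}$ may be strictly smaller than $B$, so the conclusions of the corollary would fail in that generality.
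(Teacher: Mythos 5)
Your proof is correct and follows essentially the same route as the paper's: both rest on the identity $B^{(H+\bG(B))} = B$ (which you justify slightly more explicitly than the paper does) and then apply Theorem~\ref{o823y48t2309fnb28}. For part (b) your primary argument goes through Lemma~\ref{piud29hdbfdu74heqoir923ft7fbc1oeyt}(c) rather than invoking Theorem~\ref{o823y48t2309fnb28}(b) directly, but that theorem's part (b) is itself proved from part (a) together with the same lemma, so this is only an inline re-derivation of the same fact; your parenthetical alternative is precisely the paper's argument.
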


\begin{proof}
If $H \in \bbT(B)$ then $B^{( H + \bG(B) )} = B^{( H + \G(B) )} = B$,
so Thm \ref{o823y48t2309fnb28}(a) implies that $(B^{(H)},B) = \big( B^{(H)} , B^{( H + \bG(B) )} \big) \in \EXT$, which proves (a).
If $H \in \Xscr(B)$ then $H + \G(B) = H + \bG(B) \in \Xscr(B)$ by Thm \ref{o823y48t2309fnb28}(b), so $\hlnd(B^{(H + \G(B))}) \neq \{0\}$;
since $B^{(H + \G(B))} = B = B^{(G)}$, we get  $\hlnd(B) \neq \{0\}$ and $G \in \Xscr(B)$, proving (b).
\end{proof}

\section{Partial description of the set $\Xscr(B)$}
\label {sectionThesetXscrB}

If $B$ is a domain of characteristic $0$ graded by an abelian group,
let $\Mscr(B)$ denote the set of maximal elements of the poset $(\Xscr(B),\subseteq)$.

We shall now combine the main results of Sections \ref{sectionFromderivationsofBtoderivationsofBH} and \ref{sectionFromderivationsofBHtoderivationsofB}
to obtain Thm \ref{987654esdfghjdj4edoodji2d0we9iu9w12twf5}.
Part (a) of that result reduces the problem of describing $\Xscr(B)$ to that of describing $\Mscr(B)$, and (b) gives a small piece of information about $\Mscr(B)$.

\begin{theorem}  \label {987654esdfghjdj4edoodji2d0we9iu9w12twf5}
Let $\bk$ be a field of characteristic $0$ and $B$ an affine $\bk$-domain graded over $\bk$ by a finitely generated abelian group $G$.
\begin{enumerata}

\item $\Xscr(B) = \bigcup_{ H \in \Mscr(B) } \bbT_H(B)$

\item If $B$ is normal then each element $H$ of $\Mscr(B)$ satisfies $H \supseteq \bG(B)$.

\end{enumerata}
\end{theorem}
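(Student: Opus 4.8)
The plan is to deduce both parts from the two ``transfer'' theorems already established: the Descent Theorem \ref{8237e9d1983hdjyev93} (which pushes homogeneous LNDs down to Veronese-type subrings $B^{(H)}$ with $H \in \bbT(B)$) and the Extension Theorem \ref{o823y48t2309fnb28} (which, for noetherian normal $\Rat$-domains, pushes derivations of $B^{(H)}$ up to $B^{(H+\bG(B))}$, preserving local nilpotence and homogeneity). First I would observe that under the hypotheses (affine $\bk$-domain graded over $\bk$ by a finitely generated $G$), each $B^{(H)}$ with $H \in \bbT(B)$ is again an affine $\bk$-domain graded over $\bk$ (Cor.\ \ref{8Qp98781hy2uexihvcx2qw2wUxq3ew0pokjmnbvaw2345}), so these transfer results apply. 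Also $G$ being finitely generated guarantees every subgroup $H$ is finitely generated, and $\G(B), \bG(B) \subseteq G$ are finitely generated; $\bbT(B)$ with respect to inclusion is the relevant poset throughout.

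For part (a): the inclusion $\bigcup_{H \in \Mscr(B)} \bbT_H(B) \subseteq \Xscr(B)$ is immediate from Cor.\ \ref{8A273gLbv9ncxcavcbo3w982376re5}(a), which says $H \in \Xscr(B)$ implies $\bbT_H(B) \subseteq \Xscr(B)$; apply this with $H$ ranging over $\Mscr(B) \subseteq \Xscr(B)$. For the reverse inclusion, take $H' \in \Xscr(B)$; I must produce a \emph{maximal} element $H$ of $(\Xscr(B),\subseteq)$ with $H' \subseteq H$. This is where finite generation of $G$ does the real work: since $G$ is noetherian as a module over $\Integ$, every ascending chain of subgroups stabilizes, so by Zorn's lemma (or directly) every element of $\Xscr(B)$ lies below a maximal one. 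Then $H' \in \bbT_H(B)$ for that $H \in \Mscr(B)$, giving $H' \in \bigcup_{H \in \Mscr(B)} \bbT_H(B)$. I would spell out explicitly that $H' \subseteq H$ together with $H', H \in \bbT(B)$ yields $H' \in \bbT_H(B)$ by Notation \ref{Zlknckjhi7evoenvnbCgEybgdCgbfpr9t839of}.

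For part (b): assume $B$ is normal and let $H \in \Mscr(B)$. By Cor.\ \ref{8Qp98781hy2uexihvcx2qw2wUxq3ew0pokjmnbvaw2345} and Lemma \ref{98767gh11dmdffkll0ahcchvr0vfesh8263543794663cnir}(d), $B$ is a noetherian normal $\Rat$-domain graded by the finitely generated group $G$, so Thm \ref{o823y48t2309fnb28}(b) applies: $H \in \Xscr(B)$ forces $H + \bG(B) \in \Xscr(B)$. Since $H \subseteq H + \bG(B)$ and $H$ is maximal in $(\Xscr(B),\subseteq)$, we get $H = H + \bG(B)$, i.e.\ $\bG(B) \subseteq H$, which is exactly the claim. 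I expect the only genuinely delicate point is the existence of maximal elements above a given element of $\Xscr(B)$ in part (a) — it relies essentially on $G$ being finitely generated (hence Noetherian), and without that hypothesis $\Mscr(B)$ could fail to be ``cofinal'' in $\Xscr(B)$; everything else is a direct invocation of the two transfer theorems plus the bookkeeping lemmas on $\bbT(B)$ and $B^{(H)}$.
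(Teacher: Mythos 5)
Your proposal is correct and follows essentially the same route as the paper: for (a), use Cor.\ \ref{8A273gLbv9ncxcavcbo3w982376re5}(a) for one inclusion and the ascending chain condition on subgroups of the finitely generated $G$ to place every element of $\Xscr(B)$ below a maximal one; for (b), invoke Thm \ref{o823y48t2309fnb28}(b) and the maximality of $H$ to conclude $H = H + \bG(B)$. The only cosmetic point is that $B$ being noetherian follows directly from it being an affine $\bk$-domain (and $B$ is a normal $\Rat$-domain by hypothesis and char $0$), so the citations of Cor.\ \ref{8Qp98781hy2uexihvcx2qw2wUxq3ew0pokjmnbvaw2345} and Lemma \ref{98767gh11dmdffkll0ahcchvr0vfesh8263543794663cnir}(d) in part (b) are unnecessary, though harmless.
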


\begin{proof}
Let us abbreviate $\Xscr(B)$ and $\Mscr(B)$ to $\Xscr$ and $\Mscr$ respectively.
We may assume that $\Xscr \neq \emptyset$, otherwise the claim holds trivially.

(a) By Cor.\ \ref{8A273gLbv9ncxcavcbo3w982376re5}, we have  $\bbT_H(B) \subseteq \Xscr$ for each $H \in \Xscr$.
Thus, $\Xscr = \bigcup_{ H \in \Xscr } \bbT_H(B) $.
Since $G$ is finitely generated,
the poset $( \Xscr , \subseteq )$ satisfies the ascending chain condition.
This implies that for each $H_1 \in \Xscr$, there exists $H_2 \in \Mscr$ such that $H_1 \subseteq H_2$ and hence $\bbT_{H_1}(B) \subseteq \bbT_{H_2}(B)$.
Thus, $\Xscr = \bigcup_{ H \in \Xscr } \bbT_H(B) = \bigcup_{ H \in \Mscr } \bbT_H(B)$, which proves (a).

(b) Assume that $B$ is normal and consider $H \in \Mscr$.  Then $H + \bG(B) \in \Xscr$ by Thm \ref{o823y48t2309fnb28}(b).
So we have $\Mscr \ni H \subseteq H + \bG(B) \in \Xscr$, which implies $H=H + \bG(B)$ and hence $H \supseteq \bG(B)$.
This proves (b).
\end{proof}

\begin{corollary}  \label {87654edfgyudi30piwnf3bwvf2tyweuiklc9Bmo3vA34b6getw63ye}
Let $\bk$ be a field of characteristic $0$ and $B$ a normal affine $\bk$-domain graded over $\bk$ by a finitely generated abelian group $G$.
If $B$ is saturated in codimension $1$ then $\Xscr(B) = \emptyset$ or $\Xscr(B) = \bbT(B)$.
\end{corollary}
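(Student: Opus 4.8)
The plan is to show that if $\Xscr(B)\neq\emptyset$ then $\Xscr(B)=\bbT(B)$; since $\Xscr(B)\subseteq\bbT(B)$ holds by the very definition of $\Xscr(B)$, it suffices to produce the reverse inclusion. First I would record the standing facts: $B$ is a noetherian $\Rat$-domain (it is an affine $\bk$-domain and $\Char\bk=0$), it is graded over $\bk$ by the finitely generated group $G$, and ``saturated in codimension $1$'' means precisely $\bG(B)=\G(B)$. Fix $H_0\in\Xscr(B)$, so that $H_0\in\bbT(B)$ and $\hlnd(B^{(H_0)})\neq\{0\}$.

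Next I would apply the Extension Theorem~\ref{o823y48t2309fnb28}(a) to $H_0$, obtaining $\big(B^{(H_0)},B^{(H_0+\bG(B))}\big)\in\EXT$. The crucial simplification is that, because $B_i\neq0$ forces $i\in\G(B)=\bG(B)\subseteq H_0+\bG(B)$, one has $B^{(H_0+\bG(B))}=B$ as rings; thus $\big(B^{(H_0)},B\big)\in\EXT$. Since $H_0\subseteq H_0+\bG(B)$, both belong to $\bbT(B)$, and $\hlnd(B^{(H_0)})\neq\{0\}$, Lemma~\ref{piud29hdbfdu74heqoir923ft7fbc1oeyt}(c) then gives $\hlnd(B)\neq\{0\}$. (This step is exactly part (b) of the Corollary immediately following Theorem~\ref{o823y48t2309fnb28}, which one could instead quote verbatim; alternatively, one could note at this point that $G\in\Xscr(B)$, that $G$ is the largest element of $\bbT(B)\supseteq\Xscr(B)$, hence $\Mscr(B)=\{G\}$, and then finish via Theorem~\ref{987654esdfghjdj4edoodji2d0we9iu9w12twf5}(a).)

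Finally I would invoke the Descent Theorem. Since $B=B^{(G)}$ is an affine $\bk$-domain graded over $\bk$ by $G$ and $\hlnd(B)\neq\{0\}$, Theorem~\ref{8237e9d1983hdjyev93}(c) applies to every $H'\in\bbT(B)$ and yields $\hlnd(B^{(H')})\neq\{0\}$, i.e.\ $H'\in\Xscr(B)$. Hence $\bbT(B)\subseteq\Xscr(B)$, which together with the trivial inclusion gives $\Xscr(B)=\bbT(B)$, as desired. I do not anticipate a genuine obstacle: the statement is a short assembly of Theorems~\ref{8237e9d1983hdjyev93} and~\ref{o823y48t2309fnb28} with Lemma~\ref{piud29hdbfdu74heqoir923ft7fbc1oeyt}. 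The one point that needs attention is the observation $B^{(H_0+\bG(B))}=B$ under the saturation hypothesis --- it is this ring-level identity, not any group equality such as $H_0+\bG(B)=G$, that lets the extension step reach $B$ itself, after which the conclusion is automatic.
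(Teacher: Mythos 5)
Your proof is correct, and it takes a genuinely different (more streamlined) route than the paper's. The paper proves this corollary by applying Theorem~\ref{987654esdfghjdj4edoodji2d0we9iu9w12twf5}: from $\Xscr(B)\neq\emptyset$, part (a) forces $\Mscr(B)\neq\emptyset$; then for $H\in\Mscr(B)$, part (b) gives $H\supseteq\bG(B)=\G(B)$, whence $B^{(H)}=B$ and $\hlnd(B)\neq\{0\}$; and the Descent Theorem finishes. Your argument bypasses $\Mscr(B)$ and the ascending-chain consideration entirely: you take any $H_0\in\Xscr(B)$, apply the Extension Theorem~\ref{o823y48t2309fnb28}(a) together with the ring-level observation $B^{(H_0+\bG(B))}=B$ (valid since $\bG(B)=\G(B)$), push the nonzero element of $\hlnd(B^{(H_0)})$ up to $B$ via Lemma~\ref{piud29hdbfdu74heqoir923ft7fbc1oeyt}(c), and then descend via Theorem~\ref{8237e9d1983hdjyev93}(c). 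As you yourself observe, the ascent step is precisely part (b) of the unnumbered corollary that follows Theorem~\ref{o823y48t2309fnb28}, so your proof could be condensed to ``corollary after Theorem~\ref{o823y48t2309fnb28} plus Descent Theorem.'' The trade-off: the paper's route is a one-line consequence of the already-established structural Theorem~\ref{987654esdfghjdj4edoodji2d0we9iu9w12twf5}, exhibiting the corollary as a special case of the $\Mscr(B)$-description; yours is self-contained at the cost of re-deriving a piece of that structure, but avoids invoking the finiteness/ACC argument hidden in Theorem~\ref{987654esdfghjdj4edoodji2d0we9iu9w12twf5}(a). Your careful remark that the crucial identity is the ring equality $B^{(H_0+\bG(B))}=B$ rather than the group equality $H_0+\bG(B)=G$ is exactly right; one also needs, and you have implicitly, that the two gradings on this common ring have the same homogeneous elements, so $\hlnd$ is unaffected.
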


\begin{proof}
Suppose that $\Xscr(B) \neq \emptyset$.
Then $\Mscr(B) \neq \emptyset$ by Thm \ref{987654esdfghjdj4edoodji2d0we9iu9w12twf5}(a).
Consider $H \in \Mscr(B)$.
We have $H \supseteq \bG(B)$ by Thm \ref{987654esdfghjdj4edoodji2d0we9iu9w12twf5}(b),
and  $\bG(B) = \G(B)$ because $B$ is saturated in codimension $1$,
so $H \supseteq \G(B)$ and consequently $B^{(H)} = B$.
Since $H \in \Xscr(B)$ we have $\hlnd( B^{(H)} ) \neq \{0\}$ and hence $\hlnd(B) \neq \{0\}$.
It then follows from Thm \ref{8237e9d1983hdjyev93} that $\hlnd( B^{(H')} ) \neq \{0\}$ for all $H' \in \bbT(B)$,
i.e., $\Xscr(B) = \bbT(B)$.
\end{proof}

\begin{remark}
The conclusion ``$\Xscr(B) = \emptyset$ or $\Xscr(B) = \bbT(B)$'' of Cor.\ \ref{87654edfgyudi30piwnf3bwvf2tyweuiklc9Bmo3vA34b6getw63ye}
means that all rings $B^{(H)}$ with $H \in \bbT(B)$ have the same graded-rigidity status,
i.e., if one of them is graded-rigid then all of them are (see Def.\ \ref{8hnvnBbslijvxbgxsab8AbN7p03msS726rfXC}).
If we furthermore assume that $G$ is torsion-free then (by Lemma \ref{876543wsdfgxhnm290cAo}) all those rings also have the same rigidity status.
\end{remark}

\section{$\Integ$-gradings}
\label {sectionInteggradings}

In the first part of this section (up to Cor.\ \ref{o87r62390cJj99xjhs263bfe}),
we use the general results of the previous sections to derive consequences for $\Integ$-graded rings, 
adding new pieces of information where possible.
The material covered from Lemma \ref{87t6yfjcbbvo39f8y7-2} to the end of the section goes beyond the results of the previous sections.

The roles played by the groups $\G(B)$ and $\bG(B)$ in the previous sections are played by the
natural numbers $e(B)$ and $\bar e(B)$ in the context of $\Integ$-gradings.
These numbers are defined in Notation \ref{5Q43wsdbxhj3403o2iuwyetrd567b8niVuk54hwsert2hj3k},
and their relation to $\G(B)$ and $\bG(B)$ is given in Lemma \ref{983bvkslwlia293tfg62zgbnm3kry}.

\begin{notation}
If $B = \bigoplus_{i \in \Integ} B_i$ is a $\Integ$-graded ring and $d$ is a positive integer then
we define $B^{(d)} = \bigoplus_{i \in d\Integ} B_i$.  (In other words, $B^{(d)} = B^{(H)}$ with $H = d\Integ$.)
This is called the {\it $d$-th Veronese subring\/} of $B$.
\end{notation}

\begin{bigremark}  \label {So87gfXtkdfLO982678wgdyfcfu4ygf654W09}
We will make tacit use of the following facts.
\begin{enumerata}

\item \it If $\bk$ is a field and $B$ is a $\bk$-domain, then every $\Integ$-grading of $B$ is over $\bk$.

\item \it If $B$ is a $\Integ$-graded ring then $\bbT(B) = \setspec{ d\Integ }{ d \ge 1 }$ if the grading is non-trivial, 
and $\bbT(B) = \setspec{ d\Integ }{ d \ge 0 }$ if the grading is trivial.
Consequently, $\setspec{ B^{(H)} }{ H \in \bbT(B) } = \setspec{ B^{(d)} }{ d\ge1 }$ in both cases.

\end{enumerata}
Part (a) follows from Rem.\ \ref{ijn3eoocnnqppieudbc}.
In part (b), note that if the grading is trivial then $\setspec{ B^{(H)} }{ H \in \bbT(B) } = \{B\} = \setspec{ B^{(d)} }{ d\ge1 }$.
\end{bigremark}

It is obvious that Thm \ref{8237e9d1983hdjyev93} has the following consequence:

\begin{corollary}  \label {z0zoxjf5wehghwhedcgcn26sbcdtzezp8q0}
Let $\bk$ be a field of characteristic $0$, let $B$ be a $\Integ$-graded affine $\bk$-domain,
and let $d$ be a positive integer.
\begin{enumerata}

\item For each $A \in \khlnd(B)$, we have $A^{(d)} \in \khlnd(B^{(d)})$.

\item The map $\khlnd(B) \to \khlnd(B^{(d)})$, $A \mapsto A^{(d)}$, is injective.

\item If $B$ is non-rigid then $B^{(d)}$ is non-rigid.

\end{enumerata}
\end{corollary}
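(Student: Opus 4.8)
The plan is to obtain all three parts as the special case $G = \Integ$, $H = d\Integ$ of Thm \ref{8237e9d1983hdjyev93}. First I would check that this specialization is legitimate. The group $\Integ$ is torsion-free, and by Rem.\ \ref{ijn3eoocnnqppieudbc} (equivalently Rem.\ \ref{So87gfXtkdfLO982678wgdyfcfu4ygf654W09}(a)) every $\Integ$-grading of the $\bk$-domain $B$ is over $\bk$; thus $B$ is an affine $\bk$-domain graded over $\bk$ by $\Integ$, exactly as required by Thm \ref{8237e9d1983hdjyev93}. Next, $d\Integ \in \bbT(B)$ for every positive integer $d$: if the grading of $B$ is non-trivial this holds because $\bbT(B) = \setspec{ n\Integ }{ n \ge 1 }$, and if the grading is trivial it holds because then $\G(B) = \{0\}$, so that $\G(B)/(d\Integ \cap \G(B)) = \{0\}$ is torsion. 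Finally, by the definition of the Veronese subring, $B^{(d)} = B^{(d\Integ)}$, and more generally $A^{(d)} = A^{(d\Integ)}$ for any graded subring $A$ of $B$.

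With these identifications in place, part (a) (resp.\ part (b)) of the corollary is exactly part (a) (resp.\ part (b)) of Thm \ref{8237e9d1983hdjyev93} applied to $H = d\Integ$. For part (c), I would invoke part (d) of Thm \ref{8237e9d1983hdjyev93}, which is applicable precisely because $\Integ$ is torsion-free: non-rigidity of $B$ yields non-rigidity of $B^{(d\Integ)} = B^{(d)}$. (Alternatively, one can deduce (c) from part (c) of Thm \ref{8237e9d1983hdjyev93} together with Lemma \ref{876543wsdfgxhnm290cAo}: non-rigidity of $B$ gives $\hlnd(B) \neq \{0\}$, hence $\hlnd(B^{(d)}) \neq \{0\}$, and since $B^{(d)}$ is again an affine $\bk$-domain graded by the torsion-free group $\Integ$, Lemma \ref{876543wsdfgxhnm290cAo} converts this back into non-rigidity of $B^{(d)}$.)

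There is essentially no obstacle: the corollary is, as the surrounding text indicates, an immediate consequence of Thm \ref{8237e9d1983hdjyev93}. The only points deserving a moment's attention are the verification that $d\Integ \in \bbT(B)$ even when the grading of $B$ is trivial, and the observation that the hypothesis ``graded over $\bk$'' in Thm \ref{8237e9d1983hdjyev93} is automatic for $\Integ$-gradings of $\bk$-domains.
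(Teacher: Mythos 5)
Your proposal is correct and matches the paper's intent exactly: the paper simply states that the corollary is an obvious consequence of Thm~\ref{8237e9d1983hdjyev93}, and the details you supply (that $\Integ$-gradings of $\bk$-domains are automatically over $\bk$, that $d\Integ \in \bbT(B)$ in both the trivial and non-trivial cases, and that $\Integ$ is torsion-free so part (d) applies) are precisely the verifications the paper leaves to the reader.
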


\begin{notation} \label {5Q43wsdbxhj3403o2iuwyetrd567b8niVuk54hwsert2hj3k}
Let $B = \bigoplus_{i \in \Integ} B_i$ be a $\Integ$-graded domain.
\begin{enumerate}

\item We define $e(B) = \gcd\setspec{ n \in \Integ }{ B_n \neq 0 }$.
The number $e(B)$ is called the {\it saturation index\/} of $B$.

\item Let $Z$ temporarily denote the set of all height $1$ homogeneous prime ideals of $B$.
We define $\bar e(B) = \lcm\setspec{ e(B/\pgoth) }{ \pgoth \in Z }$ if $Z \neq \emptyset$, and $\bar e(B) = e(B)$ if $Z = \emptyset$.
We call $\bar e(B)$ the {\it codimension $1$ saturation index\/} of $B$.

\end{enumerate}
\end{notation}

The following example shows that it is possible to have  $\bar e(B) = 0 \neq e(B)$.
We will see in Lemma \ref{87t6yfjcbbvo39f8y7-2} that the rings that satisfy that condition are very special.

\begin{example} \label {u6543wsxcvnj28c9w5i746bey} 
The ring $B = \Comp[U,V,X,Y] / (UY-VX)$ is a normal domain, and is $\Nat$-graded by declaring that $U,V,X,Y$ are homogeneous of degrees $0,0,1,1$.
Clearly, $e(B)=1$. 
Define $Z$ as in Notation \ref{5Q43wsdbxhj3403o2iuwyetrd567b8niVuk54hwsert2hj3k}.
The prime ideal $B_+ = \bigoplus_{ i > 0 } B_i$ belongs to $Z$ and $e( B/B_+ ) = 0$,
so $0 \in \setspec{ e(B/\pgoth) }{ \pgoth \in Z }$ and hence $\bar e(B)=0$.
\end{example}

\begin{bigremark}  \label {78rc12e2536w7894mcnfbsgnshfr}
Let $B = \bigoplus_{ i \in \Integ } B_i$ be a $\Integ$-graded domain,
and let $Z$ denote the set of all height $1$ homogeneous prime ideals of $B$.
\begin{enumerate}

\item We have $e(B) \in \Nat$ and $\G(B) = e(B)\Integ$.
Moreover, $e(B)=0$ if and only if the grading is trivial.

\item We have $\bar e(B) \in \Nat$ and $e(B) \mid \bar e(B)$. Moreover, $\bar e(B)=0$ if and only if one of the following holds:
\begin{itemize}

\item $e(B)=0$;

\item $e(B/\pgoth) = 0$ for some $\pgoth \in Z$;

\item the set $\setspec{ e(B/\pgoth) }{ \pgoth \in Z }$ is infinite.

\end{itemize}

\item {\it If $B$ is noetherian then $\setspec{ e(B/\pgoth) }{ \pgoth \in Z }$ is a finite set.}
Indeed, Prop.\ \ref{89fDCN584uKHJu7u59034uf} and Rem.\ \ref{iutr3nic34ri34765io21p} imply that $\setspec{ \G(B/\pgoth) }{ \pgoth \in Z }$ is a finite set.
By part (1) we have $\G(B/\pgoth) = e(B/\pgoth)\Integ$ for all $\pgoth \in Z$, so $\setspec{ e(B/\pgoth) }{ \pgoth \in Z }$ is finite.

\end{enumerate}
\end{bigremark}

\begin{lemma}  \label {983bvkslwlia293tfg62zgbnm3kry}
Let $B$ be a $\Integ$-graded domain.
\begin{enumerata}

\item $\G(B) = e(B)\Integ$ and $\bG(B) = \bar e(B)\Integ$.

\item $B$ is saturated in codimension $1$ if and only if $\bar e(B) = e(B)$.

\end{enumerata}
\end{lemma}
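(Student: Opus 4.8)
The plan is to reduce both parts to Proposition \ref{o98hfng54ye9c72ye6fv11vbdi38} together with the elementary fact that a subgroup of $\Integ$ is generated by the $\gcd$ of any of its generating sets. The key preliminary observation, which I would state first, is that since $G = \Integ$ the group $\G(B) \subseteq \Integ$ is torsion-free; this is precisely the hypothesis needed to invoke Proposition \ref{o98hfng54ye9c72ye6fv11vbdi38}.

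For part (a), the equality $\G(B) = e(B)\Integ$ is immediate from the definitions (it is also recorded in Rem.\ \ref{78rc12e2536w7894mcnfbsgnshfr}(1)): $\G(B)$ is the subgroup of $\Integ$ generated by $\setspec{i \in \Integ}{B_i \neq 0}$, and the subgroup of $\Integ$ generated by a subset $S$ is $\gcd(S)\Integ$ with $\gcd(S)$ the nonnegative generator, which by definition is $e(B)$. For $\bG(B) = \bar e(B)\Integ$ I would let $Z$ be the set of height $1$ homogeneous prime ideals of $B$ and split into two cases. If $Z = \emptyset$, then Proposition \ref{o98hfng54ye9c72ye6fv11vbdi38} gives $\bG(B) = \G(B) = e(B)\Integ$, while $\bar e(B) = e(B)$ by definition, so the equality holds. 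If $Z \neq \emptyset$, then Proposition \ref{o98hfng54ye9c72ye6fv11vbdi38} gives $\bG(B) = \bigcap_{\pgoth \in Z}\G(B/\pgoth)$; applying the $\G$-formula just proved to each $\Integ$-graded domain $B/\pgoth$ rewrites this as $\bigcap_{\pgoth \in Z} e(B/\pgoth)\Integ$, and by the convention for $\lcm$ recalled in Section \ref{SEC:Preliminaries} — namely that $\lcm(S)$ is the nonnegative generator of $\bigcap_{a \in S} a\Integ$, which correctly yields $0$ when $0 \in S$ or $S$ is infinite — this intersection equals $\lcm\setspec{e(B/\pgoth)}{\pgoth \in Z}\,\Integ = \bar e(B)\Integ$, using the definition of $\bar e(B)$.

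Part (b) is then purely formal: by Definition \ref{oi8y81726et73e8f0i349rtfyh-sat}, $B$ is saturated in codimension $1$ if and only if $\bG(B) = \G(B)$, and by part (a) this reads $\bar e(B)\Integ = e(B)\Integ$, which for nonnegative integers is equivalent to $\bar e(B) = e(B)$.

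I do not anticipate a genuine obstacle; the only points requiring a moment's care are to confirm that the $\lcm$ convention of the preliminaries (allowing the value $0$) agrees with the ideal-theoretic intersection $\bigcap_{\pgoth \in Z} e(B/\pgoth)\Integ$, and that Proposition \ref{o98hfng54ye9c72ye6fv11vbdi38} really is applicable, i.e.\ that $\G(B)$, being a subgroup of $\Integ$, is torsion-free. Both are immediate.
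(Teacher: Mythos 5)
Your proof is correct and follows essentially the same route as the paper's: both reduce to Proposition \ref{o98hfng54ye9c72ye6fv11vbdi38} via the same case split on whether $Z$ is empty, then use the identity $\G(B/\pgoth) = e(B/\pgoth)\Integ$ and the $\lcm$ convention to conclude. Your explicit note that $\G(B) \subseteq \Integ$ is torsion-free (justifying the appeal to Proposition \ref{o98hfng54ye9c72ye6fv11vbdi38}) is a reasonable clarification that the paper leaves implicit.
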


\begin{proof}
We already noted that $\G(B) = e(B)\Integ$.
Let $Z$ denote the set of all height $1$ homogeneous prime ideals of $B$.
If $Z = \emptyset$ then Prop.\ \ref{o98hfng54ye9c72ye6fv11vbdi38} gives $\bG(B)=\G(B)$ and the definition of $\bar e(B)$ gives $\bar e(B) = e(B)$,
so $\bar e(B) \Integ = e(B) \Integ = \G(B) = \bG(B)$, which proves (a) in the case $Z = \emptyset$.
If $Z \neq \emptyset$ then we claim that the following equalities hold:
$$
\bG(B) 
= \bigcap_{ \pgoth \in Z } \G( B/\pgoth )
= \bigcap_{ \pgoth \in Z } \bigg( e( B/\pgoth ) \Integ \bigg)
= \lcm \setspec{ e( B/\pgoth ) }{ \pgoth \in Z } \Integ
= \bar e(B)\Integ \, .
$$
Indeed, the first equality is Prop.\ \ref{o98hfng54ye9c72ye6fv11vbdi38},
the second one follows from the fact that $\G( B/\pgoth ) = e( B/\pgoth ) \Integ$ for each $\pgoth \in Z$,
the third one is the definition of ``lcm'' and the last one is the definition of $\bar e(B)$.
This proves (a).  Assertion (b) follows from (a) and Def.\ \ref{oi8y81726et73e8f0i349rtfyh-sat}.
\end{proof}

\begin{bigremark} \label {i765edcvbnmkiuyghytrew234tyuiojbvcxsertgbnjuh}
In \cite{Chitayat-Daigle:cylindrical}, saturation in codimension $1$ is only defined for $\Integ$-graded domains,
and the definition is as in part (b) of Lemma \ref{983bvkslwlia293tfg62zgbnm3kry}.
So our notion of saturation in codimension $1$ (Def.\ \ref{oi8y81726et73e8f0i349rtfyh-sat}) generalizes that of \cite{Chitayat-Daigle:cylindrical}.
\end{bigremark}

\begin{corollary}  \label {mMpPiq13096gxvbw5resip0tfs}
Let $B$ be a $\Integ$-graded domain and let $d \in \Nat\setminus\{0\}$.
\begin{enumerata}

\item $e( B^{(d)} ) = \lcm( e(B), d )$

\item If $B$ is noetherian and normal then $\bar e( B^{(d)} ) = \lcm( \bar e(B), d )$.

\end{enumerata}
\end{corollary}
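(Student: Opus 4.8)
The plan is to reduce both parts to the computations of $\G(B^{(d)})$ and $\bG(B^{(d)})$ already carried out, in greater generality, in Lemmas \ref{GOfBH} and \ref{GandGbarOfBH}, and then to translate back via Lemma \ref{983bvkslwlia293tfg62zgbnm3kry}(a), which identifies the group $\G(R)$ with $e(R)\Integ$ and $\bG(R)$ with $\bar e(R)\Integ$ for any $\Integ$-graded domain $R$. Recall that by definition $B^{(d)} = B^{(H)}$ with $H = d\Integ$, and that $B^{(d)}$ carries the induced $\Integ$-grading, so $e(B^{(d)})$ and $\bar e(B^{(d)})$ make sense.

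First I would check that $d\Integ \in \bbT(B)$ for every integer $d \ge 1$, so that Lemmas \ref{GOfBH} and \ref{GandGbarOfBH} are applicable with $H = d\Integ$. This is clear: since $\G(B) = e(B)\Integ$ (Lemma \ref{983bvkslwlia293tfg62zgbnm3kry}(a)), the quotient $\G(B)/(d\Integ \cap \G(B))$ is either $\{0\}$ (when the grading is trivial, i.e.\ $e(B)=0$) or a finite cyclic group, hence torsion in all cases; alternatively this is immediate from Rem.\ \ref{kDjkbwevCdcojBh6qvmAxfea7u}.

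For (a): applying Lemma \ref{983bvkslwlia293tfg62zgbnm3kry}(a) to the $\Integ$-graded domain $B^{(d)}$ gives $e(B^{(d)})\Integ = \G(B^{(d)})$, and Lemma \ref{GOfBH} (with $H=d\Integ \in \bbT(B)$) gives $\G(B^{(d)}) = d\Integ \cap \G(B) = d\Integ \cap e(B)\Integ$. By the definition of $\lcm$, the nonnegative generator of $d\Integ \cap e(B)\Integ$ is $\lcm(d,e(B))$; since $e(B^{(d)})$ is also nonnegative, the two coincide. Part (b) has exactly the same structure, with $\bar e$, $\bG$ and Lemma \ref{GandGbarOfBH} in place of $e$, $\G$ and Lemma \ref{GOfBH} — note that the hypotheses ``$B$ noetherian and normal'' are precisely what Lemma \ref{GandGbarOfBH} requires — so $\bar e(B^{(d)})\Integ = \bG(B^{(d)}) = d\Integ \cap \bG(B) = d\Integ \cap \bar e(B)\Integ$, whose nonnegative generator is $\lcm(d,\bar e(B))$.

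I do not expect a genuine obstacle here; the argument is essentially bookkeeping once the group-level statements are in hand. The only points needing a word of care are the membership $d\Integ \in \bbT(B)$ noted above, and the degenerate cases where a saturation index vanishes. If $e(B)=0$ then $B^{(d)} = B$ and the claimed identity in (a) reads $0 = \lcm(d,0)$, which holds because $\lcm(d,0)$ is by definition the nonnegative generator of $d\Integ \cap 0\Integ = \{0\}$. Similarly, in (b), if $\bar e(B)=0$ (which by Rem.\ \ref{78rc12e2536w7894mcnfbsgnshfr}(2) can occur even with $e(B)\neq 0$) then $\bG(B)=\{0\}$, hence $\bG(B^{(d)}) = d\Integ \cap \{0\} = \{0\}$, i.e.\ $\bar e(B^{(d)}) = 0 = \lcm(d,0)$, again consistent with the formula.
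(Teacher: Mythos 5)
Your proof is correct and takes exactly the approach the paper uses: the paper's entire proof is the sentence ``Follows from Lemmas \ref{GOfBH}, \ref{GandGbarOfBH} and \ref{983bvkslwlia293tfg62zgbnm3kry},'' and you have simply filled in the translation $d\Integ \cap e(B)\Integ = \lcm(d,e(B))\Integ$ together with the applicability check $d\Integ \in \bbT(B)$. The attention to the degenerate cases $e(B)=0$ and $\bar e(B)=0$ is accurate but not an issue the paper needed to flag, since the $\lcm$ convention already handles it.
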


\begin{proof}
Follows from Lemmas \ref{GOfBH}, \ref{GandGbarOfBH} and \ref{983bvkslwlia293tfg62zgbnm3kry}.
\end{proof}

\begin{corollary}  \label {65rfgs01oqksmnxcuyf5g42qywsvnf8}
Let $B$ be a $\Integ$-graded domain of characteristic $0$.
\begin{enumerata}

\item For each $D \in \hlnd(B) \setminus \{0\}$, there exists a height $1$ homogeneous prime ideal $\pgoth$ of $B$ satisfying:
\begin{enumerata}
\item some homogeneous element $t \in \pgoth$ satisfies $D(t) \neq 0$ and $D^2(t)=0$;
\item  $e(B/\pgoth) = e(\ker D)$.
\end{enumerata}

\item $e(B) \mid e( \ker D ) \mid \bar e(B)$ \ \  for all $D \in \hlnd(B)$.

\item If $B$ is saturated in codimension $1$ then $e( \ker D ) = e(B)$ for all $D \in \hlnd(B)$.

\end{enumerata}
\end{corollary}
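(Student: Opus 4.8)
The plan is to obtain this corollary as the $\Integ$-graded specialization of Theorem~\ref{3498huv8oe98y3876wgebf09}, using the dictionary $\G(-) = e(-)\Integ$, $\bG(-) = \bar e(-)\Integ$ supplied by Lemma~\ref{983bvkslwlia293tfg62zgbnm3kry}(a). Two preliminary observations make this translation legitimate: first, for any $D \in \hlnd(B)$ the kernel $\ker D$ is a homogeneous subring of $B$, hence itself a $\Integ$-graded domain, so $e(\ker D)$ is defined and $\G(\ker D) = e(\ker D)\Integ$; second, for any homogeneous prime ideal $\pgoth$ of $B$ the quotient $B/\pgoth$ is a $\Integ$-graded domain, so $e(B/\pgoth)$ is defined and $\G(B/\pgoth) = e(B/\pgoth)\Integ$. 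Throughout I will use that, for nonnegative integers $a,b$, one has $a\Integ \subseteq b\Integ \iff b \mid a$, and in particular $a\Integ = b\Integ \iff a = b$.

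For part (a), I would apply Theorem~\ref{3498huv8oe98y3876wgebf09}(a) to $D \in \hlnd(B)\setminus\{0\}$: this produces a height~$1$ homogeneous prime ideal $\pgoth$ of $B$ for which (a-i) holds verbatim and $\G(B/\pgoth) = \G(\ker D)$. The dictionary rewrites the latter as $e(B/\pgoth)\Integ = e(\ker D)\Integ$, whence $e(B/\pgoth) = e(\ker D)$, which is (a-ii).

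For part (b), Theorem~\ref{3498huv8oe98y3876wgebf09}(b) gives $\bG(B) \subseteq \G(\ker D) \subseteq \G(B)$ for every $D \in \hlnd(B)$; by Lemma~\ref{983bvkslwlia293tfg62zgbnm3kry}(a) this is $\bar e(B)\Integ \subseteq e(\ker D)\Integ \subseteq e(B)\Integ$, i.e.\ $e(B) \mid e(\ker D) \mid \bar e(B)$. For part (c), if $B$ is saturated in codimension~$1$ then $\bar e(B) = e(B)$ by Lemma~\ref{983bvkslwlia293tfg62zgbnm3kry}(b), so the chain of divisibilities from (b) collapses to $e(\ker D) = e(B)$; alternatively, one can translate Theorem~\ref{3498huv8oe98y3876wgebf09}(c) directly.

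There is essentially no serious obstacle here---the result is a transcription. The only points requiring a moment's care are verifying that $e$ is genuinely defined on the auxiliary rings $\ker D$ and $B/\pgoth$ (handled by the preliminary observations above) and handling the degenerate case $e(B) = 0$: this occurs exactly when the grading of $B$ is trivial, and then $\bar e(B) = 0$ as well, so the divisibility statements remain fine since $0 \mid 0$.
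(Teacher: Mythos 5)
Your proof is correct and follows exactly the route the paper takes: the paper's proof is the one-line statement that the corollary ``Follows from Thm \ref{3498huv8oe98y3876wgebf09} and Lemma \ref{983bvkslwlia293tfg62zgbnm3kry}'', and your write-up simply spells out the translation via the dictionary $\G(-) = e(-)\Integ$, $\bG(-) = \bar e(-)\Integ$. The extra care you take (checking $e$ is defined on $\ker D$ and $B/\pgoth$, and handling $e(B)=0$) is sound but not strictly necessary, since these points are immediate.
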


\begin{proof}
Follows from Thm \ref{3498huv8oe98y3876wgebf09} and Lemma \ref{983bvkslwlia293tfg62zgbnm3kry}.
\end{proof}

Note that Corollaries 4.2--4.4 of \cite{DaiFreudMoser} are special cases of Cor.\ \ref{65rfgs01oqksmnxcuyf5g42qywsvnf8}(c),
which is itself a special case of  Thm \ref{3498huv8oe98y3876wgebf09}(c).

\begin{notation}  \label {NOTA8h7gt23er5w67xxde4c2t3gb9jrdhfgbs8u}
Given a prime number $p$ and any $m \in \Integ \setminus \{0\}$, we set 
$$
v_p(m) = \max\setspec{ i \in \Nat }{ \text{$p^i$ divides $m$} }.
$$
We also define $v_p(0) = \infty$, so $v_p(0) > v_p(m)$ for all $m \in \Integ\setminus\{0\}$.

Given $m \in \Nat \setminus \{0\}$ and $n \in \Nat$, we define the element $m \odiv n$ of $\Nat \setminus \{0\}$ by 
$$
\textstyle m \odiv n = \prod\limits_{p \in P(m,n)} p^{ v_p(m) }
$$
where $P(m,n)$ is the set of prime numbers $p$ such that $v_p(m) > v_p(n)$.
\end{notation}

\begin{lemma}  \label {8h7gt23er5w67xxde4c2t3gb9jrdhfgbs8u}
The operation $\odiv$ has the following properties.
\begin{enumerata}

\item If $m \in \Nat\setminus\{0\}$ then $m \odiv 1 = m$ and $m \odiv 0 = 1$.

\item If $m \in \Nat\setminus\{0\}$ and $n \in \Nat$ then
\begin{itemize}

\item $m \odiv n = 1 \Leftrightarrow m \mid n$,

\item $(m \odiv n) \mid m \mid \lcm(m \odiv n , n)$.

\end{itemize}

\item If $B$ is a $\Integ$-graded domain then $B^{(m \odiv e(B))} = B^{(m)}$ for all $m \in \Nat\setminus\{0\}$.

\end{enumerata}
\end{lemma}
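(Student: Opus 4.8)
The plan is to prove (a), (b), (c) in order; (a) and (b) are arithmetic statements obtained by unwinding the definition of $\odiv$ prime by prime, and (c) is a short deduction from (b). For (a): since $v_p(1)=0$ for every prime $p$, the set $P(m,1)$ of \ref{NOTA8h7gt23er5w67xxde4c2t3gb9jrdhfgbs8u} is exactly the set of primes dividing $m$, so $m\odiv 1=\prod_{p\mid m}p^{v_p(m)}=m$; and since $m\neq 0$ forces $v_p(0)=\infty>v_p(m)$ for every prime $p$, the set $P(m,0)$ is empty and $m\odiv 0$ is the empty product, namely $1$.

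For (b), the key observation is that $v_p(m\odiv n)=v_p(m)$ when $p\in P(m,n)$ and $v_p(m\odiv n)=0$ otherwise; since in both cases $v_p(m\odiv n)\le v_p(m)$, this gives $(m\odiv n)\mid m$. For the equivalence, note that a factor $p^{v_p(m)}$ with $p\in P(m,n)$ satisfies $v_p(m)>v_p(n)\ge 0$, hence $p^{v_p(m)}\ge p>1$; thus $m\odiv n=1$ iff $P(m,n)=\emptyset$ iff $v_p(m)\le v_p(n)$ for all primes $p$ iff $m\mid n$ (the last equivalence using $v_p(0)=\infty$ when $n=0$). For $m\mid\lcm(m\odiv n,n)$, I would verify $v_p(m)\le\max\bigl(v_p(m\odiv n),v_p(n)\bigr)$ at each prime $p$: if $p\in P(m,n)$ the right side is at least $v_p(m\odiv n)=v_p(m)$, and if $p\notin P(m,n)$ it is at least $v_p(n)\ge v_p(m)$.

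For (c), write $e=e(B)$. By \ref{78rc12e2536w7894mcnfbsgnshfr}(1) we have $\G(B)=e\Integ$, hence $B_n=0$ whenever $n\notin e\Integ$, so for every $d\ge 1$, $B^{(d)}=\bigoplus_{n\in d\Integ\cap e\Integ}B_n=\bigoplus_{n\in\lcm(d,e)\Integ}B_n$; thus $B^{(d)}$ depends on $d$ only through $\lcm(d,e)$. Applying (b) with $n=e$: from $(m\odiv e)\mid m$ we get $\lcm(m\odiv e,e)\mid\lcm(m,e)$, and from $m\mid\lcm(m\odiv e,e)$ together with $e\mid\lcm(m\odiv e,e)$ we get $\lcm(m,e)\mid\lcm(m\odiv e,e)$; hence $\lcm(m\odiv e,e)=\lcm(m,e)$ and therefore $B^{(m\odiv e)}=B^{(m)}$. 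The degenerate case $e=0$ (trivial grading) is covered by the same computation with the conventions $\lcm(\cdot,0)=0$ and ``$a\mid 0$ for every $a$'', or alternatively by $m\odiv 0=1$ from (a) and $B^{(1)}=B=B^{(m)}$. I do not expect a real obstacle: the statement is elementary, and the only points needing care are the bookkeeping around the conventions $v_p(0)=\infty$ and $\lcm(\cdot,0)=0$ and not overlooking the trivial-grading case $e(B)=0$ in part (c).
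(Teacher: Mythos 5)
Your proof is correct and takes essentially the same route as the paper: parts (a) and (b) are verified by the same prime-by-prime unwinding (the paper just declares them "easy"), and for (c) both arguments rest on the two divisibility facts from (b) together with the observation that all nonzero homogeneous degrees lie in $e(B)\Integ$. Your reformulation via $B^{(d)}=\bigoplus_{n\in\lcm(d,e)\Integ}B_n$ and the equality $\lcm(m\odiv e,e)=\lcm(m,e)$ is just a symmetrized packaging of the paper's two-inclusion argument.
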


\begin{proof}
Assertions (a) and (b) are easy.  We prove (c).
We have  $(m \odiv e(B)) \mid m$ by (b), so $B^{(m \odiv e(B))} \supseteq B^{(m)}$.
To prove the reverse inclusion, consider $x \in  B^{(m \odiv e(B))} \setminus \{0\}$ homogeneous of degree $d$.
Then $0 \neq B_d \subseteq B^{(m \odiv e(B))}$,
so $d$ is a multiple of both $m \odiv e(B)$ and $e(B)$, i.e., $\lcm( m \odiv e(B) , e(B) ) \mid d$. 
Since $m \mid \lcm( m \odiv e(B) , e(B) )$ by (b), we get $m \mid d$, so $x \in B^{(m)}$.
\end{proof}

The main result of Section \ref{sectionFromderivationsofBHtoderivationsofB} has the following consequence:

\begin{corollary}  \label {8y3655431qsxdsf2shccvmmlot405cvxj4u1}
Let $B$ be a $\Integ$-graded noetherian normal $\Rat$-domain.
Let $d \in \Nat\setminus\{0\}$ and define $d' = \gcd(d,\bar e(B))$ and $d'' = d' \odiv e(B)$.
Then 
$$
\big( B^{(d)} , B^{(d')} \big) \in \EXT \quad \text{and} \quad \big( B^{(d)} , B^{(d'')} \big) \in \EXT .
$$
\end{corollary}

\begin{proof}
We have $\big( B^{(d)} , B^{(d')} \big) \in \EXT$ by Thm \ref{o823y48t2309fnb28} and Lemma \ref{983bvkslwlia293tfg62zgbnm3kry}.
It follows that $\big( B^{(d)} , B^{(d'')} \big) \in \EXT$, because  $B^{(d'')} = B^{(d')}$
by Lemma \ref{8h7gt23er5w67xxde4c2t3gb9jrdhfgbs8u}(c).
\end{proof}

We shall now apply the ideas and results of Section \ref{sectionThesetXscrB} to $\Integ$-gradings.

\begin{notation} \label {98y82990294jend939t9uyf}
For each positive integer $d$, let $\Iscr_d = \{d,2d,3d,\dots\}$.
Let $\preceq$ be the partial order on $\Nat\setminus\{0\}$ defined by declaring that 
$d_1 \preceq d_2$ $\Leftrightarrow$ $\Iscr_{d_1} \subseteq \Iscr_{d_2}$ $\Leftrightarrow$ $d_2 \mid d_1$.

If $B$ is a $\Integ$-graded domain of characteristic $0$, let
$$
\NR(B) = \setspec{ d \in \Nat\setminus\{0\} }{ \text{$B^{(d)}$ is non-rigid} }
$$
and let $M(B)$ be the set of maximal elements of the poset $(\NR(B),\preceq)$.
Note that 
\begin{equation}  \label {oijn23w9e0cnbv2sxcxzq5at}
\text{$B$ is non-rigid $\iff$ $1 \in \NR(B)$ $\iff$ $1 \in M(B)$ $\iff$ $M(B) = \{1\}$.}
\end{equation}
\end{notation}

\begin{lemma}  \label {9876f5d43s09ijknm57ffd}
Let $\bk$ be a field of characteristic $0$ and $B$ a $\Integ$-graded affine $\bk$-domain.
\begin{enumerata}

\item If the grading of $B$ is nontrivial then
$$
\text{$\Xscr(B) = \setspec{ d\Integ }{ d \in \NR(B) }$ \ \ and \ \ $\Mscr(B) = \setspec{ d\Integ }{ d \in M(B) }$.}
$$

\item If the grading of $B$ is trivial, the following hold.
\begin{enumerata}

\item If $B$ is non-rigid then $\NR(B) = \Nat \setminus \{0\}$ and $M(B)=\{1\}$.

\item If $B$ is rigid then  $\NR(B) = \emptyset = M(B)$.

\end{enumerata}

\end{enumerata}
\end{lemma}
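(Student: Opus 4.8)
The plan is to translate everything into the standard dictionary between positive integers $d$ and the subgroups $d\Integ$ of $\Integ$, and then to invoke the already-established equivalence of rigidity and graded-rigidity for affine domains graded by a torsion-free group (Lemma \ref{876543wsdfgxhnm290cAo}).

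For part (a), suppose the grading is nontrivial. Since $B$ is a $\bk$-domain the grading is automatically over $\bk$, so Rem.\ \ref{So87gfXtkdfLO982678wgdyfcfu4ygf654W09} gives $\bbT(B) = \setspec{d\Integ}{d\ge1}$ and $B^{(d\Integ)} = B^{(d)}$ for each $d\ge1$. First I would fix $d\ge1$: by Cor.\ \ref{8Qp98781hy2uexihvcx2qw2wUxq3ew0pokjmnbvaw2345} the ring $B^{(d)}$ is an affine $\bk$-domain, and it is graded by the torsion-free group $\Integ$, so Lemma \ref{876543wsdfgxhnm290cAo} yields $\hlnd(B^{(d)})\neq\{0\} \Leftrightarrow B^{(d)}$ is non-rigid, i.e.\ $d\Integ\in\Xscr(B) \Leftrightarrow d\in\NR(B)$. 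Together with the injectivity of $d\mapsto d\Integ$ this proves the first displayed equality. For the second, note that $d\mapsto d\Integ$ is an order isomorphism from $(\Nat\setminus\{0\},\preceq)$ onto $(\bbT(B),\subseteq)$, because $d_1\preceq d_2 \Leftrightarrow \Iscr_{d_1}\subseteq\Iscr_{d_2} \Leftrightarrow d_2\mid d_1 \Leftrightarrow d_1\Integ\subseteq d_2\Integ$; by the previous step it restricts to an order isomorphism $(\NR(B),\preceq)\to(\Xscr(B),\subseteq)$, and order isomorphisms carry maximal elements to maximal elements, so $\Mscr(B)=\setspec{d\Integ}{d\in M(B)}$.

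For part (b), suppose the grading is trivial, so $B=B_0$ and hence $B^{(d)}=\bigoplus_{i\in d\Integ}B_i=B_0=B$ for every $d\ge1$. Consequently $B^{(d)}$ is non-rigid for every $d$ when $B$ is non-rigid, and for no $d$ when $B$ is rigid; this gives $\NR(B)=\Nat\setminus\{0\}$ in the first case and $\NR(B)=\emptyset$ in the second. Finally $1$ is the greatest element of $(\Nat\setminus\{0\},\preceq)$ (every $d$ satisfies $d\preceq1$ since $1\mid d$), so $M(\Nat\setminus\{0\})=\{1\}$, while $M(\emptyset)=\emptyset$; this proves (i) and (ii).

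The main obstacle is essentially nonexistent: the statement is bookkeeping. The only points that need attention are checking that $\preceq$ corresponds to inclusion of the groups $d\Integ$ in the correct direction, and observing that Lemma \ref{876543wsdfgxhnm290cAo} is applied to $B^{(d)}$ (which is again an affine $\bk$-domain graded by the torsion-free group $\Integ$) rather than to $B$ itself.
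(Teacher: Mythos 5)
Your proof is correct and follows essentially the same route as the paper: the paper invokes Remark \ref{6543wscvhyuhoblgmrne4b3vf432wsdfw}(b), which is itself an application of Lemma \ref{876543wsdfgxhnm290cAo} to the rings $B^{(H)}$, and then dispatches the $\Mscr$ statement and part (b) as immediate consequences of the $d\leftrightarrow d\Integ$ correspondence — exactly the bookkeeping you spell out with the order isomorphism $(\Nat\setminus\{0\},\preceq)\to(\bbT(B),\subseteq)$.
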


\begin{proof}
Assertion (b) is clear. We prove (a).
Since the grading of $B$ is not trivial,
we have $\bbT(B) = \setspec{ d\Integ }{ d \in \Nat\setminus\{0\} }$, and
Rem.\ \ref{6543wscvhyuhoblgmrne4b3vf432wsdfw}(b) gives the first equality in:
\begin{align*}
\Xscr(B) &= \setspec{ H \in \bbT(B) }{ \text{$B^{(H)}$ is non-rigid} } \\
&= \setspec{ d\Integ }{ \text{$d \in \Nat\setminus \{0\}$ and $B^{(d)}$ is non-rigid} } = \setspec{ d\Integ }{ d \in \NR(B) } .
\end{align*}
It follows that $\Mscr(B) = \setspec{ d\Integ }{ d \in M(B) }$, as desired.
\end{proof}

\begin{corollary}  \label {p98qy386e523brxhbfc6ghvc63x2345618q30}
Let $\bk$ be a field of characteristic $0$ and $B$ a $\Integ$-graded affine $\bk$-domain.
\begin{enumerata}

\item $\NR(B) = \bigcup_{ d \in M(B) } \Iscr_d$

\item If $B$ is normal then each element $d$ of $M(B)$ satisfies
$$
d \mid \bar e(B) \quad \text{and} \quad d \odiv e(B) = d .
$$

\end{enumerata}
\end{corollary}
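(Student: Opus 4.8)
The plan is to obtain this as a translation of Thm~\ref{987654esdfghjdj4edoodji2d0we9iu9w12twf5} into the $\NR$/$M$ language, using the dictionary of Lemma~\ref{9876f5d43s09ijknm57ffd}, and then to extract the extra arithmetic in part~(b) with the help of the operation $\odiv$ and Lemma~\ref{8h7gt23er5w67xxde4c2t3gb9jrdhfgbs8u}.

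First I would dispose of the case in which the grading of $B$ is trivial. Here Lemma~\ref{9876f5d43s09ijknm57ffd}(b) describes $\NR(B)$ and $M(B)$ outright: either both are empty (if $B$ is rigid), or $\NR(B)=\Nat\setminus\{0\}$ and $M(B)=\{1\}$ (if $B$ is non-rigid). In the first case both sides of~(a) are empty; in the second $\bigcup_{d\in M(B)}\Iscr_d=\Iscr_1=\Nat\setminus\{0\}=\NR(B)$. For~(b) the only possible element of $M(B)$ is $1$, and then $1\mid\bar e(B)$ is automatic while $1\odiv e(B)=1$ by Lemma~\ref{8h7gt23er5w67xxde4c2t3gb9jrdhfgbs8u}(b), since $1$ divides every integer.

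For the remaining case, assume the grading of $B$ is nontrivial, so that $\bbT(B)=\setspec{d\Integ}{d\ge1}$ and, by Lemma~\ref{9876f5d43s09ijknm57ffd}(a), $\Xscr(B)=\setspec{d\Integ}{d\in\NR(B)}$ and $\Mscr(B)=\setspec{d\Integ}{d\in M(B)}$. I would first record the elementary fact that $\bbT_{d\Integ}(B)=\setspec{d'\Integ}{d'\in\Iscr_d}$ for every $d\ge1$. Substituting these identities into the equality $\Xscr(B)=\bigcup_{H\in\Mscr(B)}\bbT_H(B)$ of Thm~\ref{987654esdfghjdj4edoodji2d0we9iu9w12twf5}(a), and using that $d'\mapsto d'\Integ$ is injective on $\Nat\setminus\{0\}$, yields part~(a). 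For part~(b) fix $d\in M(B)$; then $d\Integ\in\Mscr(B)$, and since $B$ is a normal affine $\bk$-domain graded over $\bk$ by the finitely generated group $\Integ$, Thm~\ref{987654esdfghjdj4edoodji2d0we9iu9w12twf5}(b) gives $d\Integ\supseteq\bG(B)$; as $\bG(B)=\bar e(B)\Integ$ by Lemma~\ref{983bvkslwlia293tfg62zgbnm3kry}(a), this says $d\mid\bar e(B)$. For the condition $d\odiv e(B)=d$, put $d'=d\odiv e(B)$: Lemma~\ref{8h7gt23er5w67xxde4c2t3gb9jrdhfgbs8u}(c) gives $B^{(d')}=B^{(d)}$, which is non-rigid because $d\in\NR(B)$, so $d'\in\NR(B)$; and Lemma~\ref{8h7gt23er5w67xxde4c2t3gb9jrdhfgbs8u}(b) gives $d'\mid d$, which by the definition of $\preceq$ in Notation~\ref{98y82990294jend939t9uyf} means $d\preceq d'$. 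Since $d$ is maximal in $(\NR(B),\preceq)$, this forces $d=d'$.

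There is no genuine obstacle: the corollary is essentially bookkeeping built on results already established. The one step that deserves attention is the passage in~(b) from ``$B^{(d)}$ is non-rigid'' to ``$d\odiv e(B)\in\NR(B)$'', which hinges on the identity $B^{(m\odiv e(B))}=B^{(m)}$ of Lemma~\ref{8h7gt23er5w67xxde4c2t3gb9jrdhfgbs8u}(c); combined with maximality of $d$, this is precisely what produces the second arithmetic constraint.
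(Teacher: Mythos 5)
Your proof is correct and follows essentially the same route as the paper: dispose of the trivially graded case via Lemma~\ref{9876f5d43s09ijknm57ffd}(b), translate Thm~\ref{987654esdfghjdj4edoodji2d0we9iu9w12twf5} through the dictionary $\Xscr(B)\leftrightarrow\NR(B)$, $\Mscr(B)\leftrightarrow M(B)$, $\bG(B)=\bar e(B)\Integ$, and then obtain $d\odiv e(B)=d$ from the identity $B^{(d\odiv e(B))}=B^{(d)}$ together with maximality of $d$ in $(\NR(B),\preceq)$. The only difference is that you spell out a couple of bookkeeping steps (the trivial grading case and the identification $\bbT_{d\Integ}(B)=\setspec{d'\Integ}{d'\in\Iscr_d}$) that the paper leaves implicit.
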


\begin{proof}
If the grading of $B$ is trivial then the result easily follows from Lemma \ref{9876f5d43s09ijknm57ffd}(b).
Assume that the grading is nontrivial.
We have $\Xscr(B) = \setspec{ d\Integ }{ d \in \NR(B) }$ and $\Mscr(B) = \setspec{ d\Integ }{ d \in M(B) }$ by Lemma \ref{9876f5d43s09ijknm57ffd}(a),
so Thm \ref{987654esdfghjdj4edoodji2d0we9iu9w12twf5} and Lemma \ref{983bvkslwlia293tfg62zgbnm3kry} imply that (a) and (b$'$) are true, where 
\begin{enumerata}
\item[(b$'$)] If $B$ is normal then each element $d$ of $M(B)$ satisfies $d \mid \bar e(B)$.
\end{enumerata}
It remains to check that each $d \in M(B)$ satisfies $d \odiv e(B) = d$ (we don't need to assume that $B$ is normal for this part).
Let $d \in M(B)$ and define $d' = d \odiv e(B)$.
We have $B^{(d)} = B^{(d')}$ by Lemma \ref{8h7gt23er5w67xxde4c2t3gb9jrdhfgbs8u}(c), so clearly $d' \in \NR(B)$.
Lemma \ref{8h7gt23er5w67xxde4c2t3gb9jrdhfgbs8u}(b) gives $d' \mid d$, so $d \preceq d'$.
Thus $M(B) \ni d \preceq d' \in \NR(B)$, so $d'=d$, i.e., $d \odiv e(B) = d$.
\end{proof}

In the following result, the conclusion ``$\NR(B) = \emptyset$ or $\NR(B) = \Nat \setminus \{0\}$''
means that all Veronese subrings of $B$ have the same rigidity status, i.e.,
either $B^{(d)}$ is rigid for all $d \in \Nat\setminus\{0\}$ or $B^{(d)}$ is non-rigid for all $d \in \Nat\setminus\{0\}$.

\begin{corollary}  \label {1D0x2mv4yhhtfukCs1unc3t46bisgf9f2}
Let $\bk$ be a field of characteristic $0$ and $B$ a $\Integ$-graded normal affine $\bk$-domain.
If $B$ is saturated in codimension $1$ then $\NR(B) = \emptyset$ or $\NR(B) = \Nat \setminus \{0\}$.
\end{corollary}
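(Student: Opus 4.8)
The plan is to obtain this corollary as a direct translation of Cor.~\ref{87654edfgyudi30piwnf3bwvf2tyweuiklc9Bmo3vA34b6getw63ye} into the language of Veronese subrings, using the dictionary supplied by Lemma~\ref{9876f5d43s09ijknm57ffd}.

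First I would note that $G=\Integ$ is finitely generated and that, by Rem.~\ref{So87gfXtkdfLO982678wgdyfcfu4ygf654W09}(a), the $\Integ$-grading of the $\bk$-domain $B$ is automatically over $\bk$. Thus $B$ is a normal affine $\bk$-domain graded over $\bk$ by a finitely generated abelian group, and it is saturated in codimension $1$ by hypothesis; so Cor.~\ref{87654edfgyudi30piwnf3bwvf2tyweuiklc9Bmo3vA34b6getw63ye} applies and gives that $\Xscr(B)=\emptyset$ or $\Xscr(B)=\bbT(B)$.

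Next I would dispose of the trivial-grading case: if $B=B_0$ then $B^{(d)}=B$ for all $d\ge1$, so $\NR(B)=\emptyset$ or $\NR(B)=\Nat\setminus\{0\}$ according as $B$ is rigid or not, which is the desired conclusion (this is Lemma~\ref{9876f5d43s09ijknm57ffd}(b)). So I may assume the grading is nontrivial. Then Lemma~\ref{9876f5d43s09ijknm57ffd}(a) gives $\Xscr(B)=\setspec{d\Integ}{d\in\NR(B)}$, and by Rem.~\ref{So87gfXtkdfLO982678wgdyfcfu4ygf654W09}(b) we have $\bbT(B)=\setspec{d\Integ}{d\ge1}$. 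Since the map $d\mapsto d\Integ$ from $\Nat\setminus\{0\}$ to the set of nonzero subgroups of $\Integ$ is a bijection, the alternative $\Xscr(B)=\emptyset$ is equivalent to $\NR(B)=\emptyset$, and the alternative $\Xscr(B)=\bbT(B)$ is equivalent to $\NR(B)=\Nat\setminus\{0\}$; combining this with the previous step completes the proof.

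I do not expect any real obstacle here: all the substance is already contained in Cor.~\ref{87654edfgyudi30piwnf3bwvf2tyweuiklc9Bmo3vA34b6getw63ye} (hence ultimately in the Descent Theorem~\ref{8237e9d1983hdjyev93}, the Extension Theorem~\ref{o823y48t2309fnb28}, and the structural results of Section~\ref{sectionThesetXscrB}), and what remains is only the bookkeeping identification of $\bbT(B)$ with $\Nat\setminus\{0\}$ and of $\Xscr(B)$ with $\NR(B)$. If one preferred a more self-contained argument avoiding Cor.~\ref{87654edfgyudi30piwnf3bwvf2tyweuiklc9Bmo3vA34b6getw63ye}, one could instead observe via Cor.~\ref{mMpPiq13096gxvbw5resip0tfs} that every $B^{(d)}$ is again normal and saturated in codimension $1$, and then transport non-rigidity among the various $B^{(d)}$ using Cor.~\ref{z0zoxjf5wehghwhedcgcn26sbcdtzezp8q0} together with Cor.~\ref{8y3655431qsxdsf2shccvmmlot405cvxj4u1}; but the route through Cor.~\ref{87654edfgyudi30piwnf3bwvf2tyweuiklc9Bmo3vA34b6getw63ye} is the shortest.
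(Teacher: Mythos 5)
Your proof is correct and follows essentially the same route as the paper: invoke Cor.~\ref{87654edfgyudi30piwnf3bwvf2tyweuiklc9Bmo3vA34b6getw63ye} to get $\Xscr(B)=\emptyset$ or $\Xscr(B)=\bbT(B)$, then translate via Lemma~\ref{9876f5d43s09ijknm57ffd}. You simply spell out the routine verifications (the grading is over $\bk$, the trivial-grading case, the bijection $d\mapsto d\Integ$) that the paper leaves implicit.
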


\begin{proof}
Cor.\ \ref{87654edfgyudi30piwnf3bwvf2tyweuiklc9Bmo3vA34b6getw63ye} implies that $\Xscr(B) =\emptyset$ or $\Xscr(B)=\bbT(B)$,
so the claim follows from Lemma \ref{9876f5d43s09ijknm57ffd}.
\end{proof}

\begin{corollary} \label {o87r62390cJj99xjhs263bfe}
Let $\bk$ be a field of characteristic $0$ and $B$ a $\Integ$-graded normal affine $\bk$-domain.
\begin{enumerata}

\item For every $d \in \Nat\setminus\{0\}$, the following are equivalent:
\begin{enumerati}

\item $d \in \NR(B)$

\item $\Iscr_d \subseteq \NR(B)$

\item $\gcd(d,\bar e(B)) \in \NR(B)$

\item $d \odiv e(B) \in \NR(B)$.

\end{enumerati}

\item If $\bar e(B) \neq 0$ and $\NR(B) \neq \emptyset$ then $\bar e(B) \in \NR(B)$.

\end{enumerata}
\end{corollary}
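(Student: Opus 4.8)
The corollary is essentially a repackaging of earlier results, so the plan is to isolate one monotonicity principle and deduce all the equivalences from it. That principle is: \emph{if $d \in \NR(B)$ and $d \mid d_0$, then $d_0 \in \NR(B)$.} It holds because $B^{(d)}$ is again an affine $\bk$-domain (Cor.\ \ref{8Qp98781hy2uexihvcx2qw2wUxq3ew0pokjmnbvaw2345}) carrying a $\Integ$-grading, and, as subrings of $B$, one has $B^{(d_0)} = (B^{(d)})^{(d_0/d)}$; hence non-rigidity of $B^{(d)}$ forces non-rigidity of $B^{(d_0)}$ by Cor.\ \ref{z0zoxjf5wehghwhedcgcn26sbcdtzezp8q0}(c) applied to $B^{(d)}$. (Alternatively, one may quote $\NR(B) = \bigcup_{d \in M(B)} \Iscr_d$ from Cor.\ \ref{p98qy386e523brxhbfc6ghvc63x2345618q30}(a), which makes the principle immediate.)

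Granting the principle, most of part~(a) is routine. The implication (i)$\Rightarrow$(ii) is the principle applied to every multiple of $d$, while (ii)$\Rightarrow$(i) is trivial since $d \in \Iscr_d$. Since $\gcd(d,\bar e(B)) \mid d$, applying the principle with $\gcd(d,\bar e(B))$ in place of $d$ yields (iii)$\Rightarrow$(i). And (i)$\Leftrightarrow$(iv) needs no normality whatsoever: Lemma \ref{8h7gt23er5w67xxde4c2t3gb9jrdhfgbs8u}(c) provides the literal equality of rings $B^{(d)} = B^{(d \odiv e(B))}$, so the two membership assertions coincide.

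The only implication that genuinely uses normality, and the one I regard as the crux, is (i)$\Rightarrow$(iii). Set $d' = \gcd(d,\bar e(B))$, so that $d\Integ \subseteq d'\Integ$. Since $B$ is a noetherian normal $\Rat$-domain (being affine over the characteristic-$0$ field $\bk$, it is noetherian and contains $\Rat$), Cor.\ \ref{8y3655431qsxdsf2shccvmmlot405cvxj4u1} gives $(B^{(d)},B^{(d')}) \in \EXT$, and Lemma \ref{piud29hdbfdu74heqoir923ft7fbc1oeyt}(b) then propagates non-rigidity upward along the inclusion $B^{(d)} \subseteq B^{(d')}$; thus $d \in \NR(B)$ implies $d' \in \NR(B)$. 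All the substance here lives inside Cor.\ \ref{8y3655431qsxdsf2shccvmmlot405cvxj4u1} (which ultimately rests on the Extension Theorem \ref{o823y48t2309fnb28}); the present corollary only adds elementary divisibility bookkeeping. I expect the trivial-grading case $e(B) = \bar e(B) = 0$ to cause no difficulty — then every $B^{(d)}$ equals $B$, so (a) is immediate and the hypothesis of (b) is vacuous — but it may be cleanest to dispose of it at the outset.

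For part~(b), assume $\bar e(B) \neq 0$, so that $\bar e(B) \in \Nat \setminus \{0\}$, and choose any $d \in \NR(B)$. The equivalence (i)$\Leftrightarrow$(iii) established above gives $\gcd(d,\bar e(B)) \in \NR(B)$; since $\gcd(d,\bar e(B)) \mid \bar e(B)$, the monotonicity principle yields $\bar e(B) \in \NR(B)$, as desired.
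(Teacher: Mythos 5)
Your proof is correct, and it takes a slightly different route from the paper's for the substantive implication (i)$\Rightarrow$(iii). The paper's proof is uniform: it routes almost every implication through the $M(B)$ description in Cor.\ \ref{p98qy386e523brxhbfc6ghvc63x2345618q30} (for (i)$\Rightarrow$(iii), one picks $m \in M(B)$ with $m \mid d$, notes $m \mid \bar e(B)$ from part (b) of that corollary, hence $m \mid \gcd(d,\bar e(B))$, hence $\gcd(d,\bar e(B)) \in \Iscr_m \subseteq \NR(B)$). You instead invoke Cor.\ \ref{8y3655431qsxdsf2shccvmmlot405cvxj4u1} to get $(B^{(d)},B^{(d')}) \in \EXT$ with $d' = \gcd(d,\bar e(B))$, and then propagate non-rigidity upward via Lemma \ref{piud29hdbfdu74heqoir923ft7fbc1oeyt}(b). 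Both arguments ultimately rest on the Extension Theorem \ref{o823y48t2309fnb28}, but yours skips the $M(B)$ abstraction layer and works one level lower; the paper's version has the expository advantage of keeping all the divisibility bookkeeping inside the already-established description $\NR(B) = \bigcup_{m \in M(B)}\Iscr_m$ with $M(B)$ dividing $\bar e(B)$. Your monotonicity principle is also handled cleanly by either the Descent-Theorem-type Cor.\ \ref{z0zoxjf5wehghwhedcgcn26sbcdtzezp8q0}(c) as you primarily suggest, or by Cor.\ \ref{p98qy386e523brxhbfc6ghvc63x2345618q30}(a) (which is what the paper uses). One small point to make fully rigorous: in applying Lemma \ref{piud29hdbfdu74heqoir923ft7fbc1oeyt}(b) you should note that $d\Integ, d'\Integ \in \bbT(B)$ — automatic since $d, d' \ge 1$ — and when $\bar e(B)=0$ the equality $d'=d$ makes the step vacuous, which you correctly flag at the outset.
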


\begin{proof}
To prove (a), consider $d \in \Nat\setminus\{0\}$.

If $d \in \NR(B)$ then Cor.\ \ref{p98qy386e523brxhbfc6ghvc63x2345618q30} implies that $d \in \Iscr_{m} \subseteq \NR(B)$ for some $m \in M(B)$.
Then $\Iscr_d \subseteq \Iscr_{m} \subseteq \NR(B)$, so (i) implies (ii). The converse is obvious, so (i) $\Leftrightarrow$ (ii).

If $d \in \NR(B)$ then Cor.\ \ref{p98qy386e523brxhbfc6ghvc63x2345618q30} implies that $d \in \Iscr_{m} \subseteq \NR(B)$ for some $m \in M(B)$,
and moreover $m \mid \bar e(B)$. So $m \mid \gcd(d, \bar e(B))$, so $\gcd(d, \bar e(B)) \in \Iscr_{m} \subseteq \NR(B)$
and hence $\gcd(d,\bar e(B)) \in \NR(B)$.
Conversely, if $\gcd(d,\bar e(B)) \in \NR(B)$ then  define $d' = \gcd(d,\bar e(B))$ and note that $d \in \Iscr_{d'} \subseteq \NR(B)$
(using that (i) implies (ii)), so $d \in \NR(B)$. So  (i) $\Leftrightarrow$ (iii).

Since $B^{(d)} = B^{( d \odiv e(B) )}$ by Lemma \ref{8h7gt23er5w67xxde4c2t3gb9jrdhfgbs8u}(c),
it is clear that (i) $\Leftrightarrow$ (iv).
This proves (a).

To prove (b), suppose that $\bar e(B) \neq 0$ and $\NR(B) \neq \emptyset$.
Pick $d \in \NR(B)$; then $\bar e(B) d \in \Iscr_d \subseteq \NR(B)$ by (a),
so $\gcd( \bar e(B) d, \bar e(B) ) \in \NR(B)$ again by (a), so $\bar e(B) \in \NR(B)$.
\end{proof}

\section*{More about $\bar e(B)$}

The ring $B$ of Ex.\ \ref{u6543wsxcvnj28c9w5i746bey} satisfies $\bar e(B) = 0 \neq e(B)$.
The following result shows that the rings that satisfy that condition are very special.

\begin{lemma}  \label {87t6yfjcbbvo39f8y7-2}
Let $\bk$ be a field and $B = \bigoplus_{ i \in \Integ } B_i$ a $\Integ$-graded affine $\bk$-domain such that $B_j\neq0$ for some $j>0$.
\begin{enumerata}

\item $\bar e(B) = 0$ if and only if 
$B$ is $\Nat$-graded and the prime ideal $B_+ = \bigoplus_{ i > 0 } B_i$ has height $1$.

\item If $B$ is normal and $\bar e(B)=0$, then there exist $s \in B_0 \setminus \{0\}$ and a homogeneous element $t \in B \setminus\{0\}$ of
positive degree such that $B_s = (B_0)_s[t] = \big( (B_0)_s \big)^{[1]}$.

\end{enumerata}
\end{lemma}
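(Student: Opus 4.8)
The plan is to prove the two parts separately, with part (a) laying the groundwork for part (b). For part (a), the key observation is that $\bar e(B) = 0$ is governed by Remark \ref{78rc12e2536w7894mcnfbsgnshfr}(2): since $B_j \neq 0$ for some $j > 0$ we have $e(B) \neq 0$ (the grading is nontrivial), and since $B$ is noetherian, by Remark \ref{78rc12e2536w7894mcnfbsgnshfr}(3) the set $\setspec{ e(B/\pgoth) }{ \pgoth \in Z }$ is finite; hence $\bar e(B) = 0$ if and only if $e(B/\pgoth) = 0$ for some height $1$ homogeneous prime $\pgoth$. By Remark \ref{78rc12e2536w7894mcnfbsgnshfr}(1) applied to $B/\pgoth$, $e(B/\pgoth) = 0$ means the induced grading on $B/\pgoth$ is trivial, i.e. $\pgoth \supseteq B_i$ for all $i \neq 0$. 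So the question becomes: when does $B$ have a height $1$ homogeneous prime containing $\bigoplus_{i \neq 0} B_i$? First I would handle the $\Integ$-graded (possibly not $\Nat$-graded) case: if some $B_i \neq 0$ with $i < 0$, then (choosing homogeneous $a \in B_i \setminus \{0\}$, $b \in B_j\setminus\{0\}$ with $i<0<j$) the ideal $\bigoplus_{i\neq 0} B_i$ generates a homogeneous ideal $\mathfrak a$ of $B$ containing both $a$ and $b$; I would argue $\mathfrak a$ contains a nonzerodivisor of positive degree and a nonzerodivisor of negative degree, and use a grading/Krull-dimension argument to show any homogeneous prime containing $\mathfrak a$ has height $\ge 2$. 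Then in the $\Nat$-graded case, $\bigoplus_{i \neq 0} B_i = B_+$, and $\bar e(B) = 0 \iff B_+$ is contained in some height $1$ homogeneous prime $\iff \haut(B_+) = 1$ (using that $B_+$ is itself prime, with equality $B_+ = \pgoth$ forced when it has height $1$).

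For part (b), I assume $B$ is normal $\Nat$-graded affine with $\haut(B_+) = 1$ (by part (a)). The idea is that $B_+$ being a height $1$ prime in a normal noetherian domain is principal locally; more precisely, localizing at $B_+$ gives a DVR $B_{B_+}$. Let $t$ be a homogeneous element of smallest positive degree generating the maximal ideal of this DVR up to a unit; then every homogeneous element of positive degree is, in $B_{B_+}$, a unit times a power of $t$. I would then pick a single $s \in B_0 \setminus \{0\}$ clearing all the needed denominators (using that $B$ is a finitely generated $B_0$-algebra, so finitely many homogeneous generators suffice) so that over $B_s$ every homogeneous element of degree $n > 0$ lies in $(B_0)_s \cdot t^n$. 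This shows $B_s = \bigoplus_{n \ge 0} (B_0)_s t^n = (B_0)_s[t]$; algebraic independence of $t$ over $(B_0)_s$ is automatic since $t$ is homogeneous of positive degree and $B_s$ is a domain, so $B_s = \big((B_0)_s\big)^{[1]}$. The degree of $t$ here will in fact be $e(B)$ after possibly enlarging $s$, but we only need "positive degree" for the statement.

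The main obstacle I anticipate is the height argument in the non-$\Nat$-graded case of part (a): showing that a homogeneous ideal containing nonzero elements of both positive and negative degree cannot sit inside a height $1$ prime. A clean way is to note that such an ideal $\mathfrak a$ satisfies $B_\mathfrak a = B$ is impossible to make literally but: if $\pgoth$ is a homogeneous prime with $\mathfrak a \subseteq \pgoth$ and $\haut \pgoth = 1$, then in the graded localization $B_{(\pgoth)}$ and its relation to $B_\pgoth$, one derives $\trdeg(B : B_0) = 0$, contradicting $B_j \neq 0$ for $j > 0$ together with $B_i \neq 0$ for $i<0$ (which, for a domain, forces a homogeneous element of $\Frac B$ of degree $1$, hence transcendental over $B_0$, hence $\trdeg(B:B_0)\ge 1$, forcing $\haut$ of the irrelevant-type ideal to be at least $2$ by a dimension count on $B/\pgoth$). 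I would also double-check the finiteness input from Remark \ref{78rc12e2536w7894mcnfbsgnshfr}(3) covers exactly the dichotomy needed, so that "$\bar e(B) = 0$" cannot arise from the set $\setspec{ e(B/\pgoth) }{ \pgoth \in Z }$ being infinite.
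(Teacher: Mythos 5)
For part (a), your route is essentially the paper's: reduce $\bar e(B)=0$ (using the finiteness of $\setspec{e(B/\pgoth)}{\pgoth\in Z}$ and $e(B)\neq 0$) to the existence of a height-$1$ homogeneous prime $\pgoth$ containing every $B_\ell$ with $\ell\neq 0$, then rule out elements of negative degree by a transcendence-degree count. The paper's version of that count is worth recording: if $x_i\in B_i$, $x_j\in B_j$ are nonzero with $i<0<j$, then $x_i^{\,j}x_j^{\,|i|}\in\pgoth_0:=\pgoth\cap B_0$ shows $\pgoth_0\neq 0$, and $B/\pgoth\isom B_0/\pgoth_0$ forces $\trdeg_\bk(B)-1=\trdeg_\bk(B_0/\pgoth_0)<\trdeg_\bk(B_0)\leq\trdeg_\bk(B)-1$, a contradiction. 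Your ``$\trdeg(B:B_0)=0$'' derivation is the same idea, only vaguer.

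Part (b) is where you take a genuinely different route, and as written it has a gap at the decisive step. You pass to the DVR $B_{B_+}$, choose a homogeneous uniformizer $t$, write a homogeneous $b$ of positive degree as $b=u\,t^{k}$ with $k=v_{B_+}(b)$, and then assert that a single $s\in B_0\setminus\{0\}$ ``clears denominators'' so that $u\in(B_0)_s$. But $u=b/t^{k}$ is a homogeneous element of $\Frac(B)$ of degree $\deg(b)-k\deg(t)$, and nothing so far forces this degree to be zero; and even when the degree is zero, a degree-zero homogeneous fraction need not lie in $\Frac(B_0)$ (e.g.\ $x/y$ in $\bk[x,y]$ with $\deg x=\deg y=1$ has degree zero but is not in $\Frac(B_0)=\bk$). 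Two nontrivial facts are needed, and neither follows from the DVR structure alone: (i) $v_{B_+}(b)\deg(t)=\deg(b)$ for every homogeneous $b$ — this holds because the degree-zero subfield $\Beul_0$ of $S^{-1}B$ ($S$ = all nonzero homogeneous elements) is algebraic over $\Frac(B_0)$, which uses $\trdeg_{B_0}(B)=1$ coming from $\haut(B_+)=1$, and a discrete valuation that is trivial on $\Frac(B_0)^*$ is then trivial on $\Beul_0^*$; and (ii) $\Beul_0=\Frac(B_0)$, which is precisely where normality enters beyond making $B_{B_+}$ a DVR (scale an element of $\Beul_0$ into something integral over $B_0$, hence in $B\cap\Beul_0=B_0$). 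The paper sidesteps the DVR entirely: it localizes at $S$ and applies Lemma \ref{i8765redfvbnki8765rfghytrew123456789iuhv} to exhibit the Laurent polynomial ring $\Beul_0[t^{\pm1}]$ (which gives $\deg t=e(B)$ for free), uses normality to conclude $\Beul_0=\Frac(B_0)$ and $t\in B$, and only then clears denominators. Your DVR approach can probably be repaired along the lines of (i) and (ii), but the claim that $u$ lands in $(B_0)_s$ is the whole content of the lemma and is left unjustified; note also that your ``$(B_0)_s\cdot t^n$'' has degree $n\deg t$, not $n$, which signals the same degree confusion.
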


\begin{proof}
Before proving (a) or (b), we first note that
\begin{equation} \label {B09F3065-A83D-47C3-9B8B-98259E3CA797}
\text{$\bk \subseteq B_0$ and $B_0$ is $\bk$-affine.}
\end{equation}
Indeed, Rem.\ \ref{ijn3eoocnnqppieudbc} gives the first part of \eqref{B09F3065-A83D-47C3-9B8B-98259E3CA797}
and Cor.\ \ref{8Qp98781hy2uexihvcx2qw2wUxq3ew0pokjmnbvaw2345} with $H=0$ gives the second.

Proof of (a). Let $Z$ denote the set of all height $1$ homogeneous prime ideals of $B$.

If $B$ is $\Nat$-graded and $\haut(B_+) = 1$ then $B_+ \in Z$ and $e( B/B_+ ) = 0$, so $\bar e(B) = 0$
by part (2) of Rem.\ \ref{78rc12e2536w7894mcnfbsgnshfr}.

Conversely, assume that $\bar e(B) = 0$.
Since $\bar e(B) \neq e(B)$, we have $Z \neq \emptyset$;
since $B$ is noetherian, $\setspec{ e(B/\pgoth) }{ \pgoth \in Z }$ is a finite set by part (3) of Rem.\ \ref{78rc12e2536w7894mcnfbsgnshfr};
since $\lcm\setspec{ e(B/\pgoth) }{ \pgoth \in Z } = \bar e(B) = 0$, 
there exists $\pgoth \in Z$ such that $e( B/\pgoth ) = 0$.
Thus,
\begin{equation}  \label {3769ivbx910A28yt}
\text{$B_\ell \subseteq \pgoth$ for all $\ell \in \Integ \setminus \{0\}$.}
\end{equation}
Define $\pgoth_0 = \pgoth \cap B_0$.
To prove that $B$ is $\Nat$-graded, assume the contrary.
Since $B_j \neq 0$ for some $j>0$, this means that there exist $i,j \in \Integ$ such that $i < 0 < j$, $B_i \neq 0$ and $B_j \neq 0$.
Choose $x_i \in B_i \setminus \{0\}$ and $x_j \in B_j \setminus \{0\}$;
since $x_i,x_j \in \pgoth$, we have $x_i^j x_j^{|i|} \in \pgoth_0$, so $\pgoth_0 \neq 0$.
Since $x_j$ is transcendental over $B_0$, we have $\trdeg_{B_0}(B) > 0$;
since $\pgoth_0 \neq 0$ and $B_0$ is $\bk$-affine, we have $\trdeg_\bk(B_0/\pgoth_0) < \trdeg_\bk(B_0) \le \trdeg_\bk(B) - 1$.
On the other hand, we have $B_0/\pgoth_0 \isom B/\pgoth$ by \eqref{3769ivbx910A28yt},
so (using $\haut\pgoth=1$ and $B$ is $\bk$-affine) $\trdeg_\bk(B_0/\pgoth_0) = \trdeg_\bk(B/\pgoth) = \trdeg_\bk(B)-1$,
a contradiction.

So $B$ is $\Nat$-graded and consequently $B_+ = \bigoplus_{ i > 0 } B_i$ is a prime ideal of $B$.
Since $B_j\neq0$ for some $j>0$, we have $B_+ \neq 0$, so $\haut( B_+ ) \ge 1$.
We have $B_+ \subseteq \pgoth$ by \eqref{3769ivbx910A28yt}, so $\haut( B_+ ) = 1$, which proves (a).

Proof of (b). By (a), $B$ is $\Nat$-graded and $\haut(B_+) = 1$.
By \eqref{B09F3065-A83D-47C3-9B8B-98259E3CA797}, $\bk \subseteq B_0$ and both $B$ and $B_0$ are $\bk$-affine.
Since  $B/B_+ \isom B_0$ and $\haut(B_+)=1$, we have $\trdeg_{B_0}(B) = 1$.

We first prove the case where $B_0$ is a field.
Let $S = \bigcup_{ i \in \Nat }( B_i \setminus \{0\} )$ and $\Beul = S^{-1}B$.
Then $\Beul = \bigoplus_{i \in \Integ} \Beul_i$ where $\Beul_0$ is a field that contains $B_0$.
By Lemma  \ref{i8765redfvbnki8765rfghytrew123456789iuhv},
there exists a nonzero homogeneous element $t \in \Beul$ of degree $e(B)$ such that
$\Beul = \Beul_0[t^{\pm1}]$ is the ring of Laurent polynomials over $\Beul_0$.
In particular, $\trdeg_{\Beul_0}(\Beul) = 1$.
We also have $\trdeg_{B_0}(\Beul) = \trdeg_{B_0}(B) = 1$, so $\Beul_0 / B_0$ is an algebraic extension of fields.
Since $B$ is normal, it follows that $\Beul_0 \subseteq B$ and hence that $\Beul_0 = B_0$.
So $B \subseteq \Beul = B_0[t^{\pm1}]$. Since $B$ is $\Nat$-graded, it follows that $B \subseteq B_0[t]$.
Since $B_0$ is a field, it is clear that some monic polynomial $f(t) \in B_0[t]$ belongs to $B$; so $t$ is integral over $B$; since $B$ is normal and $t \in S^{-1}B$,
we get $t \in B$ and hence $B = B_0[t] = B_0^{[1]}$, as desired.

Now consider the general case.
This time, let $S = B_0 \setminus \{0\}$ and $\Beul = S^{-1}B = \bigoplus_{i \in \Nat} \Beul_i$, where $\Beul_0 = \Frac B_0$.
It is easy to see that $\Beul_+ \cap B = B_+$; by basic properties of localization, it follows that $\haut(\Beul_+) = \haut(B_+)=1$.
Since $\Beul_0$ is a field and $\Beul$ is an $\Nat$-graded normal affine $\Beul_0$-domain such that $\haut( \Beul_+ ) = 1$,
the preceding paragraph implies that $\Beul = \Beul_0[t] = \Beul_0^{[1]}$ where $t \in \Beul$ is homogeneous of degree $e(\Beul) = e(B)>0$.
Multiplying $t$ by an element of $S = B_0 \setminus \{0\}$, we may arrange that $t \in B$.
Since $B$ is $\bk$-affine, we have $B = \bk[ f_1(t), \dots, f_n(t) ]$ where $f_i(t) \in \Beul_0[t]$ for all $i$.
Clearly, there exists $s \in S$ such that all coefficients of $f_1(t), \dots, f_n(t)$ belong to $(B_0)_s$.
Thus, $B_s = (B_0)_s[t] = \big( (B_0)_s \big)^{[1]}$, which proves (b).
\end{proof}

\begin{corollary}  \label {i8976r53g47c8ru3yd3g23ev5627fu}
Let $\bk$ be a field of characteristic $0$ and $B = \bigoplus_{ i \in \Integ } B_i$ a $\Integ$-graded normal affine $\bk$-domain.
If $e(B) \neq 0 = \bar e(B)$ then there exists $D \in \hlnd(B) \setminus \{0\}$ such that $\ker(D) = B_0$.  In particular, $B$ is non-rigid.
\end{corollary}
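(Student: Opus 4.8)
The plan is to reduce to the hypotheses of Lemma~\ref{87t6yfjcbbvo39f8y7-2}, use the polynomial structure it provides after inverting one degree-$0$ element, and then descend the associated partial derivative back to $B$ by means of Lemma~\ref{6543wxcvbnc3wvlplpo0i9kijh7waq4rtmlb87hg}.

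First I would handle a harmless asymmetry. Since $e(B)\neq 0$ the grading is nontrivial, so $B_n\neq 0$ for some $n\neq 0$; replacing the grading $(B_i)_{i\in\Integ}$ by the reversed grading $(B_{-i})_{i\in\Integ}$ if necessary --- an operation that changes neither $B_0$, nor $e(B)$, nor $\bar e(B)$, nor the set $\hlnd(B)$ --- I may assume that $B_j\neq 0$ for some $j>0$. Lemma~\ref{87t6yfjcbbvo39f8y7-2}(b) then yields $s\in B_0\setminus\{0\}$ and a homogeneous $t\in B\setminus\{0\}$ of positive degree with $B_s=(B_0)_s[t]=\big((B_0)_s\big)^{[1]}$.

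Next I would work with the partial derivative $\partial=\frac{d}{dt}: B_s\to B_s$. It is locally nilpotent, it is homogeneous of degree $-\deg(t)$ (because $s$ lies in degree $0$), and $\ker(\partial)=(B_0)_s$. The one computation worth isolating is that $B\cap(B_0)_s=B_0$: the inclusion $\supseteq$ is clear, and $\subseteq$ follows from $(B_0)_s\subseteq\Frac(B_0)$ together with Lemma~\ref{98767gh11dmdffkll0ahcchvr0vfesh8263543794663cnir}(c) applied with $H=0$, which gives $B\cap\Frac(B_0)=B_0$. Since $\bk\subseteq B_0$ and $B$ is an affine $\bk$-domain, $B$ is finitely generated as an algebra over $B\cap\ker(\partial)=B_0$, so Lemma~\ref{6543wxcvbnc3wvlplpo0i9kijh7waq4rtmlb87hg} provides $\ell\in\Nat$ such that $s^{\ell}\partial$ maps $B$ into itself and such that $D:=(s^{\ell}\partial)|_B: B\to B$ is locally nilpotent. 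As $s^{\ell}$ lies in degree $0$ and $\partial$ is homogeneous, $D$ is homogeneous, hence $D\in\hlnd(B)$; moreover $\ker(D)=B\cap\ker(\partial)=B_0$, and $D\neq 0$ since $B_0\subsetneq B$. Thus $D\in\hlnd(B)\setminus\{0\}$ with $\ker(D)=B_0$, and in particular $D\in\lnd(B)\setminus\{0\}$, so $B$ is non-rigid.

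I do not expect a real obstacle: the substance is already contained in Lemma~\ref{87t6yfjcbbvo39f8y7-2}(b), and the remaining work is the grading-reversal reduction, the identity $B\cap(B_0)_s=B_0$, and the routine verifications that $D$ is homogeneous, locally nilpotent, nonzero, and has kernel $B_0$.
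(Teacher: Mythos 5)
Your proof is correct and follows essentially the same route as the paper's: reduce (by reversing the grading) to the case $B_j\neq 0$ for some $j>0$, invoke Lemma~\ref{87t6yfjcbbvo39f8y7-2}(b) to write $B_s=(B_0)_s[t]$, and descend a suitable multiple of $\tfrac{d}{dt}$ back to $B$ via Lemma~\ref{6543wxcvbnc3wvlplpo0i9kijh7waq4rtmlb87hg}. The only difference is cosmetic --- the paper reverses the grading at the end rather than at the start --- while you make explicit the verification $B\cap(B_0)_s=B_0$ and the finite-generation hypothesis, both of which the paper leaves implicit.
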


\begin{proof}
First consider the case where $B_j\neq0$ for some $j>0$.
By Lemma \ref{87t6yfjcbbvo39f8y7-2}(b), there exist $s \in B_0 \setminus \{0\}$ and a homogeneous element $t \in B \setminus\{0\}$ of
positive degree such that $B_s = (B_0)_s[t] = \big( (B_0)_s \big)^{[1]}$.
For each $m \in \Nat$, $s^m \frac{d}{d t} : (B_0)_s[t] \to (B_0)_s[t]$ belongs to $\hlnd(B_s) \setminus \{0\}$.
Since $B$ is $\bk$-affine, we can choose $m$ such that $s^m \frac{d}{d t}$ maps $B$ into itself;
then $D = (s^m \frac{d}{d t})|_B : B \to B$ belongs to $\hlnd(B) \setminus \{0\}$ and $\ker(D) = B_0$.
This proves the result in this case.

If no $j>0$ is such that $B_j\neq0$, then
let $B^-$ be the ring $B$ with the opposite grading, i.e., $B^- = \bigoplus_{ i \in \Integ } B_i^-$ where $B_i^- = B_{-i}$ for each $i \in \Integ$.
We have $e(B^-) \neq 0 = \bar e(B^-)$, and there exists $j>0$ such that $B_j^- \neq 0$.
By the preceding paragraph, there exists $D \in \hlnd(B^-) \setminus \{0\}$ such that $\ker(D) = B_0^-$.
Clearly, the same $D$ satisfies  $D \in \hlnd(B) \setminus \{0\}$ and $\ker(D) = B_0$.
\end{proof}

\section*{The set $M(B)$}

By Cor.\ \ref{p98qy386e523brxhbfc6ghvc63x2345618q30}(a), the problem of describing $\NR(B)$ reduces to that of describing $M(B)$.
So it is interesting to ask what can be said about $M(B)$.
Let us agree that a {\it primitive set\/} is a subset $Y$ of $\Nat\setminus\{0\}$
such that the conditions $y,y' \in Y$ and $y \mid y'$ imply $y=y'$.
It is obvious that $M(B)$ is primitive, and Prop.\ \ref{finiteprimitiveset534823i93i} shows that it is finite.

\begin{proposition}  \label {finiteprimitiveset534823i93i}
Let $\bk$ be a field of characteristic $0$
and $B$ a $\Integ$-graded normal affine $\bk$-domain.
\begin{enumerata}

\item $M(B)$ is a finite primitive subset of the set of divisors of $\bar e(B)$.

\item If $e(B)=0$ then  $M(B)$ is either $\emptyset$ or $\{1\}$.

\item If $e(B) \neq 0$ and $\bar e(B)=0$ then $M(B) = \{1\}$.

\end{enumerata}
\end{proposition}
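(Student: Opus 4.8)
The plan is to establish parts (b) and (c) first, since these settle the degenerate situations $e(B)=0$ and $\bar e(B)=0$, and then to deduce part (a) from them together with the divisibility information already recorded in Corollary~\ref{p98qy386e523brxhbfc6ghvc63x2345618q30}(b). All three parts reduce quickly to results proved earlier in the section.

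\textbf{Parts (b) and (c).} For (b): by Remark~\ref{78rc12e2536w7894mcnfbsgnshfr}(1), the hypothesis $e(B)=0$ is equivalent to the grading of $B$ being trivial, so (b) is precisely Lemma~\ref{9876f5d43s09ijknm57ffd}(b) --- namely $M(B)=\{1\}$ if $B$ is non-rigid and $M(B)=\emptyset$ otherwise. For (c): assuming $e(B)\neq0=\bar e(B)$, Corollary~\ref{i8976r53g47c8ru3yd3g23ev5627fu} applies (here one uses that $B$ is normal) and produces a nonzero element of $\hlnd(B)$, so $B$ is non-rigid; hence $1\in\NR(B)$, and the chain of equivalences \eqref{oijn23w9e0cnbv2sxcxzq5at} forces $M(B)=\{1\}$.

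\textbf{Part (a).} That $M(B)$ is a primitive set is immediate from its definition as the set of maximal elements of $(\NR(B),\preceq)$: if $d,d'\in M(B)$ and $d\mid d'$ then $d'\preceq d$, so maximality of $d'$ gives $d=d'$. That each $d\in M(B)$ divides $\bar e(B)$ is exactly Corollary~\ref{p98qy386e523brxhbfc6ghvc63x2345618q30}(b), so $M(B)$ is contained in the set of divisors of $\bar e(B)$. For finiteness I would split into two cases: if $\bar e(B)\neq0$ then the set of positive divisors of $\bar e(B)$ is finite and $M(B)$ sits inside it; if $\bar e(B)=0$ then, according as $e(B)=0$ or $e(B)\neq0$, part (b) or part (c) already gives $M(B)\subseteq\{1\}$. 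In every case $M(B)$ is finite, completing (a).

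\textbf{Main obstacle.} There is no genuine difficulty here; the one point that needs care is that when $\bar e(B)=0$ the containment of $M(B)$ in the set of divisors of $\bar e(B)$ is vacuous (every positive integer divides $0$), so the finiteness half of (a) in that case must be routed through parts (b) and (c) rather than through the divisibility statement --- which is the reason for proving (b) and (c) first.
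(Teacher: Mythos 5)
Your proposal is correct and follows essentially the same route as the paper: parts (b) and (c) by the same two citations, and part (a) by splitting on whether $\bar e(B)$ is zero, invoking Corollary~\ref{p98qy386e523brxhbfc6ghvc63x2345618q30}(b) when $\bar e(B)\neq 0$ and falling back to (b) and (c) when $\bar e(B)=0$. The only cosmetic difference is that for (b) you cite Lemma~\ref{9876f5d43s09ijknm57ffd}(b) via the observation that $e(B)=0$ means the grading is trivial, whereas the paper just re-derives the same fact inline from $B^{(d)}=B$ for all $d$.
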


\begin{proof}
(b) If $e(B)=0$ then $B^{(d)} = B$ for all positive integers $d$, so $\NR(B)$ is either $\emptyset$ or $\Nat\setminus\{0\}$
and consequently (b) is true.

(c) If $e(B) \neq 0$ and $\bar e(B)=0$ then
Cor.\ \ref{i8976r53g47c8ru3yd3g23ev5627fu} implies that $B$ is non-rigid, so $1 \in \NR(B)$ and hence $M(B) = \{1\}$.

(a) It is obvious that $M(B)$ is a primitive set.
If $\bar e(B) \neq 0$ then  Cor.\ \ref{p98qy386e523brxhbfc6ghvc63x2345618q30}(b) implies that $M(B)$ is included in the set of divisors of $\bar e(B)$,
so (a) is true in this case.
If $\bar e(B)=0$ then (b) and (c) imply that $M(B)$ is either $\emptyset$ or $\{1\}$, so (a) is true in this case as well.
\end{proof}

\begin{bigremark}  \label {7cer78543432qq32cr98}
Let us say that a subset $Y$ of $\Nat\setminus\{0\}$ is {\it realizable\/}
if there exists a $\Integ$-graded normal affine $\Comp$-domain $B$ satisfying $M(B) = Y$.
Prop.\ \ref{finiteprimitiveset534823i93i} implies that every realizable set is finite and primitive,
but {\it we don't know if every finite primitive set is realizable.}
For instance, we don't know whether $\{6,10\}$ is realizable.
We will learn more about realizable sets in Section \ref{SEC:PhamBrieskornrings}, by studying Pham-Brieskorn rings.
\end{bigremark}

\section{Pham-Brieskorn rings}  \label {SEC:PhamBrieskornrings}

Recall from Cor.\ \ref{p98qy386e523brxhbfc6ghvc63x2345618q30}(a) that the problem of describing $\NR(B)$ reduces to that of describing $M(B)$.
The main result of this section (Thm \ref{eion98h38dhb2d203ojfbdt7}) describes the set $M(B)$ for Pham-Brieskorn rings $B$ that satisfy a certain condition.

\begin{notation}  \label {iu7654wd3fg56hxr39ik}
Given $\ba = (a_1,\dots,a_n) \in ( \Nat\setminus\{0\} )^{n}$ (where $n\ge3$), we define:
\begin{enumerate}

\item $L = \lcm(a_1,\dots,a_n)$

\item For each $i \in \{1,\dots,n\}$,
$J_i = \{1,\dots,n\} \setminus \{i\}$ and $L_i = \displaystyle \lcm\setspec{ a_j }{ j \in J_i }$.

\item $S( \ba ) = S(a_1,\dots,a_n) = \setspec{ j }{ 1 \le j \le n \text{ and } L_j \neq L }$ 

\item The {\it cotype\/} of $\ba$ is the cardinality of $S(\ba)$.  Note that 
$$
\cotype(\ba) = \cotype(a_1,\dots,a_n) \in \{0,1,\dots,n\} . 
$$

\item Let $D$ be the set of positive divisors of $\prod_{i=1}^n \frac{L}{L_i}$ and define a map $f : D \to (\Nat\setminus\{0\})^n$
by declaring that if $d \in D$ then $f(d) = (a_1', \dots, a_n') \in (\Nat\setminus\{0\})^n$ where:
\begin{equation}  \label {6gvhb3ag2n60b6tm9rn83h76b54}
a_i' =  \gcd( a_i, L_i ) \, {\textstyle \left[ \frac{L/L_i}{ \gcd( d, L/L_i) } \right] } , \quad 1 \le i \le n .
\end{equation}
It follows from \eqref{6gvhb3ag2n60b6tm9rn83h76b54} that $a_i = a_i' \gcd(d, L/L_i)$ (and hence $a_i' \mid a_i$) for all $i = 1,\dots,n$.
Consequently, $f(1) = (a_1, \dots, a_n)$.

\end{enumerate}
\end{notation}

\begin{notation}  \label {8r43qasd321qsxcvgttyhnemkrifv90olkmn}
If $\bk$ is a field of characteristic $0$, $n\ge3$ and $a_1,\dots,a_n \in \Nat\setminus\{0\}$, define
$B_{\bk; a_1,\dots,a_n} = \bk[X_1,\dots,X_n] / ( X_1^{a_1} + \cdots + X_n^{a_n} )$.
This is called a {\it Pham-Brieskorn ring}, and it is well known that
$$
\text{$B_{\bk; a_1,\dots,a_n}$ is a normal domain.}
$$
Consider the $\Nat$-grading of $\bk[X_1,\dots,X_n]$ such that (for each $i=1,\dots,n$)
$X_i$ is homogeneous of degree $d_i = L/a_i$, where $L = \lcm(a_1,\dots,a_n)$;
then  $X_1^{a_1} + \cdots + X_n^{a_n}$ is homogeneous and consequently $B_{\bk; a_1,\dots,a_n}$ is $\Nat$-graded.
We have $B_{\bk; a_1,\dots,a_n} = \bk[x_1,\dots,x_n]$, where (for each $i$) $x_i \in B_{\bk; a_1,\dots,a_n}$ denotes the canonical image of $X_i$.
Clearly, $x_i \neq 0$ is homogeneous of degree $d_i$, and if $n\ge4$ then $x_i$ is a prime element of $B_{\bk; a_1,\dots,a_n}$.
\end{notation}

When considering a Pham-Brieskorn ring $B_{\bk; a_1,\dots,a_n}$,
we shall use the notations $L$, $J_i$, $L_i$, $x_i$, $d_i$ and $f : D \to (\Nat \setminus \{0\} )^n$
(defined in \ref{iu7654wd3fg56hxr39ik} and \ref{8r43qasd321qsxcvgttyhnemkrifv90olkmn}) without necessarily recalling their definitions.
Note that all those notations are uniquely determined by $(a_1,\dots,a_n)$.

\begin{nothing*} \label {ConjPn}
For each $n\ge3$, let $\Gamma_n$ be the set of 
$(a_1,\dots,a_n) \in (\Nat \setminus \{0\})^{n}$  such that
$$
\text{$\min(a_1,\dots,a_n)>1$ and at most one $i \in \{1,\dots,n\}$ satisfies $a_i=2$.}
$$
It is well known and easy to see that
\begin{equation}  \label {7y2ew6t2ewoqpoqbgfccxv27}
\text{\it for every $(a_1,\dots,a_n) \in (\Nat \setminus \{0\})^{n} \setminus \Gamma_n$, the ring $B_{\Comp;a_1,\dots,a_n}$ is non-rigid.}
\end{equation}
It is conjectured (\cite[1.22]{Cheltsov-Park-Prokhorov-Zaidenberg_2021}, \cite{Kali-Zaid_2000}, \cite{FlennerZaidenberg_RatCurRatSing_2003})
that the following statement $\Pscr(n)$ is true for all $n\ge3$:
\begin{description}
\item[$\Pscr(n)$\,] \it \ \ For every $(a_1,\dots,a_n) \in \Gamma_n$, the ring $B_{\Comp;a_1,\dots,a_n}$ is rigid.
\end{description}
It is easy to see that if $B_{\Comp;a_1,\dots,a_n}$ is rigid then so is $B_{\bk;a_1,\dots,a_n}$ for any field $\bk$ of characteristic $0$.
So $\Pscr(n)$ can also be written as follows:
\begin{description}
\item[$\Pscr(n)$\,] \it \ \ 
\begin{minipage}[t]{115mm}
\it
For every field $\bk$ of characteristic $0$ and every $(a_1,\dots,a_n) \in \Gamma_n$, the ring $B_{\bk;a_1,\dots,a_n}$ is rigid.
\end{minipage}
\end{description}
The current status of this conjecture is as follows.
\begin{itemize}

\item $\Pscr(3)$ is true, by Lemma 4 of \cite{Kali-Zaid_2000}.

\item $\Pscr(4)$ is true, by \cite{ChitayatPhDThesis} and \cite{ChitayatDubouloz2025}.
(The PhD thesis \cite{ChitayatPhDThesis} shows, among other things, that if 
$B_{ \Comp ; 2,3,4,12 }$ and $B_{ \Comp ; 2,3,5,30 }$ are rigid then $\Pscr(4)$ is true.
These two rings are shown to be rigid in \cite{ChitayatDubouloz2025}.)

\item For $n>4$, only special cases of $\Pscr(n)$ are known to be true. (See for instance Lemma \ref{4uygrfhejrFDfhgsdTHbfd68827djGD763}.)

\end{itemize}
\end{nothing*}

\begin{lemma}[Cor.\ 4.16 of \cite{Chitayat-Daigle:PhamBrieskorn}]  \label {4uygrfhejrFDfhgsdTHbfd68827djGD763}
Let $\bk$ be a field of characteristic $0$, $n\ge3$ and $(a_1, \dots, a_n) \in \Gamma_n$.
If $\cotype(a_1,\dots,a_n)\ge n-2$, then $B_{\bk;a_1,\dots,a_n}$ is rigid.
\end{lemma}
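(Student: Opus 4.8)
The plan is to argue by induction on $n$, exploiting that for $n\ge 4$ the quotient $B/x_iB$ of a Pham--Brieskorn ring by one of its prime elements $x_i$ is again a Pham--Brieskorn ring (in $n-1$ variables). Write $B=B_{\bk;a_1,\dots,a_n}$ with its $\Nat$-grading, $\deg x_i=L/a_i$. Since $B$ is an affine $\bk$-domain graded by the torsion-free group $\Integ$, Lemma \ref{876543wsdfgxhnm290cAo} reduces rigidity to graded-rigidity, so it is enough to show $\hlnd(B)=\{0\}$. Suppose $D\in\hlnd(B)\setminus\{0\}$ and put $A=\ker D$: this is a graded subring of $B$, factorially closed, with $\trdeg(B:A)=1$ and $\bk=B_0\subseteq A$; in particular $A\ne B$, so not all $x_i$ lie in $A$. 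For the base case $n=3$: since $(a_1,a_2,a_3)\in\Gamma_3$, the statement $\Pscr(3)$ --- true by Lemma 4 of \cite{Kali-Zaid_2000} --- already forces $\lnd(B)=\{0\}$, contradicting $D\ne 0$. So from now on $n\ge 4$ and the Lemma is assumed known for $n-1$.

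The inductive step runs as follows. Three facts are routine: (i) if $x_i\in A$ then $D$ maps $x_iB$ into itself and induces $\overline D\in\hlnd(B/x_iB)$, and $B/x_iB\cong B_{\bk;a_1,\dots,\widehat{a_i},\dots,a_n}$ as a ring; (ii) deleting $a_i$ from $(a_1,\dots,a_n)$ keeps the tuple in $\Gamma_{n-1}$ and cannot ``unstick'' any surviving index, so $\cotype(a_1,\dots,\widehat{a_i},\dots,a_n)\ge\cotype(a_1,\dots,a_n)-1\ge(n-1)-2$; hence by the inductive hypothesis $B/x_iB$ is rigid, so $\overline D=0$, i.e.\ $D(B)\subseteq x_iB$; (iii) if $f\in A\setminus\{0\}$ and $D(B)\subseteq fB$, then $D/f$ is again a nonzero element of $\hlnd(B)$ with kernel $A$. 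The one substantial fact needed is the \emph{Claim} that $\ker D$ contains at least one of $x_1,\dots,x_n$. Granting the Claim, set $T=\{i:x_i\in A\}\ne\emptyset$; by (i)--(ii), $D(B)\subseteq x_iB$ for each $i\in T$, and since the $x_i$ with $i\in T$ are pairwise non-associate primes, $D(B)\subseteq wB$ where $w=\prod_{i\in T}x_i\in A$ is a nonunit. By (iii), $D/w\in\hlnd(B)\setminus\{0\}$ has kernel $A$, hence the same set $T$, hence $D(B)\subseteq w^2B$; iterating, $D(B)\subseteq\bigcap_{m\ge 1}w^mB=\{0\}$ by Krull's intersection theorem, contradicting $D\ne 0$. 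This closes the induction.

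The main obstacle is the Claim; this is precisely where the hypotheses ``$(a_1,\dots,a_n)\in\Gamma_n$'' and ``$\cotype(a_1,\dots,a_n)\ge n-2$'' must be used in a genuinely non-formal way. (The hypothesis $(a_1,\dots,a_n)\in\Gamma_n$ is essential, by \ref{7y2ew6t2ewoqpoqbgfccxv27}; and the cotype restriction is too, since if the Claim held for all tuples in $\Gamma_n$ then the induction above would prove the whole conjecture $\Pscr(n)$, which is not known for $n\ge 5$.) To prove the Claim I would invoke the local slice construction: by Cor.\ \ref{65rfgs01oqksmnxcuyf5g42qywsvnf8}(a) there is a height-$1$ homogeneous prime $\pgoth$ of $B$ and a homogeneous $t\in\pgoth$ with $Dt\ne 0$, $D^2t=0$ (so $B_{Dt}=A_{Dt}[t]$ by Lemma \ref{SliceThm}(b)) satisfying $e(B/\pgoth)=e(\ker D)$. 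I would then combine this equality with the arithmetic special to Pham--Brieskorn rings --- $e(B)=\gcd(L/a_1,\dots,L/a_n)=1$, $e(\ker D)\mid\bar e(B)$, the values $e(B/x_jB)=L/L_j$, and a sufficiently precise list of the height-$1$ homogeneous primes $\pgoth$ of $B$ --- to force, under $\min(a_1,\dots,a_n)>1$, ``at most one $a_i$ equal to $2$'', and $\cotype(a_1,\dots,a_n)\ge n-2$, that $Dx_i=0$ for some $i$. Carrying out this prime-and-degree bookkeeping is the technical heart of the proof; everything else is the formal induction described above.
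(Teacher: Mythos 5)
The paper does not prove this lemma at all --- it is quoted verbatim from Cor.\ 4.16 of \cite{Chitayat-Daigle:PhamBrieskorn} --- so there is no in-paper argument for yours to be compared against. Judged on its own terms, your proposal contains a genuine gap, located exactly where all the content of the result lives. The outer framework is sound and routine: the base case $n=3$ via $\Pscr(3)$; the check that deleting an entry from $\ba\in\Gamma_n$ keeps the tuple in $\Gamma_{n-1}$ and drops $\cotype$ by at most one (correct, since removing an index can only shrink the maxima $v_p(L_j)$, so $S(\ba)\setminus\{i\}\subseteq S(\ba')$); the descent $D\mapsto\overline D$ modulo a kernel element $x_i$; the reassembly $D(B)\subseteq wB$ for $w=\prod_{i\in T}x_i\in\ker D$; and the Krull-intersection degradation of $D$ to zero. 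All correct, but all formal.

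Everything hinges on the Claim that every nonzero $D\in\hlnd(B)$ must annihilate some $x_i$, and for that you give only a description of intent rather than a proof. Worse, the path you sketch does not obviously lead to the Claim even in outline. Cor.\ \ref{65rfgs01oqksmnxcuyf5g42qywsvnf8}(a) gives a height-$1$ homogeneous prime $\pgoth$ with $e(B/\pgoth)=e(\ker D)$ and a slice element $t\in\pgoth$ with $Dt\ne0$, $D^2t=0$; arithmetic on the possible values of $e(B/\pgoth)$ (namely $1$ or $L/L_j$ for a unique $j$) can, in favourable cases, force $\pgoth=x_jB$. But identifying $\pgoth$ as $x_jB$ does not give $x_j\in\ker D$: the slice element $t\in x_jB$ is \emph{outside} $\ker D$, and factorial closure of $\ker D$ yields no information about the factors of an element that does not lie in $\ker D$. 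Nor does the degree equality $e(\ker D)=L/L_j$ assert anything about which homogeneous generators belong to the kernel. So the logical bridge from the slice analysis to the Claim is simply missing, and with it the whole proof. Until the Claim is actually established --- which is the substance of the cited corollary and its supporting structure theorem in \cite{Chitayat-Daigle:PhamBrieskorn} --- your induction, correct as a frame, proves nothing.
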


\begin{notation} \label {98if9jvs9d772jfpd0vhf9w93uo0541hf9}
Let $B = \bigoplus_{i \in \Integ} B_i$ and $B' = \bigoplus_{i \in \Integ} B_i'$ be $\Integ$-graded rings.
\begin{enumerate}

\item We write $B \isom B'$ to indicate that there exists an isomorphism of rings $\phi : B \to B'$
satisfying $\phi(B_i) = B_i'$ for all $i \in \Integ$.

\item We write $B \isomdot B'$ to indicate that there exists an isomorphism of rings $\phi : B \to B'$
such that, for every $x \in B$, $x$ is homogeneous in $B$ if and only if $\phi(x)$ is homogeneous in $B'$
(but the degree of $x$ in $B$ is not necessarily equal to that of $\phi(x)$ in $B'$).
Note that if $B \isomdot B'$ then $e(B)$ is not necessarily equal to $e(B')$.
Also note that
$$
\text{if $B \isomdot B'$ then $B$ and $B'$ have the same rigidity status,}
$$
because whether or not $B$ (or $B'$) is rigid is independent of the grading.

\end{enumerate}
\end{notation}

The following result computes $e(B_{\bk;a_1,\dots,a_n})$ and $\bar e(B_{\bk;a_1,\dots,a_n})$,
and shows that if $d$ is a positive divisor of $\bar e(B_{\bk;a_1,\dots,a_n})$
then $( B_{\bk;a_1,\dots,a_n} )^{(d)}$ is again a Pham-Brieskorn ring.
More precisely, we have $( B_{\bk;a_1,\dots,a_n} )^{(d)} \isomdot B_{\bk; a_1', \dots, a_n'}$
where $a_1',\dots,a_n'$ are defined in the statement of part (d) of the Lemma.

\begin{lemma}  \label {o8y7t626rf53deg23489jru3sgbref6}
Let $\bk$ be a field of characteristic $0$, $n\ge3$ and $a_1,\dots,a_n \in \Nat\setminus\{0\}$.
\begin{enumerata}

\item The integers $\frac{L}{L_1}, \dots,\frac{L}{L_n}$ are pairwise relatively prime.

\item $e(B_{\bk;a_1,\dots,a_n}) = 1$ and $\bar e(B_{\bk;a_1,\dots,a_n}) = \prod_{i=1}^n \frac{L}{L_i}$.

\item $B_{\bk;a_1,\dots,a_n}$ is saturated in codimension $1$ if and only if $\cotype(a_1,\dots,a_n)=0$.

\item Let $d$ be a positive divisor of $\prod_{i=1}^n \frac{L}{L_i}$ and consider $f(d) = (a_1',\dots,a_n')$ as in part {\rm (5)} of Notation \ref{iu7654wd3fg56hxr39ik}.
Then $( B_{\bk;a_1,\dots,a_n} )^{(d)} \isomdot B_{\bk; a_1', \dots, a_n'}$.

\end{enumerata}
\end{lemma}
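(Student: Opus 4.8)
The plan is to handle the four parts in the order stated; part (d) carries essentially all the weight, the other three reducing to short prime-by-prime computations combined with the machinery of Section \ref{SectionTheGroupbGB}.

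\emph{Parts (a) and the equality $e(B)=1$.} For (a), fix a prime $p$ and put $e=v_p(L)=\max_j v_p(a_j)$. Then $p\mid L/L_i$ iff $v_p(L_i)<e$ iff $v_p(a_j)<e$ for all $j\ne i$ iff $i$ is the \emph{unique} index with $v_p(a_i)=e$; hence at most one of $L/L_1,\dots,L/L_n$ is divisible by $p$, which is (a). For $e(B)$: since $x_1,\dots,x_n$ generate $B$ over $\bk=B_0$ with $\deg x_i=d_i=L/a_i$, we have $e(B)=\gcd(d_1,\dots,d_n)$, and for every prime $p$, $\min_i v_p(L/a_i)=v_p(L)-\max_i v_p(a_i)=0$, so $e(B)=1$.

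\emph{The value of $\bar e(B)$ and part (c).} I would compute $\bG(B)$ and invoke $\bG(B)=\bar e(B)\Integ$ (Lemma \ref{983bvkslwlia293tfg62zgbnm3kry}). Apply Proposition \ref{89fDCN584uKHJu7u59034uf}(d) to $B$ (which is $\bk$-affine with $B_0=\bk$) using the generating set $S=\{x_1,\dots,x_n\}$. No height-one prime $\pgoth$ of $B$ contains two of the $x_i$: if $x_i,x_j\in\pgoth$ with $i\ne j$ then $B/(x_i,x_j)=\bk[(X_k)_{k\ne i,j}]/(\sum_{k\ne i,j}X_k^{a_k})$ has Krull dimension $n-3$ while $\dim B=n-1$, so $\haut((x_i,x_j))=2$; and by Krull's principal ideal theorem each $x_i$ lies in some height-one prime of $B$. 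Hence the set $P$ of Proposition \ref{89fDCN584uKHJu7u59034uf}(d) is nonempty, each $\pgoth\in P$ contains exactly one $x_i$, so $D_\pgoth=\{d_j : j\ne i\}$ and $\langle D_\pgoth\rangle=\gcd(\{d_j:j\ne i\})\,\Integ=(L/L_i)\Integ$, and every index occurs. Therefore $\bG(B)=\bigcap_{i=1}^n(L/L_i)\Integ=\bigl(\prod_{i=1}^n L/L_i\bigr)\Integ$ (last equality by (a)), i.e.\ $\bar e(B)=\prod_i L/L_i$. Part (c) is then immediate: $B$ is saturated in codimension $1$ iff $\bar e(B)=e(B)=1$ iff $L/L_i=1$ for all $i$ iff $S(a_1,\dots,a_n)=\emptyset$ iff $\cotype(a_1,\dots,a_n)=0$.

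\emph{Part (d).} Write $R=\bk[X_1,\dots,X_n]$ graded by $\deg X_i=d_i$, so $B=R/(F)$ with $F=\sum_i X_i^{a_i}$, and set $e_i=\gcd(d,L/L_i)$. Since $R\twoheadrightarrow B$ is degree-preserving, $B^{(d)}\cong R^{(d)}/(FR\cap R^{(d)})$. The crux is number-theoretic: because $d\mid\prod_i L/L_i$ and the $L/L_i$ are pairwise coprime, every prime $p\mid d$ divides exactly one $L/L_{i(p)}$, with $v_p(d)\le v_p(L/L_{i(p)})$; a short check then gives $v_p(d_{i(p)})=0$ and $p^{v_p(d)}\mid d_j$ for all $j\ne i(p)$, so that the condition $d\mid\sum_j c_jd_j$ on an exponent vector $(c_j)$ is equivalent to $e_i\mid c_i$ for all $i$. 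Consequently $R^{(d)}=\bk[X_1^{e_1},\dots,X_n^{e_n}]$ is a polynomial ring in the variables $Y_i:=X_i^{e_i}$ (with $\deg Y_i=e_id_i$). Next, the prime-by-prime identity $a_i=\gcd(a_i,L_i)\,(L/L_i)$ gives $e_i\mid L/L_i\mid a_i$, so $F=\sum_i Y_i^{a_i/e_i}\in R^{(d)}$, and with $a_i':=\gcd(a_i,L_i)\,\frac{L/L_i}{\gcd(d,L/L_i)}$ as in Notation \ref{iu7654wd3fg56hxr39ik}(5) one has $a_i/e_i=a_i'$. By Lemma \ref{98767gh11dmdffkll0ahcchvr0vfesh8263543794663cnir}(a) applied to the ideal $FR^{(d)}$ of $R^{(d)}$, $FR\cap R^{(d)}=FR^{(d)}$, whence $B^{(d)}\cong\bk[Y_1,\dots,Y_n]/(\sum_i Y_i^{a_i'})$ as rings. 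Finally $a_i'\,(e_id_i)=a_id_i=L$ for every $i$, so the inherited grading $\deg Y_i=e_id_i$ is $L/\lcm(a_1',\dots,a_n')$ times the standard Pham-Brieskorn grading on $B_{\bk;a_1',\dots,a_n'}$; rescaling a grading does not change which elements are homogeneous, so $B^{(d)}\isomdot B_{\bk;a_1',\dots,a_n'}$.

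\emph{Main obstacle.} The one genuinely delicate step is the number-theoretic decoupling in part (d): recognizing that the hypothesis $d\mid\prod_i L/L_i$ is exactly what forces the divisibility condition $d\mid\sum_jc_jd_j$ to split into the separate conditions $e_i\mid c_i$. Without that hypothesis $R^{(d)}$ need not be a polynomial ring and $B^{(d)}$ need not be a Pham-Brieskorn ring, so this is where the content lies; the remaining steps are routine given the cited lemmas.
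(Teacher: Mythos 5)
Your proof is correct and follows the same overall strategy as the paper's, so I will just record the local variations. For (b), you compute $\bG(B)$ via Proposition~\ref{89fDCN584uKHJu7u59034uf}(d) applied to $S=\{x_1,\dots,x_n\}$ and then pass to $\bar e(B)$ through Lemma~\ref{983bvkslwlia293tfg62zgbnm3kry}; the crucial step, that each $\pgoth\in P$ contains exactly one $x_i$, you justify by the dimension count $\haut(x_i,x_j)=2$. The paper instead computes $\bar e(B)$ directly from its definition as $\lcm\{e(B/\pgoth):\pgoth\in Z\}$, using the same observation about the $x_i$'s but for height-$1$ \emph{homogeneous} primes; the two routes are reconciled by Lemma~\ref{0Edlv2y6ctwbeofyvrbqleuo245612nxr7m54}(b), which shows every relevant height-$1$ prime is automatically homogeneous. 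For (d), the content is identical --- your $e_i$ is the paper's $\alpha_i=\gcd(d,L/L_i)$, and both proofs reduce to $B^{(d)}=\bk[x_1^{e_1},\dots,x_n^{e_n}]$ --- but you verify the key divisibility decoupling ($d\mid\sum_jc_jd_j\Leftrightarrow e_i\mid c_i$ for all $i$) prime by prime, whereas the paper derives it from the chain $d\mid\bar e(B)\mid d_i\hat d_i$ together with $\gcd(d_i,\alpha_i)=1$. You also work in the polynomial ring $R$ and transfer to $B$ via Lemma~\ref{98767gh11dmdffkll0ahcchvr0vfesh8263543794663cnir}(a), while the paper argues directly in $B=\bk[x_1,\dots,x_n]$; and you are more explicit about the degree rescaling factor $L/\lcm(a_1',\dots,a_n')$ in the final $\isomdot$ step, which the paper leaves implicit. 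No gaps.
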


\begin{proof}
Let $B = B_{\bk;a_1,\dots,a_n}$.
For each $i \in \{1,\dots,n\}$, $x_i \neq 0$ is homogeneous of degree $d_i$;
so $e(B) = \gcd(d_1,\dots,d_n) = \gcd( \frac{L}{a_1}, \dots, \frac{L}{a_n} ) = \frac{L}{ \lcm(a_1,\dots,a_n) } = 1$.
Define 
$$
\hat d_i = \gcd\setspec{ d_j }{ j \in J_i } \quad \text{for each $i \in \{1,\dots,n\}$.}
$$
Then $\hat d_1 = \gcd(d_2,\dots,d_n) \linebreak[3]= \gcd( \frac{L}{a_2}, \dots, \frac{L}{a_n} ) = \frac{L}{ \lcm(a_2,\dots,a_n) } = \frac{L}{L_1}$,
and by the same argument we get
$$
\textstyle
\hat d_i = \frac{L}{L_i} \quad \text{for all $i \in \{1,\dots n\}$.}
$$
If $i \neq j$ then $\gcd( L/L_i , L/L_j ) = \gcd(\hat d_i , \hat d_j ) = \gcd(d_1,\dots,d_n) = 1$, which proves (a).

(b)  We already proved that $e(B) = 1$.
If $Z$ denotes the set of height $1$ homogeneous prime ideals of $B$ then:
\begin{itemize}

\item for each $i \in \{1,\dots,n\}$, some $\pgoth \in Z$ satisfies $x_i \in \pgoth$;
\item for each $\pgoth \in Z$, at most one $i \in \{1,\dots,n\}$ satisfies $x_i \in \pgoth$, 
and if  $x_i \in \pgoth$ then $e(B/\pgoth) = \gcd\setspec{d_j}{j \in J_i} = \hat d_i = L/L_i$.

\end{itemize}
It follows that $\setspec{ e(B/\pgoth ) }{ \pgoth \in Z } = \{ 1,  \frac{L}{L_1}, \dots, \frac{L}{L_n} \}$,
so $\bar e(B) = \lcm\big( {\textstyle \frac{L}{L_1}, \dots, \frac{L}{L_n} } \big) = \prod_{i=1}^n \frac{L}{L_i}$, the last equality by (a). 
This proves (b), and (c) follows from (b).

(d)  Let $d$ be a divisor of $\bar e(B) = \prod_{i=1}^n \frac{L}{L_i}$.
Define $\alpha_i = \gcd(d,\hat d_i)$ for each $i \in \{1,\dots,n\}$.
Let us prove that
\begin{equation}  \label {B09876543wexcvbjk0oijhnmlpAoiuy654321qws}
\text{for every $i \in \{1,\dots,n\}$,} \qquad \gcd(d_i,\alpha_i)=1, \quad d \mid d_i \alpha_i  \quad \text{and} \quad  a_i = a_i' \alpha_i.
\end{equation}
Indeed, let $i \in \{1,\dots,n\}$.
Since  $\alpha_i \mid \hat d_i$ and $\gcd(d_i,\hat d_i) = \gcd(d_1,\dots,d_n) = 1$, the first part of \eqref{B09876543wexcvbjk0oijhnmlpAoiuy654321qws} is clear.
To prove the second part, we first show that
\begin{equation}  \label {uhiw37ug3674g8239re0idk}
\textstyle \text{for all $j \in \{1,\dots,n\}$,\ \ $\frac{L}{L_j}$ divides $d_i \hat d_i$.}
\end{equation}
To see this, we note that $\frac{ d_i \hat d_i }{(L/L_j)} = \frac{ (L/a_i) (L/L_i) }{(L/L_j)} =  \frac{ L_j L }{a_i L_i} \in \Integ$
because if $i=j$ then $a_i \mid L$ and $L_i \mid L_j$,
and if $i \neq j$ then $a_i \mid L_j$ and $L_i \mid L$. This proves \eqref{uhiw37ug3674g8239re0idk}.
By (a), (b) and \eqref{uhiw37ug3674g8239re0idk}, it follows that $\bar e(B) = \prod_{j=1}^n \frac{L}{L_j}$ divides $d_i \hat d_i$.
Since $d \mid \bar e(B)$, we obtain $d \mid d_i \hat d_i$, so $d$ divides $\gcd(d_i d, d_i \hat d_i) = d_i \gcd(d,\hat d_i) = d_i \alpha_i$,
which proves the second part of \eqref{B09876543wexcvbjk0oijhnmlpAoiuy654321qws}.
For the third part, note that $a_i' \alpha_i = \gcd( a_i, L_i ) \,  {\textstyle \left[ \frac{L/L_i}{ \gcd( d, L/L_i) } \right] } \, \gcd(d, \hat d_i )$
and $\hat d_i = \frac{L}{L_i}$, so $a_i' \alpha_i = \gcd( a_i, L_i ) (L/L_i)$.
On the other hand, 
$\frac{ a_i L_i }{ \gcd(a_i,L_i) } = \lcm(a_i,L_i) = L$, so $a_i = \gcd( a_i, L_i ) (L/L_i) = a_i'\alpha_i$,
proving \eqref{B09876543wexcvbjk0oijhnmlpAoiuy654321qws}.

Consider a monomial $m = x_1^{\nu_1} \cdots x_n^{\nu_n}$ ($\nu_1,\dots,\nu_n \in \Nat$) such that $d$ divides $\deg(m)= d_1 \nu_1 + \cdots + d_n \nu_n$.
If $i \in \{1,\dots,n\}$ then  $\alpha_i \mid d_i \nu_i$ (because $\alpha_i \mid d \mid \deg(m)$ and $\alpha_i \mid \hat d_i$),
so $\alpha_i \mid \nu_i$ by \eqref{B09876543wexcvbjk0oijhnmlpAoiuy654321qws}. This shows that $m \in  \bk[x_1^{ \alpha_1 }, \dots, x_n^{ \alpha_n } ]$
and hence that $B^{(d)} \subseteq \bk[x_1^{ \alpha_1 }, \dots, x_n^{ \alpha_n } ]$.
By \eqref{B09876543wexcvbjk0oijhnmlpAoiuy654321qws}, we have $d \mid d_i \alpha_i$ and hence $x_i^{\alpha_i} \in B^{(d)}$ for all $i$,
so $B^{(d)} = \bk[x_1^{ \alpha_1 }, \dots, x_n^{ \alpha_n } ]$.
Observing that $( x_1^{ \alpha_1 } )^{a_1'} + \cdots + (x_n^{ \alpha_n })^{a_n'} = 0$ (again by \eqref{B09876543wexcvbjk0oijhnmlpAoiuy654321qws}),
one easily obtains that $\bk[x_1^{ \alpha_1 }, \dots, x_n^{ \alpha_n } ] \isomdot B_{\bk; a_1', \dots, a_n'}$.
This proves (d). 
\end{proof}

The aim of this section is to describe $M(B)$ for as many Pham-Brieskorn rings $B$ as we can.
(Recall from Cor.\ \ref{p98qy386e523brxhbfc6ghvc63x2345618q30}(a) that if we know $M(B)$ then we also know $\NR(B)$.)
We already know that $M(B_{\Comp; a_1, \dots, a_n}) = \{1\}$ whenever $( a_1, \dots, a_n ) \notin \Gamma_n$,
because in that case \eqref{7y2ew6t2ewoqpoqbgfccxv27} implies that $B_{\Comp; a_1, \dots, a_n}$ is non-rigid,
so $M(B_{\Comp; a_1, \dots, a_n}) = \{1\}$ by \eqref{oijn23w9e0cnbv2sxcxzq5at}.
So we may restrict our investigation to the case where $(a_1,\dots,a_n) \in \Gamma_n$.

\begin{theorem}  \label {eion98h38dhb2d203ojfbdt7}
Let $n\ge3$ and $(a_1,\dots,a_n) \in \Gamma_n$.
Let $f : D \to (\Nat\setminus\{0\})^n$ be the map determined by $(a_1,\dots,a_n)$ as in Notation \ref{iu7654wd3fg56hxr39ik}.
Assume that one of the following holds:
\begin{enumerati}  
\item $\Pscr(n)$ is true,
\item $B_{\Comp;f(d)}$ is rigid for each $d \in D$ such that $f(d) \in \Gamma_n$,
\end{enumerati}
and observe that {\rm (i)} implies {\rm (ii)}.
For each $\nu =1,2$, let $I_\nu$ be the set of all $i \in \{1,\dots,n\}$ satisfying $\gcd(a_i,L_i) = \nu$.
Then at most one element $i_0 \in I_2$ is such that $a_{i_0}=2$, and 
$$
M( B_{\Comp; a_1, \dots, a_n} ) = \begin{cases}
\setspec{ a_i }{ i \in I_1 } \cup \setspec{ \textstyle \frac{a_ia_j}4 }{ i,j \in I_2 \text{ and } i \neq j} & \text{if $i_0$ does not exist,} \\[1.5mm]
\setspec{ a_i }{ i \in I_1 } \cup \setspec{ \textstyle \frac{a_i}2}{ i \in I_2 \setminus \{i_0\} } & \text{if $i_0$ exists.} 
\end{cases}
$$
\end{theorem}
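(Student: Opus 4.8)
The plan is to convert the statement into a finite combinatorial problem about divisors of $\bar e(B)$. Write $B = B_{\Comp;a_1,\dots,a_n}$, and for each $i$ put $e_i = L/L_i$ and $g_i = \gcd(a_i,L_i)$, so that $a_i = g_i e_i$ and $I_\nu = \setspec{i}{g_i=\nu}$. By Lemma \ref{o8y7t626rf53deg23489jru3sgbref6} we have $e(B)=1$, $\bar e(B)=\prod_i e_i$, and $e_1,\dots,e_n$ are pairwise coprime; hence each positive divisor $d$ of $\bar e(B)$ is uniquely a product $d=\prod_i d_i$ with $d_i=\gcd(d,e_i)\mid e_i$, and $d\mid d'$ (for divisors $d,d'$ of $\bar e(B)$) iff $d_i\mid d'_i$ for all $i$. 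Unwinding the formula defining the map $f$ of Notation \ref{iu7654wd3fg56hxr39ik} gives, for such $d$, $f(d)=(a_1/d_1,\dots,a_n/d_n)$. The preliminary assertion that at most one $i_0\in I_2$ has $a_{i_0}=2$ is immediate, since $(a_1,\dots,a_n)\in\Gamma_n$ already forbids two indices $i$ with $a_i=2$.

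Next I would reduce to describing a set $N$ of divisors. Since $e(B)=1$, Lemma \ref{8h7gt23er5w67xxde4c2t3gb9jrdhfgbs8u}(a) makes the condition $d\odiv e(B)=d$ automatic, so Cor.\ \ref{p98qy386e523brxhbfc6ghvc63x2345618q30}(b) gives $M(B)\subseteq D$, where $D$ denotes the set of positive divisors of $\bar e(B)$. For $d\in D$, Lemma \ref{o8y7t626rf53deg23489jru3sgbref6}(d) gives $B^{(d)}\isomdot B_{\Comp;f(d)}$, so $d\in\NR(B)$ iff $B_{\Comp;f(d)}$ is non-rigid; by \eqref{7y2ew6t2ewoqpoqbgfccxv27} this happens whenever $f(d)\notin\Gamma_n$, while if $f(d)\in\Gamma_n$ then $B_{\Comp;f(d)}$ is rigid by hypothesis (ii) (and (i) implies (ii)). Hence $N:=\NR(B)\cap D=\setspec{d\in D}{f(d)\notin\Gamma_n}$. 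Combining $\NR(B)=\bigcup_{d\in M(B)}\Iscr_d$ (Cor.\ \ref{p98qy386e523brxhbfc6ghvc63x2345618q30}(a)) with $M(B)\subseteq D$, and noting that every proper divisor of an element of $D$ again lies in $D$, one obtains that $M(B)$ is precisely the set of $d\in N$ having no proper divisor in $N$.

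It then remains to identify these minimal divisors, working in the coordinate description. One has $f(d)\notin\Gamma_n$ iff some coordinate $a_i/d_i$ equals $1$, or at least two coordinates equal $2$; moreover $a_i/d_i=1$ forces $i\in I_1$ with $d_i=e_i$, and $a_i/d_i=2$ forces $i\in I_1\cup I_2$, with $d_i=e_i$ if $i\in I_2$ and $d_i=e_i/2$ (so $a_i=e_i$ is even) if $i\in I_1$. The crucial observation, which produces the dichotomy in the statement, is: \emph{if $i\in I_1$ and $a_i$ is even, then $L_i$ is odd} (because $\gcd(a_i,L_i)=1$), whence no $k\ne i$ has $2\mid a_k$; in particular $I_2=\emptyset$ and no $k\ne i$ has $a_k=2$. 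Using this, any $d\in N$ dominates (coordinatewise) one of the ``atomic'' divisors obtained by keeping only the coordinate(s) witnessing $f(d)\notin\Gamma_n$ and setting the rest equal to $1$; so the elements of $M(B)$ lie among these atomic divisors, namely $\tau_i:=e_i=a_i$ for $i\in I_1$, together with the divisors $\tau_{ij}$ arising from a pair of value-$2$ coordinates $i\ne j$, which by the crucial observation must both lie in $I_2$ (a value-$2$ coordinate in $I_1$ would force $I_2=\emptyset$ and leave no room for a second), so that $\tau_{ij}=e_ie_j=a_ia_j/4$.

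Finally I would check which of these atomic divisors are genuinely minimal. Each $\tau_i$ ($i\in I_1$) is minimal, because a strictly smaller divisor in $N$ would need either a coordinate equal to $1$ (impossible, as its only nontrivial coordinate is at $i$ and no $a_j$ equals $1$) or two coordinates equal to $2$, which is impossible by the crucial observation and by $(a_1,\dots,a_n)\in\Gamma_n$. For $\tau_{ij}$ with $i,j\in I_2$, $i\ne j$: it is minimal unless $i_0$ exists (recall $i_0\in I_2$) and $i_0\notin\{i,j\}$ --- in that case the divisor with $i$-coordinate $1$, $j$-coordinate $e_j$ and all others $1$ still lies in $N$ (it has value $2$ at $j$ and at $i_0$) and is a proper divisor of $\tau_{ij}$, of value $a_j/2$; and conversely $\tau_{i_0 j}$ (of value $e_{i_0}e_j=a_j/2$, for $j\in I_2\setminus\{i_0\}$) is itself minimal when $i_0$ exists. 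When $i_0$ does not exist --- so either no index has $a_i=2$, or the unique such index is in $I_1$ and then $I_2=\emptyset$ --- no domination occurs and every $\tau_{ij}$ ($i,j\in I_2$, $i\ne j$) survives; when $i_0$ exists, exactly the $\tau_{i_0 j}$ survive. This yields the two displayed formulas. This minimality bookkeeping is the only delicate part of the argument.
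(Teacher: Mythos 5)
Your proposal is correct and follows essentially the same route as the paper: both reduce via Cor.\ \ref{p98qy386e523brxhbfc6ghvc63x2345618q30} to divisors of $\bar e(B)$, use Lemma \ref{o8y7t626rf53deg23489jru3sgbref6}(d) and \eqref{7y2ew6t2ewoqpoqbgfccxv27} to characterize $\NR(B)\cap D$ as $\setspec{d\in D}{f(d)\notin\Gamma_n}$, and then analyze coordinates $a_i' = a_i/d_i$. The only real difference is the final step: the paper identifies candidate sets $U_1,U_2$, proves $M(B)\subseteq U_1\cup U_2\subseteq\NR(B)$, and concludes by the sandwich fact ``if $M(B)\subseteq P\subseteq\NR(B)$ and $P$ is primitive then $M(B)=P$,'' together with the observation that $U_1\cup U_2$ (resp.\ $U_1\cup W$) is primitive because the $L/L_i$ are pairwise coprime; you instead characterize $M(B)$ directly as the set of $d\in N$ with no proper divisor in $N$ and then explicitly determine the divisibility-minimal atomics. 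Both finishes are valid; the paper's primitivity argument is a bit slicker and avoids the case-by-case minimality bookkeeping, while your version is more hands-on and makes the coordinatewise structure (including the role of the ``crucial observation'' that a value-$2$ coordinate in $I_1$ forces $I_2=\emptyset$) entirely explicit.
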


\begin{proof}
Let $B = B_{\Comp; a_1, \dots, a_n}$.
Since (i) or (ii) holds and (i) implies (ii), (ii) holds. 
Cor.\ \ref{p98qy386e523brxhbfc6ghvc63x2345618q30} implies that $M(B) \subseteq D$,
and Lemma \ref{o8y7t626rf53deg23489jru3sgbref6}(d) implies that if $d \in D$ then
$B^{(d)} \isomdot B_{\Comp; f(d)}$ and consequently $B^{(d)}$ and $B_{\Comp; f(d)}$ have the same rigidity status.
By (ii) and \eqref{7y2ew6t2ewoqpoqbgfccxv27}, $B_{\Comp; f(d)}$ is rigid if and only if $f(d) \in \Gamma_n$, so:
\begin{equation}  \label {9876543erfcnbvc9x87654}
\text{for each $d \in D$, \quad $d \in \NR(B)$ if and only if  $f(d) \notin \Gamma_n$.}
\end{equation}
Since  $f(1)=(a_1,\dots,a_n) \in \Gamma_n$, \eqref{9876543erfcnbvc9x87654} implies that $1 \notin \NR(B)$, so $1 \notin M(B)$.

Also note that $L = \lcm(a_i,L_i) = a_i L_i / \gcd(a_i,L_i)$ for each $i \in \{1,\dots,n\}$, so
\begin{equation}  \label {7236yhwebdcv5172wu}
L/L_i = a_i / \gcd(a_i,L_i) \quad \text{for all $i \in \{1,\dots,n\}$.}
\end{equation}
Define the sets
$$
U_1 = \setspec{ a_i }{ i \in I_1 } \quad \text{and} \quad  U_2 = \setspec{ \textstyle \frac{a_ia_j}4 }{ i,j \in I_2 \text{ and } i \neq j} .
$$

Suppose that $i \in I_1$. Then \eqref{7236yhwebdcv5172wu} gives $L/L_i = a_i$,
so $a_i \in D$ and the tuple $f(a_i) = (a_1', \dots, a_n')$ satisfies $a_i'=1$ by \eqref{6gvhb3ag2n60b6tm9rn83h76b54};
so $f(a_i) \notin \Gamma_n$ and hence $a_i \in \NR(B)$ by \eqref{9876543erfcnbvc9x87654}. Thus,
\begin{equation}  \label {987432sdxcpo09287h6hxcve}
U_1 \subseteq \NR(B) .
\end{equation}

Suppose that $1,2 \in I_2$.
For each $i \in \{1,2\}$ we have $\frac{a_i}2 = \frac L{L_i}$ by \eqref{7236yhwebdcv5172wu}, so $\frac{a_1 a_2}{4} = \frac L{L_1}  \frac L{L_2} \in D$
and, by \eqref{6gvhb3ag2n60b6tm9rn83h76b54}, the tuple $f(\frac{a_1 a_2}{4}) = (a_1', \dots, a_n')$ satisfies $a_1' = 2 = a_2'$.
So $f(\frac{a_1 a_2}{4}) \notin \Gamma_n$ and hence $\frac{a_1 a_2}{4} \in \NR(B)$ by \eqref{9876543erfcnbvc9x87654}.
More generally, the same argument shows that
\begin{equation}  \label {ZoiugvXEWXzfvWbYHuyt9nJd8r763bncnAt7}
U_2 \subseteq \NR(B) .
\end{equation}

Let $d \in M(B)$; since $M(B) \subseteq D$, $f(d) = (a_1', \dots, a_n')$ is defined and (by \eqref{9876543erfcnbvc9x87654}) does not belong to $\Gamma_n$;
so one of the following holds:
\begin{enumerata}
\item $a_i' = 1$ for some $i \in \{1,\dots,n\}$,
\item $a_i' = 2 = a_j'$ for some distinct $i,j \in \{1,\dots,n\}$.
\end{enumerata}

Suppose that (a) holds.
Then \eqref{6gvhb3ag2n60b6tm9rn83h76b54} implies that $\gcd(a_i,L_i)=1$ (so $i \in I_1$ and $a_i \in U_1$)
and $\frac L{L_i} = \gcd( d , \frac L{L_i} )$, so $\frac L{L_i} \mid d$.
We have $\frac L{L_i} = a_i$ by \eqref{7236yhwebdcv5172wu}, so $a_i \mid d$ and hence $d \preceq a_i$.
We have $a_i \in \NR(B)$ by \eqref{987432sdxcpo09287h6hxcve} and $d$ is a maximal element of $(\NR(B),\preceq)$, so $d=a_i \in U_1$.
So if (a) holds then $d \in U_1$.

Suppose that (b) holds; here, we might as well assume that $a_1' = 2 = a_2'$.
Since $a_i' \mid a_i$ for all $i$, we see that $a_1$ and $a_2$ are even.
Since $a_1 \mid L_2$ and $a_2 \mid L_1$, both $L_1,L_2$ are even, so $\gcd(a_i,L_i)$ is even for $i=1,2$.
By \eqref{6gvhb3ag2n60b6tm9rn83h76b54}, we obtain that for each $i =1,2$, $\gcd(a_i,L_i) = 2$  and $\frac L{L_i} = \gcd( d , \frac L{L_i} )$, and hence $\frac L{L_i} \mid d$.
It also follows that $1,2 \in I_2$, so $\frac{a_1 a_2}4 \in U_2$ and $\frac{a_1 a_2}4 \in \NR(B)$ by \eqref{ZoiugvXEWXzfvWbYHuyt9nJd8r763bncnAt7}.
Since $\gcd( \frac L{L_1} , \frac L{L_2} ) = 1$ and (by \eqref{7236yhwebdcv5172wu}) $\frac L{L_i} = \frac{a_i}2$ for $i=1,2$,
we have $\frac{a_1 a_2}4 \mid d$ and hence $M(B) \ni d \preceq \frac{a_1 a_2}4 \in \NR(B)$, which implies that $d = \frac{a_1 a_2}4 \in U_2$.
So if (b) holds then $d \in U_2$.
This proves the first inclusion in:
$$
M(B) \subseteq U_1 \cup U_2 \subseteq \NR(B) 
$$
where the second inclusion follows from \eqref{987432sdxcpo09287h6hxcve} and \eqref{ZoiugvXEWXzfvWbYHuyt9nJd8r763bncnAt7}.
If we set $t_i = a_i$ for $i \in I_1$ and $t_i = \frac{a_i}2$ for $i \in I_2$, then $t_i = \frac{L}{L_i}$ for all $i \in I_1 \cup I_2$, so
\begin{equation} \label {654edvbnmki87654esdcvbnhy543}
\text{the family $\big( t_i \big)_{i \in I_1 \cup I_2}$ is pairwise relatively prime}
\end{equation} 
by Lemma \ref{o8y7t626rf53deg23489jru3sgbref6}(a). 
Clearly,
\begin{equation} \label {bvcde45676543wsxcvbhuy6567ujnbxsw2}
U_1 = \setspec{ t_i }{ i \in I_1 }  \quad \text{and} \quad  U_2 = \setspec{ t_j t_k }{ j,k \in I_2 \text{ and } j \neq k} .
\end{equation}
Also note that $t_i>1$ for all $i \in I_1$, and that for $i \in I_2$ we have $t_i = 1$ $\Leftrightarrow$ $i=i_0$ (where $i_0$ is defined in the statement of the Proposition).

We will now use the following (easily verified) fact: {\it if $M(B) \subseteq P \subseteq \NR(B)$ and $P$ is a primitive set, then $M(B)=P$.}

Assume that $i_0$ does not exist. Then $t_i>1$ for all $i \in I_1\cup I_2$, so \eqref{654edvbnmki87654esdcvbnhy543} and \eqref{bvcde45676543wsxcvbhuy6567ujnbxsw2}
imply that $U_1 \cup U_2$ is primitive. Since $M(B) \subseteq U_1 \cup U_2 \subseteq \NR(B)$, we have $M(B)=U_1 \cup U_2$ in this case, as desired.

Assume that $i_0$ exists. Then $i_0 \in I_2$ and $t_{i_0}=1$.
For each $i \in I_2 \setminus \{i_0\}$ we have $t_i = t_{i_0}t_i \in U_2$; so $W \subseteq U_2 \subseteq \NR(B)$, where we define $W = \setspec{ t_i }{ i \in I_2 \setminus \{i_0\} }$.
Moreover, each element of $U_2 \setminus W$ is a product of two elements of $W$.
This implies that $M(B) \cap (U_2 \setminus W) = \emptyset$, because no element of $M(B)$ has a proper divisor that belongs to $\NR(B)$.
So $M(B) \subseteq U_1 \cup W \subseteq \NR(B)$.
Since the elements of $U_1 \cup W$ $ = \setspec{ t_i }{ i \in I_1 \cup I_2 \setminus \{i_0\} }$ are pairwise relatively prime and strictly larger than $1$,
$U_1 \cup W$ is primitive. So $M(B) = U_1 \cup W$, as desired.
\end{proof}

Corollaries \ref{8bcnc4n57xdklidsocmo77456} and \ref{C9B83uhAjh309fvn928Ia} are applications of the above Theorem.
In each case, we use Lemma \ref{4uygrfhejrFDfhgsdTHbfd68827djGD763} to verify that assumption (ii) of Thm \ref{eion98h38dhb2d203ojfbdt7} is satisfied.

\begin{corollary}  \label {8bcnc4n57xdklidsocmo77456}
Let $m,n \in \Nat$ be such that $n\ge3$ and $0 \le m \le n-2$.
Let $\ell_1, \dots, \ell_n \ge2$ be pairwise relatively prime integers such that $\ell_1, \dots, \ell_m$ are odd.
Let $B = B_{ \Comp; \ell_1, \dots, \ell_m, 2\ell_{m+1}, \dots, 2\ell_n }$.
Then 
$$
M(B) = \{ \ell_1, \dots, \ell_m \} \cup \setspec{ \ell_j \ell_k }{ m+1 \le j < k \le n } .
$$
\end{corollary}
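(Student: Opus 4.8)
The plan is to apply Theorem \ref{eion98h38dhb2d203ojfbdt7} to the tuple $(a_1,\dots,a_n)=(\ell_1,\dots,\ell_m,2\ell_{m+1},\dots,2\ell_n)$, so that $B=B_{\Comp;a_1,\dots,a_n}$. The first step is to check that $(a_1,\dots,a_n)\in\Gamma_n$: every $a_i\ge2$, and $a_i=2$ would force $\ell_i=2$ with $i\le m$, contradicting the hypothesis that $\ell_1,\dots,\ell_m$ are odd; hence no $a_i$ equals $2$, so in particular at most one does, and $(a_1,\dots,a_n)\in\Gamma_n$.

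Next I would compute the data that enters the theorem. Using that $\ell_1,\dots,\ell_n$ are pairwise relatively prime and that at most one of $\ell_{m+1},\dots,\ell_n$ is even (so, since $|\{m+1,\dots,n\}|\ge2$, one always has $v_2(L_i)\ge1$ for $i>m$), one obtains $\gcd(a_i,L_i)=1$ for $i\le m$ and $\gcd(a_i,L_i)=2$ for $i>m$, and in both cases $L/L_i=\ell_i$. Consequently the sets of Theorem \ref{eion98h38dhb2d203ojfbdt7} are $I_1=\{1,\dots,m\}$ and $I_2=\{m+1,\dots,n\}$, and since $a_i=2\ell_i\ge4$ for every $i\in I_2$, the index $i_0$ does not exist. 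Also $D$ is the set of divisors of $\prod_i(L/L_i)=\ell_1\cdots\ell_n$, and for $d\in D$ formula \eqref{6gvhb3ag2n60b6tm9rn83h76b54} gives $f(d)=(a_1',\dots,a_n')$ with $a_i'=\ell_i/\gcd(d,\ell_i)$ for $i\le m$ and $a_i'=2\ell_i/\gcd(d,\ell_i)$ for $i>m$.

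The main step is to verify hypothesis (ii) of Theorem \ref{eion98h38dhb2d203ojfbdt7}: that $B_{\Comp;f(d)}$ is rigid whenever $d\in D$ satisfies $f(d)\in\Gamma_n$. I would do this via Lemma \ref{4uygrfhejrFDfhgsdTHbfd68827djGD763}, by showing $\cotype(f(d))\ge n-2$ (in fact $\ge n-1$). Since the odd prime divisors of $a_i'$ all divide $\ell_i$ and hence are coprime to the other $a_j'$, an index $i$ can lie outside $S(f(d))$ only if $a_i'$ is a power of $2$ and $v_2(a_i')\le\max_{j\ne i}v_2(a_j')$. For $i\le m$, $a_i'$ is odd and $\ge2$ (because $f(d)\in\Gamma_n$), hence not a power of $2$, so $i\in S(f(d))$. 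A short case analysis for $i>m$ — separating whether $\ell_i$ is odd or is the single possible even one among $\ell_{m+1},\dots,\ell_n$, and using $|\{m+1,\dots,n\}|\ge2$ to control $v_2(L_i)$ — shows that the two conditions above force $a_i'=2$. Since $f(d)\in\Gamma_n$ allows at most one $a_i'$ equal to $2$, we get $|\{1,\dots,n\}\setminus S(f(d))|\le1$, i.e.\ $\cotype(f(d))\ge n-1\ge n-2$; Lemma \ref{4uygrfhejrFDfhgsdTHbfd68827djGD763} then yields that $B_{\Comp;f(d)}$ is rigid, establishing (ii).

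With (ii) in hand, Theorem \ref{eion98h38dhb2d203ojfbdt7} applies in the case ``$i_0$ does not exist'', giving $M(B)=\setspec{a_i}{i\in I_1}\cup\setspec{a_ia_j/4}{i,j\in I_2,\ i\ne j}$. Substituting $a_i=\ell_i$ for $i\le m$ and $a_i=2\ell_i$ for $i>m$ turns this into $\{\ell_1,\dots,\ell_m\}\cup\setspec{\ell_j\ell_k}{m+1\le j<k\le n}$, as claimed. I expect the only delicate point to be the cotype estimate of the previous paragraph, because the hypotheses leave open the possibility that one of $\ell_{m+1},\dots,\ell_n$ is even, which must be handled separately when computing $v_2(a_i')$ and $v_2(L_i)$; everything else is routine bookkeeping with the definitions in \ref{iu7654wd3fg56hxr39ik} and \ref{8r43qasd321qsxcvgttyhnemkrifv90olkmn}.
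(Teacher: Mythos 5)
Your proposal is correct and follows essentially the same route as the paper: apply Thm \ref{eion98h38dhb2d203ojfbdt7} with $\ba=(\ell_1,\dots,\ell_m,2\ell_{m+1},\dots,2\ell_n)$, compute $L/L_i=\ell_i$ and $\gcd(a_i,L_i)\in\{1,2\}$, verify hypothesis (ii) by bounding $\cotype(f(d))$ and invoking Lemma \ref{4uygrfhejrFDfhgsdTHbfd68827djGD763}, then read off $I_1,I_2$ and the nonexistence of $i_0$. The one place where you do more than the paper is the cotype estimate: the paper simply notes that $\ell_1',\dots,\ell_n'$ are pairwise coprime and asserts that $f(d)\in\Gamma_n$ then forces $\cotype(f(d))\ge n-1$, whereas you spell out the 2-adic case analysis (an index $i$ escapes $S(f(d))$ only if $a_i'$ is a $2$-power with $v_2(a_i')\le\max_{j\ne i}v_2(a_j')$, which, using $n-m\ge2$ and the coprimality, forces $a_i'=2$) — including the subcase where one of $\ell_{m+1},\dots,\ell_n$ is even. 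That extra care is warranted and confirms the paper's terser claim; everything else matches.
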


\begin{proof}
Write $B = B_{\Comp; a_1, \dots, a_n}$ where $(a_1, \dots, a_n) = (\ell_1, \dots, \ell_m, 2\ell_{m+1}, \dots, 2\ell_n )$.
We have $L = 2 \prod_{i=1}^n \ell_i$ and the following hold for all $i \in \{1,\dots,n\}$:
$$
\textstyle
\text{\rien{$L_i = 2 \prod_{ j \in J_i } \ell_j$ , \quad }$L/L_i = \ell_i$ \quad and \quad
$\gcd(a_i,L_i) = \begin{cases} 1, & \text{if $i \le m$,} \\ 2, & \text{if $i > m$.} \end{cases}$}
$$
We claim that $(a_1,\dots,a_n)$ satisfies assumption (ii) of Thm \ref{eion98h38dhb2d203ojfbdt7}.
Indeed, it is easy to see that if $d \in D$ then $f(d) = (\ell_1', \dots, \ell_m', 2\ell_{m+1}', \dots, 2\ell_n')$
where $\ell_i' \mid \ell_i$ for all $i=1,\dots,n$. It follows that $\ell_1', \dots, \ell_n'$ are pairwise relatively prime.
So, if $f(d) \in \Gamma_n$ then $\cotype f(d) \ge n-1$, so $B_{\Comp;f(d)}$ is rigid by Lemma \ref{4uygrfhejrFDfhgsdTHbfd68827djGD763}.
Thus, assumption (ii) of Thm \ref{eion98h38dhb2d203ojfbdt7} is satisfied.

In the notation of Thm \ref{eion98h38dhb2d203ojfbdt7}, we have $I_1 = \{1,\dots,m\}$, $I_2 = \{m+1, \dots, n\}$ and $i_0$ does not exist,
so $M(B) = \{ \ell_1, \dots, \ell_m \} \cup \setspec{ \ell_j \ell_k }{ m+1 \le j < k \le n }$ by that result.
\end{proof}

\begin{corollary}  \label {C9B83uhAjh309fvn928Ia}
Let $Y$ be a finite primitive set whose elements are pairwise relatively prime. 
Then there exists a Pham-Brieskorn ring $B = B_{\Comp;a_1, \dots, a_n}$ satisfying $M(B) = Y$.
\end{corollary}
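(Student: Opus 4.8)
The plan is to treat $Y$ in three cases and, in the main one, to exhibit an explicit Pham--Brieskorn ring and read off its $M$-set from Theorem~\ref{eion98h38dhb2d203ojfbdt7}. First, the degenerate cases. If $Y=\emptyset$, take $B=B_{\Comp;6,10,15}$: one checks $(6,10,15)\in\Gamma_3$ and $\cotype(6,10,15)=0$ (each $L_i$ equals $L=30$), so $B$ is saturated in codimension~$1$ by Lemma~\ref{o8y7t626rf53deg23489jru3sgbref6}(c), while $B$ is rigid by $\Pscr(3)$ (Lemma~4 of \cite{Kali-Zaid_2000}); hence $\NR(B)=\emptyset$ by Cor.~\ref{1D0x2mv4yhhtfukCs1unc3t46bisgf9f2}, so $M(B)=\emptyset=Y$. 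If $Y=\{1\}$, take $B=B_{\Comp;1,1,1}$: since $(1,1,1)\notin\Gamma_3$, $B$ is non-rigid by \eqref{7y2ew6t2ewoqpoqbgfccxv27}, so $M(B)=\{1\}$ by \eqref{oijn23w9e0cnbv2sxcxzq5at}. In every remaining case primitivity forces all elements of $Y$ to be $\ge 2$, so write $Y=\{c_1,\dots,c_k\}$ with $k\ge 1$, the $c_i$ pairwise relatively prime and $\ge 2$; note that at most one $c_i$ is even.

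For the main case, choose an odd prime $p$ dividing none of the $c_i$ (possible, since only finitely many primes divide $c_1\cdots c_k$), put $n=k+2$, and set $B=B_{\Comp;a_1,\dots,a_n}$ with $a_i=c_i$ for $1\le i\le k$ and $a_{k+1}=a_{k+2}=p$. Then $(a_1,\dots,a_n)\in\Gamma_n$ (all entries are $\ge 2$, $p\ne 2$, and at most one $a_i$ equals $2$). With $L=c_1\cdots c_k\,p$ one computes $L_i=L/c_i$ for $i\le k$ and $L_{k+1}=L_{k+2}=L$, hence $\gcd(a_i,L_i)=1$ for $i\le k$ while $\gcd(a_{k+1},L_{k+1})=\gcd(a_{k+2},L_{k+2})=p$; formula \eqref{6gvhb3ag2n60b6tm9rn83h76b54} then gives $f(d)=\big(c_1/\gcd(d,c_1),\dots,c_k/\gcd(d,c_k),\,p,\,p\big)$ for each $d\in D$. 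The one step requiring care is checking hypothesis~(ii) of Theorem~\ref{eion98h38dhb2d203ojfbdt7}: if $f(d)\in\Gamma_n$ then every $c_i/\gcd(d,c_i)\ge 2$, and since these quotients are pairwise relatively prime and each is coprime to $p$, deleting any of the first $k$ coordinates of $f(d)$ strictly decreases the $\lcm$ whereas deleting either copy of $p$ does not; thus $\cotype(f(d))=k=n-2$, so $B_{\Comp;f(d)}$ is rigid by Lemma~\ref{4uygrfhejrFDfhgsdTHbfd68827djGD763}. Theorem~\ref{eion98h38dhb2d203ojfbdt7} now applies with $I_1=\{1,\dots,k\}$ and $I_2=\emptyset$, so no $i_0$ occurs and we land in the first branch of its conclusion, giving $M(B)=\{a_i: i\in I_1\}=\{c_1,\dots,c_k\}=Y$.

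The main obstacle is arranging the ``padding'' coordinates so that they do not pollute $M(B)$. A single extra coordinate $p$ would have $\gcd(p,L_{k+1})=1$, hence would lie in $I_1$ and add the unwanted element $p$ to $M(B)$; using two equal copies of $p$ forces $\gcd(a_{k+1},L_{k+1})=p\ge 3$, which removes both padding indices from $I_1\cup I_2$ altogether. Apart from this device (and the elementary observation that the $\Gamma_n$ condition forces $p$ to be odd), the proof is routine bookkeeping with $L$, the $L_i$, the map $f$, and $\cotype$, together with the single appeal to Lemma~\ref{4uygrfhejrFDfhgsdTHbfd68827djGD763} needed to verify hypothesis~(ii) of Theorem~\ref{eion98h38dhb2d203ojfbdt7}.
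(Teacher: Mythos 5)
Your proof is correct and follows essentially the same route as the paper's: handle $Y=\emptyset$ and $Y=\{1\}$ directly, and otherwise pad $Y$ with a repeated coprime value $c,c$ and apply Theorem~\ref{eion98h38dhb2d203ojfbdt7} after verifying hypothesis~(ii) via the cotype bound of Lemma~\ref{4uygrfhejrFDfhgsdTHbfd68827djGD763}. The only (cosmetic) differences are that the paper takes $B_{\Comp;3,3,3}$ for $Y=\emptyset$ and allows the padding value $c\ge3$ to be any integer coprime to the elements of $Y$, whereas you specialize to an odd prime; both choices work for exactly the reasons you give.
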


\begin{proof}
If $Y = \emptyset$ then let $B = B_{\Comp;3,3,3}$.
We have $\bar e(B)=1$ by Lemma \ref{o8y7t626rf53deg23489jru3sgbref6}(b), 
so Cor.\ \ref{p98qy386e523brxhbfc6ghvc63x2345618q30} implies that $M(B)$ is either $\emptyset$ or $\{1\}$.
By $\Pscr(3)$, $B$ is rigid, so  $1 \notin M(B)$ and hence $M(B) = \emptyset = Y$.

If $Y = \{1\}$ then let $B = B_{\Comp;1, 1, 1}$.  Since $B$ is non-rigid, $M(B) = \{1\} = Y$.

From now on, assume that $Y$ is neither $\emptyset$ nor $\{1\}$. Let $m = |Y| \ge 1$ and let $a_1, \dots, a_m$ be the 
distinct elements of $Y$. Then $a_1, \dots, a_m$ are pairwise relatively prime and $a_1, \dots, a_m \ge 2$.
Choose an integer $c\ge3$ such that  $a_1, \dots, a_m, c$ are pairwise relatively prime and let $B = B_{\Comp;a_1, \dots, a_m, c, c}$.

We claim that $(a_1,\dots,a_m,c,c)$ satisfies assumption (ii) of Thm \ref{eion98h38dhb2d203ojfbdt7}.
Indeed, it is easy to see that if $d \in D$ then $f(d) = (a_1',\dots,a_m',c,c)$ where $a_i' \mid a_i$ for all $i = 1,\dots,m$.
Note that $a_1', \dots, a_m', c$ are pairwise relatively prime.
If $f(d) \in \Gamma_{m+2}$ then $a_i'>1$ for all $i$, so $\cotype(a_1', \dots, a_m',c,c) = m$
and $B_{\Comp;a_1',\dots,a_m',c,c}$ is rigid by Lemma \ref{4uygrfhejrFDfhgsdTHbfd68827djGD763}.
Thus, assumption (ii) of Thm \ref{eion98h38dhb2d203ojfbdt7} is satisfied.

In the notation of Thm \ref{eion98h38dhb2d203ojfbdt7}, we have $I_1 = \{1, \dots, m\}$ and $I_2 = \emptyset$, so 
$M(B) =\{ a_1, \dots, a_m \} = Y$ by that result.
\end{proof}

We mentioned in Rem.\ \ref{7cer78543432qq32cr98} that
we don't know whether the set $\{6,10\}$ is realizable.
So it is interesting to note that $\{6\}$, $\{10\}$, $\{15\}$ and $\{6,10,15\}$ are realizable,
by Corollaries \ref{8bcnc4n57xdklidsocmo77456} and \ref{C9B83uhAjh309fvn928Ia}.

\section{Rigidity of $B_{(x)}$}
\label{sectionRigidityofBx}

The main result of this section is Thm \ref{7654cvhgfsd2d34fds23f42dfdghjk8l}. 
We begin with some preliminaries on ramification indices.

\begin{notation}
Let $X$ be an integral scheme that is normal and noetherian.
Then we may consider the group $\Div(X)$ of Weil divisors of $X$, and the group $\Div_\Rat(X) = \Rat \otimes_\Integ \Div(X)$ of $\Rat$-divisors of $X$.
If $f$ is a nonzero element of the function field $K(X)$ of $X$ then the divisor of $f$ is denoted $\div_X(f)$;
so  $\div_X : K(X)^* \to \Div(X)$ is a group homomorphism.
\end{notation}

\begin{nothing*} \label {Pkcnbvc9w3eidjojf0q9w}
(This is paragraph 5.4 in \cite{Chitayat-Daigle:cylindrical}, with minor edits.)
Let $B = \bigoplus_{i \in \Nat} B_i$ be an $\Nat$-graded noetherian normal domain such that the prime ideal $B_+ = \bigoplus_{i > 0} B_i$
has height at least $2$.
Let $X = \Spec B$ and $Y = \Proj B$, which are noetherian normal integral schemes.
We shall now define an injective $\Rat$-linear map $D \mapsto D^*$ from $\Div_\Rat(Y)$ to $\Div_\Rat(X)$.

Note that the function fields of $X$ and $Y$ satisfy $K(X) \supseteq K(Y)$.
Let $\YY$ be the set of homogeneous prime ideals of $B$ of height $1$.
Since $\haut(B_+)>1$, each element of $\YY$ is a point of $Y$; in fact we have $\YY = \setspec{ y \in Y }{ \dim\Oeul_{Y,y}=1 }$.
For each $\pgoth \in \YY$, $B_\pgoth \supset B_{(\pgoth)}$ is an extension of discrete valuation rings; let $e_\pgoth$ denote the ramification index of this extension.
Then $e_\pgoth \in \Nat\setminus \{0\}$.
If $v^Y_\pgoth : K(Y)^* \to \Integ$ and 
$v^X_\pgoth : K(X)^* \to \Integ$ denote the normalized\footnote{The word ``normalized'' means that the maps $v^Y_\pgoth$ and $v^X_\pgoth$ are surjective.}
valuations of $B_{(\pgoth)}$ and $B_\pgoth$ respectively,
then $v^X_\pgoth (\xi) = e_\pgoth v^Y_\pgoth(\xi)$ for all $\xi \in K(Y)^*$.
Let $C_\pgoth^Y$ (resp.~$C_\pgoth^X$) denote the closure of $\{ \pgoth \}$ in $Y$ (resp.\ in $X$).
Then $C_\pgoth^Y$ (resp.~$C_\pgoth^X$) is a prime divisor of $Y$ (resp.\ of $X$),
and every prime divisor of $Y$ is a $C^Y_\pgoth$ for some $\pgoth \in \YY$.
We define $( C_\pgoth^Y )^* = e_\pgoth C_\pgoth^X$ for each $\pgoth \in \YY$,
and extend linearly to  a $\Rat$-linear map $\Div_\Rat(Y) \to \Div_\Rat(X)$, $D \mapsto D^*$.
It is not hard to see that the linear map $D \mapsto D^*$ is injective and has the following property:
\begin{equation}
\label {PWSoidbfci27wyd}
\text{\it  $\big( \div_Y(\xi) \big)^* = \div_X(\xi)$ for all $\xi \in K(Y)^*$.}
\end{equation}
\end{nothing*}

\begin{lemma}  \label {7267e23ro98ifwhe8hfwjd}
Let the assumptions and notations be as in paragraph \ref{Pkcnbvc9w3eidjojf0q9w}.
Let $\pgoth \in \YY$ and consider the ramification index $e_\pgoth$ of $B_\pgoth$ over $B_{(\pgoth)}$.
\begin{enumerata}
\setlength{\itemsep}{1mm}

\item There exists a homogeneous element $g$ of $\pgoth$ such that $\pgoth B_\pgoth = g B_\pgoth$.

\item For any $g$ as in {\rm(a)} we have $\gcd( \deg(g) , e(B/\pgoth) ) = e(B)$.

\item  $\displaystyle e_\pgoth = \frac{  e(B/\pgoth) }{ e(B) }$

\end{enumerata}
\end{lemma}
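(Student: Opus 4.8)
The plan is to prove the three assertions in sequence, using the $\Nat$-graded structure together with the fact that $B_\pgoth$ and $B_{(\pgoth)}$ are both DVRs, with $B_{(\pgoth)} = (B_\pgoth)_0$ after localizing.

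For part (a), first localize at the multiplicative set $S$ of nonzero homogeneous elements of $B\setminus\pgoth$; set $\Beul = S^{-1}B$, which is a $\Integ$-graded domain whose degree-$0$ part is $B_{(\pgoth)}$ and in which $S^{-1}\pgoth$ is the unique homogeneous maximal ideal. I would first treat the case where $B_{(\pgoth)}$ is a field: then by Lemma \ref{i8765redfvbnki8765rfghytrew123456789iuhv}, $\Beul = \Beul_0[t^{\pm 1}]$ for a homogeneous $t$ of degree $e(B)$, so $\Beul$ is a DVR and any homogeneous generator of its maximal ideal works (and pulling back to $B$ and clearing denominators by an element of $S$ — which is a unit in $B_\pgoth$ — gives a homogeneous $g\in\pgoth$ with $\pgoth B_\pgoth = gB_\pgoth$). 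For the general case, localize further at $B_{(\pgoth)}\setminus\{0\}$; the argument of Lemma \ref{87t6yfjcbbvo39f8y7-2}(b) shows the resulting ring is $\Beul_0[t]$-like with $\Frac(B_{(\pgoth)})$ in degree $0$, and one recovers a homogeneous local generator of $\pgoth B_\pgoth$ by multiplying by a suitable element of the multiplicative set, which is a unit of $B_\pgoth$. Alternatively, and perhaps more cleanly: $\pgoth B_\pgoth$ is principal since $B_\pgoth$ is a DVR; a homogeneous element of $\pgoth$ of minimal positive $v^X_\pgoth$-value must be a generator, because any homogeneous element of $\pgoth$ has $v^X_\pgoth$-value a multiple of that minimum (this uses that $v^X_\pgoth$ restricted to homogeneous elements of $B\setminus\{0\}$ takes values in a numerical semigroup and that $B_\pgoth$ is generated over $B_{(\pgoth)}$ in the relevant way). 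I expect the field case to be the substantive one and the general case to reduce to it.

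For part (b), fix $g$ as in (a). I would compute $v^X_\pgoth$ of homogeneous elements in two ways. On one hand, for any homogeneous $h\in B\setminus\{0\}$, writing $h = g^k u$ in $B_\pgoth$ with $u$ a homogeneous unit of $B_\pgoth$ (which forces $\deg(u)$ to be realized by a homogeneous element of $B\setminus\pgoth$, i.e.\ $\deg(u)\in\bbM(B,\pgoth)$, hence $\deg(u)\in\langle\bbM(B,\pgoth)\rangle = \overline{\bbM}(B,\pgoth) = \G(B/\pgoth) = e(B/\pgoth)\Integ$ by Lemma \ref{o72ytg8ed623orh9yft8v1238293y}): this gives $\deg(h) \equiv k\deg(g) \pmod{e(B/\pgoth)\Integ}$. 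Taking $h$ to range over all homogeneous generators shows the subgroup of $\Integ$ generated by $\{\deg(h): h\in B\setminus\{0\}\ \text{homogeneous}\}$ — which is $\G(B) = e(B)\Integ$ — is contained in $\langle \deg(g), e(B/\pgoth)\rangle = \langle\gcd(\deg(g),e(B/\pgoth))\rangle$. Conversely $\deg(g)\in e(B)\Integ = \G(B)$ and $e(B/\pgoth)\in e(B)\Integ$ as well (since $\G(B/\pgoth)$ is generated by a subset of $\setspec{i}{B_i\neq 0}$-translates, and more directly $e(B)\mid e(B/\pgoth)$ always), so $\gcd(\deg(g),e(B/\pgoth))$ is a multiple of $e(B)$. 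Combining the two gives $\gcd(\deg(g),e(B/\pgoth)) = e(B)$. The main obstacle here is justifying carefully that the degree of a homogeneous unit of $B_\pgoth$ lies in $\G(B/\pgoth)$ and that every homogeneous element of $B\setminus\{0\}$ factors in $B_\pgoth$ as a power of $g$ times a homogeneous unit — this needs the DVR structure of $B_\pgoth$ together with homogeneity of $g$.

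For part (c), combine (b) with the relation $v^X_\pgoth(\xi) = e_\pgoth v^Y_\pgoth(\xi)$ for $\xi\in K(Y)^*$ from paragraph \ref{Pkcnbvc9w3eidjojf0q9w}. Pick a homogeneous $t\in\Frac(B)$ of degree $e(B)$ (such $t$ exists since $\G(B) = e(B)\Integ$: take a ratio of homogeneous elements of appropriate degrees). Then $g/t^{\deg(g)/e(B)}$ lies in $K(Y)^*$ (it is homogeneous of degree $0$ in $\Frac(B)$), and its $v^X_\pgoth$-value is $v^X_\pgoth(g) = 1$ since $g$ generates $\pgoth B_\pgoth$ and $t$ is a homogeneous unit of $B_\pgoth$ (its degree $e(B)$ lies in $\G(B/\pgoth) = e(B/\pgoth)\Integ$... wait, this needs $e(B)\in e(B/\pgoth)\Integ$, i.e.\ $e(B/\pgoth)\mid e(B)$, which together with $e(B)\mid e(B/\pgoth)$ is only true when they're equal — so instead argue via $v^Y_\pgoth$ directly). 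Concretely: a homogeneous element of degree $e(B/\pgoth)$ in $B\setminus\pgoth$, divided by a suitable power of $t$, gives an element $\eta\in K(Y)^*$ with $v^X_\pgoth(\eta) = 0$ generating the value group contribution of degree $e(B/\pgoth)$; meanwhile $g$ divided by a power of $t$ times $\eta$-adjustments realizes $v^X_\pgoth = e_\pgoth\cdot(\text{value of a uniformizer of }B_{(\pgoth)})$. The cleanest route: $e_\pgoth = [v^X_\pgoth(K(X)^*) : v^X_\pgoth(K(Y)^*)]$ as subgroups of $\Integ$; $v^X_\pgoth(K(X)^*) = \Integ$ by normalization; and $v^X_\pgoth(K(Y)^*)$ is generated by $v^X_\pgoth$ of homogeneous degree-$0$ fractions, which by (b) is $e(B/\pgoth)/e(B)\cdot\Integ$ — one shows the minimal positive value of $v^X_\pgoth$ on such fractions is exactly $e(B/\pgoth)/e(B)$ using that $v^X_\pgoth(g) = 1$, $\deg(g)$ and $e(B/\pgoth)$ have gcd $e(B)$, and degree-$0$ fractions correspond (via $t$) to differences of degrees divisible by $e(B)$. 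Hence $e_\pgoth = e(B/\pgoth)/e(B)$. I expect part (c) to be mostly bookkeeping once (a) and (b) are in hand, with the one delicate point being the bookkeeping with $t$ to pass between homogeneous elements of $\Frac(B)$ and elements of $K(Y)$.
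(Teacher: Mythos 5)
Your plan for (b) and (c) is on the right track, but part (a) as sketched does not work, and both (b) and (c) hinge on a technical claim that you correctly identify as ``the main obstacle'' but do not prove. In (a), the first sketch is off: $B_{(\pgoth)}$ is a DVR, not a field, so your case distinction never applies, and the Laurent-polynomial lemma you cite requires localizing at \emph{all} nonzero homogeneous elements, not only those outside $\pgoth$, so it does not describe $\Beul = S^{-1}B$. The alternative (``a homogeneous element of minimal positive $v^X_\pgoth$-value is a generator'') has a real gap: the minimum $m$ of a numerical semigroup need not divide every element of it, and even if it did you would still have to show $m=1$, which you never do. The paper's argument is short and decisive: take any $g \in \pgoth$ with $\pgoth B_\pgoth = gB_\pgoth$, write $g = \sum_i g_i$ with $g_i \in B_i$; since $\pgoth$ is homogeneous each $g_i \in \pgoth$, so $v^X_\pgoth(g_i) \ge 1$; if all were $>1$ then $v^X_\pgoth(g)>1$, a contradiction, so some $g_i$ has value $1$ and generates $\pgoth B_\pgoth$.

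The technical fact underpinning your (b) and (c) is: \emph{if $x,y \in B\setminus\{0\}$ are homogeneous and $x/y \in B_\pgoth^*$, then $x/y = \alpha/\beta$ for some homogeneous $\alpha,\beta \in B\setminus\pgoth$.} This is precisely what lets you say that the ``degree'' $\deg(x)-\deg(y)$ lies in $\overline{\bbM}(B,\pgoth)=\G(B/\pgoth)=e(B/\pgoth)\Integ$ (note it need not lie in $\bbM(B,\pgoth)$ itself, since it can be negative; you want the generated group). The paper proves this by writing $x/y = u/v$ with $v\in B\setminus\pgoth$, decomposing $u,v$ into homogeneous components, picking $v_j\notin\pgoth$, and comparing degree-$(\deg x + j)$ components of $xv = yu$ to get $x v_j = y u_i$, whence $x/y = u_i/v_j$ with $u_i\notin\pgoth$. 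Once this is in hand your computation in (b) goes through as you sketched. For (c), your ``cleanest route'' --- computing the value group $v^X_\pgoth(K(Y)^*)$ and showing it equals $d\Integ$ with $d = e(B/\pgoth)/e(B)$ --- is a legitimate alternative to what the paper does (the paper instead constructs an explicit uniformizer $\xi = g^d u/v \in B_{(\pgoth)}$ with $\deg(g^d)=\deg(v)-\deg(u)$, $u,v\in B\setminus\pgoth$ homogeneous, and shows $\mgoth = \xi B_{(\pgoth)}$). Both versions use the same inputs: the factorization lemma above and the coprimality $\gcd(d,\deg(g)/e(B))=1$ which is what (b) gives. The abortive digression about a homogeneous $t$ of degree $e(B)$ being a unit of $B_\pgoth$ was rightly abandoned; such a $t$ lives in $\Frac(B)$ but need not be a unit of $B_\pgoth$, and you do not need it for the value-group computation.
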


\begin{proof}
(a) As $B_\pgoth$ is a discrete valuation ring and hence a principal ideal domain, there exists $g \in \pgoth$ such that $\pgoth B_\pgoth = g B_\pgoth$.
We have to show that $g$ can be chosen to be homogeneous.
Note that $v^X_\pgoth(g) = 1$.
Write $g = \sum_{i \in I} g_i$ where $I$ is a finite subset of $\Nat$ and $g_i \in B_i \setminus \{0\}$ for all $i \in I$.
Since $g \in \pgoth$, we have $g_i \in \pgoth$ and hence $v^X_\pgoth(g_i) \ge 1$ for all $i \in I$.
If $v^X_\pgoth(g_i) > 1$ for all $i \in I$ then $v^X_\pgoth(g) > 1$, which is not the case.
So there exists $i \in I$ such that  $v^X_\pgoth(g_i) = 1$. Then  $\pgoth B_\pgoth = g_i B_\pgoth$, which proves (a).

Before proving (b), we show that
\begin{equation}  \label {ikjhhgcgrd6widcjb9wvcr}
\begin{minipage}{.8\textwidth}
If $x,y \in B \setminus \{0\}$ are homogeneous with $x/y \in B_\pgoth^*$, then there exist homogeneous
$\alpha, \beta \in B \setminus \pgoth$ such that $x/y = \alpha/\beta$.
\end{minipage}
\end{equation}
To see this, we first choose $u \in B$ and $v \in B \setminus \pgoth$ such that $x/y = u/v$.
Write $u = \sum_{i \in \Nat} u_i$ and $v = \sum_{i \in \Nat} v_i$ where $u_i,v_i \in B_i$ for all $i$,
and choose $j \in \Nat $ such that $v_j \in B_j \setminus \pgoth$.
Then the equality $x\sum_{i \in \Nat} v_i = y \sum_{i \in \Nat} u_i$ implies that $xv_j = y u_i$ for some $i$.
So $x/y = u_i / v_j$. If $u_i \in \pgoth$ then $u_i/v_j$ belongs to $\pgoth B_\pgoth$, contradicting $x/y \in B_\pgoth^*$.
So $u_i \in B_i \setminus \pgoth$, which proves \eqref{ikjhhgcgrd6widcjb9wvcr}.

(b) Let $g$ be a homogeneous element of $\pgoth$ such that  $\pgoth B_\pgoth = g B_\pgoth$. 
Let $\delta = \gcd( \deg(g), e(B/\pgoth))$.
Pick any $i \in \Nat$ such that $B_i \neq 0$ and pick $h \in B_i \setminus \{0\}$.
For some $\nu \in \Integ$, we have $h/g^\nu \in B_\pgoth^*$;
so \eqref{ikjhhgcgrd6widcjb9wvcr} implies that $h/g^\nu = \alpha/\beta$ for some homogeneous $\alpha,\beta \in B \setminus \pgoth$.
Then $\delta \mid e(B/\pgoth) \mid \deg(\gamma)$ for each $\gamma \in \{\alpha,\beta\}$.
We have $\deg(h) - \nu \deg(g) = \deg(\alpha) - \deg(\beta)$, so $\delta$ divides $\deg(h) = i$.
This being true for each $i \in \Nat$ such that $B_i \neq0$, we conclude that $\delta \mid e(B)$.
Clearly, $e(B) \mid \delta$; so (b) is true.

(c) Let $g$ be as in (a) and (b).
Let $\epsilon = e(B/\pgoth)$ and $d = e(B/\pgoth)/e(B) = \epsilon/e(B)$.
Since $e(B) \mid \deg(g)$,
$\deg(g^d)$ belongs to $\epsilon \Integ$, which is the group generated by $\setspec{ i \in \Nat }{ (B/\pgoth)_i \neq 0 }$;
so there exist $i,j \in \Nat$ such that $(B/\pgoth)_i \neq 0$,  $(B/\pgoth)_j \neq 0$, and $\deg(g^d) = i-j$;
consequently, there exist homogeneous elements $u,v \in B \setminus \pgoth$ such that $\deg(g^d) = \deg(v) - \deg(u)$.
Then $\xi = g^d u / v $ belongs to the maximal ideal $\mgoth$ of $B_{(\pgoth)}$.
Let us prove that $\mgoth = \xi B_{(\pgoth)}$.

Consider an arbitrary nonzero element $\frac xs$ of $\mgoth$, where $x,s \in B \setminus \{0\}$ are homogeneous of the same degree and $s \notin \pgoth$. 
Define  $k = v^X_\pgoth\big( \frac xs \big)$.
Then $k>0$, because $\frac xs \in \mgoth \subseteq \pgoth B_\pgoth$; it follows that $\frac{x}{sg^k} \in B_\pgoth^*$,
so \eqref{ikjhhgcgrd6widcjb9wvcr} implies that $\frac{x}{sg^k} = \frac{b_m}{c_\ell}$ for some
$b_m \in B_m \setminus \pgoth$ and $c_\ell \in B_\ell \setminus \pgoth$.
Since  $c_\ell, b_m \in B \setminus \pgoth$, we have $\ell, m \in \epsilon\Integ$.
Since  $\frac xs = g^k \frac{b_m}{c_\ell}$, we have $k \deg(g) + m - \ell = \deg(x) - \deg(s) = 0$;
it follows that $\epsilon \mid k \deg(g)$, or equivalently, $d \mid k \frac{\deg(g)}{e(B)}$.
Part (b) shows that $d = \frac{\epsilon}{e(B)}$ is relatively prime to $\frac{\deg(g)}{e(B)}$, so $d \mid k$.
Write $k = k_0 d$ (where $k_0>0$), then 
$$
\frac xs = g^{k_0 d} \, \frac{b_m}{c_\ell} 
= \left(g^{d} \frac uv\right)^{k_0} \frac{v^{k_0}b_m}{u^{k_0}c_\ell} 
= \xi^{k_0} \frac{v^{k_0}b_m}{u^{k_0}c_\ell} 
$$
where $\frac{v^{k_0}b_m}{u^{k_0}c_\ell} \in B_{(\pgoth)}^*$.
Since $k_0>0$, we have $\frac xs \in \xi B_{(\pgoth)}$, which shows that $\mgoth = \xi B_{(\pgoth)}$.
Consequently, $e_\pgoth = v^X_\pgoth( \xi ) = v^X_\pgoth(  g^d u / v ) = d$, which proves (c).
\end{proof}

\begin{corollary}
Let the assumptions and notations be as in paragraph \ref{Pkcnbvc9w3eidjojf0q9w}.
Then $B$ is saturated in codimension $1$ if and only if $e_\pgoth = 1$ for all $\pgoth \in \YY$.
\end{corollary}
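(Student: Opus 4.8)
The plan is to deduce the corollary directly from Lemma \ref{7267e23ro98ifwhe8hfwjd}(c) together with the description of saturation in codimension $1$ in terms of the saturation indices $e(B)$ and $\bar e(B)$. No new idea is needed: the statement is essentially a repackaging of Lemma \ref{7267e23ro98ifwhe8hfwjd}(c).

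First I would observe that, with the conventions of paragraph \ref{Pkcnbvc9w3eidjojf0q9w}, $\YY$ is precisely the set of all height $1$ homogeneous prime ideals of $B$ (the hypothesis $\haut(B_+)\ge2$ serves only to place the points of $\YY$ on $Y=\Proj B$ and to exclude $B_+$, but plays no further role here). Thus $\YY$ is the same set $Z$ that appears in Notation \ref{5Q43wsdbxhj3403o2iuwyetrd567b8niVuk54hwsert2hj3k} and Lemma \ref{983bvkslwlia293tfg62zgbnm3kry}. By Lemma \ref{7267e23ro98ifwhe8hfwjd}(c), for each $\pgoth\in\YY$ we have $e_\pgoth=e(B/\pgoth)/e(B)$, so $e_\pgoth=1$ if and only if $e(B/\pgoth)=e(B)$. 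By Lemma \ref{983bvkslwlia293tfg62zgbnm3kry}(b), $B$ is saturated in codimension $1$ if and only if $\bar e(B)=e(B)$. It therefore suffices to prove
$$
\bar e(B)=e(B) \iff e(B/\pgoth)=e(B) \text{ for every } \pgoth\in\YY.
$$
When $\YY=\emptyset$ both sides hold, since then $\bar e(B)=e(B)$ by the definition of $\bar e(B)$ and the right-hand condition is vacuous. When $\YY\neq\emptyset$, the definition gives $\bar e(B)=\lcm\setspec{e(B/\pgoth)}{\pgoth\in\YY}$, while $e(B)\mid e(B/\pgoth)$ for each $\pgoth\in\YY$ because $\G(B/\pgoth)=\overline{\bbM}(B,\pgoth)\subseteq\G(B)$ by Lemma \ref{o72ytg8ed623orh9yft8v1238293y} (equivalently $e(B/\pgoth)\Integ\subseteq e(B)\Integ$); hence $\bar e(B)=e(B)$ exactly when $e(B/\pgoth)=e(B)$ for every $\pgoth\in\YY$.

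Chaining these equivalences yields: $B$ is saturated in codimension $1$ $\iff$ $e(B/\pgoth)=e(B)$ for all $\pgoth\in\YY$ $\iff$ $e_\pgoth=1$ for all $\pgoth\in\YY$, which is exactly the assertion. I do not expect any genuine obstacle: all the substantive work is contained in Lemma \ref{7267e23ro98ifwhe8hfwjd}(c), and what remains is bookkeeping with the definitions of $\bar e(B)$ and of saturation in codimension $1$, the only point requiring a word of care being the trivial separate treatment of the case $\YY=\emptyset$.
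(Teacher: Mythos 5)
Your proof is correct and follows essentially the same route as the paper's: both reduce the claim to Lemma \ref{7267e23ro98ifwhe8hfwjd}(c) (which gives $e_\pgoth = e(B/\pgoth)/e(B)$) and Lemma \ref{983bvkslwlia293tfg62zgbnm3kry} (which translates saturation in codimension $1$ into the condition $e(B/\pgoth)=e(B)$ for all $\pgoth\in\YY$). The only difference is that you spell out the lcm bookkeeping that connects $\bar e(B)=e(B)$ with the pointwise condition, which the paper leaves implicit.
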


\begin{proof}
By Lemma \ref{983bvkslwlia293tfg62zgbnm3kry}, $B$ is saturated in codimension $1$ if and only if $e(B/\pgoth) = e(B)$ for all  $\pgoth \in \YY$.
So the claim follows from part (c) of Lemma \ref{7267e23ro98ifwhe8hfwjd}.
\end{proof}

We noted in the Introduction that Thm \ref{jh782uwsdvvhjkqw547182} is a result of Demazure with some extra pieces added to it.
We can now use Lemma \ref{7267e23ro98ifwhe8hfwjd} to prove assertion (a-iv), which is the only part of the Theorem that requires a justification.

\begin{nothing*}  \label {7tyzvcxzcvbcvQcksbvcstdio976543dfsxnbx48}
{\bf Proof of Theorem \ref{jh782uwsdvvhjkqw547182}.}
Part (b) is well known.
In part (a), the fact that $\Delta$ exists, is unique, and is an ample $\Rat$-divisor was proved in \cite[Thm 3.5]{Demazure_1988}.
Assertion (a-ii) is well known (see for instance  \cite[5.20(a)]{Chitayat-Daigle:cylindrical}), and (a-iii) follows from \cite[Cor.\ 5.20(d)]{Chitayat-Daigle:cylindrical}.
Let us prove (a-iv).
The article \cite{Demazure_1988} explains how to compute $\Delta$ from the given $t$, but it is easier to use the statement of \cite[Thm 5.9]{Chitayat-Daigle:cylindrical},
which asserts that $\Delta^* = \div_X(t)$.
By definition of the injective map $D \mapsto D^*$ (see \ref{Pkcnbvc9w3eidjojf0q9w}), this gives the first equality in:
$$
\textstyle
\Delta
= \sum_{ \pgoth \in Y^{(1)} } \frac{ v_\pgoth^X(t) }{ e_\pgoth }  C_\pgoth^Y 
= \sum_{ \pgoth \in Y^{(1)} } \frac{ v_\pgoth^X(t) }{ e(B/\pgoth) }  C_\pgoth^Y , 
$$
where the second equality follows from the assumption $e(B)=1$ together with part (c) of Lemma \ref{7267e23ro98ifwhe8hfwjd}.
So $\Delta \in \Div(Y)$ if and only if
\begin{equation}  \label {8A32uCyifbPMk8590mNBEWw6289}
\text{$e(B/\pgoth) \mid v_\pgoth^X(t)$ \ \  for all $\pgoth \in Y^{(1)}$.}
\end{equation}
Let $\pgoth \in Y^{(1)}$.
By Lemma \ref{7267e23ro98ifwhe8hfwjd}(a), there exists a homogeneous element $g$ of $\pgoth$ such that $\pgoth B_\pgoth = g B_\pgoth$.
Let $\nu = v_\pgoth^X(t)$ and note that $t/g^\nu \in B_\pgoth^*$;
by \eqref{ikjhhgcgrd6widcjb9wvcr}, we have $t/g^\nu = \alpha/\beta$ for some homogeneous $\alpha,\beta \in B \setminus \pgoth$.
Since $1 = \deg(t) = \nu \deg(g) + \deg(\alpha) - \deg(\beta)$ and $e(B/\pgoth)$ divides $\deg(\alpha)$  and $\deg(\beta)$,
$e(B/\pgoth)$ is relatively prime to $\nu =v_\pgoth^X(t)$.
This being true for each $\pgoth \in Y^{(1)}$, we obtain that
\eqref{8A32uCyifbPMk8590mNBEWw6289} is equivalent to $e(B/\pgoth)=1$ for all $\pgoth \in Y^{(1)}$,
which is itself equivalent to $B$ being saturated in codimension $1$.
This proves (a-iv), and completes the proof of Thm \ref{jh782uwsdvvhjkqw547182}.
\hfill\qedsymbol
\end{nothing*}

The following notation is used in the proof of Theorem \ref{7654cvhgfsd2d34fds23f42dfdghjk8l}:

\begin{notation}  \label {iwyr8762367ds2wxfwd2w2f3rfvtm0m00}
Given an $\Nat$-graded ring $A = \bigoplus_{ i \in \Nat } A_i$ and $d \in \Nat\setminus \{0\}$, we define the $\Nat$-graded ring
$A^{\langle d \rangle} = S = \bigoplus_{ i \in \Nat } S_i$ by setting $S_i = A_{di}$ for all $i \in \Nat$.
Note that $A^{\langle d \rangle}$ and $A^{(d)}$ are the same ring, but have different gradings: if $x \in S_i \setminus \{0\}$
then $x$ is a homogeneous element of both $A^{\langle d \rangle}$ and $A^{(d)}$,
the degree of $x$ in $A^{\langle d \rangle}$ is $i$, and the degree of $x$ in $A^{(d)}$ is $di$.
Observe that $A^{(d)} \isomdot A^{\langle d \rangle}$, in the notation of \ref{98if9jvs9d772jfpd0vhf9w93uo0541hf9}.
\end{notation}

\begin{theorem}  \label {7654cvhgfsd2d34fds23f42dfdghjk8l}
Let $\bk$ be a field of characteristic $0$ and
$B = \bigoplus_{n \in \Nat} B_n$ an $\Nat$-graded normal affine $\bk$-domain
such that the prime ideal $B_+ = \bigoplus_{n > 0} B_n$ has height at least $2$.
Let $x$ be a homogeneous prime element of $B$ of degree $d>0$.
If $B_{(x)}$ is non-rigid then so is $(B/xB)^{(d)}$.
\end{theorem}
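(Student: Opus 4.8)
\emph{Plan.} The plan is to reduce to the case $d=1$ and then construct a nonzero homogeneous locally nilpotent derivation of $B/xB$ directly, by extending a locally nilpotent derivation of $B_{(x)}$ to the Laurent polynomial ring $B_x$ and then ``pushing it back into $B$'' after multiplying by a suitable power of $x$.

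\emph{Reduction to $d=1$.} Since $B_d\ni x$ we have $e(B)\mid d$. Set $B'=B^{(d)}$, which is a noetherian normal affine $\bk$-domain by Lemma~\ref{98767gh11dmdffkll0ahcchvr0vfesh8263543794663cnir} and Cor.~\ref{8Qp98781hy2uexihvcx2qw2wUxq3ew0pokjmnbvaw2345}. Applying Lemma~\ref{98767gh11dmdffkll0ahcchvr0vfesh8263543794663cnir}(a) to the ideal $xB'$ of $B'$ gives $B'\cap xB=xB'$, so $xB'$ is prime, $x$ is a homogeneous prime element of $B'$, and $B'/xB'\cong(B/xB)^{(d)}$ as graded rings. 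Moreover $B'_x=B^{(d)}_x$ has degree-$0$ part $(B_x)_0=B_{(x)}$, so $B'_{(x)}=B_{(x)}$ is non-rigid. Since non-rigidity of $B'/xB'=(B/xB)^{(d)}$ does not depend on its grading, I would re-grade $B'$, replacing it by $(B')^{\langle d\rangle}$ in the notation of~\ref{iwyr8762367ds2wxfwd2w2f3rfvtm0m00}, so that $x$ acquires degree $1$. After this reduction we may assume $d=1$; then $e(B)=1$, $x\in B_1$, $B_{(x)}$ is non-rigid, and it suffices to prove that $B/xB$ is non-rigid.

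\emph{Extending the derivation.} Now $x\in B_1$ is a homogeneous unit of the $\Integ$-graded domain $B_x$, so $(B_x)_n=x^nB_{(x)}$ for every $n$, and $x$ is transcendental over $B_{(x)}$ (if $\sum_i a_ix^i=0$ with $a_i\in B_{(x)}$, the homogeneous components $a_ix^i$ vanish separately). Hence $B_x=B_{(x)}[x,x^{-1}]$ is a Laurent polynomial ring. I would pick $\partial\in\lnd(B_{(x)})\setminus\{0\}$ and let $\tilde\partial\in\lnd(B_x)$ be its extension with $\tilde\partial(x)=0$; this $\tilde\partial$ is homogeneous of degree $0$ and nonzero, and it is determined by its restriction to $B$ (as $\tilde\partial(x^{-1})=0$). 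For $m\ge0$ put $I_m=x^{-m}B_m\subseteq B_{(x)}$; then $I_0\subseteq I_1\subseteq\cdots$ and $\bigcup_m I_m=B_{(x)}$. For homogeneous $b\in B_m$, writing $u=x^{-m}b\in I_m$, one has $x^n\tilde\partial(b)=x^{m+n}\partial(u)$, which lies in $B$ if and only if $\partial(u)\in I_{m+n}$. As $B$ is generated over $\bk$ by finitely many homogeneous elements and $\partial(u)\in B_{(x)}=\bigcup_m I_m$ for each of the finitely many relevant $u$, the set $\{n\in\Integ:x^n\tilde\partial(B)\subseteq B\}$ is nonempty, and it is closed under $n\mapsto n+1$ since $I_m\subseteq I_{m+1}$; it is bounded below because $B_m=0$ for $m<0$ and $\tilde\partial\neq0$. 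Let $N$ be its least element and set $D=x^N\tilde\partial$; then $D$ maps $B$ into itself, hence restricts to a derivation of $B$ that is homogeneous of degree $N$, and $D$ is locally nilpotent since $x^N\in\ker\tilde\partial$. Also $D\neq0$, since $\tilde\partial\neq0$ on $B_x=B[x^{-1}]$ forces $\tilde\partial(b)\neq0$ for some $b\in B$, whence $D(b)=x^N\tilde\partial(b)\neq0$.

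\emph{Descent and the main obstacle.} Finally $D(x)=x^N\tilde\partial(x)=0$, so $D(xB)\subseteq xB$ and $D$ induces a homogeneous locally nilpotent derivation $\bar D$ of $B/xB$. The crux is to see $\bar D\neq0$: if $D(B)\subseteq xB$ then $x^{-1}D=x^{N-1}\tilde\partial$ would map $B$ into $B$, contradicting the minimality of $N$; hence $\bar D\neq0$ and $B/xB$ is non-rigid, so by the reduction $(B/xB)^{(d)}$ is non-rigid. The step I expect to be the main obstacle is precisely this last one — arranging, via the minimal choice of $N$, that $D$ does not vanish modulo $x$ — together with the bookkeeping needed to see that the set of admissible exponents $n$ is nonempty and bounded below, which is where finite generation of $B$ and the $\Nat$-grading are used.
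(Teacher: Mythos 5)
Your proof is correct, and it takes a genuinely different route from the paper's. The paper works directly with general $d$: it invokes the Demazure--style setup of paragraph~\ref{Pkcnbvc9w3eidjojf0q9w}, computes the ramification index $e_\pgoth$ via Lemma~\ref{7267e23ro98ifwhe8hfwjd}(c), builds a degree function $\deg_R$ on $R = B_{(x)}$ from the valuation $v^Y_\pgoth$, proves $\Gr(R) \isom (B/xB)^{\langle d e_\pgoth\rangle}$, and then uses the tame-degree-function theorem from \cite{Dai:TameWild} to transfer $\lnd(R)\neq\{0\}$ into $\hlnd(\Gr R)\neq\{0\}$. You instead reduce to $d=1$ by passing to $B^{(d)}$ and re-grading so that $x$ has degree $1$ and $e=1$ (a valid step: Lemma~\ref{98767gh11dmdffkll0ahcchvr0vfesh8263543794663cnir} gives $B^{(d)}\cap xB = xB^{(d)}$, hence $B^{(d)}/xB^{(d)}\isom(B/xB)^{(d)}$ as rings and $x$ stays prime; Cor.~\ref{mMpPiq13096gxvbw5resip0tfs} gives $e(B^{(d)})=d$; Lemma~\ref{ze3xqrctwvyjmkiu0u9inj7b2q3q01iqu} gives $(B^{(d)})_{(x)}=B_{(x)}$). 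In the $d=1$ case the Laurent decomposition $B_x = B_{(x)}[x^{\pm1}]$ is available, and you extend $\partial\in\lnd(B_{(x)})\setminus\{0\}$ to $\tilde\partial$ on $B_x$ killing $x$, multiply by the smallest power $x^N$ that maps $B$ into itself (nonempty and bounded below by the finite generation and the $\Nat$-grading, as you explain), and reduce mod $x$; minimality of $N$ makes the residue nonzero. This route is more elementary and self-contained --- it uses no divisor theory, no tame degree functions, and no associated-graded-ring machinery --- whereas the paper's proof yields the structural isomorphism $\Gr(B_{(x)})\isom(B/xB)^{\langle de_\pgoth\rangle}$ as a by-product. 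One further observation: after your reduction the hypothesis $\haut(B_+)\ge2$ is never invoked, so your argument actually proves the statement without that hypothesis; in the paper's proof it is needed only to set up paragraph~\ref{Pkcnbvc9w3eidjojf0q9w}.
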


\begin{bigremark}  \label {iytd43J45628c93j}
If $B$ and $x$ satisfy the hypothesis of Thm \ref{7654cvhgfsd2d34fds23f42dfdghjk8l}
then {\rm(a)}$\Leftrightarrow${\rm(b)}$\Rightarrow${\rm(c)}, where:
\begin{enumerata}

\item $B_{(x)}$ is non-rigid,

\item some basic cylinder of $\Proj(B)$ is included in $\bbD_+(x)$,

\item $\NR(B) \neq \emptyset$.

\end{enumerata}
Indeed, we have (b)$\Rightarrow${\rm(c)} by Thm \ref{edh83yf6r79hvujhxu6wrefji9e}(a).
By \ref{jbhgfdxfewae33w6q920we}, (a) is equivalent to the existence of a basic cylinder in $\Spec( B_{(x)} )$;
so the equivalence of (a) and (b) follows from the following easily checked observation:
{\it an open subset of $\bbD_+(x)$ is a basic open subset of $\Proj(B)$ if and only if its image
by the canonical isomorphism $\bbD_+(x) \to \Spec( B_{(x)} )$ is a basic open subset of $\Spec( B_{(x)} )$.}
\end{bigremark}

\begin{proof}[Proof of Thm \ref{7654cvhgfsd2d34fds23f42dfdghjk8l}]
Let $X = \Spec B$ and $Y = \Proj B$, and let us use the notations of paragraph \ref{Pkcnbvc9w3eidjojf0q9w}
(this is why we need to assume that $\haut(B_+)\ge2$ in Thm \ref{7654cvhgfsd2d34fds23f42dfdghjk8l}).
Let $\pgoth = xB \in \YY$ and consider the valuations $v_\pgoth^X : K(X) \to \Integ \cup \{\infty\}$ and $v_\pgoth^Y : K(Y) \to \Integ \cup \{\infty\}$
and the  ramification index  $e_\pgoth$.
Lemma \ref{7267e23ro98ifwhe8hfwjd} gives
\begin{equation}  \label {cjb738645q9mbr2hnbmadp28570}
\textstyle
\gcd(d, e(B/xB) ) = e(B) \quad \text{and} \quad e_\pgoth  = \frac{ e( B/xB ) }{ e(B) } .
\end{equation}

Consider $C_\pgoth^Y \subset Y$ and $C_\pgoth^X \subset X$ as in \ref{Pkcnbvc9w3eidjojf0q9w};
note that $C_\pgoth^Y = \bbV_+(x) \subset Y$ and let $U = \bbD_+(x) = Y \setminus C_\pgoth^Y$.
Let $R = B_{(x)}$ and note that $R$ is a subring of $K(Y)$.
We claim:
\begin{equation}  \label {E1nfo83aTcbpqc0gfrnxjg}
\text{if $f \in R \setminus \{0\}$ then $\div_U(f) \ge 0$  and  $v_\pgoth^Y(f) \le 0$.}
\end{equation}
Indeed, for each $\qgoth \in Y^{(1)} \setminus \{\pgoth\}$, we have $R = B_{(x)} \subseteq B_{(\qgoth)}$ and hence $v_\qgoth^Y(f) \ge0$; so $\div_U(f) \ge 0$.
Since $f \in R \setminus \{0\}$, we have $f = g/x^n$ for some $n \ge 0$ and $g \in B_{nd} \setminus \{0\}$.
If $v_\pgoth^Y(f) > 0$ then $f = xb/s$ where $b \in B \setminus \{0\}$ and $s \in B \setminus \pgoth$ are homogeneous and $\deg(xb) = \deg(s)$;
then $g/x^n = xb/s$, so $x^{n+1} \mid sg$; since $x$ is prime and $x \nmid s$ in $B$, we obtain $x^{n+1} \mid g$, which contradicts $g \in B_{nd} \setminus \{0\}$.
So \eqref{E1nfo83aTcbpqc0gfrnxjg} is true.
Consequently, the map
$$
\deg_R : R \to \Nat \cup \{ - \infty \}, \quad \text{$\deg_R(f) = -v_\pgoth^Y(f)$ for all $f \in R$}
$$
is a well-defined  degree function.
Consider the associated graded ring $\Gr(R)$ determined by $(R,\deg_R)$;
recall that $\Gr(R) = \bigoplus_{n \in \Nat} R_{\le n} / R_{<n}$ where for each $n \in \Nat$ we set
$$
R_{\le n} = \setspec{ r \in R }{ \deg_R(r) \le n } \quad \text{and} \quad R_{< n} = \setspec{ r \in R }{ \deg_R(r) < n } \quad \text{(so $R_{<0} = 0 $).}
$$
Since $v_\pgoth^Y : K(Y) \to \Integ \cup \{\infty\}$ is surjective and $\Frac R = K(Y)$, it follows that $e( \Gr R ) = 1$.
We claim (see Notation \ref{iwyr8762367ds2wxfwd2w2f3rfvtm0m00}):
\begin{equation}  \label {765wjwejnbfcx67678o}
\Gr(R) \isom \big( B/x B \big)^{\langle e_\pgoth d \rangle} .
\end{equation}
Let us prove \eqref{765wjwejnbfcx67678o}.
Given $f \in R \setminus \{0\}$, let $S(f) = \setspec{ n \in \Nat }{ f x^n \in B }$ and note that $S(f)$ is a nonempty subset of $\Nat$.
Clearly, if $n \in S(f)$ then $n+1 \in S(f)$.
We claim that 
\begin{equation}  \label {A87tr7ghcjomgIbbvncvnsuieruo3834}
\text{for each $f \in R \setminus \{0\}$,} \quad S(f) = \setspec{ n \in \Nat }{ n \ge e_\pgoth \deg_R(f) } .
\end{equation}
Indeed, let $f \in R \setminus \{0\}$. Given any $n \in \Nat$, we have
$n \in S(f) \iff f x^n \in B \iff \div_X(f x^n) \ge 0$.
Consider the $\Rat$-linear map $D \mapsto D^*$ from $\Div_\Rat(Y)$ to $\Div_\Rat(X)$ defined in \ref{Pkcnbvc9w3eidjojf0q9w};
then $(C_\pgoth^Y )^* = {e_\pgoth} C_\pgoth^X$, and \eqref{PWSoidbfci27wyd} gives $( \div_Y(f))^* = \div_X(f)$.
Since 
$$
\textstyle
\div_X(f x^n) 
= \div_X(f) + n \div_X(x)
= \div_X(f) + n C_\pgoth^X
= ( \div_Y(f) + \frac n{e_\pgoth} C_\pgoth^Y )^*,
$$
and since $D^* \ge0 \Leftrightarrow D \ge 0$ for any $D \in \Div_\Rat(Y)$,
we have
\begin{equation*}  
\textstyle
n \in S(f) \iff \div_Y(f) +  \frac n{e_\pgoth} C_\pgoth^Y  \ge 0
\iff v_\pgoth^Y(f) \ge -\frac n{e_\pgoth} \iff \deg_R(f) \le \frac n{e_\pgoth},
\end{equation*}
where the middle equivalence follows from $\div_U(f) \ge 0$ (see \eqref{E1nfo83aTcbpqc0gfrnxjg}).
This proves \eqref{A87tr7ghcjomgIbbvncvnsuieruo3834}.
Now \eqref{A87tr7ghcjomgIbbvncvnsuieruo3834} implies that if $n \in \Nat$ and $f \in R \setminus \{0\}$ then:
\begin{align*}
f \in R_{\le n} & \Leftrightarrow \deg_R(f) \le n \Leftrightarrow e_\pgoth n \in S(f) \Leftrightarrow f x^{e_\pgoth n} \in B ; \\
f \in R_{< n} & \Leftrightarrow \deg_R(f) < n \Leftrightarrow e_\pgoth n > \min S(f) \Leftrightarrow  e_\pgoth n - 1 \in S(f)
\Leftrightarrow  f x^{e_\pgoth n - 1} \in B \\
& \Leftrightarrow  f x^{e_\pgoth n } \in xB.
\end{align*}
So the following holds for each $n \in \Nat$:
\begin{equation*} 
\text{each $f \in R_{\le n}$ satisfies\ \ $\big( f x^{e_\pgoth n} \in B$\ \ and\ \  $f x^{e_\pgoth n} \in xB \Leftrightarrow f \in R_{< n}\big)$.}
\end{equation*}
So, for each $n \in \Nat$, the $\bk$-linear map $\theta_n :R_{\le n} \to B_{d e_\pgoth n}$, $f \mapsto f x^{e_\pgoth n}$, 
is well-defined and induces an injective $\bk$-linear map $\bar\theta_n :  R_{\le n} / R_{<n} \to (B/xB)_{d e_\pgoth n}$. 
Note that $\bar\theta_n$ is also surjective.
Indeed, consider $\xi \in (B/xB)_{d e_\pgoth n}$. 
Then $\xi = h + xB$ for some $h \in B_{d e_\pgoth n}$. 
Obviously, if $h=0$ then $\xi \in \image( \bar\theta_n )$.
Assume that $h \neq0$ and let $f = h/x^{e_\pgoth n} \in R \setminus \{0\}$;
then $e_\pgoth n \in S(f)$ by definition of $S(f)$, so $e_\pgoth n \ge e_\pgoth \deg_R(f)$ by \eqref{A87tr7ghcjomgIbbvncvnsuieruo3834}, so $\deg_R(f) \le n$,
i.e., $f \in R_{\le n}$; we have $\theta_n(f) = h$, so $\bar\theta_n( f + R_{<n} ) = \xi$.
This shows that, for each $n \in \Nat$, the $\bk$-linear map  $\bar\theta_n :  R_{\le n} / R_{<n} \to (B/xB)_{d e_\pgoth n}$ is bijective.
The family $\big(\bar\theta_n \big)_{ n \in \Nat }$ defines an injective $\bk$-linear map $\bar\theta : \Gr(R) \to B/xB$
which is easily seen to preserve multiplication. Moreover, the image of $\bar\theta$ is the subring $\big( B/xB )^{(d e_\pgoth )}$ of $B/xB$.
If we define $S_n =  (B/xB)_{d e_\pgoth n}$ for all $n \in \Nat$, then $\big( B/xB )^{ \langle d e_\pgoth \rangle } = \bigoplus_{n \in \Nat} S_n$,
so $\bar\theta : \Gr(R) \to \big( B/xB )^{ \langle d e_\pgoth \rangle }$
is a degree-preserving isomorphism of graded $\bk$-algebras. 
This proves \eqref{765wjwejnbfcx67678o}.

It follows from \eqref{765wjwejnbfcx67678o} that $\Gr(R)$ is $\bk$-affine
(because $( B/x B \big)^{\langle d e_\pgoth \rangle}$ is $\bk$-affine by Cor.\ \ref{8Qp98781hy2uexihvcx2qw2wUxq3ew0pokjmnbvaw2345}),
so part (b) of Theorem 1.7 of \cite{Dai:TameWild}
implies that $\deg_R$ is tame over $\bk$ (in the terminology of \cite{Dai:TameWild}, this means that 
for every $\bk$-derivation $D : R \to R$, the set $\setspec{ \deg_R( Dr ) - \deg_R(r) }{ r \in R \setminus \{0\} }$ has a greatest element).
As is well known, this implies that each element $D$ of $\lnd(R) \setminus \{0\}$ gives rise to a well-defined $\Gr(D) : \Gr(R) \to \Gr(R)$
which belongs to $\hlnd( \Gr R ) \setminus \{0\}$.
So if $\lnd(R) \neq \{0\}$ then $\hlnd( \Gr R ) \neq \{0\}$,
which (by \eqref{765wjwejnbfcx67678o}) implies $\hlnd\big( \big( B/x B \big)^{\langle d e_\pgoth \rangle} \big) \neq \{0\}$, 
which implies that $\big( B/x B \big)^{\langle d e_\pgoth \rangle}$ is non-rigid. 
Since the graded rings $\big( B/x B \big)^{\langle d e_\pgoth \rangle}$ and $\big( B/x B \big)^{( d e_\pgoth )}$ have the same underlying ring,
and since (by definition) rigidity only depends on the underlying ring, we have shown that 
\begin{equation}  \label {u765r0x9id21wsxyud56rb75v2ubvcexzLu504odkUT3}
\text{if $R = B_{(x)}$ is non-rigid then $\big( B/x B \big)^{( d e_\pgoth )}$ is non-rigid}.
\end{equation}
By Cor.\ \ref{mMpPiq13096gxvbw5resip0tfs}(a) and \eqref{cjb738645q9mbr2hnbmadp28570},
$e\big( (B/xB)^{(d)} \big) = \lcm(d, e(B/xB) ) = \frac{ d \, e(B/xB) }{ e(B) } = d e_\pgoth$,
which implies that $(B/xB)^{(d)} = (B/xB)^{(d e_{\pgoth})}$.
This together with \eqref{u765r0x9id21wsxyud56rb75v2ubvcexzLu504odkUT3} completes the proof.
\end{proof}

Note that this also proves Thm \ref{7654cvhgfsd2d34fds23f42dfdghjk8l-geometric}, 
since it is equivalent to Thm \ref{7654cvhgfsd2d34fds23f42dfdghjk8l}.

The next fact is an application of Theorem \ref{7654cvhgfsd2d34fds23f42dfdghjk8l}.
Refer to  \ref{iu7654wd3fg56hxr39ik} and \ref{8r43qasd321qsxcvgttyhnemkrifv90olkmn} for the notation.

\begin{proposition} \label {u654edcvbhyu89okmnbgr32qazdcftgyu934}
Let $n\ge4$ and $\ba = (a_1, \dots, a_n) \in ( \Nat \setminus \{0\} )^n$.
Let $\bk$ be a field of characteristic $0$ and consider the Pham-Brieskorn ring 
$B_{ \bk; \ba } = B_{ \bk ; a_1, \dots, a_n } = \bk[x_1,\dots,x_n]$.
For each $i \in \{1,\dots,n\}$, let $\ba(i) = (a_1, \dots , \widehat{a_i} , \dots , a_n) \in ( \Nat \setminus \{0\} )^{n-1}$
and consider the corresponding ring $B_{ \bk ; \ba(i) } = B_{ \bk ; a_1, \dots , \widehat{a_i} , \dots , a_n }$.
Then the implication 
\begin{equation}  \label {765ghcnvbnx3928yrtg}
\text{$B_{ \bk ; \ba(i) }$ is rigid} \implies \text{$(B_{ \bk ; \ba })_{(x_i)}$ is rigid}
\end{equation}
is valid for each $i \in \{1,\dots,n\}$ that satisfies $S(\ba) \subseteq \{i\}$.
\end{proposition}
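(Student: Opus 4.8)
The plan is to deduce the statement from the contrapositive of Theorem~\ref{7654cvhgfsd2d34fds23f42dfdghjk8l}, applied to $B := B_{\bk;\ba}$ and $x := x_i$. Fix $i$ with $S(\ba) \subseteq \{i\}$ and assume $B_{\bk;\ba(i)}$ is rigid. First I would verify the hypotheses of Theorem~\ref{7654cvhgfsd2d34fds23f42dfdghjk8l}: $B$ is an $\Nat$-graded normal affine $\bk$-domain; every generator $x_j$ is homogeneous of positive degree $d_j = L/a_j$, so $B_0 = \bk$ and $B_+$ is the graded maximal ideal, whence $\haut(B_+) = \dim B = n-1 \ge 3 \ge 2$; and $x_i$ is a homogeneous prime element (because $n \ge 4$, see \ref{8r43qasd321qsxcvgttyhnemkrifv90olkmn}) of degree $d_i = L/a_i > 0$. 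Theorem~\ref{7654cvhgfsd2d34fds23f42dfdghjk8l} then says that if $B_{(x_i)}$ were non-rigid, $(B/x_iB)^{(d_i)}$ would be non-rigid; so it suffices to show that $(B/x_iB)^{(d_i)}$ is rigid.

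Next I would identify the ring $(B/x_iB)^{(d_i)}$. As a ring, $B/x_iB = \bk[X_1,\dots,\widehat{X_i},\dots,X_n]/\bigl( \sum_{j\ne i} X_j^{a_j} \bigr) = B_{\bk;\ba(i)}$, a Pham--Brieskorn ring in $n-1 \ge 3$ variables. The grading that $B/x_iB$ inherits from $B$ gives $x_j$ degree $d_j = L/a_j$, while the standard Pham--Brieskorn grading of $B_{\bk;\ba(i)}$ gives $x_j$ degree $L_i/a_j$, where $L_i = \lcm\{a_k : k \ne i\}$; thus the inherited degree of a homogeneous element equals $L/L_i$ times its standard degree. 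Since $L = \lcm(a_i,L_i)$ we have $\gcd(d_i,\, L/L_i) = \gcd(L/a_i,\, L/L_i) = L/\lcm(a_i,L_i) = 1$, so for every monomial $\mu$ in the $x_j$'s, $d_i$ divides the inherited degree of $\mu$ if and only if $d_i$ divides the standard degree of $\mu$. Hence $(B/x_iB)^{(d_i)}$ and $(B_{\bk;\ba(i)})^{(d_i)}$ are the same subring, and it remains to prove that $d_i \notin \NR(B_{\bk;\ba(i)})$.

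The heart of the argument --- and the step I expect to be the main obstacle --- is the arithmetic identity $\gcd\bigl(d_i,\, \bar e(B_{\bk;\ba(i)})\bigr) = 1$, which is precisely where the hypothesis $S(\ba) \subseteq \{i\}$ is used. By Lemma~\ref{o8y7t626rf53deg23489jru3sgbref6}(b) applied to the $(n-1)$-tuple $\ba(i)$, $\bar e(B_{\bk;\ba(i)}) = \prod_{j \ne i} L_i / L_{ij}$, where $L_{ij} = \lcm\{a_k : k \ne i,\, k \ne j\}$. Fix a prime $p$ with $v_p(d_i) > 0$, i.e.\ $v_p(L) > v_p(a_i)$; I must show $v_p(\bar e(B_{\bk;\ba(i)})) = 0$. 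For each $j \ne i$, the hypothesis $S(\ba) \subseteq \{i\}$ gives $L = L_j(\ba) = \lcm(a_i, L_{ij})$, hence $\max(v_p(a_i), v_p(L_{ij})) = v_p(L)$; as $v_p(a_i) < v_p(L)$ this forces $v_p(L_{ij}) = v_p(L)$, and since $L_{ij} \mid L_i \mid L$ also $v_p(L_i) = v_p(L)$, so $v_p(L_i/L_{ij}) = 0$. Summing over $j \ne i$ gives $v_p(\bar e(B_{\bk;\ba(i)})) = 0$, so indeed $\gcd(d_i, \bar e(B_{\bk;\ba(i)})) = 1$. Finally, since $B_{\bk;\ba(i)}$ is rigid we have $1 \notin \NR(B_{\bk;\ba(i)})$ by \eqref{oijn23w9e0cnbv2sxcxzq5at}; by Corollary~\ref{o87r62390cJj99xjhs263bfe}(a), $d_i \in \NR(B_{\bk;\ba(i)})$ would imply $\gcd(d_i, \bar e(B_{\bk;\ba(i)})) = 1 \in \NR(B_{\bk;\ba(i)})$, a contradiction. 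Hence $d_i \notin \NR(B_{\bk;\ba(i)})$, i.e.\ $(B/x_iB)^{(d_i)}$ is rigid, which completes the proof. (One could equally finish using Corollary~\ref{p98qy386e523brxhbfc6ghvc63x2345618q30}, since every element of $M(B_{\bk;\ba(i)})$ divides $\bar e(B_{\bk;\ba(i)})$.)
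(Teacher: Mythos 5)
Your proof is correct and follows essentially the same route as the paper's: reduce via Theorem~\ref{7654cvhgfsd2d34fds23f42dfdghjk8l} to rigidity of $(B/x_iB)^{(d_i)}$, identify this with $(B_{\bk;\ba(i)})^{(d_i)}$ using the fact that the inherited grading is the standard one scaled by $L/L_i$ with $\gcd(d_i,L/L_i)=1$, and then exploit the arithmetic consequence $\gcd(d_i,\bar e(B_{\bk;\ba(i)}))=1$ of $S(\ba)\subseteq\{i\}$ together with Corollary~\ref{o87r62390cJj99xjhs263bfe}(a) (the paper uses Corollary~\ref{p98qy386e523brxhbfc6ghvc63x2345618q30}(b) via $M(B_{\bk;\ba(i)})$, which you note as an alternative). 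Your phrasing of the identification step is slightly more direct than the paper's, which routes through an explicit isomorphism $\phi$ and the observation $\lcm(\epsilon,d_n)/\epsilon=d_n$, but the computation is the same.
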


\begin{remark}
By definition, $\cotype(\ba) = |S(\ba)|$.
Clearly, if $\cotype(\ba)=0$ then all $i \in \{1,\dots,n\}$ satisfy  $S(\ba) \subseteq \{i\}$;
if $\cotype(\ba)=1$ then exactly one $i \in \{1,\dots,n\}$ satisfies  $S(\ba) \subseteq \{i\}$;
and if $\cotype(\ba)>1$ then no element $i$ of $\{1,\dots,n\}$ satisfies  $S(\ba) \subseteq \{i\}$.
\end{remark}

\begin{proof}[Proof of Prop.\ \ref{u654edcvbhyu89okmnbgr32qazdcftgyu934}]
We may assume that $i=n$, so it suffices to prove that 
$$
\text{\it if $S(\ba) \subseteq \{n\}$ and $(B_{ \bk ; \ba })_{(x_n)}$ is non-rigid, then $B_{ \bk ; \ba(n) }$ is non-rigid.}
$$
Suppose that $S(\ba) \subseteq \{n\}$ and that $(B_{ \bk ; \ba })_{(x_n)}$ is non-rigid.
Thm \ref{7654cvhgfsd2d34fds23f42dfdghjk8l} implies that $( B_{ \bk ; \ba } / x_n B_{ \bk ; \ba } )^{(d_n)}$ is non-rigid.
Note that $B_{ \bk ; \ba } / x_n B_{ \bk ; \ba } \isomdot B_{ \bk ; \ba(n) }$ (see Notation \ref{98if9jvs9d772jfpd0vhf9w93uo0541hf9});
we claim:
\begin{equation}  \label {9879686754evcxyui9q}
\big( B_{ \bk ; \ba } / x_n B_{ \bk ; \ba } \big)^{(d_n)} \isomdot \big( B_{ \bk ; \ba(n) } \big)^{(d_n)} .
\end{equation}
This has to be carefully argued, because in general the condition $A \isomdot B$ does not imply that $A^{(d)} \isomdot B^{(d)}$ for all $d$.\footnote{For instance,
let $A = \bk[X_1,X_2] = \kk2$ and $B = \bk[Y_1,Y_2] = \kk2$ where $\deg(X_i)=1$ and $\deg(Y_i)=2$.
Then $A \isomdot B$ is true but $A^{(2)} \isomdot B^{(2)}$ is false, as $B^{(2)} = B$ is a polynomial ring but $A^{(2)}$ is not.}
If we use the notations $\bar B =B_{ \bk ; \ba } / x_n B_{ \bk ; \ba }$, $C = B_{ \bk ; \ba(n) }$ and $\epsilon = \gcd(d_1,\dots,d_{n-1})$,
then $e(\bar B) = \epsilon$
and there is an isomorphism of $\bk$-algebras $\phi : \bar B \to C$ satisfying:
$$
\phi( \bar B_{i} ) = \begin{cases}
C_{i/\epsilon} & \text{if $i \in \epsilon\Nat$,} \\
0 & \text{if $i \in \Nat \setminus \epsilon\Nat$.}
\end{cases}
$$
Let $\ell = \lcm(\epsilon,d_n)$; then $e( \bar B^{(d_n)} ) = \ell$ by Cor.\ \ref{mMpPiq13096gxvbw5resip0tfs}(a), so $\bar B^{(d_n)} = \bar B^{(\ell)}$ and hence
$$
\textstyle
\phi\big( \bar B^{(d_n)} \big)
= \phi\big( \bar B^{( \ell )} \big)
= \phi\big( \bigoplus_{ i \in \ell\Nat } \bar B_{i} \big)
= \bigoplus_{ i \in \ell\Nat } C_{i/\epsilon}
= \bigoplus_{ j \in (\ell/\epsilon)\Nat } C_{j} = C^{( \ell/\epsilon )} ,
$$
showing that $( \bar B )^{(d_n)} \isomdot C^{(\ell/\epsilon)}$.
Since $\gcd(\epsilon,d_n) = \gcd(d_1, \dots, d_n) = 1$, we have $\ell/\epsilon = d_n$,
so $( \bar B )^{(d_n)} \isomdot C^{(d_n)}$.  This proves \eqref{9879686754evcxyui9q}.
Next, we prove:
\begin{equation}  \label {i76f5de4ws0A0olciunmnbvce231zx9}
\gcd( d_n , \bar e(B_{ \bk ; \ba(n) }) ) = 1 .
\end{equation}
We have $\bar e(B_{ \bk ; \ba(n) }) = \prod_{j=1}^{n-1} \frac{\lcm(a_1,\dots,a_{n-1})}{\lcm(a_1,\dots, \widehat{a_j} , \dots, a_{n-1})}$
by Lemma \ref{o8y7t626rf53deg23489jru3sgbref6}.
Suppose that $p$ is a prime number that divides both $d_n$ and $\bar e(B_{ \bk ; \ba(n) })$.
Since $p \mid \prod_{j=1}^{n-1} \frac{\lcm(a_1,\dots,a_{n-1})}{\lcm(a_1,\dots, \widehat{a_j} , \dots, a_{n-1})}$,
there exists $j \in \{1,\dots,n-1\}$ such that $v_p(a_j) >  \max( v_p(a_1) , \dots, \widehat{ v_p(a_j) } , \dots, v_p(a_{n-1}) )$.
Since $p$ divides $d_n = L/a_n$, we have $v_p(a_n) < \max( v_p(a_1) , \dots, v_p(a_{n-1}) ) = v_p(a_j)$, so
$v_p(a_j) >  \max( v_p(a_1) , \dots, \widehat{ v_p(a_j) } , \dots, v_p(a_{n}) ) = v_p( L_j )$.
It follows that $v_p(L) \ge v_p(a_j) > v_p( L_j )$, so $j \in S(\ba)$, which contradicts $S(\ba) \subseteq \{n\}$.
So $p$ does not exist, and this proves \eqref{i76f5de4ws0A0olciunmnbvce231zx9}.

Since $\big( B_{ \bk ; \ba } / x_n B_{ \bk ; \ba } \big)^{(d_n)}$ is non-rigid,
\eqref{9879686754evcxyui9q} implies that $(B_{ \bk ; \ba(n) })^{(d_n)}$ is non-rigid.
Thus, $d_n \in \NR( B_{ \bk ; \ba(n) } ) = \bigcup_{ d \in M( B_{ \bk ; \ba(n) } ) } \Iscr_d$,
where the equality is Cor.\ \ref{p98qy386e523brxhbfc6ghvc63x2345618q30}(a).
It follows that there exists $d \in M( B_{ \bk ; \ba(n) } )$ such that $d \mid d_n$.
Cor.\ \ref{p98qy386e523brxhbfc6ghvc63x2345618q30}(b) gives $d \mid \bar e(B_{ \bk ; \ba(n) })$,
so $d=1$ by \eqref{i76f5de4ws0A0olciunmnbvce231zx9}.
This means that $1 \in M( B_{ \bk ; \ba(n) } )$, so $B_{ \bk ; \ba(n) }$ is non-rigid, as desired.
\end{proof}

\begin{example}  \label {lBoibhgxfd7ii03A8vevcCxEvv82}
Let $\bk$ be a field of characteristic $0$ and let $n$ be an integer such that:\footnote{Observe that if $n \in \{4,5\}$
then $n$ satisfies \eqref{conditionOnn}, because $\Pscr(n-1)$ is true by \ref{ConjPn}.}
\begin{equation} \label {conditionOnn}
\text{$n\ge4$ and $\Pscr(n-1)$ is true.}
\end{equation}
Let $\ba = (a_1,\dots,a_n) \in \Gamma_n$ be such that $\cotype(\ba)=0$ and consider $B = B_{\bk;\ba} = B_{ \bk ; a_1,\dots,a_n }$.
Then:
\begin{enumerata}

\item $B_{(x_i)}$ is rigid for every $i \in \{1,\dots,n\}$;
\item for each basic cylinder $U$ of $\Proj(B)$ and each  $i \in \{1,\dots,n\}$, we have $\bbV_+(x_i) \cap U \neq \emptyset$.

\end{enumerata}
Indeed, for each $i \in \{1,\dots,n\}$ we have $\ba(i) \in \Gamma_{n-1}$, so $B_{ \bk  ; \ba(i) }$ is rigid because $\Pscr(n-1)$ is true;
since $S(\ba) = \emptyset \subseteq \{i\}$, Prop.\ \ref{u654edcvbhyu89okmnbgr32qazdcftgyu934} implies that $B_{(x_i)}$ is rigid.  This proves (a).
By Rem.\ \ref{iytd43J45628c93j}, (b) follows from (a).
\end{example}

\section{Fibers of a homogeneous polynomial}
\label{sectionFibersofahomogeneouspolynomial}

The title of this section refers to Prop.\ \ref{u65vmklio909543qsdfh4vue}, which is the main result of the section.

\begin{lemma}[\cite{SchinzelBook2000}, Chap.~3, \S~3, Cor.~1]
\label {Matsudp23rwije}
Let $\bk$ be an algebraically closed field, $n\ge1$ and $F \in A = \kk n$. Then the set
$ \setspec{ \lambda \in \bk }{ \text{$F-\lambda$ is not irreducible in $A$} } $
is either finite or equal to $\bk$, and it is equal to $\bk$ if and only if
$F = P(G)$ for some $G \in A$ and some univariate polynomial $P(T) \in \bk[T]$
such that $\deg_T P(T) > 1$.
\end{lemma}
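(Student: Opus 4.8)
The statement is a form of the Bertini--Krull theorem, and the cited reference \cite{SchinzelBook2000} supplies a proof; the route I would take is as follows. Put $L = \Frac(A) = \bk(X_1,\dots,X_n)$. One proves the two implications of the ``if and only if'' separately, and the dichotomy (``finite or equal to $\bk$'') is then automatic, since a set that is not finite will be shown to equal $\bk$. Throughout one may assume $F \notin \bk$; the case $F \in \bk$ is degenerate, as then $F-\lambda$ is never irreducible and $F$ is vacuously of the excluded form.

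\emph{The easy implication.} Suppose $F = P(G)$ with $P(T) \in \bk[T]$ and $d := \deg_T P > 1$. Then $G \notin \bk$ (otherwise $F \in \bk$), and for every $\lambda \in \bk$ the polynomial $P(T) - \lambda \in \bk[T]$ has degree $d \ge 2$; since $\bk$ is algebraically closed it splits as $c \prod_{i=1}^{d}(T - \alpha_i)$ with $c \in \bk^{*}$, whence $F - \lambda = c \prod_{i=1}^{d}(G - \alpha_i)$ is a product of $d \ge 2$ nonzero non-units of $A$ (using $A^{*} = \bk^{*}$). Thus $F - \lambda$ is not irreducible for any $\lambda$, i.e.\ the set equals $\bk$.

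\emph{The main implication.} Assume the set is not finite. Let $E$ be the relative algebraic closure of $\bk(F)$ in $L$; then $\trdeg(E : \bk) = 1$. The crucial step is to show $\bk(F) \subsetneq E$, which is the Bertini-type input: if $\bk(F)$ were algebraically closed in $L$, then (characteristic $0$) the generic fiber of the morphism $F : \Spec A \to \aff^1$ would be geometrically integral --- equivalently, with $t$ an indeterminate, $F - t$ would be irreducible over $\overline{\bk(t)}$ --- and then Bertini's irreducibility theorem (or Noether's theorem on reducible specializations) would force $F - \lambda$ to be geometrically integral, hence irreducible in $A$, for all but finitely many $\lambda \in \bk$, contradicting the hypothesis. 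So $d := [E : \bk(F)] \ge 2$. Now, since $\bk$ is algebraically closed, $E \subseteq L$, and $\trdeg(E:\bk)=1$, L\"uroth's theorem gives $G \in L$ with $E = \bk(G)$; as $F \in \bk(G)$ and $G$ is transcendental over $\bk$, we get $F = \phi(G)$ for a unique $\phi \in \bk(T)$ with $\deg \phi = [\bk(G):\bk(F)] = d \ge 2$. Finally one normalizes: using $A^{*} = \bk^{*}$, that $F \in A$, and that $\bk(G)$ is a rational function field in one variable over $\bk$, one replaces $G$ by a suitable M\"obius transform $\tilde G \in A$ and $\phi$ by a polynomial $P \in \bk[T]$ with $F = P(\tilde G)$ and $\deg_T P = d \ge 2$.

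The main obstacle is the Bertini/Noether specialization step: this is the genuine geometric content, and it is where the hypothesis $n \ge 2$ is really used (for $n = 1$ everything is elementary, reducing to $\deg F \ge 2$). The remaining steps are comparatively formal --- L\"uroth's theorem is invoked as a black box, and the passage from the rational presentation $F = \phi(G)$ to a polynomial presentation $F = P(\tilde G)$, while it requires a short argument about poles and units, is routine.
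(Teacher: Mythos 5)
The paper does not prove this lemma; it is quoted directly from Schinzel's book (Chap.~3, \S~3, Cor.~1), so there is no in-paper argument to compare your sketch against. Taken on its own, your outline is a correct account of the classical Bertini--Krull argument and agrees in substance with the standard proofs, including the one in the cited reference.

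Two precisions are worth recording. First, the ``L\"uroth'' input you use for $n\ge 2$ is not the elementary one-variable L\"uroth theorem but the statement that a transcendence-degree-one subfield $E$ of $\bk(X_1,\dots,X_n)$ is purely transcendental over an algebraically closed $\bk$ --- geometrically, that a unirational curve is rational; this does hold (in characteristic $0$ it is immediate from L\"uroth via normalization, and in fact it holds in all characteristics for curves), but it should be invoked as such. Second, the normalization at the end --- passing from $F=\phi(G)$ with $\phi\in\bk(T)$ rational and $G\in L$ to a presentation $F=P(\tilde G)$ with $P\in\bk[T]$ and $\tilde G\in A$ --- is the place where the real work lies, and it is somewhat more than ``a short argument about poles and units.'' The clean route is to show that $E\cap A$ is a normal one-dimensional $\bk$-subalgebra of $A$ of the form $\bk[\tilde G]$ for a suitable $\tilde G$ (this is the theory of \emph{closed polynomials}); since $F\in E\cap A$, one then gets $F=P(\tilde G)$ with $\deg_T P=[E:\bk(F)]\ge 2$. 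With those caveats the proposal is sound, but note that the paper itself deliberately outsources all of this to \cite{SchinzelBook2000}.
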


\begin{lemma}[Lemma 6.1 of \cite{Daigle:StructureRings}]  \label {cccy5hc7cnc8c4cg}
Let $A$ be an algebra over a field $\bk$, 
let $K/\bk$ be an algebraic Galois extension and write $G=\Gal(K/\bk)$ and $A_K = K \otimes_\bk A$.
For each $\theta \in G$, let $\tilde\theta : A_K \to A_K$ be the  $A$-automorphism of $A_K$ given by
$\tilde\theta (\lambda \otimes a ) =\theta(\lambda) \otimes a $ ($\lambda\in K$, $a\in A$).
If $b$ is an element of $A_K$ satisfying
\begin{equation} \label {cijvno2i3JGFdWKJ732e83}
\forall_{ \theta \in G }\ \exists_{\lambda \in K^*}\  \tilde\theta(b) = \lambda b 
\end{equation}
then there exists $\lambda \in K^*$ such that $\lambda b \in A$.
\end{lemma}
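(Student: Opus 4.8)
The plan is to recognize the hypothesis as saying that $\theta \mapsto \lambda_\theta$ is a $1$-cocycle and to kill it by Hilbert's Theorem 90. First I would dispose of the trivial case $b=0$ (take $\lambda=1$) and assume $b \neq 0$. Write $b = \sum_{i=1}^m \ell_i \otimes a_i$ with $a_1,\dots,a_m \in A$ linearly independent over $\bk$ and $\ell_1,\dots,\ell_m \in K\setminus\{0\}$; since extending $\{a_i\}$ to a $\bk$-basis of $A$ produces a $K$-basis of $A_K$, the elements $1\otimes a_1,\dots,1\otimes a_m$ are $K$-linearly independent, so this expression is unique. Because $K/\bk$ is algebraic, $\bk(\ell_1,\dots,\ell_m)$ is finite over $\bk$, and because $K/\bk$ is Galois its normal closure inside $K$ is a finite Galois subextension $L/\bk$ with $L\subseteq K$; thus $b \in L\otimes_\bk A$.

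Next I would analyze the scalars. For $\theta \in G$, applying $\tilde\theta$ to $b$ and using \eqref{cijvno2i3JGFdWKJ732e83} gives $\sum_i \theta(\ell_i)\otimes a_i = \tilde\theta(b) = \lambda b = \sum_i \lambda\ell_i\otimes a_i$ for some $\lambda=\lambda_\theta \in K^*$; comparing coefficients forces $\theta(\ell_i)=\lambda_\theta\ell_i$ for all $i$, hence $\lambda_\theta = \theta(\ell_1)/\ell_1 \in L^*$, and $\lambda_\theta$ depends only on $\theta|_L$. (This argument pins down $\lambda_\theta$ uniquely even though $A$ need not be a domain, so one never cancels $b$ from an equation.) From $\widetilde{\sigma\tau}=\tilde\sigma\circ\tilde\tau$ I compute $\lambda_{\sigma\tau}b = \tilde\sigma(\lambda_\tau b) = \sigma(\lambda_\tau)\lambda_\sigma b$, whence $\lambda_{\sigma\tau} = \lambda_\sigma\,\sigma(\lambda_\tau)$; so $\bar\theta\mapsto\lambda_\theta$ is a $1$-cocycle of $\Gal(L/\bk)$ with values in $L^*$.

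Then I would invoke Hilbert's Theorem 90, $H^1(\Gal(L/\bk),L^*)=\{1\}$, to obtain $\mu \in L^*$ with $\lambda_\theta = \theta(\mu)/\mu$ for every $\theta$. Setting $\lambda=\mu^{-1}\in K^*$, for each $\theta\in G$ I get $\tilde\theta(\lambda b) = \theta(\mu)^{-1}\,\lambda_\theta\,b = \theta(\mu)^{-1}\,\theta(\mu)\mu^{-1}\,b = \lambda b$, so $\lambda b$ is fixed by every $\tilde\theta$. Finally, choosing a $\bk$-basis $(e_j)_{j}$ of $A$ exhibits $A_K=K\otimes_\bk A=\bigoplus_j K(1\otimes e_j)$ with each $\tilde\theta$ acting coordinatewise via $\theta$; since $K^G=\bk$, the $G$-fixed subring is exactly $\bigoplus_j\bk(1\otimes e_j) = 1\otimes A = A$. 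Hence $\lambda b \in A$, which is the claim.

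The main obstacle is really just the conceptual move of spotting the cocycle relation, orienting it correctly, and citing Hilbert 90; the remaining ingredients — finiteness of the subextension $L$, the coefficient comparison in $A_K$, and the identification $(A_K)^G=A$ — are routine. The only subtlety to keep in mind is that $A$ is merely a $\bk$-algebra (not necessarily a domain or reduced), so all uniqueness of scalars must be extracted from linear independence of the $1\otimes a_i$ rather than from cancellation in $A_K$.
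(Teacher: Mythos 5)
Since the paper cites this statement as Lemma 6.1 of \cite{Daigle:StructureRings} and does not reproduce a proof, there is no in-paper argument to compare against; I will simply assess yours. Your proof is correct: the reduction to a finite Galois subextension $L$, the coefficient comparison via $K$-linear independence of the $1\otimes a_i$ (the right way to extract scalars without cancelling in $A_K$, which need not be a domain), the verification of the cocycle relation, Hilbert~90, and the identification $(A_K)^G = 1\otimes A$ all hold up.

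That said, the Hilbert~90 step is a detour that your own computation has already rendered superfluous. Once you establish $\theta(\ell_i)=\lambda_\theta\,\ell_i$ for every $i$, and in particular $\lambda_\theta=\theta(\ell_1)/\ell_1$, you have explicitly written the cocycle as a coboundary with $\mu=\ell_1$, so the theorem is not doing any work. More directly: set $\lambda=\ell_1^{-1}$; then $\lambda b=\sum_i(\ell_i/\ell_1)\otimes a_i$, and the relation $\theta(\ell_i)=\lambda_\theta\ell_i$ for all $i$ gives $\theta(\ell_i/\ell_1)=\ell_i/\ell_1$ for every $\theta\in G$, so each $\ell_i/\ell_1$ lies in $K^G=\bk$ and $\lambda b\in 1\otimes A$. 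This shortcut also removes the need to pass to the finite subextension $L$. The cohomological framing is conceptually natural and not wrong, but the statement is elementary enough that you never actually need to invoke Hilbert~90.
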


\begin{lemma}  \label {9823bSnby5is5wckdsohf}
Let $\bk$ be a field of characteristic $0$, $\ck$ an algebraic closure of $\bk$, $n\ge1$ and $R = \bk[X_1,\dots,X_n] = \bk^{[n]}$.
Endow $R$ with an $\Nat$-grading such that each $X_i$ is homogeneous and let $f$ be a homogeneous prime element of $R$ of positive degree.
Then $f-\lambda$ is irreducible over $\ck$ for every $\lambda \in \ck^*$.
\end{lemma}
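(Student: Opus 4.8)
The statement is that a homogeneous prime element $f \in R = \bk[X_1,\dots,X_n]$ of positive degree has the property that $f - \lambda$ is irreducible over $\ck$ for every $\lambda \in \ck^*$. My plan is to reduce to the case $\bk = \ck$ and then apply Lemma \ref{Matsudp23rwije}. Since $f$ is prime in $R$ it is in particular irreducible, but irreducibility over $\bk$ need not survive base change to $\ck$; so the first genuine step is to upgrade "$f$ prime in $R$" to "$f$ irreducible in $R_{\ck} = \ck[X_1,\dots,X_n]$." For this I would use Lemma \ref{cccy5hc7cnc8c4cg}: it suffices to check the claim when $\ck/\bk$ is a finite Galois extension and then pass to the limit, or — more carefully — to argue directly. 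Write $f = f_1 \cdots f_r$ as a product of irreducibles in $\ck[X_1,\dots,X_n]$; the Galois group $G = \Gal(\ck/\bk)$ permutes the $f_i$ (up to scalars), and since $f$ is $G$-invariant and irreducible over $\bk$, the $G$-orbit of $f_1$ must account for all of $f$. If the orbit has size $>1$, then some nontrivial product of conjugates of $f_1$ is a proper factor fixed by $G$ up to scalars; by Lemma \ref{cccy5hc7cnc8c4cg} that product, rescaled, lies in $R$ and is a proper divisor of $f$, contradicting irreducibility of $f$ in $R$. Hence $r = 1$ and $f$ is irreducible over $\ck$. (I should also note the $f_i$ can be taken homogeneous, since $f$ is homogeneous and $\ck[X_1,\dots,X_n]$ is a graded UFD, so irreducible factors of a homogeneous element are homogeneous.)

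Next, with $f$ irreducible over $\ck$, I apply Lemma \ref{Matsudp23rwije} to $A = \ck[X_1,\dots,X_n]$ and $F = f$. That lemma says the set $\Lambda = \setspec{ \lambda \in \ck }{ f - \lambda \text{ is not irreducible in } A }$ is either finite or all of $\ck$, and equals $\ck$ precisely when $f = P(G)$ for some $G \in A$ and some $P(T) \in \ck[T]$ with $\deg_T P > 1$. So I need to rule out that $f$ is a nonlinear polynomial in some element $G$ of $A$. The key observation is that $f$ is homogeneous of positive degree. If $f = P(G) = c_d G^d + c_{d-1} G^{d-1} + \cdots + c_0$ with $d = \deg_T P \ge 2$ and $c_d \neq 0$, I would argue that $G$ may be taken homogeneous: decomposing $G = \sum_j G_j$ into homogeneous components and comparing top-degree (and bottom-degree) terms in the homogeneous element $f = P(G)$ forces $G$ to be homogeneous plus a constant, and absorbing the constant into a reparametrization $P(T) \rightsquigarrow P(T + \text{const})$ we may assume $G$ is homogeneous (of positive degree, else $f$ would be constant) and $c_0 = 0$ is not needed. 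But then $f = c_d G^d + (\text{lower}) $, and homogeneity of $f$ forces $f = c_d G^d$ exactly (all other $c_i G^i$ have different degrees and must vanish), so $f$ is divisible by $G$ with multiplicity $d \ge 2$ — contradicting that $f$ is irreducible (hence squarefree) in $A$. Therefore $\Lambda$ is finite, and in particular $\Lambda \ne \ck$; since $\ck$ is infinite (it has characteristic $0$), $\Lambda$ being finite does not by itself give irreducibility at \emph{every} nonzero $\lambda$ — so I need one more step.

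To handle the finitely many bad values, I would exploit homogeneity via a scaling argument. For $t \in \ck^*$, the automorphism of $\ck[X_1,\dots,X_n]$ sending $X_i \mapsto t^{d_i} X_i$ (where $d_i = \deg X_i$) multiplies $f$ by $t^{\deg f}$, hence carries $f - \lambda$ to $t^{\deg f}(f - \lambda t^{-\deg f})$. Thus irreducibility of $f - \lambda$ is equivalent to irreducibility of $f - \lambda t^{-\deg f}$ for every $t \in \ck^*$. Since $\deg f > 0$ and $\ck$ is algebraically closed, as $t$ ranges over $\ck^*$ the value $\lambda t^{-\deg f}$ ranges over all of $\ck^*$. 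So either $f - \mu$ is irreducible for \emph{all} $\mu \in \ck^*$, or for \emph{none} — i.e. $\ck^* \subseteq \Lambda$. The latter would make $\Lambda$ infinite, hence (by Lemma \ref{Matsudp23rwije}) equal to $\ck$, which we have just excluded. Therefore $f - \lambda$ is irreducible over $\ck$ for every $\lambda \in \ck^*$, as claimed.

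\textbf{Main obstacle.} The step I expect to require the most care is the descent of irreducibility from $\bk$ to $\ck$ in the first paragraph: making the Galois-orbit argument precise (including the reduction to finite subextensions so that Lemma \ref{cccy5hc7cnc8c4cg} applies, and tracking the scalar ambiguities in "the $f_i$ are permuted up to scalars") is the delicate part, whereas the homogeneity arguments in the second and third paragraphs are essentially bookkeeping with graded pieces. An alternative, possibly cleaner route for that first step is to observe that $R/fR$ is a domain, that $R/fR$ is geometrically integral would follow if we knew $R/fR$ is a normal (or just reduced) domain with $\bk$ algebraically closed in it — but since we only assume $f$ prime, the Galois argument via Lemma \ref{cccy5hc7cnc8c4cg} seems the most self-contained.
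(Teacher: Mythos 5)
Your first step is incorrect: the claim that a homogeneous prime element of $R$ remains irreducible over $\ck$ is false in general. Take $\bk = \Reals$ and $f = X^2 + Y^2$ with the standard grading; this is a homogeneous prime element of $\Reals[X,Y]$, yet over $\ck = \Comp$ it factors as $(X+iY)(X-iY)$. (This is consistent with the lemma's statement, which only concerns $f - \lambda$ for $\lambda \neq 0$.) The flaw in your Galois-orbit argument is the assertion that if the orbit of $f_1$ has size $> 1$ then some nontrivial proper subproduct is fixed by $\Gal(\ck/\bk)$ up to scalars: once you know the orbit of $f_1$ accounts for all of $f$, the only $G$-stable subsets of that orbit are the empty set and the full orbit, so there is no such proper subproduct. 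In the example above, the orbit is $\{X+iY,\,X-iY\}$ and neither factor is fixed up to scalar by complex conjugation.

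Fortunately the remainder of your argument uses only that $f$ is squarefree over $\ck$ — equivalently, in characteristic $0$, that $f$ is not of the form $g^r$ in $\ck[X_1,\dots,X_n]$ with $r \ge 2$ — and this weaker statement is true. The paper proves exactly this and nothing stronger: given $f = g^r$ over $\ck$ with $r \ge 2$, each $\tilde\theta(g)$ ($\theta \in \Gal(\ck/\bk)$) satisfies $\tilde\theta(g)^r = f$, so $\tilde\theta(g)/g$ is an $r$-th root of unity, and Lemma \ref{cccy5hc7cnc8c4cg} applied to $g$ itself yields $\lambda \in \ck^*$ with $\lambda g \in R$, whence $f = \lambda^{-r}(\lambda g)^r$ contradicts irreducibility of $f$ in $R$. (Alternatively, squarefreeness over $\ck$ follows from a derivative argument in the style of Lemma \ref{87t7weg5r237r8jwngbvcb37wj83y}.) With your first step replaced by either of these, your plan matches the paper's: the same scaling automorphism $X_i \mapsto \lambda^{d_i}X_i$ to show that the irreducibility status of $f - \lambda$ is constant on $\ck^*$, the same appeal to Lemma \ref{Matsudp23rwije}, and the same use of homogeneity to reduce $f = P(g)$ to $f = g^r$. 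The only remaining difference is logical arrangement — the paper runs the scaling argument first, then argues by contradiction via Matsuda's lemma, feeding Lemma \ref{cccy5hc7cnc8c4cg} in at the very end, rather than establishing anything about $f$ over $\ck$ up front.
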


\begin{proof}
Let $\bar R = \ck[X_1,\dots,X_n] = \ck^{[n]}$.
Let $m=\deg(f)>0$ and $d_i = \deg(X_i)$ ($1 \le i \le n$).
For each $\lambda \in \ck^*$, consider the $\ck$-automorphism $\mu_\lambda$ of $\bar R$ defined by
$\mu_\lambda(X_i) = \lambda^{d_i} X_i$ ($1 \le i \le n$).
If $\lambda, \lambda_0 \in \ck^*$ then $\mu_\lambda( f - \lambda^m \lambda_0 ) = \lambda^m(f-\lambda_0)$,
so the $\ck$-algebras $\bar R / (f - \lambda_0)$ and $\bar R / (f - \lambda^m \lambda_0)$ are isomorphic.
Since $m>0$, it follows that
\begin{equation}  \label {765432erghxdnjkrg67h8j90monk}
\text{for all $\lambda_1,\lambda_2 \in \ck^*$, \quad $\bar R / (f - \lambda_1)$ is $\ck$-isomorphic to $\bar R / (f - \lambda_2)$.}
\end{equation}

Arguing by contradiction, suppose that $f-\lambda$ is reducible in $\bar R$, for some $\lambda \in \ck^*$.
Then, by \eqref{765432erghxdnjkrg67h8j90monk}, $f-\lambda$ is reducible in $\bar R$ for all $\lambda \in \ck^*$.
So Lemma \ref{Matsudp23rwije} implies that $f = P(g)$ for some $g \in \bar R$ and some univariate polynomial $P(T) \in \ck[T]$ such
that $\deg_T P(T) > 1$. Using that $f$ is homogeneous of positive degree, we see that $g$ may be chosen so as to have $P(T) = T^r$, where $r\ge2$.
Thus, $f = g^r$.
For each $\theta \in \Gal(\ck/\bk)$, define $\tilde\theta : \bar R \to \bar R$ as in Lemma \ref{cccy5hc7cnc8c4cg}.
Then $\tilde\theta(g)^r = f$, so $( \tilde\theta(g) / g )^r = 1$.
This shows that for each $\theta \in \Gal(\ck/\bk)$ there exists $\lambda_\theta \in \ck^*$ such that $\tilde\theta(g)=\lambda_\theta g$.
By Lemma \ref{cccy5hc7cnc8c4cg}, it follows that there exists $\lambda \in \ck^*$ such that $\lambda g \in R$.
So $f = (\lambda^{-r}) (\lambda g)^r$ is not irreducible in $R$, a contradiction.
\end{proof}

\begin{lemma}  \label {87t7weg5r237r8jwngbvcb37wj83y}
Let $\bk$ be a field of characteristic $0$, $\ck$ an algebraic closure of $\bk$, $n\ge2$,
and $f$ an irreducible element of $R = \bk[X_1, \dots, X_{n}] = \kk n$.
Then the zero-set of $( f, \frac{\partial f}{\partial X_1}, \dots, \frac{\partial f}{\partial X_n} )$ in $\aff^n_{ \ck }$ has dimension
at most $n-2$.
\end{lemma}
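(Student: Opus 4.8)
The plan is to reduce to the statement that the hypersurface $V(f) \subset \aff^n_{\ck}$ is geometrically reduced, i.e. that it has no multiple components, and then invoke the Jacobian criterion. First I would observe that, since $\Char \bk = 0$, the polynomial $f$ remains \emph{separable} as a polynomial in each variable over the relevant function field; more precisely, because $f$ is irreducible in $\bk[X_1,\dots,X_n]$ and $\bk$ is perfect, $f$ factors over $\ck$ into distinct irreducible factors $f = c\, g_1 \cdots g_r$ with the $g_i$ pairwise non-associate (the Galois group $\Gal(\ck/\bk)$ permutes the $g_i$, and no repetition can occur in characteristic $0$). Hence $\bar f := f$ viewed in $\bar R = \ck[X_1,\dots,X_n]$ generates a radical ideal, so $\bar R/(f)$ is reduced, and the same holds after localizing at any prime.

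Next, let $Z \subseteq \aff^n_{\ck}$ be the zero-set of $\big(f, \frac{\partial f}{\partial X_1}, \dots, \frac{\partial f}{\partial X_n}\big)$; this is precisely the singular locus of the hypersurface $H = V(f)$ together with (a priori) some components of $H$ itself along which $f$ vanishes to higher order, but by the previous paragraph there are no such components. So $Z = \Sing(H)$. Since $H$ is a reduced hypersurface in $\aff^n_{\ck}$, each irreducible component of $H$ has dimension $n-1$, and the singular locus of a reduced variety is a proper closed subset of each component; therefore $\dim \Sing(H) \le (n-1) - 1 = n-2$. For the case $r \geq 2$, one also has to account for the intersection loci $V(g_i) \cap V(g_j)$ ($i \neq j$), which lie in $Z$; but each such intersection has dimension at most $n-2$ because $g_i, g_j$ are non-associate irreducibles, so they impose two independent conditions. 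Combining, $\dim Z \le n-2$, which is the assertion.

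The main technical point to get right — and the step I expect to be the real obstacle — is the passage from "$f$ irreducible over $\bk$" to "$f$ squarefree over $\ck$". In characteristic $p$ this can fail (e.g. $f = X_1^p - t$ over an imperfect field), so one genuinely uses $\Char \bk = 0$ (equivalently, that $\bk$ is perfect). The clean way to see it is: if $g^2 \mid f$ in $\bar R$ for some nonconstant $g$, then applying $\tilde\theta$ for $\theta \in \Gal(\ck/\bk)$ and using that $f$ has the given irreducible factorization, one finds $f$ would be a $\geq 2$ power up to a Galois-stable rearrangement, hence (by an argument parallel to the proof of Lemma \ref{9823bSnby5is5wckdsohf}, via Lemma \ref{cccy5hc7cnc8c4cg}) $f$ would be a proper power in $R$ itself, contradicting irreducibility. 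Once squarefreeness over $\ck$ is in hand, the dimension bound is the standard fact that the singular locus of a reduced affine scheme of pure dimension $n-1$ has dimension $\le n-2$, applied component by component, plus the elementary bound on pairwise intersections of the distinct irreducible factors.
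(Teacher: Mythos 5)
Your proposal is correct in outline and reaches the right conclusion, but it follows a genuinely different route than the paper's. The paper proceeds by direct contradiction: if the zero-set had dimension at least $n-1$, the UFD $\bar R=\ck[X_1,\dots,X_n]$ would contain a prime $g$ dividing $f$ and all its partials; choosing $i$ with $\frac{\partial g}{\partial X_i}\neq0$, Gauss's lemma shows $f$ is still irreducible in $K[X_i]$ where $K=\bk(X_1,\dots,\widehat{X_i},\dots,X_n)$, so $\gcd\bigl(f,\frac{\partial f}{\partial X_i}\bigr)=1$ in $K[X_i]$, and the resulting Bezout identity persists over $\bar K[X_i]$, contradicting the existence of $g$. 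This is short, elementary, and avoids saying anything about the factorization of $f$ over $\ck$ or about singular loci.

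Your route instead first establishes that $f$ is squarefree in $\bar R$, then identifies the zero-set with $\Sing(V(f))$, and finally applies the standard density-of-the-smooth-locus theorem (plus the Krull bound on $V(g_i)\cap V(g_j)$) to each irreducible component of $V(f)$. This works, and the reliance on $\Char\bk=0$ (really: $\bk$ perfect) that you flag is exactly right. Two small imprecisions worth noting. First, "the Galois group permutes the $g_i$, and no repetition can occur in characteristic $0$" is not, by itself, an argument for squarefreeness --- permutation of factors doesn't forbid repeated exponents; the correct justification is the one you give in the last paragraph (or, more cleanly, that a reduced finitely generated algebra over a perfect field stays reduced after base change). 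Second, in that last paragraph the conclusion is not literally that "$f$ would be a proper power in $R$"; what Lemma~\ref{cccy5hc7cnc8c4cg} yields, after grouping the $\bar R$-primes by exponent and descending each group, is an identity $f=c\,w^{m}$ with $c\in\bk^*$, $w\in R$ nonconstant and $m\ge2$, which is what contradicts irreducibility of $f$ in $R$ (reducibility, not a literal $m$-th power, is the point). Both issues are cosmetic and easily patched. Net comparison: the paper's proof is leaner and purely algebraic; yours is more structural and buys a clear geometric picture ($f$ squarefree over $\ck$, the zero-set is the singular locus), at the cost of invoking generic smoothness and needing the extra squarefreeness lemma.
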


\begin{proof}
If the conclusion is false then there exists a prime element $g$ of $\bar R = \ck[X_1, \dots, X_{n}]$ that is a common divisor
of $f$, $\frac{\partial f}{\partial X_1}$, \dots, $\frac{\partial f}{\partial X_n}$ in  $\bar R$.
Choose $i$ such that $\frac{\partial g}{\partial X_i} \neq 0$ and note that $\frac{\partial f}{\partial X_i} \neq 0$.
Let $K = \bk(X_1, \dots, \widehat{X_i}, \dots, X_n)$ and $\bar K = \ck(X_1, \dots, \widehat{X_i}, \dots, X_n)$.
Note that $f$ (resp.\ $g$) is an irreducible element of $K[X_i] = K^{[1]}$ (resp.\  of $\bar K[X_i] = \bar K^{[1]}$).
It follows that $\gcd( f, \frac{\partial f}{\partial X_i}) = 1$ in $K[X_i]$, so $uf+v \frac{\partial f}{\partial X_i} = 1$ for some $u,v \in K[X_i]$,
so $\gcd( f, \frac{\partial f}{\partial X_i}) = 1$ in $\bar K[X_i]$, contradicting the fact that $g$ is 
an irreducible element of $\bar K[X_i]$ that divides both $f$ and $\frac{\partial f}{\partial X_i}$.
\end{proof}

We say that a $\bk$-algebra $A$ {\it has trivial units\/} if $A^* = \bk^*$.

\begin{proposition}  \label {u65vmklio909543qsdfh4vue}
Let $\bk$ be a field of characteristic $0$ and $R = \bk[X_1, \dots, X_n] = \kk n$, where $n\ge2$.
Endow $R$ with an $\Nat$-grading such that each $X_i$ is homogeneous and $X_1$ has positive degree.
Let $f$ be a homogeneous prime element of $R = \bigoplus_{i \in \Nat} R_i$ such that $f \notin R_0[X_1]$.
\begin{enumerata}

\item If $c \in \bk$ then $R/(f - c)R$ is a domain with trivial units.

\item If $R/fR$ is rigid then so is $R/(f - c)R$ for every $c \in \bk$.

\end{enumerata}
\end{proposition}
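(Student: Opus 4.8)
The statement has two parts, and the plan treats them separately; part (a) is quick, part (b) is the substance and is engineered to feed into Theorem \ref{7654cvhgfsd2d34fds23f42dfdghjk8l}.

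\textbf{Part (a).} Since $f\notin R_0[X_1]\supseteq R_0$, the homogeneous element $f$ has positive degree; put $m=\deg f$. For $c=0$ the ring $R/fR$ is a domain because $f$ is prime. For $c\in\bk\setminus\{0\}$, Lemma \ref{9823bSnby5is5wckdsohf} gives that $f-c$ is irreducible in $\ck[X_1,\dots,X_n]$, hence also in $R=\bk[X_1,\dots,X_n]$, hence prime as $R$ is a UFD; so $R/(f-c)R$ is a domain.

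\textbf{Part (b), setup.} Fix $c\in\bk$; the case $c=0$ is the hypothesis, so assume $c\neq0$. Replacing $f$ by $c^{-1}f$ — again a homogeneous prime of degree $m$, still outside $R_0[X_1]$, and generating the same ideal as $f$ — reduces the desired implication ``$R/fR$ rigid $\Rightarrow R/(f-c)R$ rigid'' to ``$R/fR$ rigid $\Rightarrow R/(f-1)R$ rigid''. Contrapositively, I must show: if $R/(f-1)R$ is non-rigid then $R/fR$ is non-rigid. To this end, introduce a new variable $X_0$, set $\deg X_0=1$ and $\deg X_i=d_i$ for $i\ge1$ (where $d_i=\deg X_i$ in $R$), and define the $\Nat$-graded $\bk$-algebra $B=\bk[X_0,X_1,\dots,X_n]/(X_0^m-f)$, the relation $X_0^m-f$ being homogeneous of degree $m$. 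The plan is to verify that $B$ and the homogeneous element $x=X_0$ of degree $1$ satisfy the hypotheses of Theorem \ref{7654cvhgfsd2d34fds23f42dfdghjk8l}, and that $B/X_0B\cong R/fR$ while $B_{(X_0)}\cong R/(f-1)R$.

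\textbf{Part (b), verifications.} That $B$ is a domain follows from the classical criterion for irreducibility of $X^m-a$ — applicable since $f$, being irreducible in the UFD $\bk[X_1,\dots,X_n]$, is not a $p$-th power, nor $-4$ times a fourth power, in $\bk(X_1,\dots,X_n)$ — together with Gauss's lemma, $X_0^m-f$ being monic in $X_0$; the same argument over $\ck$, using that $f$ stays square-free over $\ck$, shows $\bar B:=B\otimes_\bk\ck$ is a domain. Normality of $B$ follows by faithfully flat descent from normality of $\bar B$: for $m=1$, $\bar B$ is a polynomial ring; for $m\ge2$, $\bar B$ is a hypersurface, hence satisfies $(S_2)$, and its singular locus lies in $\{X_0=0\}\times V\big(f,\partial f/\partial X_1,\dots,\partial f/\partial X_n\big)\subseteq\aff^{n+1}_{\ck}$, which by Lemma \ref{87t7weg5r237r8jwngbvcb37wj83y} (valid since $n\ge2$ and $f$ is irreducible over $\bk$) has dimension $\le n-2=\dim\bar B-2$, so $\bar B$ satisfies $(R_1)$ and is normal. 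Next, $X_0$ is prime in $B$ because $B/X_0B\cong\bk[X_1,\dots,X_n]/(f)=R/fR$ is a domain. Finally $\haut(B_+)\ge2$: since $f\notin R_0[X_1]$, some $d_j$ with $j\ge2$ must be positive (otherwise $R_0[X_1]=R$), so $X_0,X_1,X_j$ are three variables of positive degree; hence $B_0=\bk[X_i:i\ge1,\ d_i=0]$ has Krull dimension $\le n-2$, and from $B/B_+\cong B_0$ and $\dim B=n$ we get $\haut(B_+)=n-\dim B_0\ge2$. (This is precisely where the hypothesis $f\notin R_0[X_1]$ is used, and essentially the only place.)

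\textbf{Part (b), conclusion.} In $B_{X_0}$ one has $f=X_0^m$, so the degree-$0$ elements $Y_i:=X_iX_0^{-d_i}$ ($1\le i\le n$) generate $B_{(X_0)}$ over $\bk$ and satisfy $f(Y_1,\dots,Y_n)=X_0^{-m}f(X_1,\dots,X_n)=1$ by homogeneity of $f$. This gives a surjection $\bk[Z_1,\dots,Z_n]/(f(Z)-1)\twoheadrightarrow B_{(X_0)}$; the source is a domain by part (a), the target is a domain (the degree-$0$ subring of the domain $B_{X_0}$), and both have dimension $n-1$ (for the target: $B_{X_0}\cong B_{(X_0)}[X_0^{\pm1}]$ has dimension $\dim B=n$), so a surjection of affine $\bk$-domains of equal dimension is an isomorphism, whence $B_{(X_0)}\cong R/(f-1)R$. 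If $R/(f-1)R$ is non-rigid, then $B_{(X_0)}$ is non-rigid, so Theorem \ref{7654cvhgfsd2d34fds23f42dfdghjk8l} yields that $(B/X_0B)^{(1)}=R/fR$ is non-rigid, as required. I expect the main obstacle to be making the normality argument fully rigorous — in particular the passage to $\ck$ so that Lemma \ref{87t7weg5r237r8jwngbvcb37wj83y} applies, and the descent of irreducibility and square-freeness from $\ck$ to $\bk$; everything else is degree bookkeeping.
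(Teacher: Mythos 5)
Your proof is correct and takes essentially the same route as the paper's: both introduce an extra homogeneous variable of degree $1$, form the hypersurface $B=\bk[X_0,X_1,\dots,X_n]/(X_0^m-f)$ (the paper writes it as $\bk[X_1,\dots,X_{n+1}]/(f-cX_{n+1}^m)$), verify that $B$ is a normal $\Nat$-graded affine domain with $\haut(B_+)\ge2$ in which $X_0$ is a homogeneous prime with $B/X_0B\cong R/fR$ and $B_{(X_0)}\cong R/(f-c)R$, and then invoke Theorem~\ref{7654cvhgfsd2d34fds23f42dfdghjk8l}. The only (minor) divergences are in the technical verifications: you establish irreducibility of $X_0^m-f$ via the Capelli criterion rather than the paper's specialization argument using Lemma~\ref{9823bSnby5is5wckdsohf}, and you bound $\haut(B_+)$ by counting the degree-zero variables rather than exhibiting two algebraically independent positive-degree generators; both variants are sound.
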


\begin{proof}
Since $S := R/fR = \bigoplus_{i \in \Nat}S_i$ is an $\Nat$-graded domain, $S^* = S_0^*$.
The fact that $\deg(f)>0$ implies that $S_0 = R_0$, and we have $R_0 = \kk r$ for some $r\ge0$, so $S^* = S_0^* = R_0^* = \bk^*$,
i.e., $R/fR$ has trivial units. So the case $c=0$ of (a) is true.
Obviously, the case $c=0$ of (b) is also true. 

Until the end of the proof, we assume that $c \in \bk^*$.
Lemma \ref{9823bSnby5is5wckdsohf} implies that $R/(f-c)$ is a domain.
To complete the proof of the Proposition, it remains to show that $R/(f-c)$ has trivial units, and that if $R/fR$ is rigid then so is $R/(f-c)$.

Let $m=\deg(f)>0$ and $d_i = \deg(X_i) \in \Nat$ ($1 \le i \le n$).
We claim that $f - c X_{n+1}^m$ is irreducible in $\ck[X_1, \dots, X_{n+1}] = \ck^{[n+1]}$, where  $\ck$ is an algebraic closure of $\bk$.
Indeed, suppose that $G,H \in \ck[X_1, \dots, X_{n+1}]$ are such that $f - c X_{n+1}^m = GH$.
For each $\lambda \in \ck^*$, define $G(\lambda) = G(X_1,\dots,X_n,\lambda)$ and $H(\lambda) = H(X_1,\dots,X_n,\lambda)$.
Then for each $\lambda \in \ck^*$ we have  $f - c \lambda^m = G(\lambda)H(\lambda)$.
Since $f - c \lambda^m$ is irreducible over $\ck$ by Lemma \ref{9823bSnby5is5wckdsohf},
it follows that  $G(\lambda)\in \ck^*$ or $H(\lambda)\in\ck^*$.
So there exists $W \in \{G,H\}$ such that $W(\lambda)\in \ck^*$ is true for infinitely many $\lambda \in \ck$.
This implies that $W \in \ck[ X_{n+1}]$.
There exists $V \in \{G,H\}$ such that  $WV = f-cX_{n+1}^m$;
pick $i \in \{1,\dots,n\}$ such that $\frac{\partial f}{\partial X_i} \neq 0$, then 
$W\frac{\partial V}{\partial X_i} = \frac{\partial f}{\partial X_i} \in \bk[X_1,\dots,X_n] \setminus \{0\}$ shows that $W \in \ck^*$,
which, in turn, shows that $f - c X_{n+1}^m$ is irreducible in $\ck[X_1, \dots, X_{n+1}]$.
So both $B = \bk[X_1, \dots, X_{n+1}] / (f - c X_{n+1}^m)$ and $\bar B = \ck \otimes_\bk B$ are domains.

We claim that $B$ and $\bar B$ are normal. This is clear if $m=1$, so assume that $m\ge2$.
Then the singular locus of the affine variety $X = V(f - c X_{n+1}^m) \subset \aff_{\ck}^{n+1}$ 
is $V(X_{n+1}, f, \frac{\partial f}{\partial X_1}, \dots, \frac{\partial f}{\partial X_n} ) \subseteq \aff_{\ck}^{n+1}$.
By Lemma \ref{87t7weg5r237r8jwngbvcb37wj83y}, we see that the codimension of $\Sing(X)$ in $X$ is greater than $1$;
since $X$ is a hypersurface of $\aff^{n+1}_{\ck}$, it follows that $X$ is normal. Since $X \isom \Spec \bar B$, $\bar B$ is normal.
It is well known that $\bar B \cap \Frac(B) = B$, so $B$ is normal.

Define an $\Nat$-grading on $\bk[X_1, \dots, X_{n+1}]$ by declaring that $X_{n+1}$ is homogeneous of degree $1$
and (for $1 \le i \le n$) $X_i$ is homogeneous of degree $d_i$.
Then $f - c X_{n+1}^m$ is homogeneous, so $B$ is $\Nat$-graded.
Write $B = \bk[x_1, \dots, x_{n+1}] = \bigoplus_{i \in \Nat} B_i$ where $x_i$ is the canonical image of $X_i$ in $B$.
Since $x_1$ and $x_{n+1}$ have positive degrees and are algebraically independent over $B_0$ (because $f - c X_{n+1}^m \notin R_0[X_1,X_{n+1}]$),
$B$ has transcendence degree at least $2$ over $B_0$; so the prime ideal $B_+ = \bigoplus_{i>0} B_i$ has height at least $2$.
We showed:
$$
\text{$B$ is an $\Nat$-graded normal affine $\bk$-domain such that $\haut(B_+)\ge2$.}
$$
Since $B/x_{n+1}B \isom R/fR$ is a domain, $x_{n+1}$ is a homogeneous prime element of $B$ of degree $1$.
We claim that 
\begin{equation}  \label {K5f43d32rcnA391Hi02r5}
B_{(x_{n+1})} \isom R/(f - c) .
\end{equation}
To see this, let $\phi : R \to B_{(x_{n+1})}$ be the surjective $\bk$-homomorphism defined by $\phi( X_i ) = x_i / x_{n+1}^{d_i}$ ($1 \le i \le n$).
We have $\phi(f) = f\big( {x_1}/{x_{n+1}^{d_1}} , \dots, {x_n}/{x_{n+1}^{d_n}} \big) = f / x_{n+1}^m = c$,
so $(f-c) \subseteq \ker\phi$.
Lemma \ref{9823bSnby5is5wckdsohf} implies that $f-c$ is a prime element of $R$, so $(f-c)$ is a height $1$ prime ideal of $R$.
Since $\dim R = n$ and $\dim B_{(x_{n+1})} = \dim \Proj B = n-1$, $\ker\phi$ is a height $1$ prime ideal of $R$, so $\ker\phi = (f-c)$,
which proves \eqref{K5f43d32rcnA391Hi02r5}.

Since $B$ is an $\Nat$-graded domain and $x_{n+1}$ is a homogeneous prime element of $B$ of positive degree, we have $\big( B_{(x_{n+1})} \big)^* = B_0^*$.
Since $\deg(f - cX_{n+1}^m)>0$, $B_0 = \bk[X_1, \dots, X_{n+1}]_0 = \kk r$ for some $r\ge0$, so $\big( B_{(x_{n+1})} \big)^* = B_0^* = \bk^*$.
It then follows from \eqref{K5f43d32rcnA391Hi02r5} that $R/(f-c)$ has trivial units.

Suppose that $R/fR$ is rigid and 
let us apply Thm \ref{7654cvhgfsd2d34fds23f42dfdghjk8l} to $B$ and $x = x_{n+1}$ (note that $d = \deg(x) = 1$).
By that result and the fact that $(B/xB)^{(d)} = B/xB = B/x_{n+1}B \isom R/fR$ is rigid, we obtain that $B_{(x_{n+1})}$ is rigid,
so $R/(f-c)$ is rigid by \eqref{K5f43d32rcnA391Hi02r5}.
\end{proof}


Before giving an application of Prop.\ \ref{u65vmklio909543qsdfh4vue}, we recall some facts about the Fermat cubics:

\begin{bigremark} \label {24799F47-1201-4FF5-A50D-93EB60154D1C} 
For each $n\ge1$, consider the $n$-dimensional Fermat cubic 
$$
\Feul_n = \bbV_{\!+}(X_0^3 + \cdots + X_{n+1}^3) \subset \proj^{n+1} .
$$
\begin{enumerata}

\item If $n$ is even then $\Feul_n$ is rational, by \cite[Thm 1.1]{Massarenti_Fermat_2026}.

\item It is known that $\Feul_3$ does not contain a cylinder.
We thank Michael Chitayat for explaining to us that this 
follows from the fact that $\Feul_3$ is rationally connected \cite{Campana_ConnexRatFano_1992}
and not rational \cite{ClemensGriffithIntermJacobian_1972}.

\end{enumerata}
\end{bigremark}

\begin{example}
Let $n \ge 2$ and define
$$
B_n = \Comp[X_0, \dots, X_n] / (X_0^3 + \cdots + X_n^3 )
\quad \text{and} \quad
A_n = \Comp[X_0, \dots, X_n] / (X_0^3 + \cdots + X_n^3 - 1) .
$$
\begin{enumerata}

\item $A_n$ and $B_n$ are domains with trivial units, and of Krull dimension $n$.

\item $\Spec A_n$ is a smooth affine variety, and if $n$ is even then $\Spec A_n$ is rational.

\item $\Spec B_n$ is a normal affine variety, and if $n$ is odd then $\Spec B_n$ is rational.

\rien{
\item If $n$ satisfies one of the following conditions
\begin{enumerati}
\item $\Pscr(n+1)$ holds (see \ref{ConjPn}),
\item $\Feul_{n-1}$ does not contain a cylinder,
\end{enumerati}
then $B_n^{(d)}$ is rigid for all $d\ge1$ and $A_n$ is rigid.
}

\item If $n \in \{2,3,4\}$ then  $B_n^{(d)}$ is rigid for all $d\ge1$ and $A_n$ is rigid.

\end{enumerata}
To prove these claims, consider
$f_n = X_0^3 + \cdots + X_n^3 \in R_n = \Comp[X_0, \dots, X_n] = \Comp^{[n+1]}$ where $X_0,\dots,X_{n}$ are homogeneous of degree $1$.
We have $B_n = R_n/(f_n)$ and $A_n = R_n/(f_n-1)$, so Proposition \ref{u65vmklio909543qsdfh4vue} implies (in particular)
that $A_n$ and $B_n$ are domains with trivial units. So (a) is true.
It is clear that $\Spec A_n$ is smooth and that $\Spec B_n$ is normal.
Note that the open subset ``$X_{n+1} \neq 0$'' of the Fermat cubic $\Feul_n = \bbV_{\!+}(X_0^3 + \cdots + X_n^3 + X_{n+1}^3) \subset \proj^{n+1}$
is isomorphic to $\Spec A_n$;
if $n$ is even then $\Feul_n$ is rational by Rem.\ \ref{24799F47-1201-4FF5-A50D-93EB60154D1C}, so $\Spec A_n$ is rational. This proves (b).
Since $\Proj B_n = \Feul_{n-1}$, we have the birational equivalences
$\Spec B_n \sim \proj^1 \times \Proj B_n \sim \proj^1 \times \Feul_{n-1}$;
if $n$ is odd then $\Feul_{n-1}$ is rational by Rem.\ \ref{24799F47-1201-4FF5-A50D-93EB60154D1C}, so $\Spec B_n$ is rational. This proves (c).

It is easy to see that $B_n$ is saturated in codimension $1$ (for instance this follows from Lemma \ref{o8y7t626rf53deg23489jru3sgbref6}).
So Cor.\ \ref{1D0x2mv4yhhtfukCs1unc3t46bisgf9f2} implies that, for each $n\ge2$,
the rings $B_n^{(d)}$ ($d \ge 1$) all have the same rigidity status.
Combining that with part (b) of Prop.\ \ref{u65vmklio909543qsdfh4vue} gives:
$$
\text{if $n$ is such that $B_n$ is rigid, then  $B_n^{(d)}$ is rigid for all $d\ge1$ and $A_n$ is rigid.}
$$
So, in order to prove (d), it suffices to show that $B_2$, $B_3$ and $B_4$ are rigid.
Since $\Pscr(3)$ and $\Pscr(4)$ are true (see \ref{ConjPn}), the rings $B_2 \isom B_{\Comp; 3,3,3}$ and $B_3 \isom B_{\Comp; 3,3,3,3}$ are rigid.
Since $\Proj B_4 = \Feul_3$ does not contain a cylinder by \ref{24799F47-1201-4FF5-A50D-93EB60154D1C},
it follows that $B_4$ is rigid by Thm \ref{edh83yf6r79hvujhxu6wrefji9e}.
This proves (d).
(More generally, if $\Pscr(n+1)$ is true or $\Feul_{n-1}$ does not contain a cylinder then $B_n$ is rigid.)


Note that the rigidity of $A_3$ can be proved without invoking Proposition~\ref{u65vmklio909543qsdfh4vue}:
since $\Spec A_3$ is an open subset of $\Feul_3$,
\ref{24799F47-1201-4FF5-A50D-93EB60154D1C} implies that $\Spec A_3$ does not contain a cylinder, so $A_3$ is rigid by \ref{jbhgfdxfewae33w6q920we}.
In contrast, the proofs of rigidity for $A_2$ and $A_4$ appear to require the Proposition.
Moreover, the rigidity of $A_2$ and $A_4$ is noteworthy, since $\Spec A_2$ and $\Spec A_4$ are smooth, rational, and have trivial units.
\end{example}

\section{A result about $\bG(B)$}
\label {SEC:AresultaboutbGB}

The aim of this section is to prove Thm \ref{p0987uhmnbvccjiuseyt428}.

Given a subgroup $H$ of an abelian group $G$, we define the {\it torsion-closure of $H$ in $G$} to be
$\torc(H,G) = \setspec{ x \in G }{ \text{there exists $m \in \Nat\setminus\{0\}$ such that $mx \in H$} }$.

\begin{lemma}  \label {1hfcvbnn92m3n456789df7nm534jh5f6d78}
Let $H$ be a subgroup of a finitely generated abelian group $G$ such that $G/H$ is not torsion.
Let $\Heul(H,G)$ be the set of all subgroups $H'$ of $G$ such that $H \subseteq H' \subset G$ and $G/H' \isom \Integ$.
Then $\Heul(H,G) \neq \emptyset$ and the intersection of all elements of $\Heul(H,G)$ is $\torc(H,G)$.
\end{lemma}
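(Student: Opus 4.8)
The plan is to reduce the whole statement to the quotient group $\bar G = G/H$. Write $\pi : G \to \bar G$ for the canonical epimorphism. Subgroups $H'$ of $G$ with $H \subseteq H' \subseteq G$ correspond bijectively, in an inclusion-preserving way, to subgroups $\bar{H}'$ of $\bar G$ via $H' \mapsto \pi(H')$ and $\bar{H}' \mapsto \pi^{-1}(\bar{H}')$, and for such $H'$ one has $G/H' \isom \bar G/\pi(H')$. Since $G/H' \isom \Integ$ automatically forces $H' \neq G$, this bijection restricts to a bijection between $\Heul(H,G)$ and $\Heul(0,\bar G)$ (in the notation of the statement, with $0$ the trivial subgroup of $\bar G$), sending $H'$ to $\pi(H')$. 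Note that $\bar G$ is finitely generated, being a quotient of $G$, and is not torsion by hypothesis. Using that $\pi^{-1}$ commutes with arbitrary nonempty intersections, the statement for $(H,G)$ will follow from the case $H=0$ applied to $A = \bar G$, together with the observation that $\pi^{-1}\big(\torc(0,\bar G)\big) = \torc(H,G)$ (immediate from the definitions, since $mx \in H \Leftrightarrow m\pi(x)=0$).

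So it remains to treat the case where $A$ is a finitely generated abelian group that is not torsion, and to show that $\Heul(0,A) \neq \emptyset$ and $\bigcap_{K \in \Heul(0,A)} K = \torc(0,A)$, the torsion subgroup of $A$. By the structure theorem I would write $A \isom \Integ^r \oplus F$ with $F$ finite and $r = \rank(A) \geq 1$ (the inequality because $A$ is not torsion); then $\torc(0,A)$ is the summand $0 \oplus F$ and $A/\torc(0,A) \isom \Integ^r$. First, the first-coordinate projection $A \to \Integ$ is surjective, so its kernel lies in $\Heul(0,A)$, which is therefore nonempty. Next, for any $K \in \Heul(0,A)$ the group $A/K \isom \Integ$ is torsion-free, so $\torc(0,A)$ is contained in $K$; hence $\torc(0,A) \subseteq \bigcap_{K} K$. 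Finally, given $x \in A \setminus \torc(0,A)$, its image $\bar x$ in $A/\torc(0,A) \isom \Integ^r$ is nonzero, so some coordinate projection $p : \Integ^r \to \Integ$ has $p(\bar x) \neq 0$; the composite $A \to A/\torc(0,A) \xrightarrow{\,p\,} \Integ$ is then a surjection whose kernel $K$ belongs to $\Heul(0,A)$ and does not contain $x$, so $x \notin \bigcap_K K$. This gives $\bigcap_{K \in \Heul(0,A)} K = \torc(0,A)$ and completes the case $H=0$.

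Nothing genuinely hard is anticipated: the proof is a routine application of the structure theorem for finitely generated abelian groups once the correspondence between subgroups of $G$ containing $H$ and subgroups of $\bar G$ is set up. The only points requiring a modicum of care are verifying that the bijection above restricts correctly to $\Heul(H,G) \leftrightarrow \Heul(0,\bar G)$, and checking the identity $\bigcap_i \pi^{-1}(S_i) = \pi^{-1}\big(\bigcap_i S_i\big)$ for a nonempty family $(S_i)$, which is what transports the computation of the intersection from $\bar G$ back to $G$.
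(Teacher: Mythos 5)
Your proof is correct, and it takes a genuinely different route from the paper. The paper tensors with $\Rat$: it forms the $\Rat$-vector space $G_\Rat = S^{-1}G$ (where $S = \Integ \setminus \{0\}$), uses the fact that $\lambda^{-1}(K_\Rat) = \torc(K,G)$ for the canonical map $\lambda : G \to G_\Rat$, and for each $y \in G \setminus \torc(H,G)$ it picks a codimension-one subspace $V_y$ of $G_\Rat$ containing $H_\Rat$ but missing $\lambda(y)$; the pullback $H_y = \lambda^{-1}(V_y)$ is then shown to lie in $\Heul(H,G)$ because $G/H_y$ is torsion-free and finitely generated, hence free, and $S^{-1}(G/H_y) \isom \Rat$ forces rank $1$. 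In contrast, you first pass to $\bar G = G/H$ via the lattice isomorphism theorem (with the nice observation that $G/H' \isom \Integ$ automatically forces $H' \subsetneq G$, so the correspondence really does restrict to $\Heul(H,G) \leftrightarrow \Heul(0,\bar G)$), check that $\pi^{-1}$ transports both the intersection and the torsion-closure, and then handle the $H=0$ case with the structure theorem $A \isom \Integ^r \oplus F$ and coordinate projections. Both arguments ultimately hinge on the same small fact (a finitely generated torsion-free abelian group of positive rank surjects onto $\Integ$), but the paper reaches it through rational linear algebra in one shot, while your route separates the ``divide out $H$'' step from the ``structure theorem'' step, which is arguably easier to read. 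The reduction to $H=0$ is a clean organizational idea that the paper's proof does not use.
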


\begin{proof}
Viewing $G$ as a $\Integ$-module, let $S = \Integ \setminus \{0\}$ and consider the $\Rat$-vector space $G_\Rat = S^{-1}G$
and the canonical homomorphism $\lambda : G \to G_\Rat$, $x \mapsto x/1$.
For each subgroup $K$ of $G$, let $K_\Rat$ be the subspace $S^{-1}K$ of $G_\Rat$ and note that $\lambda^{-1}( K_\Rat ) = \torc(K,G)$.

Since  $G/H$ is not torsion, there exists $y \in G \setminus \torc(H,G)$.
Consider any such $y$.  Consider the vector $\lambda(y) \in G_\Rat$ and note that $\lambda(y) \notin H_\Rat$.
Choose a vector subspace $V_y$ of $G_\Rat$ such that $H_\Rat \subseteq V_y \subset G_\Rat$, $\dim_\Rat(G_\Rat / V_y) = 1$, and $\lambda(y) \notin V_y$.
Let $H_y = \lambda^{-1}(V_y)$. Then $y \notin H_y$ and $H \subseteq H_y \subset G$; since $G/H_y$ is torsion-free (and hence free, since $G$ is finitely generated),
and since $S^{-1}( G/H_y ) \isom G_\Rat / V_y \isom \Rat$, we have $G/H_y \isom \Integ$; so $H_y \in \Heul(H,G)$. 
This shows that $\Heul(H,G) \neq \emptyset$ and $y \notin I$, where $I$ denotes the intersection of all elements of $\Heul(H,G)$.
This being true for each $y \in G \setminus \torc(H,G)$, we obtain $I \subseteq \torc(H,G)$, and the reverse inclusion is clear.
\end{proof}

\begin{lemma}  \label {jhgf09wk2mnddm9ijh2wedfgj34b}
Let $H_1, \dots, H_n$ be subgroups of an abelian group $G$.  Then
$$
\torc(H_1,G) \cap \dots \cap \torc(H_n,G) = \torc( H_1 \cap \dots \cap H_n , G ) .
$$
\end{lemma}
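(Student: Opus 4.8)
The statement is an equality of subgroups, so the plan is to prove each inclusion separately by unwinding the definition of torsion-closure; the reverse inclusion is immediate and the forward inclusion requires a common-multiplier argument.

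First I would handle the easy inclusion $\torc( H_1 \cap \dots \cap H_n , G ) \subseteq \torc(H_1,G) \cap \dots \cap \torc(H_n,G)$. If $x \in \torc(H_1 \cap \dots \cap H_n, G)$ then there is some $m \in \Nat \setminus \{0\}$ with $mx \in H_1 \cap \dots \cap H_n$, so in particular $mx \in H_j$ for each $j$, whence $x \in \torc(H_j,G)$ for each $j$; thus $x$ lies in the intersection. This needs no computation.

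For the forward inclusion, suppose $x \in \torc(H_1,G) \cap \dots \cap \torc(H_n,G)$. For each $j \in \{1,\dots,n\}$ choose $m_j \in \Nat\setminus\{0\}$ with $m_j x \in H_j$. Set $m = m_1 m_2 \cdots m_n$ (or $\lcm(m_1,\dots,m_n)$, either works); then $m \in \Nat\setminus\{0\}$ and for each $j$ we have $mx = (m/m_j)(m_j x) \in H_j$ since $H_j$ is a subgroup and $m/m_j$ is a nonnegative integer. Hence $mx \in H_1 \cap \dots \cap H_n$, so $x \in \torc(H_1 \cap \dots \cap H_n, G)$. Combining the two inclusions gives the desired equality.

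I do not anticipate any real obstacle here: the only subtlety is making sure the common multiplier $m$ is nonzero (guaranteed since each $m_j \ge 1$) and that $m/m_j$ is an integer, which is why one multiplies (or takes an lcm) rather than something more clever. One could write the whole argument in two or three sentences; the proof in the paper is presumably just that.
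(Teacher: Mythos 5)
Your proof is correct: the reverse inclusion is immediate from $H_1 \cap \dots \cap H_n \subseteq H_j$, and the forward inclusion is handled cleanly by the common-multiplier argument (taking $m = m_1 \cdots m_n$ or $\lcm(m_1,\dots,m_n)$, either of which is a positive integer making $mx \in H_j$ for every $j$). Note that the paper does not actually supply a proof of this lemma but explicitly leaves the verification to the reader, so there is nothing to compare against; your argument is the standard one anyone would write.
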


We leave it to the reader to verify Lemma \ref{jhgf09wk2mnddm9ijh2wedfgj34b}.

\begin{nothing*}  \label {8A76f5swaz23edafgwuvperol0ut83m395ff}
Let $B = \bigoplus_{i \in G} B_i$ be a domain graded by an abelian group $G$ and let $H$ be a subgroup of $\G(B)$.
Let $\Omega = \G(B)/H$ and let $\pi : \G(B) \to \Omega$ be the canonical homomorphism of the quotient group.
Let $\Beul$ denote the ring $B$  endowed with the $\Omega$-grading defined by
$\Beul = \bigoplus_{\omega \in \Omega} \Beul_\omega$, where $\Beul_\omega = \bigoplus_{ \pi(i)=\omega } B_i$ for each $\omega \in \Omega$.
We refer to this as ``the natural $\G(B)/H$-grading''.
It satisfies $\G( \Beul ) = \Omega$. Observe that $\Spec^1( \Beul ) = \Spec^1(B)$ and let us show that
\begin{equation} \label {765O043wsdvbnm32M91q0oijhbLvcl}
\pi\big( \overline{\bbM}( B, \pgoth ) \big) =  \overline{\bbM}( \Beul , \pgoth ) \quad \text{for every $\pgoth \in \Spec^1(B)$.}
\end{equation}
Indeed, let  $\pgoth \in \Spec^1(B)$. If $i \in {\bbM}( B, \pgoth )$ then $B_i \nsubseteq \pgoth$,
so $\Beul_{ \pi(i) }  \nsubseteq \pgoth$ (because $B_i \subseteq \Beul_{ \pi(i) }$) and hence $\pi(i) \in \bbM( \Beul, \pgoth )$.
Thus, $\pi \big(  {\bbM}( B, \pgoth ) \big) \subseteq \bbM( \Beul, \pgoth )$.
Conversely, consider $\omega \in \bbM( \Beul, \pgoth )$. Then $\Beul_\omega  = \bigoplus_{ \pi(i)=\omega } B_i$ is not included in $\pgoth$,
so there exists $i \in \G(B)$ such that $\pi(i)=\omega$ and $B_i \nsubseteq \pgoth$; then $i \in {\bbM}( B, \pgoth )$ and hence
$\omega \in \pi \big(  {\bbM}( B, \pgoth ) \big)$.
This shows that $\pi \big( {\bbM}( B, \pgoth ) \big) = \bbM( \Beul, \pgoth )$, and it follows that
$\pi \big(  \overline{\bbM}( B, \pgoth ) \big) = \overline\bbM( \Beul, \pgoth )$.
So \eqref{765O043wsdvbnm32M91q0oijhbLvcl} is proved.
({\it Caution:} Assertion \eqref{765O043wsdvbnm32M91q0oijhbLvcl} implies that $\pi\big( \bG(B) \big) \subseteq \bG(\Beul)$,
but equality does not necessarily hold.)
\end{nothing*}

The {\it rank\/} of an abelian group $G$ is the dimension of the $\Rat$-vector space $\Rat \otimes_\Integ G$.
See Rem.\ \ref{8765r984f5f7637w} for the definition of $\ML(B)$.

\begin{theorem}   \label {p0987uhmnbvccjiuseyt428}
\it Let $\bk$ be a field of characteristic $0$ and $B$ a normal affine $\bk$-domain graded over $\bk$ by an abelian group $G$.
Assume that $\G(B)/\bG(B)$ is not torsion and define $r = \rank\big( \G(B) / \bG(B) \big)$ and $W = \torc(\bG(B),\G(B))$.
\begin{enumerata}

\item There exists $\pgoth \in \Spec^1(B)$ such that $\G(B)/\overline{\bbM}( B, \pgoth )$ is not torsion.
Consider such a $\pgoth$ and define
$\Heul_\pgoth = \Heul\big( \overline{\bbM}( B, \pgoth ) , \G(B) \big)$ and $T_\pgoth = \torc( \overline{\bbM}( B, \pgoth ) , \G(B) )$.
Then the following hold.
\begin{enumerata}

\item $\Heul_\pgoth \neq \emptyset$

\item For each $H \in \Heul_\pgoth$, there exists $D \in \lnd(B)$ such that $\ker(D) = B^{(H)}$.

\item There exists a subset $\Delta_\pgoth$ of $\lnd(B)$ such that $\bigcap_{D \in \Delta_\pgoth} \ker(D) = B^{ ( T_\pgoth ) }$.

\end{enumerata}

\item There exists a subset $\Delta$ of $\lnd(B)$ such that $\bigcap_{D \in \Delta} \ker(D) = B^{ (W) }$.

\item There exists a field $K$ such that
$$
\text{$\ML(B) \subseteq B^{(W)} \subseteq K \subseteq \Frac(B)$ \ \ and \ \ $\Frac(B) = K^{(r)}$.}
$$
In particular, $\trdeg( B : \ML(B) ) \ge r$.

\item $B$ is non-rigid, and if $G$ is torsion-free then $\Xscr(B) = \bbT(B)$.

\end{enumerata}
\end{theorem}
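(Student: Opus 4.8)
The plan is to build, out of a single height-$1$ homogeneous prime $\pgoth$ with $\G(B)/\overline{\bbM}(B,\pgoth)$ not torsion, a large family of locally nilpotent derivations whose common kernel is a Veronese-type subring, and then to intersect over all such $\pgoth$. First I would establish part (a-i): since $\bG(B)$ is the intersection of the $\overline{\bbM}(B,\pgoth)$ over $\pgoth\in\Spec^1(B)$ (and $\Spec^1(B)\neq\emptyset$ because $\G(B)/\bG(B)$ is not torsion, so in particular $\bG(B)\neq\G(B)$), if every $\overline{\bbM}(B,\pgoth)$ had torsion quotient in $\G(B)$, then each $\torc(\overline{\bbM}(B,\pgoth),\G(B))=\G(B)$, and by Lemma \ref{jhgf09wk2mnddm9ijh2wedfgj34b} together with the finite-generation of $G$ (and Prop.\ \ref{89fDCN584uKHJu7u59034uf}(b) giving finitely many distinct $\overline{\bbM}(B,\pgoth)$) we would get $\torc(\bG(B),\G(B))=\G(B)$, i.e.\ $\G(B)/\bG(B)$ torsion — contradiction. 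For (a-ii): fix $\pgoth$ as above and $H\in\Heul_\pgoth$ (nonempty by Lemma \ref{1hfcvbnn92m3n456789df7nm534jh5f6d78}), so $\overline{\bbM}(B,\pgoth)\subseteq H\subset\G(B)$ with $\G(B)/H\cong\Integ$. Passing to the natural $\G(B)/H$-grading $\Beul$ of $B$ as in \ref{8A76f5swaz23edafgwuvperol0ut83m395ff}, we get a nontrivial $\Integ$-grading with $\Spec^1(\Beul)=\Spec^1(B)$ and, by \eqref{765O043wsdvbnm32M91q0oijhbLvcl}, $\overline{\bbM}(\Beul,\pgoth)=\pi(\overline{\bbM}(B,\pgoth))=\{0\}$; hence $e(\Beul/\pgoth)=0$, so $\bar e(\Beul)=0\neq1=e(\Beul)$, and since $\Beul$ is a normal affine $\bk$-domain, Cor.\ \ref{i8976r53g47c8ru3yd3g23ev5627fu} provides $D\in\hlnd(\Beul)\setminus\{0\}$ with $\ker D=\Beul_0=B^{(H)}$. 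That same $D$ is an element of $\lnd(B)$ with the stated kernel.

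Next, part (a-iii) follows by taking $\Delta_\pgoth$ to be the union of these $D$'s over all $H\in\Heul_\pgoth$: by Lemma \ref{1hfcvbnn92m3n456789df7nm534jh5f6d78} the intersection $\bigcap_{H\in\Heul_\pgoth}H$ equals $T_\pgoth=\torc(\overline{\bbM}(B,\pgoth),\G(B))$, and since $B^{(H_1)}\cap B^{(H_2)}=B^{(H_1\cap H_2)}$ (this is immediate from the direct-sum decomposition), we get $\bigcap_{D\in\Delta_\pgoth}\ker D=\bigcap_H B^{(H)}=B^{(T_\pgoth)}$. For part (b), let $\Delta$ be the union of the $\Delta_\pgoth$ over all $\pgoth\in\Spec^1(B)$ with $\G(B)/\overline{\bbM}(B,\pgoth)$ not torsion, together with (for the remaining $\pgoth$, where $\torc(\overline{\bbM}(B,\pgoth),\G(B))=\G(B)$) nothing extra is needed; then $\bigcap_{D\in\Delta}\ker D=\bigcap_\pgoth B^{(T_\pgoth)}=B^{(\bigcap_\pgoth T_\pgoth)}$, and by Lemma \ref{jhgf09wk2mnddm9ijh2wedfgj34b} $\bigcap_\pgoth T_\pgoth=\torc(\bigcap_\pgoth\overline{\bbM}(B,\pgoth),\G(B))=\torc(\bG(B),\G(B))=W$. (One must check the $\pgoth$ with torsion quotient contribute $T_\pgoth=\G(B)$, which does not shrink the intersection, consistently with $W\subseteq\G(B)$.)

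For part (c): $\ML(B)\subseteq\bigcap_{D\in\lnd(B)}\ker D\subseteq\bigcap_{D\in\Delta}\ker D=B^{(W)}$. Now take $K=\Frac(B^{(W)})$. Since $B$ is a normal domain, Lemma \ref{98767gh11dmdffkll0ahcchvr0vfesh8263543794663cnir}(c) gives $B\cap\Frac(B^{(W)})=B^{(W)}$, so $B^{(W)}\subseteq K\subseteq\Frac(B)$. It remains to see $\Frac(B)=K^{(r)}$, i.e.\ that $\Frac(B)$ is purely transcendental of transcendence degree $r$ over $K$: localizing $B$ at the set $S$ of all nonzero homogeneous elements and using that $\G(B)/W$ is torsion-free of rank $r$ (because $W=\torc(\bG(B),\G(B))$ and $\rank(\G(B)/\bG(B))=r$) and finitely generated, hence free of rank $r$, Lemma \ref{i8765redfvbnki8765rfghytrew123456789iuhv} applied to the $\G(B)/W$-graded domain $S^{-1}B$ (whose degree-$0$ part has fraction field $K$, by Lemma \ref{ze3xqrctwvyjmkiu0u9inj7b2q3q01iqu}) shows $S^{-1}B=(S^{-1}B)_0^{(W)}[t_1^{\pm1},\dots,t_r^{\pm1}]$ with $t_j$ algebraically independent over $(S^{-1}B)_0$; taking fraction fields gives $\Frac(B)=K^{(r)}$. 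Then $\trdeg(B:\ML(B))\ge\trdeg(\Frac(B):K)=r$. Finally, part (d): $r\ge1$ forces a nonzero element in $\Delta\subseteq\lnd(B)$, so $B$ is non-rigid; and if $G$ is torsion-free then Lemma \ref{876543wsdfgxhnm290cAo} gives $\hlnd(B)\neq\{0\}$, so $\G(B)\in\Xscr(B)$ — wait, more directly $0\in\bbT(B)$ need not lie in $\Xscr(B)$; instead, since some $B^{(H)}$ with $H\in\Heul_\pgoth$ is non-rigid and $H\in\bbT(B)$, we have $\Xscr(B)\neq\emptyset$, and then Cor.\ \ref{87654edfgyudi30piwnf3bwvf2tyweuiklc9Bmo3vA34b6getw63ye} would give $\Xscr(B)=\bbT(B)$ — but that corollary assumes saturation in codimension $1$, which is exactly what fails here; so instead I would argue that any $D\in\Delta$ is homogeneous for the original $G$-grading (it is, being built from Cor.\ \ref{i8976r53g47c8ru3yd3g23ev5627fu} on a coarsening of the $G$-grading, hence $G$-homogeneous up to the usual argument), so $\hlnd(B)\neq\{0\}$, whence $\Xscr(B)\ni$ all of $\bbT(B)$ by Thm \ref{8237e9d1983hdjyev93}. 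The main obstacle I anticipate is precisely the bookkeeping in parts (a-iii)/(b): verifying that $B^{(H_1)}\cap B^{(H_2)}=B^{(H_1\cap H_2)}$ interacts correctly with the torsion-closure identities of Lemma \ref{jhgf09wk2mnddm9ijh2wedfgj34b}, and checking that the $G$-homogeneity of the constructed derivations survives the passage through the coarsened grading so that part (d) goes through.
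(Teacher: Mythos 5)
Your overall strategy — construct LNDs from $\Integ$-coarsenings of the $G$-grading via Cor.\ \ref{i8976r53g47c8ru3yd3g23ev5627fu}, intersect kernels using Lemma \ref{1hfcvbnn92m3n456789df7nm534jh5f6d78} and Lemma \ref{jhgf09wk2mnddm9ijh2wedfgj34b}, then read off the field $K$ from Lemma \ref{i8765redfvbnki8765rfghytrew123456789iuhv} — is exactly the paper's approach, and parts (a), (b) and (the eventual version of) (d) are correct, with the finiteness of $\{\overline{\bbM}(B,\pgoth)\}$ from Prop.\ \ref{89fDCN584uKHJu7u59034uf} supplying what is needed to apply Lemma \ref{jhgf09wk2mnddm9ijh2wedfgj34b}.

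There is, however, a genuine gap in part (c). You set $K=\Frac(B^{(W)})$ and assert that the degree-$0$ component of $S^{-1}B$ (with respect to the coarsened $\G(B)/W$-grading, $S$ being the set of all nonzero homogeneous elements of $B$) has fraction field $K$, citing Lemma \ref{ze3xqrctwvyjmkiu0u9inj7b2q3q01iqu}. That lemma requires $S\subseteq B^{(W)}\setminus\{0\}$, which your $S$ does not satisfy (it contains homogeneous elements of nonzero coarsened degree). In fact the degree-$0$ part of $S^{-1}B$ consists of all ratios $a/b$ with $a,b$ homogeneous of the same nonzero degree, and there is no reason for such a ratio to lie in $\Frac(B^{(W)})$ when the monoid $\{\,i\in \G(B)/W : B^{(W)}\text{-degree-}i\text{ part}\neq 0\,\}$ is not a group. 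So the identification of your $K$ with the field produced by Lemma \ref{i8765redfvbnki8765rfghytrew123456789iuhv} is unjustified, and consequently so is the claim $\Frac(B)=K^{(r)}$ for your choice of $K$. The fix, which is what the paper does, is simply not to specify $K$: apply Lemma \ref{i8765redfvbnki8765rfghytrew123456789iuhv} to the $\G(B)/W$-graded ring $R=B$ and take $K$ to be the resulting field $\Reul_0$; one then has $\ML(B)\subseteq B^{(W)}=R_0\subseteq K\subseteq \Frac(B)$ and $\Frac(B)=K^{(r)}$ directly, without needing $K=\Frac(B^{(W)})$. In part (d), your first instinct was already complete: once $B$ is non-rigid and $G$ is torsion-free, Lemma \ref{876543wsdfgxhnm290cAo} gives $\hlnd(B)\neq\{0\}$, and Thm \ref{8237e9d1983hdjyev93}(c) immediately yields $\Xscr(B)=\bbT(B)$; the attempt to argue that the constructed $D$ is $G$-homogeneous is both unnecessary and not quite right, since $D$ is constructed to be homogeneous only for the coarsened $\G(B)/H$-grading, which is a weaker property.
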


\begin{proof}
Since $B$ is $\bk$-affine and the grading is over $\bk$, $B$ is finitely generated as a $B_0$-algebra and $\G(B)$ is finitely generated.
Consequently, $r$ is finite (so $r \in \Nat \setminus \{0\}$).

(a) If $\Spec^1(B) = \emptyset$ then $\bG(B) = \G(B)$ by definition, contradicting the assumption that $\G(B)/\bG(B)$ is not torsion.
So $\Spec^1(B) \neq \emptyset$.

By Prop.\  \ref{89fDCN584uKHJu7u59034uf}, the nonempty set $\setspec{ \overline{\bbM}( B, \pgoth ) }{ \pgoth \in \Spec^1(B) }$ is finite.
So we can choose $\pgoth_1, \dots, \pgoth_n \in \Spec^1(B)$ such that $\bG(B) =  \bigcap_{i=1}^n\overline{\bbM}( B, \pgoth_i )$.
There must exist $i \in \{1,\dots,n\}$ such that $\G(B)/\overline{\bbM}( B, \pgoth_i )$ is not torsion.
This proves the first claim in assertion (a).

Consider $\pgoth \in \Spec^1(B)$ such that $\G(B)/\overline{\bbM}( B, \pgoth )$ is not torsion and define
$\Heul_\pgoth$ and $T_\pgoth$ as in the statement of (a).
Lemma \ref{1hfcvbnn92m3n456789df7nm534jh5f6d78} implies that $\Heul_\pgoth \neq \emptyset$ (which proves (a-i)) and
\begin{equation}  \label {1Adcdb9bdNTs438ywte6v2w89unvBNZjxmnbevCe49831}
\textstyle \bigcap_{ H \in \Heul_\pgoth } H  = T_\pgoth .
\end{equation}
Let $H \in \Heul_\pgoth$. Let $\pi : \G(B) \to \G(B)/H$ be the canonical homomorphism.
The definition of $\Heul_\pgoth$ implies that $\overline{\bbM}( B, \pgoth ) \subseteq H = \ker(\pi)$ and $\G(B)/H \isom \Integ$.
Let $\Beul(H)$ denote the ring $B$ endowed with the natural $\G(B)/H$-grading (see paragraph \ref{8A76f5swaz23edafgwuvperol0ut83m395ff})
and note that $\G( \Beul(H) ) = \G(B)/H \isom \Integ$.
Since $\Spec^1(B) = \Spec^1( \Beul(H) )$, we have $\pgoth \in \Spec^1( \Beul(H) )$; thus,
$$
\textstyle
\bG( \Beul(H) ) \ = \ \bigcap_{ \qgoth \in \Spec^1( \Beul(H) ) } \overline{\bbM}( \Beul(H), \qgoth ) \ \subseteq \  
\overline{\bbM}( \Beul(H), \pgoth )  = \pi\big( \overline{\bbM}( B, \pgoth ) \big) = 0 
$$
where the penultimate equality follows from \eqref{765O043wsdvbnm32M91q0oijhbLvcl}.
Since $\G( \Beul(H) ) \isom \Integ$ and $\bG( \Beul(H) ) = 0$,
Cor.\ \ref{i8976r53g47c8ru3yd3g23ev5627fu} implies that there exists $D \in \hlnd( \Beul(H) )$ such that $\ker( D ) = \Beul(H)_0 = B^{(H)}$.
Since $D \in \lnd(B)$, this proves (a-ii).
Since $H$ is a proper subgroup of $\G(B)$, we have $B^{(H)} \neq B$ and hence $D \neq 0$. So $B$ is non-rigid (this is claimed in (d)).

For each $H \in \Heul_\pgoth$, choose $D_H \in \lnd(B)$ such that $\ker(D_H) = B^{(H)}$ (possible by (a-ii)).
Define $\Delta_\pgoth = \setspec{ D_H }{ H \in \Heul_\pgoth }$.
Then $\bigcap_{D \in \Delta_\pgoth} \ker(D) = \bigcap_{H \in \Heul_\pgoth} B^{(H)} = B^{ ( T_\pgoth ) }$,
the last equality by \eqref{1Adcdb9bdNTs438ywte6v2w89unvBNZjxmnbevCe49831}.
This proves (a-iii) and completes the proof of (a).

(b) Recall that we chose $\pgoth_1, \dots, \pgoth_n \in \Spec^1(B)$ such that $\bG(B) =  \bigcap_{i=1}^n\overline{\bbM}( B, \pgoth_i )$,
and that we noted that $I \neq \emptyset$, where 
$$
I = \setspec{ i }{ \text{$1 \le i \le n$ and $\G(B) /  \overline{\bbM}( B, \pgoth_i )$ is not torsion} } .
$$
Since $\bG(B) =  \bigcap_{i=1}^n\overline{\bbM}( B, \pgoth_i )$,
Lemma \ref{jhgf09wk2mnddm9ijh2wedfgj34b} gives $W = \bigcap_{i=1}^n \torc\big( \overline{\bbM}( B, \pgoth_i ) , \G(B) \big)$.
We have
$\torc\big( \overline{\bbM}( B, \pgoth_i ) , \G(B) \big) = T_{\pgoth_i}$ for each $i \in I$
and $\torc\big( \overline{\bbM}( B, \pgoth_i ) , \G(B) \big) = \G(B)$ for each $i \in \{1,\dots,n\} \setminus I$, so 
\begin{equation} \label {u6f5c263F7v8b92ne8fbvcx23iawekeo09}
\textstyle W = \bigcap_{i = 1}^n \torc\big( \overline{\bbM}( B, \pgoth_i ) , \G(B) \big) = \bigcap_{i \in I} T_{\pgoth_i} .
\end{equation}
For each $i \in I$, consider a subset $\Delta_{\pgoth_i}$ of $\lnd(B)$ such that $\bigcap_{D \in \Delta_{\pgoth_i}} \ker(D) = B^{ ( T_{\pgoth_i} ) }$
($\Delta_{\pgoth_i}$ exists by (a-iii)).
Let $\Delta = \bigcup_{i \in I} \Delta_{\pgoth_i}$. Then
$$
\textstyle
\bigcap_{D \in \Delta} \ker(D)
= \bigcap_{i \in I} \bigcap_{D \in \Delta_{\pgoth_i}} \ker(D)
= \bigcap_{i \in I} B^{ ( T_{\pgoth_i} ) }
= B^{ (W) } 
$$
where the last equality follows from \eqref{u6f5c263F7v8b92ne8fbvcx23iawekeo09}. This proves (b).

(c) Let $R$ denote $B$ endowed with the natural $\G(B)/W$-grading (see paragraph \ref{8A76f5swaz23edafgwuvperol0ut83m395ff}).
Since  $\G(R) = \G(B)/W$ is finitely generated and torsion-free,
and $\rank( \G(B)/W ) = \rank( \G(B)/\bG(B) ) = r \in \Nat \setminus \{0\}$, we have $\G(R) \isom \Integ^r$.
Lemma \ref{i8765redfvbnki8765rfghytrew123456789iuhv}
implies that there exists a field $K$ such that $R_0 \subseteq K \subseteq \Frac(R)$ and $\Frac(R) = K^{(r)}$.
Since $R_0 = B^{(W)} \supseteq \ML(B)$ by (b), we have $\ML(B) \subseteq K$. Since $\Frac(B)=\Frac(R)$, (c) follows.

(d) We already proved that $B$ is non-rigid.
If moreover $G$ is torsion-free then Lemma \ref{876543wsdfgxhnm290cAo} implies that $\hlnd(B) \neq \{0\}$,
so Thm \ref{8237e9d1983hdjyev93}(c)  gives $\Xscr(B) = \bbT(B)$.
\end{proof}


\noindent{\bf Data availability: }
Not applicable.

\noindent{\bf Conflicts of interest: }
The author has no conflicts of interest to declare that are relevant to this article.

\end{document}